\def\R{\mathbb{R}}
\def\N{\mathbb{N}}
\def\P{\mathcal{P}}
\def\Pr{\mathcal P_2(\D)}
\def\D{\Omega}
\def\d{d}
\def\Asym{\mathcal A_{\rm sym}}
\def\A{\mathcal A}
\def\M{\mathcal M}
\def\GammaM{\Gamma_{\mathcal M}}
\def\GammaQM{\Gamma_{Q,{\mathcal M}}}
\def\Md{\delta_{\mathcal M,p}}
\def\Mdt{\widetilde{\delta}_{{\mathcal M},p}}
\def\Mdq{\delta_{Q,{\mathcal M},p}}
\def\AMdq{\delta_{Q,{\mathcal A},p}}
\def\AMdqtwo{\delta_{Q,{\mathcal A},2}}
\def\Mdtq{\widetilde{\delta}_{Q,{\mathcal M},p}}
\newtheorem{assumption}[theorem]{Assumption}
\newtheorem{remark}[theorem]{Remark}
\newtheorem{example}[theorem]{Example}
\title{A Wasserstein-type metric for generic mixture models, including 
location-scatter and 
group invariant measures}
\author{Geneviève Dusson, Virginie Ehrlacher, Nathalie Nouaime}
\author{ Genevi\`eve Dusson\thanks{Laboratoire de Math\'ematiques de Besan\c{c}on, UMR CNRS 6623,
  Universit\'e de Franche-Comt\'e
  (\email{genevieve.dusson@math.cnrs.fr})}
\and Virginie Ehrlacher\thanks{CERMICS, \'Ecole des Ponts \& Inria Paris
  (\email{virginie.ehrlacher@enpc.fr})}
\and Nathalie Nouaime\thanks{CEA Saclay, DEN/DM2S/STMF/LMSF (nathalie.nouaime@cea.fr)}
}
\date{\today}
\begin{document}

\maketitle

\begin{abstract}
In this article, we study Wasserstein-type metrics and corresponding barycenters for mixtures of a chosen subset of probability measures called atoms hereafter. 
In particular, this works extends what was proposed by Delon and Desolneux~\cite{Delon2020-wk} for mixtures of gaussian measures to other mixtures. 
We first prove in a general setting that for a set of atoms equipped with a metric that defines a geodesic space, the set of mixtures based on this set of atoms is also geodesic space for the defined modified Wasserstein metric. We then focus on two particular cases of sets of atoms: (i) the set of location-scatter atoms and (ii) the set of measures that are invariant with respect to some symmetry group. Both cases are particularly relevant for various applications among which electronic structure calculations. Along the way, we also prove some sparsity and symmetry properties of optimal transport plans between measures that are invariant under some well-chosen symmetries. 
\end{abstract}

\begin{keywords}
optimal transport, mixture, Wasserstein distance, Wasserstein barycenters
\end{keywords}

\begin{MSCcodes}
65D05,65K10,41A05,41A63,46G99,46T12,60B05,47N50
\end{MSCcodes}

\section{Introduction}

The original motivation of this work stems from electronic structure calculations in quantum chemistry, which are widely used for the simulation of molecular and material systems~\cite{Cances2003-jm,Helgaker2014-py}. Permutation-invariant probability measures naturally arise in this context, as the square modulus of wavefunctions and their marginals satisfy such invariance property, due to the indistinguishability of bosonic and fermionic particles, e.g. electrons.  In general, these objects are moreover parame\-trized by the positions of the nuclei in the molecular system, and efficient ways to interpolate between such objects when the positions of the nuclei are changing would be extremely beneficial to this field, where the computations at stake are numerically very expensive, as they involve the resolution of high-dimensional and/or nonlinear eigenvalue partial differential equations. 
Due to the localized nature of the considered probability measures, optimal transport seems a natural way to interpolate between them using Wasserstein barycenters developed by Carlier and Agueh~\cite{Agueh2011-uz}, as shown in~~\cite{DussonFries} for a toy model with one-dimensional particles. 
However due to the high-dimensionality of the considered measures for real systems (e.g. already $3$-dimensional for the electronic density, and $6$-dimensional for the electronic pair density), this does not seem feasible in practice using standard optimal transport algorithms, such as the now widely used Sinkhorn algorithm~\cite{Sinkhorn1967-rk}, see~\cite{Peyre2019-yf} for a monograph on the numerical aspects of optimal transport describing this algorithm.

Luckily, in~\cite{Delon2020-wk}, Delon and Desolneux proposed a Wasserstein-type distance and interpolation scheme based on the decomposition of probability measures as mixtures of gaussian distributions.
The modified optimal transport problem expressed in this context becomes independent of the underlying dimension of the distribution, or of its discrete spatial representation, so that the problem becomes extremely cheap to solve when the considered measures can be decomposed as convex combinations of a few gaussian distributions. 
However, we encounter two limitations when trying to apply this strategy in the context of electronic structure calculations. 
First, for some models, the wave-functions are not as regular as gaussians, and cusps are to be found around the nuclei positions in the molecular system. 
Therefore, the wave-function or its marginals would be better represented as a mixture of Slater-type functions, i.e. based on $\exp(-\alpha | x |)$, see~\cite[Figure 2]{Pham2017-bd} for an example on the $H_2^+$ molecule which is composed of two protons and one electron. 
Unfortunately these functions do not fit in the framework of~\cite{Delon2020-wk}. 
Second, when considering fermions, the wave-function is anti-symmetric, so that its modulus square as well as its marginals is not only permutation symmetric, but also contains constraints coming from the anti-symmetry property. For example, the modulus square of the wave-function is zero when two variables are equal. 
This feature cannot be satisfied as such with convex combinations of strictly positive measures.
Note that generic distributions as well as group symmetric mixtures appear in other contexts, such as portfolio theory in finance~\cite{Gupta2013-qk}, or in image analysis where the described objects are defined up to rigid movements~\cite{Mehr2018-zi}. 

In this work, we therefore extend the theory presented in~\cite{Delon2020-wk,Chen2015-gu} to generic mixture models. This allows us to consider both mixtures based on different distributions such as any elliptical distribution, but also any distribution transported with an affine map. These are the main cases where the Wasserstein barycenters between two different individual measures composing the mixtures (later called atoms) can be explicitely computed.
We then provide various numerical results involving different types of distributions, as well as group-invariant measures for permutation with or without underlying antisymmetry, as well as rotation-invariant measures.
Our main contributions in this article are listed below:
\begin{enumerate}
    \item We prove in a general setting that for a set of atoms equipped with a metric that defines a geodesic space, the set of mixtures based on this set of atoms is also geodesic space for the defined modified Wasserstein metric.
    \item We provide numerical results for the modified Wasserstein barycenters in the case of location-scatter atoms and group-invariant measures.
    \item We prove some sparsity and symmetry properties of optimal transport plans between measures that are invariant under some specific symmetries, namely the permutation of the variables, as well as a permutation-invariance arising from antisymmetry. 
\end{enumerate}
We leave the practical application of this theory to electronic structure calculations to a further work~\cite{PolackDusson}.

The rest of the article is organised as follows.  In Section~\ref{sec:3}, we provide a few preliminaries on Wasserstein distances and barycenters that will be used in the subsequent sections. 
In Section~\ref{sec:gen}, we give general conditions on a set of atoms for a modified Wasserstein distance defined on the associated set of mixtures to be computable by means of a sparse discrete optimal transport problem in the spirit of~\cite{Delon2020-wk}, and for the set of mixtures to be a geodesic space.
We then focus our attention to two particular cases of interest: the case of location-scatter atoms in Section~\ref{sec:sec4}, and the case of symmetry group invariant measures in Section~\ref{sec:5}. Along the way, in Section~\ref{sec:properties}, we gather some properties satisfied by the exact optimal transport plan relative to the Wasserstein metric, as well as Wasserstein barycenters.

\section{Preliminaries on Wasserstein distances and barycenters}
\label{sec:3}

To start with, we introduce in this section a few objects related to Wasserstein metric and barycenters, that will be useful in the subsequent sections of this article, see for instance~\cite{Villani2009-sy,Santambrogio2015-xa,Peyre2019-yf} for references. Let $\Omega \subset \R^d$ for $d\in\N$ be a Borelian set. We denote by $\P(\Omega)$ the set of probability measures on $\Omega$.

\subsection{Wasserstein space and distance}

Let $c:\D\times\D\rightarrow\R^+$ be a metric on $\D$. For $p>1$, we denote by $\P^c_p(\D)$ the set of probability measures on $\D$ with finite $p^{\rm th}$ moments, i.e. 
\[
   \P^c_p(\D) := \left\{ \mu \in \P(\D), \quad
      \exists x_0\in \D, \;
      \int_\D c(x,x_0)^p d\mu(x) < +\infty
      \right\}.
\]

The $p^{\rm th}$-Wasserstein distance between $\mu,\nu\in\P^c_p(\D)$ associated to $c$ is defined as
\begin{equation}
   \label{eq:Wpc}
   W^{c}_p (\mu_0, \mu_1):=\left( \inf_{\gamma \in \Pi (\mu_0, \mu_1)} \int_{\D \times \D} c(x, y)^p \, d\gamma (x, y) \right)^{1/p},
\end{equation}
where $\Pi (\mu_0, \mu_1)$ denotes the set of measures on $\D\times\D$ with marginals $\mu_0$ and $\mu_1$, also called the set of transport plans between $\mu_0$ and $\mu_1$.

An important particular example is the case where the metric $c$ is the euclidean distance on $\D$, i.e. when
\begin{equation}
    \label{eq:euclidean_distance}
    \forall x,y\in \D, \; c(x,y):=\|x-y\|. 
\end{equation}
In this case, we drop the superscript $^c$ in the notation for the ease of readability. 
Then a Wasserstein distance of particular interest is the $2$-Wasserstein distance with euclidean distance on $\D$ defined by
\begin{equation}\label{eq:OT2}
W_2(\mu_0,\mu_1) :=\mathop{\inf}_{\gamma \in \Pi(\mu_0,\mu_1)} \left( \int_{\D\times \D} \|x-y\|^2 \,d\gamma(x,y) \right)^{1/2}.
\end{equation}
The space $\P_2(\D)$ endowed with the distance $W_2$ is a metric space, usually called $L^2$-Wasserstein space (see~\cite{Villani2009-sy} for more details).
From~\cite[Theorem 1.17]{Santambrogio2015-xa}, there exists a unique optimal transport plan solution of the minimization problem~\eqref{eq:OT2} denoted by $\gamma$ in the sequel, provided that $\mu_0$ is absolutely continuous.
Also, the optimal transport plan $\gamma$ has the following form
\[
   \gamma = ({\rm Id}, T)\# \mu_0,
\]
where $T: \D \to \D$ is an application called the optimal transport map and satisfying $T\#\mu_0 = \mu_1$.  
Here, we denote by $T\#\mu$ the push-forward measure of a measure $\mu$ on~$\Omega$ by a map $T:\Omega \to \Omega$, that is the measure $\nu$ on $\D$ such that $\forall A \subset \D$, $T\#\mu (A) = \mu(T^{-1} (A))$. Similar results are available for more generic cost functions~$c$, in particular when for all $x,y\in\D$, $c(x,y) = h(x-y)$ with $h$ stricly convex and $\D$ compact~\cite{Santambrogio2015-xa}.

The path $(\mu_t)_{t\in [0,1]}$ given by 
\[
 \forall t\in[0,1], \quad \mu_t = P_t \# \gamma = ((1-t){\rm Id} + tT)\#\mu_0, \quad 
   \forall (x,y)\in \D^2, \; P_t(x,y):=(1-t)x + t y,
\]
then defines a constant speed geodesic in $\P_2(\D)$ between $\mu_0$ and $\mu_1$. The path $(\mu_t)_{t\in[0,1]}$ is called the McCahn interpolation between $\mu_0$ and $\mu_1$~\cite{McCann1997-da}. 
It can be shown that for all $t\in[0,1]$, $\mu_t$ can be equivalently expressed as the unique solution of the following minimization problem
\begin{equation}
\label{eq:mut}
     \mu_t := \mathop{\rm argmin}_{\rho \in \Pr} \; (1-t) W_2^2(\rho, \mu_0) + t W^2_2(\rho, \mu_1). 
\end{equation}

\subsection{Wasserstein barycenters}

We next recall the notion of barycenters in the Wasserstein space which was introduced in~\cite{Agueh2011-uz} and can be seen as an extension of the McCahn interpolation to a family of more than two measures.
Let $n\in \N^*$ and let
$$
\mathcal L_n:=\left\{ \bm{t} :=(t_1,\cdots,t_n) \in [0,1]^n, \quad \sum_{i=1}^n t_i = 1\right\}
$$
be the probability simplex of dimension $n-1$. 
For any family of probability measures $\bm{\mu} = (\mu_i)_{1\leq i\leq n} \in (\P^c_p(\D))^n$ and barycentric weights 
$\bm{t} = (t_i)_{1\leq i\leq n} \in \mathcal L_n$,  if one of the measures $\mu_i$ has a density, there exists a unique minimizer to the problem
\begin{equation}
\label{eq:bary}
\inf_{\rho\in \P^c_p(\D)} \sum_{i=1}^n t_i W_p^c(\rho,\mu_i)^p,
\end{equation}
which is the barycenter of the family of measures $\bm{\mu}$ with barycentric weights $\bm{t}$. 
The solutions of the barycenter problem are moreover related to the
solutions of the following multi-marginal optimal transport problem~\cite{Gangbo1998-wi}
\begin{equation}\label{eq:multimarginal}
    W_p^c(\mu_1, \ldots, \mu_n)^p:= \mathop{{\inf}}_{\gamma \in \Pi(\mu_1, \ldots, \mu_n)}\int_{\D^n} \frac{1}{2}\sum_{i,j =1}^n t_i t_j c(x_i, x_j)^p \,d\gamma(x_1, \ldots, x_n),
\end{equation}
where $\Pi(\mu_1, \ldots, \mu_n)$ denotes the set of probability measures on $\D^n$ having $\mu_1, \ldots, \mu_n$ as marginals. 
In particular, at least in the case of the euclidean distance~\eqref{eq:euclidean_distance} if $\Omega$ is a convex set and if (\ref{eq:multimarginal}) has a unique solution $\gamma^*$, there exists a unique solution $\rho^*$ to~\eqref{eq:bary} given by $\rho^* = B\#\gamma^*$, with $B: \D^n \to \D$ defined by $B(x_1, \ldots, x_n):= \sum_{i=1}^n t_i x_i$ for all $(x_1,\ldots,x_n)\in \Omega^n$ and the infima of~\eqref{eq:bary} and~\eqref{eq:multimarginal} are equal. 

\subsection{Case of gaussian distributions}

To conclude this section, we recall some well-known results about gaussians distributions for which many quantities introduced above can be made explicit. Here, $\D = \R^d$ and we denote by $\mathcal S_d$ the set of symmetric positive definite matrices of $\R^{d\times d}$.
 For any $m \in \mathbb{R}^d$ and $\Sigma \in \mathcal S_d$, the (normalized) Gaussian distribution $g_{m, \Sigma}\in \mathcal P_2(\R^d)$ is defined by 
\[ 
 g_{m,\Sigma}(\,dx) = G_{m, \Sigma}(x)\,dx,
\]
 where
\[
\forall x\in \R^d, \quad G_{m, \Sigma}(x):= \frac{1}{(2\pi)^{d/2} \sqrt{{\rm det} \Sigma}} \exp\left(\- (x-m)^T \Sigma^{-1} (x-m)\right). 
\]
For any $m_0,m_1\in \R^d$ and $\Sigma_0,\Sigma_1\in \mathcal S_d$, denoting by $g_0 := g_{m_0,\Sigma_0}$ and $g_1 := g_{m_1,\Sigma_1}$, there holds
\[
W_2^2(g_0, g_1) = \|m_0 - m_1\|^2 +  {\rm Tr}\left(
\Sigma_0 + \Sigma_1 - 2 \left(\Sigma_0^{1/2}\Sigma_1 \Sigma_0^{1/2}\right)^{1/2} 
\right).
\]
Finally, the Wasserstein barycenter between two and more generally $n$ Gaussian measures is also a Gaussian measure. Precisely, for~$\bm{t} = (t_i)_{1\leq i\leq n} \in \mathcal L_n$, the unique minimizer of~\eqref{eq:bary} is given by \begin{equation}\label{eq:Gaussbarmulti}
g_{\bm{t}}(\,dx) = G_{m_{\bm{t}}, \Sigma_{\bm{t}}}(x)\,dx, 
\end{equation}
where $m_{\bm{t}}$ and $\Sigma_{\bm{t}}$ satisfy
\begin{equation}\label{eq:Meanbarmulti}
m_{\bm{t}} =  \sum_{i=1}^n t_i m_i, \quad 
\Sigma_{\bm{t}} =  \sum_{i=1}^n t_i \left( 
\Sigma_{\bm{t}}^{1/2} \Sigma_i \Sigma_{\bm{t}}^{1/2}
\right)^{1/2}.
\end{equation}

\section{Wasserstein-type distance and barycenters for mixtures}\label{sec:gen}

In this section, we follow the works~\cite{Chen2018-vn,Delon2020-wk} which proposed a modified Wasserstein distance for gaussian mixtures, and we generalize the theory to a larger setting. 
Namely, we exhibit sufficient conditions on the densities constituting the mixtures and the definition of the modified Wasserstein distance on mixtures to define a metric and geodesic space. We then prove a similar result as in~\cite{Delon2020-wk}, namely the equivalence between a discrete optimisation problem and a continuous one in the case where the considered mixtures are identifiable.

\subsection{Distance and barycenters between generic mixtures}

We start by defining a set of particular densities, which we call atoms, and mixtures thereof.

\begin{definition}[$\A$-mixture]
Let $\A$ be a subset of $\P(\D)$, called dictionnary of atoms hereafter. We denote by $\mathcal M(\A)$ the set of finite mixtures of atoms $\A$, i.e. 
the set of probability measures $\mu$ of $\P(\D)$ such that there exists $K\in \N^*,$ $\bm{a}:=(a^1, \cdots, a^K)\in \A^K$ and $\bm{\lambda}:=(\lambda^1, \cdots, \lambda^K)\in \mathcal L_K$ such that
\[
\mu = \sum_{k=1}^K \lambda^k a^k. 
\]
\end{definition}
Typical examples of dictionaries of atoms are
\begin{itemize}
    \item [(i)] the set of non-degenerate gaussian measures i.e.
$$
\A^d_g:=\left\{ g_{m,\Sigma}, \; m\in \R^d, \; \Sigma \in \mathcal S_d\right\};
$$
    \item [(ii)] the set of possibly degenerate gaussian measures i.e.
$$
\overline{\A}^d_g:=\left\{ g_{m,\Sigma}, \; m\in \R^d, \; \Sigma \in \overline{\mathcal S}_d\right\};
$$
\end{itemize}
where $\overline{\mathcal S}_d$ is the set of symmetric positive (but not necessarily definite) matrices of $\R^{d\times d}$.
The set $\mathcal M(\A^d_g)$ (respectively $\mathcal M\left(\overline{\A}^d_g\right)$) is then the set of finite gaussian mixtures (respectively possibly degenerate gaussian mixtures) of dimension $d$. Note that both $\A^d_g$ and $\overline{\A}^d_g$ are geodesic spaces when endowed with the metric $W_2$. 

In order to define a distance on the set of mixtures as well as establish many properties on the space of mixtures which we study in the sequel, we introduce a map $\delta: \A \times \A \to \mathbb{R}_+$, which will need to satisfy the following assumption.

\begin{assumption} \label{as:metric_atom}
    The application $\delta:\A\times \A \rightarrow \R^+$ defines a metric on $\A$ and $(\A,\delta)$ is a geodesic space.
\end{assumption}

\begin{definition}[Mixture distance]
Let $\A \subset \P(\D)$ be a dictionary of atoms, and let $\delta:  \A \times \A \to \R_+$ be a metric on $\A$. 
For all $p>1$, we define the application  $\Md: \mathcal M(\A) \times \mathcal M(\A) \to \R_+$ as follows:
for all $\mu_0 = \sum_{j=1}^{J} \lambda_0^j a_0^j \in \M(\A)$ and $\mu_1 = \sum_{k=1}^{K} \lambda_1^k a_1^k \in \M(\A)$, 
\begin{equation}\label{eq:discreteatoms}
\Md(\mu_0, \mu_1)^p:= \mathop{\min}_{w := (w_{jk})_{1\leq j \leq J, 1\leq k \leq K} \in \Pi(\Lambda_0, \Lambda_1)} \sum_{j=1}^{J} \sum_{k=1}^{K} w_{jk}\delta^p(a_0^j, a_1^k),
\end{equation}
\begin{align*}
    \text{with} \quad \Pi(\Lambda_0, \Lambda_1):= \Bigg\{ & w := (w_{jk})_{1\leq j \leq J, 1\leq k \leq K}\in \R_+^{J \times K}, \\ & \quad \forall 1\leq j \leq J, \; \sum_{k=1}^{K} w_{jk}= \lambda_0^j, \quad \forall 1\leq k \leq K, \; \sum_{j=1}^{J}w_{jk} = \lambda_1^k \Bigg\}. 
\end{align*}
\end{definition}

\begin{remark}
   It is easily seen that there exists at least one minimizer $w^*:=(w^*_{jk})_{1\leq j \leq n_0, 1\leq k \leq n_1}$ to problem~(\ref{eq:discreteatoms}). However, let us point out here that uniqueness is not guaranteed. 
\end{remark}

\begin{proposition}
    For all $p>1$, if $\delta:\A\times \A \rightarrow \R^+$ defines a metric, the application $\Md$ defines a metric on the set of $\A$-mixtures $\mathcal M(\A)$. 
\end{proposition}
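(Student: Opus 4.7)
The plan is to verify the four metric axioms and, as a preliminary, that $\Md(\mu_0,\mu_1)$ depends only on $\mu_0,\mu_1$ as elements of $\M(\A)$ rather than on the chosen atomic decomposition. The key observation for the latter is that the value of the linear program~\eqref{eq:discreteatoms} is invariant under refinement of a representation (replacing a pair $(\lambda_0^j,a_0^j)$ by two pairs $(\alpha\lambda_0^j,a_0^j),((1-\alpha)\lambda_0^j,a_0^j)$ with $\alpha\in(0,1)$, or merging two pairs with identical atom): any feasible plan on one side yields a feasible plan with the same cost on the other by splitting or summing the corresponding rows/columns of $w$. Any two decompositions of the same mixture admit a common refinement of this type, so $\Md$ descends to a well-defined function on $\M(\A)\times\M(\A)$. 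I expect this to be the main subtlety, since $\A$ is not assumed identifiable and one must be careful to reduce any pair of distinct decompositions to a single common refinement.

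Non-negativity is immediate, and symmetry follows by transposing the plan: if $w=(w_{jk})$ is admissible for $(\mu_0,\mu_1)$ then $(w_{kj})$ is admissible for $(\mu_1,\mu_0)$ with the same cost, using symmetry of $\delta$. For the separation axiom, $\Md(\mu,\mu)=0$ is obtained using the diagonal plan on a common refinement. Conversely, if $\Md(\mu_0,\mu_1)=0$, any optimal plan $w^*$ satisfies $w^*_{jk}\delta(a_0^j,a_1^k)^p=0$ termwise, so $w^*_{jk}>0$ forces $a_0^j=a_1^k$ because $\delta$ is a metric on $\A$; reading off the marginals then gives
\[
\mu_0=\sum_j\Bigl(\sum_k w^*_{jk}\Bigr)a_0^j=\sum_{j,k}w^*_{jk}a_1^k=\sum_k\Bigl(\sum_j w^*_{jk}\Bigr)a_1^k=\mu_1.
\]

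The most substantive axiom is the triangle inequality, which I would establish via a discrete gluing construction in the spirit of classical Wasserstein theory. Given $\mu_0,\mu_1,\mu_2\in\M(\A)$ with weight vectors $\Lambda_0,\Lambda_1,\Lambda_2$ and optimal plans $w^{01}\in\Pi(\Lambda_0,\Lambda_1)$, $w^{12}\in\Pi(\Lambda_1,\Lambda_2)$, I would define a three-index array $\widetilde{w}_{jk\ell}:=w^{01}_{jk}\,w^{12}_{k\ell}/\lambda_1^k$ when $\lambda_1^k>0$ and zero otherwise, and set $w^{02}_{j\ell}:=\sum_k\widetilde{w}_{jk\ell}$. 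A direct marginal check gives $w^{02}\in\Pi(\Lambda_0,\Lambda_2)$. Viewing $\widetilde{w}$ as a probability mass function on the joint index set, the triangle inequality for $\delta$ applied pointwise together with Minkowski's inequality in $L^p(\widetilde{w})$ yields
\[
\Md(\mu_0,\mu_2)\leq\Bigl(\sum_{j,k,\ell}\widetilde{w}_{jk\ell}\,\delta(a_0^j,a_1^k)^p\Bigr)^{1/p}+\Bigl(\sum_{j,k,\ell}\widetilde{w}_{jk\ell}\,\delta(a_1^k,a_2^\ell)^p\Bigr)^{1/p},
\]
and marginalising in $\ell$ (respectively in $j$) collapses the right-hand side to $\Md(\mu_0,\mu_1)+\Md(\mu_1,\mu_2)$, as required.
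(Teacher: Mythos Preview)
Your proof of the triangle inequality via the discrete gluing $w^{02}_{j\ell}=\sum_k w^{01}_{jk}w^{12}_{k\ell}/\lambda_1^k$ followed by Minkowski's inequality is exactly the paper's argument, and your treatment of the separation axiom (which the paper dismisses as clear) is correct and more explicit than what the paper offers.

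Your well-definedness argument, however, has a genuine gap. The claim that any two decompositions of the same mixture admit a common refinement obtained by splitting or merging pairs with \emph{identical} atoms is false in general: these operations never change the underlying set of atoms used, so if $\mu=\tfrac12 a_1+\tfrac12 a_2=\tfrac12 a_3+\tfrac12 a_4$ with $\{a_1,a_2\}\cap\{a_3,a_4\}=\emptyset$---which can happen when $\M(\A)$ is not identifiable---no common refinement exists. Worse, without identifiability $\Md$ can genuinely depend on the representation: if some $a_3\in\A$ equals $\tfrac12 a_1+\tfrac12 a_2$ as a measure, then computing the distance from $a_3$ to itself via the one-atom versus the two-atom representation gives $0$ versus $\tfrac12\delta(a_3,a_1)^p+\tfrac12\delta(a_3,a_2)^p>0$. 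The paper does not confront this issue at all (it simply writes ``clearly''); strictly speaking one needs either the identifiability hypothesis (Assumption~\ref{as:identifiability}, which the paper only introduces later) or to read $\Md$ as a function on \emph{represented} mixtures rather than on $\M(\A)$ as a set of measures. You correctly flagged this as the main subtlety, but the refinement mechanism you propose does not resolve it.
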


The proof of this result follows from~\cite[Theorem 1]{Chen2018-vn} that was derived for the particular case of gaussian mixtures. We present it here for sake of completeness as well as to precisely specify the assumptions that guarantee that the result is indeed valid in this general framework.

\begin{proof}
    First,  $\Md$ is clearly symmetric, $\Md(\mu_0,\mu_1) \ge 0$ and $\Md(\mu_0,\mu_1) = 0$ if and only if $\mu_0 = \mu_1$. 
    Next we prove the triangle inequality.
    Let $\mu_0 = \sum_{j=1}^{J} \lambda_0^{j} a_0^{j}\in \M(\A)$, $\mu_1 = \sum_{k=1}^{K} \lambda_1^{k} a_1^{k}\in \M(\A)$, and 
    $\mu_2 = \sum_{l=1}^{L} \lambda_2^l a_2^l\in \M(\A)$. We can assume without loss of generality that $\lambda_0^j >0$ for all $1\leq j \leq J$, $\lambda_1^k >0$ for all $1\leq k \leq K$ and $\lambda_2^l >0$ for all $1\leq l \leq L$. We want to show that
    \[
        \Md(\mu_0,\mu_2) \le \Md(\mu_0,\mu_1) + \Md(\mu_1,\mu_2).
    \]
    Let $w^{01} \in \R^{J\times K}$ be a solution to~\eqref{eq:discreteatoms} for $(\mu_0,\mu_1)$
    and $w^{12} \in \R^{K\times L}$ be a solution to~\eqref{eq:discreteatoms} for $(\mu_1,\mu_2)$. Let us define for $1\le j\le J, 1\le l\le L$,
    \[
        w^{02}_{jl} = \sum_{k=1}^{K} \frac{w^{01}_{jk} w^{12}_{kl}}{\lambda_1^k}.
    \]
    A simple calculation leads to show that $w^{02} \in \Pi(\Lambda_0,\Lambda_2)$. 
    Then 
\begin{align*}
    \Md(\mu_0,\mu_2)^p 
    & \le 
    \sum_{j=1}^{J} \sum_{l=1}^{L} w^{02}_{jl}\delta^p(a_0^j, a_2^l) = 
     \sum_{j=1}^{J} \sum_{l=1}^{L}
     \sum_{k=1}^{K} \frac{w^{01}_{jk} w^{12}_{kl}}{\lambda_1^k} \delta^p(a_0^j, a_2^l) \\
     & \le  
     \sum_{j=1}^{J}
    \sum_{k =1}^{K}
    \sum_{l=1}^{L}
 \frac{w^{01}_{jk} w^{12}_{kl}}{\lambda_1^k} \left(\delta(a_0^j, a_1^k ) + \delta(a_1^k,a_2^l) \right)^p,
\end{align*}
using the triangular inequality for $\delta$.
Using Minkowski inequality, we obtain
\begin{align*}
     \Md(\mu_0,\mu_2) 
    & \le \left(\sum_{j,k,l=1}^{J,L,K} \frac{w^{01}_{jk} w^{12}_{kl}}{\lambda_1^k} \delta(a_0^j, a_1^k )^p\right)^{1/p}
     + \left(\sum_{j,l,k=1}^{J,L,K}  \frac{w^{01}_{jk} w^{12}_{kl}}{\lambda_1^k}  \delta(a_1^k,a_2^l)^p \right)^{1/p} \\
     & = \left( \sum_{j,k=1}^{J,K} 
      w^{01}_{jk} \delta(a_0^j, a_1^k)^p \right)^{1/p}
     + \left( \sum_{l,k=1}^{L,K}
      w^{12}_{kl} \delta(a_1^k,a_2^l)^p \right)^{1/p} \\
     & = \Md(\mu_0,\mu_1)  + \Md(\mu_1,\mu_2).
\end{align*}
This concludes the proof.
\end{proof}

\begin{proposition}[Geodesic space]
   \label{eq:geodesic_space_mixture}
Under Assumption~\ref{as:metric_atom}, the space $\M(\A)$ equipped with the metric $\Md$ is a geodesic space.
\end{proposition}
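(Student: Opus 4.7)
The plan is to construct an explicit constant speed geodesic between two arbitrary $\A$-mixtures $\mu_0 = \sum_{j=1}^J \lambda_0^j a_0^j$ and $\mu_1 = \sum_{k=1}^K \lambda_1^k a_1^k$ by lifting an optimal discrete coupling on indices to geodesics in $(\A, \delta)$. First, I would fix an optimal $w^* = (w^*_{jk})_{j,k} \in \Pi(\Lambda_0, \Lambda_1)$ achieving the minimum in~\eqref{eq:discreteatoms}. Invoking Assumption~\ref{as:metric_atom}, for each pair $(j,k)$ I would choose a constant speed geodesic $[0,1] \ni t \mapsto a_{jk}(t) \in \A$ from $a_0^j$ to $a_1^k$, so that $\delta(a_{jk}(s), a_{jk}(t)) = |t-s|\, \delta(a_0^j, a_1^k)$ for every $s,t\in [0,1]$. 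I would then propose the candidate path
\[
   \mu_t := \sum_{j=1}^J \sum_{k=1}^K w^*_{jk} \, a_{jk}(t), \qquad t \in [0,1].
\]
Since $\sum_{j,k} w^*_{jk} = 1$ and each $a_{jk}(t) \in \A$, the measure $\mu_t$ is an element of $\M(\A)$, and the marginal constraints on $w^*$ yield $\mu_{t=0} = \mu_0$, $\mu_{t=1} = \mu_1$.

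For any $0 \leq s \leq t \leq 1$, both $\mu_s$ and $\mu_t$ admit representations as $\A$-mixtures indexed by the same pairs $(j,k)$ with identical weight vector $(w^*_{jk})$. Hence the diagonal coupling
\[
 \widetilde w_{(j,k),(j',k')} := w^*_{jk}\,\mathbf{1}_{(j,k) = (j',k')}
\]
is admissible in the minimization defining $\Md(\mu_s, \mu_t)$, which gives
\[
 \Md(\mu_s,\mu_t)^p \;\leq\; \sum_{j,k} w^*_{jk}\, \delta(a_{jk}(s), a_{jk}(t))^p \;=\; (t-s)^p \sum_{j,k} w^*_{jk}\, \delta(a_0^j, a_1^k)^p \;=\; (t-s)^p\, \Md(\mu_0,\mu_1)^p,
\]
by the constant speed property of the $a_{jk}$ and the optimality of $w^*$.

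To upgrade this bound to equality, I would chain the triangle inequality (already established for $\Md$) along $\mu_0 \to \mu_s \to \mu_t \to \mu_1$:
\[
 \Md(\mu_0,\mu_1) \;\leq\; \Md(\mu_0,\mu_s) + \Md(\mu_s,\mu_t) + \Md(\mu_t,\mu_1) \;\leq\; \bigl(s + (t-s) + (1-t)\bigr)\, \Md(\mu_0,\mu_1),
\]
which forces saturation at every step and yields $\Md(\mu_s,\mu_t) = (t-s)\, \Md(\mu_0,\mu_1)$, i.e.\ $(\mu_t)_{t\in [0,1]}$ is a constant speed geodesic in $\M(\A)$.

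The only delicate point I foresee is that the atoms $a_{jk}(t)$ need not be distinct and some weights $w^*_{jk}$ may vanish, but since $\M(\A)$ and $\Md$ are defined via convex combinations that tolerate repetitions and zero entries, these degeneracies cause no issue. The real content is that Assumption~\ref{as:metric_atom} allows us to promote the discrete coupling on indices to a path-valued coupling inside $(\A,\delta)$, and the very same $w^*$ remains admissible between $\mu_s$ and $\mu_t$ for every intermediate time, which is what makes the upper bound above tight.
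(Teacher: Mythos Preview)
Your proof is correct and follows essentially the same route as the paper: pick an optimal discrete coupling $w^*$, attach to each pair $(j,k)$ a constant speed geodesic in $(\A,\delta)$, define $\mu_t$ as the $w^*$-weighted mixture of these geodesics, and bound $\Md(\mu_s,\mu_t)$ from above using the diagonal coupling. The only minor difference is presentational: the paper phrases the goal in terms of path length and stops once the upper bound yields ${\rm Len}_{\Md}((\mu_t)) = \Md(\mu_0,\mu_1)$, whereas you go one step further and explicitly chain the triangle inequality to force equality at every intermediate pair, establishing the constant speed property directly.
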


\begin{proof}
    To show that $\M(\A)$ equipped with the metric $\Md$ is a geodesic space, we consider paths $\rho:=(\rho_t)_{t\in[0,1]}$ with $\rho_t \in \M(\A)$ for all $t\in [0,1]$ and define the length of the path relative to the $\Md$ metric as
    \begin{equation}
    \label{eq:path_length}
         {\rm Len}_{\Md}(\rho) = \sup_{\substack{N \in \N^*, \; (t_i)_{0\leq i \leq N}\in [0,1]^{N+1} \\ 0= t_0 < t_1 < \ldots < t_N = 1 }} \sum_{i=1}^N \Md(\rho_{t_{i-1}},\rho_{t_i}).
    \end{equation}
We want to show that given any two points $\mu_0,\mu_1 \in \M(\A),$ there exists a path between them the length of which equals the distance $\Md(\mu_0,\mu_1)$ between the points $\mu_0$ and $\mu_1$.
    
    First, for $\mu_0,\mu_1 \in \M(\A),$ for any path $\rho:=(\rho_t)_{t\in[0,1]}$, we always have, using the triangle inequality 
    \[
        \Md(\mu_0,\mu_1) \le \sum_{i=1}^N \Md(\rho_{t_{i-1},\rho_{t_i}}).
    \]
    Therefore, ${\rm Len}_{\Md}(\rho) \ge \Md(\mu_0,\mu_1)$. To show the equality, we need to exhibit one path which connects $\mu_0$ to $\mu_1$ whose length is $\Md(\mu_0,\mu_1)$. The proof here is adapted from~\cite[section III, B and C]{Chen2018-vn}.
    
    Let us assume that $\mu_0 = \sum_{j=1}^{J} \lambda_0^{j} a_0^{j}\in \M(\A)$ and $\mu_1 = \sum_{k=1}^{K} \lambda_1^{k} a_1^{k}\in \M(\A)$.
    We define for any $a_0^j,a_1^k\in \A$ a constant speed geodesic  $(a_t^{jk})_{t\in[0,1]}$ with  $a_t^{jk}\in \A$ for all $t\in [0,1]$ such that for all $0\le s,t \le 1$, $\delta(a_t^{jk}, a_s^{jk}) = |t-s| \delta(a_0^j, a_1^k)$.
     
     Let $w^*$ be a solution to~\eqref{eq:discreteatoms} for $\mu_0$ and $\mu_1$, we define
    $
        \mu_t = \sum_{j=1}^{J} \sum_{k=1}^{K} w^*_{jk} a_t^{jk}$. Then, for any $0\le s \le t \le 1$, choosing $w_{jk,j'k'} = w^*_{jk} \delta_{jk,j'k'} \in \Pi(w^*,w^*)$, we obtain
    \begin{align*}
        \Md^p(\mu_s,\mu_t) \; &
        \le  \sum_{jk}  \sum_{j'k'} w_{jk, j'k'} \delta^p(a_t^{jk}, a_s^{j'k'})
        \\& 
        \le (t-s)^p \sum_{jk} w^*_{jk} \delta^p(a_0^j, a_1^k) \\
        & \le (t-s)^p \Md^p(\mu_0,\mu_1),\\
    \end{align*}
    from which we deduce that 
    $
    {\rm Len}_{\Md}((\mu_t)_{t\in[0,1]}) = \Md(\mu_0,\mu_1)$. We conclude that $\M(\A)$ equipped with $\Md$ is a geodesic space.
\end{proof}

In the case where $(\M(\A),\Md)$ is a geodesic space, we can now express barycenters in terms of geodesics of atoms.

\begin{corollary}
Let us assume that Assumption~\ref{as:metric_atom} holds. Let $\mu_0 = \sum_{j=1}^{J} \lambda_0^j a_0^j$ and $\mu_1 = \sum_{k=1}^{K} \lambda_1^k a_1^k$ be two elements of $\M(\A)$. 
   The barycenters between $\mu_0$ and $\mu_1$ belong to $\M(\A)$  and can be written as 
   \[
      \forall t\in [0,1],\quad \mu_t = \sum_{j=1}^{J}\sum_{k=1}^{n_1} w_{jk}^* a_t^{j,k},
   \]
where $w^* = (w_{jk}^*)_{1\leq k \leq K, 1\leq l \leq n_1}$ is a solution to (\ref{eq:discreteatoms}) and $(a_t^{j,k})_{t\in[0,1]}$ are constant speed geodesic between respectively $a_0^j$ and $a_1^k$.
\end{corollary}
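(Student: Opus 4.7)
The plan is to observe that the corollary is a direct consequence of the explicit geodesic construction carried out inside the proof of Proposition~\ref{eq:geodesic_space_mixture}, combined with the general fact that in a geodesic metric space the barycenter between two points lies along a constant speed geodesic connecting them.

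First, I would recall the explicit geodesic from the proof of Proposition~\ref{eq:geodesic_space_mixture}. Given $\mu_0 = \sum_{j=1}^{J} \lambda_0^j a_0^j$ and $\mu_1 = \sum_{k=1}^{K} \lambda_1^k a_1^k$ in $\M(\A)$, pick a minimizer $w^*=(w_{jk}^*)$ of the discrete optimal transport problem~\eqref{eq:discreteatoms}, and, for each pair $(j,k)$, a constant speed geodesic $(a_t^{j,k})_{t\in[0,1]}$ from $a_0^j$ to $a_1^k$ in $(\A,\delta)$, whose existence is guaranteed by Assumption~\ref{as:metric_atom}. The computation in the proof of Proposition~\ref{eq:geodesic_space_mixture} shows that the curve $\mu_t := \sum_{j=1}^J \sum_{k=1}^K w^*_{jk} a_t^{j,k}$ is a constant speed geodesic in $(\M(\A),\Md)$ joining $\mu_0$ to $\mu_1$, with $\Md(\mu_s,\mu_t) = |t-s|\,\Md(\mu_0,\mu_1)$ for all $s,t \in [0,1]$. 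In particular each $\mu_t$ is a convex combination of atoms and therefore lies in $\M(\A)$.

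Second, I would identify $\mu_t$ as a barycenter of $\mu_0$ and $\mu_1$ with weights $(1-t,t)$. Defining the barycenter in analogy with~\eqref{eq:bary} as a minimizer over $\M(\A)$ of $\rho \mapsto (1-t)\Md^p(\rho,\mu_0) + t \Md^p(\rho,\mu_1)$, a standard metric-space argument, based on the triangle inequality $\Md(\rho,\mu_0)+\Md(\rho,\mu_1) \ge \Md(\mu_0,\mu_1)$ coupled with one-dimensional convex minimization of $(1-t)a^p + tb^p$ under the constraint $a+b \ge \Md(\mu_0,\mu_1)$, shows that any minimizer must saturate the triangle inequality and therefore lie on a constant speed geodesic from $\mu_0$ to $\mu_1$ at the appropriate parameter. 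The explicit path built in the previous step provides exactly such a point, so each $\mu_t$ is a barycenter of the announced form.

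The main step is the elementary metric-space lemma linking barycenters to points along geodesics; all the substantive work has already been done in Proposition~\ref{eq:geodesic_space_mixture}, and the corollary amounts to reinterpreting the explicit geodesic constructed there in barycenter language. One minor subtlety to flag in writing is that for $p\neq 2$ the minimizer of $(1-t)a^p + tb^p$ under $a+b = c$ is at a value of $a$ which is a reparametrization of $t$; this does not affect the statement since the claim is about the set of barycenters as $t$ ranges over $[0,1]$, not about their precise time labelling.
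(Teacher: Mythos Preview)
Your proposal is correct and matches the paper's approach: the corollary is stated in the paper without proof, as an immediate consequence of the explicit geodesic $\mu_t = \sum_{j,k} w^*_{jk}\, a_t^{j,k}$ constructed in the proof of Proposition~\ref{eq:geodesic_space_mixture}. Your additional step, spelling out via the triangle inequality and one-variable convexity why points on a constant speed geodesic are exactly the two-point barycenters, and your remark on the $p\neq 2$ reparametrization, make explicit what the paper leaves implicit.
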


\subsection{Particular case of Wasserstein metric}

We focus here on the specific case where $\A \subset \mathcal P_p^c(\D)$ for some $p>1$ and metric $c:\Omega\times\Omega \rightarrow \R^+$, and where the atom metric $\delta$ is defined by the Wasserstein metric $W_p^c$, i.e.
\begin{equation}
    \label{eq:Wass_distance}
     \forall a_0, a_1\in \A, \quad \delta(a_0,a_1) = \left( 
      \inf_{\gamma \in \Pi(a_0,a_1)} 
    \int_{\Omega\times\Omega}(c(x,y))^p d\gamma(x,y)
    \right)^{1/p}.
\end{equation}
Let us point out that Assumption~\ref{as:metric_atom} is 
indeed satisfied in this setting.
We first investigate two-marginal problems, and then show that the theory can easily be extended to multi-marginal problems.

\subsubsection{Two-marginal problem}

Let us show that the discrete problem~\eqref{eq:discreteatoms} is in fact equivalent to a continuous problem, similar to the one presented in~\cite{Delon2020-wk}. To obtain this equivalence, we need to make two assumptions, which is the existence of an optimal transport plan between any atoms, as well as the identifiability of the mixtures of atoms.

\begin{assumption}[Existence of optimal transport plan]\label{as:existencetransport}
   The set of atoms $\A$ and the cost function $c$ are such that for any $a_0,a_1\in\A$, there exists at least one solution to the optimal transport problem
   \begin{equation}\label{eq:OT}
      \mathop{\rm inf}_{\gamma \in \Pi(a_0,a_1)} \int_{\Omega\times\Omega} c(x,y)^p d\gamma(x,y).
   \end{equation}
The set of minimizers of (\ref{eq:OT}) is then denoted by $\Gamma^*_{a_0,a_1}$.
\end{assumption}
This assumption is for instance satisfied for the cost function $c(x,y) = \|x-y\|$, $p=2$ and atoms that are absolutely continuous measures.

\begin{assumption}[Identifiability] \label{as:identifiability}
    The set of mixtures $\M(\A)$ is identifiable, that is given two mixtures $ \mu_0 = \sum_{j=1}^{J} \lambda_0^j a_0^j$, $ \mu_1 = \sum_{k=1}^{K} \lambda_1^k a_1^k$, where the atomic functions  $(a_0^j)_{j=1,\ldots,J}$ are pairwise distinct, and similarly for $(a_1^k)_{k=1,\ldots,K}$, then $\mu_0 = \mu_1$ if and only if $J = K$, and the indices in the sums can be reordered such that all $\lambda_0^k = \lambda_1^k$ and $a_0^k = a_1^k$ for all $k=1,\ldots,K$.
\end{assumption}
A classical example of identifiable mixtures is the set of gaussian mixtures~$\M(\A^d_g)$, see e.g.~\cite[Appendix]{Delon2020-wk}. 
We then define the set of optimal transport plans of the atoms as well as mixtures thereof.

\begin{definition}[Admissible set of atom transport plans]
\label{def:admsettransport_atom}
A set of transport plans $\Gamma(\A)\subset \mathcal P(\Omega \times \Omega)$ is said admissible for the set of atoms $\A$ if and only if
$$
\Gamma(\A) = \bigcup_{a_0,a_1\in \A} \Gamma_{a_0,a_1},
$$
where for all $a_0,a_1\in \A$, $\Gamma_{a_0,a_1}$ is a convex set such that
$$
\Gamma^*_{a_0,a_1} \subset \Gamma_{a_0, a_1} \subset \Pi(a_0, a_1).
$$
\end{definition}

\begin{definition}[Mixtures of admissible atom transport plans]
\label{def:admsettransport_mixtures}
Let $\Gamma(\A)$ be an admissible set of atom transport plans.
We define $\GammaM(\A):= \M\left( \Gamma(\A)\right)$ as the set of finite mixtures of optimal transport plans of atoms $\Gamma(\A)$, i.e. the set of probability measures $\gamma$ of $\P(\Omega\times\Omega)$ such that there exists $K\in \N^*$, $\bm{\gamma}:=(\gamma^1, \cdots, \gamma^K)\in \Gamma(\A)^K$ and $\bm{\lambda}:=(\lambda^1, \cdots, \lambda^K)\in \mathcal L_K$ such that
\[
    \gamma = \sum_{k=1}^K \lambda^k \gamma^k. 
\]
\end{definition}

\begin{proposition} \label{prop:equivalence_discrete_cont}
  Let $\mu_0 = \sum_{j=1}^{J} \lambda_0^{j} a_0^{j}\in \M(\A)$ and $\mu_1 = \sum_{k=1}^{K} \lambda_1^{k} a_1^{k}\in \M(\A)$. Let us define
    \begin{equation}\label{eq:MOT2}
        \Mdt(\mu_0, \mu_1):= 
        \left(
        \inf_{\gamma \in \Pi(\mu_0, \mu_1) \cap \GammaM(\A) } \int_{\Omega \times \Omega} c(x,y)^p \,d\gamma(x,y)
        \right)^{1/p}. 
    \end{equation}
    Then, under Assumptions~\ref{as:existencetransport} and~\ref{as:identifiability}, problem~\eqref{eq:MOT2} is equivalent to~\eqref{eq:discreteatoms}, i.e.
    $ {\Mdt}(\mu_0, \mu_1) = {\Md}(\mu_0, \mu_1)$.
    Moreover, the sets of minimizers of (\ref{eq:MOT2}) is equal to the set of measures $\gamma$ which can be written as
    \begin{equation}
    \label{eq:gammaopt}
           \gamma = \sum_{j=1}^{J}  \sum_{k=1}^{K} w_{jk}^* \gamma_{jk},
    \end{equation}
        with $w^*:=(w^*_{jk})_{1\leq j \leq J, 1\leq k \leq K}$ a minimizer of~\eqref{eq:discreteatoms}, and $\gamma_{jk}  \in \Gamma^*_{a_0^j, a_1^k}$ is an optimal transport plan between $a_0^j$ and $a_1^k$.
\end{proposition}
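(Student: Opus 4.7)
The strategy is to prove the two inequalities $\Mdt(\mu_0,\mu_1) \le \Md(\mu_0,\mu_1)$ and its converse, and then extract the structure of optimal plans from the equality cases. The upper bound is essentially a direct construction from a discrete minimizer, while the lower bound is where Assumption~\ref{as:identifiability} becomes crucial.

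For the upper bound, I would start from a minimizer $w^* = (w^*_{jk})$ of the discrete problem~\eqref{eq:discreteatoms} and, for each pair $(j,k)$, invoke Assumption~\ref{as:existencetransport} to pick some $\gamma_{jk} \in \Gamma^*_{a_0^j, a_1^k}$. The measure $\gamma := \sum_{j,k} w^*_{jk} \gamma_{jk}$ then belongs to $\GammaM(\A)$ by Definition~\ref{def:admsettransport_mixtures}, and its marginals are $\mu_0$ and $\mu_1$ thanks to the marginal constraints on $w^*$. A direct computation gives $\int c^p\, d\gamma = \sum_{j,k} w^*_{jk}\,\delta^p(a_0^j,a_1^k) = \Md^p(\mu_0,\mu_1)$, proving $\Mdt \le \Md$ and exhibiting every measure of the form~\eqref{eq:gammaopt} as an admissible competitor achieving this value.

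For the lower bound, I would take an arbitrary $\gamma \in \Pi(\mu_0,\mu_1)\cap\GammaM(\A)$ and decompose it as $\gamma = \sum_i \alpha^i \gamma^i$ with $\alpha^i > 0$ and each $\gamma^i \in \Gamma_{b_0^i,b_1^i}$ for some $b_0^i, b_1^i \in \A$. Projecting onto each marginal yields $\sum_i \alpha^i b_0^i = \mu_0$ and $\sum_i \alpha^i b_1^i = \mu_1$; after collapsing repeated $b_0^i$'s (resp. $b_1^i$'s), applying Assumption~\ref{as:identifiability} on each marginal forces every $b_0^i$ to coincide with some $a_0^{j(i)}$ and every $b_1^i$ with some $a_1^{k(i)}$. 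Setting $w_{jk} := \sum_{\{i\,:\,(j(i),k(i))=(j,k)\}} \alpha^i$, the marginal identities immediately give $w \in \Pi(\Lambda_0,\Lambda_1)$. Since each $\gamma^i$ is then a transport plan between $a_0^{j(i)}$ and $a_1^{k(i)}$, we have $\int c^p\, d\gamma^i \ge \delta^p(a_0^{j(i)},a_1^{k(i)})$, and summing yields $\int c^p\, d\gamma \ge \sum_{j,k} w_{jk}\,\delta^p(a_0^j,a_1^k) \ge \Md^p(\mu_0,\mu_1)$.

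To characterize the minimizers of~\eqref{eq:MOT2}, I would track when equality holds in the chain above: for any minimizer $\gamma$, each $\gamma^i$ must lie in $\Gamma^*_{a_0^{j(i)},a_1^{k(i)}}$ and the associated $w$ must be optimal for~\eqref{eq:discreteatoms}; regrouping the $\gamma^i$'s by pairs $(j,k)$ and using convexity of $\Gamma^*_{a_0^j,a_1^k}$ (a face of the convex set of transport plans between $a_0^j$ and $a_1^k$) then recovers the form~\eqref{eq:gammaopt}. The main obstacle, and really the only nontrivial step, is the identifiability argument in the lower bound: one must first collapse repeated atoms appearing in the mixture decomposition of $\gamma$, since Assumption~\ref{as:identifiability} is stated only for pairwise distinct atoms, and then independently identify each resulting distinct atom on each marginal with one of the $a_0^j$'s or $a_1^k$'s in order to define the weights $w_{jk}$ and recover a feasible candidate for the discrete problem.
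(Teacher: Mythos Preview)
Your proposal is correct and follows essentially the same approach as the paper: construct an admissible plan from a discrete minimizer for the upper bound, and for the lower bound decompose an arbitrary $\gamma\in\Pi(\mu_0,\mu_1)\cap\GammaM(\A)$, use identifiability on each marginal to match its atomic components to the $a_0^j$'s and $a_1^k$'s, and regroup the weights into a feasible $w\in\Pi(\Lambda_0,\Lambda_1)$. Your treatment is in fact slightly more explicit than the paper's in two places---the need to collapse repeated atoms before invoking Assumption~\ref{as:identifiability}, and the use of convexity of $\Gamma^*_{a_0^j,a_1^k}$ when regrouping to obtain the minimizer characterization~\eqref{eq:gammaopt}---but the overall strategy is identical.
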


\begin{proof}
    This proof extends the arguments presented in~\cite[Proposition 4]{Delon2020-wk}.
    Let $\mu_0 = \sum_{j=1}^{J} \lambda_0^j a_0^j$, $\mu_1 = \sum_{k=1}^{K} \lambda_1^k a_1^k$ be two mixtures in $\M(\A)$, with $a_0^j$ (respectively $a_1^k$) all distinct. 
     First, let $w^*\in \R^{J\times K}$ be a solution to~\eqref{eq:discreteatoms}, and let for $j = 1,\ldots, J$ and $k = 1,\ldots, K$, $\gamma_{jk}:= \gamma^*_{a_0^j, a_1^k}$ 
    so that $\gamma_{jk}\in \Gamma_{a_0^j, a_1^k} \supset \Gamma_{a_0^j, a_1^k}^*$.  
    Then let us define
    $
    \gamma^* = \sum_{j=1}^{J}  \sum_{k=1}^{K} w^*_{jk} \gamma_{jk}$. There holds $\gamma^* \in \GammaM(\A) \cap \Pi(\mu_0,\mu_1)$. 
    Moreover, 
    \begin{align*}
    {\Mdt}(\mu_0, \mu_1)^p &\le 
         \sum_{j=1}^{J} \sum_{k=1}^{K} w^*_{jk} 
    \int_{\Omega\times\Omega} c(x,y)^p d\gamma_{jk}(x,y)  =  \sum_{j=1}^{J} \sum_{k=1}^{K} w^*_{jk} \delta^p(a_0^j,a_1^k) \\
    & = \Md(\mu_0, \mu_1)^p
        .
    \end{align*}
    Second, let $\gamma \in \GammaM(\A)$. Then, there exists $I\in \N^*$ such that $\gamma = \sum_{i=1}^I \lambda^i \gamma^i$ with $\gamma^i\in \Gamma(\A)$. Using that $P_0 \# \gamma = \mu_0$, we obtain 
    \[
        \sum_{i=1}^I \lambda^i P_0 \# \gamma^i = \sum_{j=1}^{J} \lambda_0^j a_0^j.
    \]
    Using the fact that for all $1\leq i \leq I$, $P_0 \# \gamma^i\in \A$ and using the identifiability assumption~\ref{as:identifiability}, 
    we obtain that for each $i =1,\ldots, I$, 
    there exists $j \in \{ 1, \ldots, J\} $ such that $P_0 \# \gamma^i = \mu_0^j$. 
    Similarly, for each $i =1,\ldots, I$, there exists $k \in \{ 1, \ldots, K\} $ 
    such that $P_1 \# \gamma^i = \mu_1^k$. Hence for all $1\leq i \leq I$, $\gamma^i \in \Pi(\mu_0^j,\mu_1^k)$ for some $j \in \{ 1, \ldots, J\}, k \in \{ 1, \ldots, J\} $. 
    For all $1\leq j \leq J$ and $1\leq k \leq K$, let $I_{jk}:= \left\{ 1\leq i \leq I, \; \gamma^i \in \Pi(\mu_0^j,\mu_1^k)\right\}$. The collection of sets $(I_{jk})_{1\leq j \leq J, 1\leq k \leq K}$ then forms a partition of the set $\{1,\ldots,I\}$ and for all $1\leq j \leq J$ and $1\leq k\leq K$, we denote by $\displaystyle \Lambda_{jk}:= \sum_{i\in I_{jk}} \lambda^i$. For all $1\leq j \leq J$ and $1\leq k \leq K$, we also denote by 
    $$
\overline{\gamma}_{jk}:= \left\{
    \begin{array}{ll}
    \frac{1}{\Lambda_{jk}}\sum_{i\in I_{jk}} \lambda^i \gamma^i & \mbox{ if } \Lambda_{jk} \neq 0,\\
    \gamma^*_{a_0^j, a_1^k} & \mbox{ if } \Lambda_{jk} = 0.\\
    \end{array}
    \right.
    $$
    Then, it necessarily holds that for all $1\leq j \leq J$, $1\leq k \leq K$, $\overline{\gamma}_{jk} \in \Gamma_{a_0^j, a_1^k}$ since $\Gamma_{a_0^j, a_1^k}$ is a convex set and $\gamma^i \in \Gamma_{a_0^j, a_1^k}$ for all $i\in I_{jk}$. Furthermore, we have
    $
    \gamma = \sum_{j=1}^{J}\sum_{k=1}^{K} \Lambda_{jk}\overline{\gamma}_{jk}$. Thus, 
    \begin{align*}
        \int_{\Omega\times\Omega} c(x,y)^p d\gamma(x,y) &=
        \sum_{j=1}^{J}  \sum_{k=1}^{K} w_{jk} \int_{\Omega\times\Omega} c(x,y)^p d\overline{\gamma}_{jk}(x,y) \\
        & \ge \sum_{j=1}^{J}  \sum_{k=1}^{K} w_{jk} \delta^p(a_0^j, a_1^k) \ge \Md^p(\mu_0,\mu_1),
    \end{align*}
    where we have used~\eqref{eq:Wass_distance}. This inequality being valid for any $\gamma \in \GammaM(\A)$, we conclude that ${\Mdt}(\mu_0,\mu_1) \ge \Md(\mu_0,\mu_1)$. 
    Equation~\eqref{eq:gammaopt} is then a straightforward consequence of this proof.
\end{proof}

\begin{remark}
   In the case where $\A$ is the set of atomic gaussian measures $\A^d_g$, 
the result of~\cite[Proposition 4]{Delon2020-wk} corresponds to Proposition~\ref{prop:equivalence_discrete_cont} for a set of atomic transport plans chosen as
   $$
   \Gamma(\A):= \bigcup_{a_0,a_1\in \A_g^d} \left( \overline{\A}_g^{2d} \cap \Pi(a_0,a_1)\right) = \overline{\A}_g^{2d}.
   $$
   Indeed  for all $a_0,a_1\in \A_g^d$, the set $\Gamma_{a_0,a_1}:=\left( \overline{\A}_g^{2d} \cap \Pi(a_0,a_1)\right)$ is convex and such that
   \[
   \{ \gamma^*_{a_0,a_1} \} \subset \Gamma_{a_0,a_1} \subset \Pi(a_0,a_1).
   \]
\end{remark}

\subsubsection{Multi-marginal problems}

The theory naturally extends to multi-mar\-ginal problems. For this, we need to assume the existence of multi-marginal optimal transport plans. We then define the extensions of Definitions~\ref{def:admsettransport_atom} and~\ref{def:admsettransport_mixtures} to the multi-marginal case.

\begin{assumption}[Existence of multi-marginal transport map]\label{as:multimarg}
   The set of atoms $\A$ and the metric $c$ are such that for all $Q\geq 2$, for any $\bm{t}:=(t_q)_{1\leq q \leq Q} \in \mathcal L_Q$ and for any $a_1,\cdots,a_Q\in\A$, there exists at least one solution to the multi-marginal optimal transport problem
\begin{equation}
   \label{eq:mm_atoms}
   \AMdq^{\bm{t}}
   (a_1,\ldots,a_Q) =
   \left( 
   \inf_{\gamma\in \Pi(a_1,\ldots,a_Q)}
   s(x_1,\ldots,x_Q) d\gamma(x_1,\ldots,x_Q) \right)^{1/p},
\end{equation}
with
\[
   s(x_1,\ldots,x_Q) = \frac{1}{2} \sum_{q=1}^Q
   \sum_{q'=1}^Q t_q t_{q'} c(x_q,x_{q'})^p.
   \]
The set of minimizers of (\ref{eq:mm_atoms}) is then denoted by $\Gamma^{*,\bm{t}}_{a_1,\ldots,a_Q}$.
\end{assumption}

\begin{definition}
   We say that $\Gamma_Q(\A) \subset \mathcal P(\Omega^Q)$ is an admissible set of atom multi-marginal transport plans if
       \[
       \Gamma_Q(\A) = \bigcup_{a_1,\cdots,a_Q \in \A} \Gamma_{a_1,\ldots,a_Q}, \]
        where $\Gamma_{a_1,\ldots,a_Q}$ is any convex set such that 
       $$
       \bigcup_{\bm{t} \in \mathcal L_Q} \Gamma^{*,\bm{t}}_{a_1,\ldots,a_Q} \subset \Gamma_{a_1,\ldots,a_Q} \subset \Pi(a_1,a_2,\ldots,a_Q).
       $$
\end{definition}

\begin{definition}
   We define $\GammaQM(\A)$ as the set of finite mixtures of admissible atom multi-marginal optimal plans $\Gamma_Q(\A)$, i.e. the set of probability measures of $\P(\Omega^Q)$ such that there exists $K\in \N^*$, $\bm{\gamma}:=(\gamma^1, \cdots, \gamma^K)\in \Gamma_Q(\A)^K$ and $\bm{\lambda}:=(\lambda^1, \cdots, \lambda^K)\in \mathcal L_K$ such that
\[
   \gamma = \sum_{k=1}^K \lambda^k \gamma^k. 
\]
\end{definition}

Now, let $Q\geq 2$ and for all $1\leq q \leq Q$, let $K_q\in \mathbb{N}^*$, $\boldsymbol{\lambda}_q:=(\lambda_q^{k_q})_{1\leq k_q \leq K_q} \in \mathcal L_{K_q}$ and $\mu_q:= \sum_{k_q=1}^{K_q} \lambda_q^{k_q} a_q^{k_q} \in \M(\A)$. For all $\bm{t}:=(t_q)_{1\leq q \leq Q} \in \mathcal L_Q$, we define the multi-marginal transport problem as
\begin{equation}
   \label{eq:mm}
   \Mdtq^{\bm{t}}(\mu_1,\ldots,\mu_Q) =
   \left( 
   \inf_{\gamma\in \Pi(\mu_1,\ldots,\mu_Q) \cap \Gamma_{Q,\M}(\A)}
   s(x_1,\ldots,x_Q) d\gamma(x_1,\ldots,x_Q) \right)^{1/p},
\end{equation}
with
\[
   s(x_1,\ldots,x_Q) = \frac{1}{2} \sum_{q=1}^Q
   \sum_{q'=1}^Q t_q t_{q'} c(x_i,x_j)^p.
   \]

Using a similar approach as in the previous section, we prove the following result.

\begin{proposition}
Under Assumption~\ref{as:multimarg}, problem~\eqref{eq:mm} is equivalent to the following discrete problem
   \begin{equation}
      \label{eq:mm2}
      \left( \Mdq^{\bm{t}}
      (\mu_1,\ldots,\mu_Q) \right)^p =
      \hspace{-.2cm}
   \inf_{w\in \Pi(\boldsymbol{\lambda}_1,\ldots,\boldsymbol{\lambda}_Q)}
   \sum_{k_1,k_2,\ldots,k_{Q}=1}^{K_1,K_2,\ldots,K_Q}  
   \hspace{-.6cm}
   w_{k_1,k_2,\ldots,k_Q}
   \hspace{-.1cm}
   \left(\AMdq^{\bm{t}}
   (a_1^{k_1},a_2^{k_2}, \ldots, a_Q^{k_Q})\right)^p \hspace{-.1cm},
   \end{equation}
      \begin{align*}
      \text{with} \;\;
          \Pi(\boldsymbol{\lambda}_1, \ldots, & \boldsymbol{\lambda}_Q)  := \Bigg\{  w \in \R_+^{K_1 \times \ldots \times K_Q}, \\
     & \forall 1\le q \le Q, \; \forall 1\le k_q \le K_q,
       \sum_{
     k_1,\ldots,k_{q-1},k_{q+1},\ldots, k_Q = 1}^{  K_1,\ldots,K_{q-1},K_{q+1},\ldots, K_Q}
        w_{k_1,\ldots,k_Q}= \lambda_q^{k_q} \Bigg\}. 
      \end{align*}
      Moreover, denoting by $\mathcal K:= \{1,\ldots, K_1\}\times \{1,\ldots, K_2\}\times \ldots \times \{1,\ldots, K_Q\}$, any solution to~\eqref{eq:mm} can be written as 
      \begin{equation}
         \label{eq:gammastar}
         \gamma^* = \sum_{{\bf k}\in \mathcal K } w^*_{\bf k}  \gamma^*_{\bf k},
      \end{equation}
      where $(w^*_{\bf k})_{{\bf k}\in \mathcal K}$ is a solution to~\eqref{eq:mm2} and $\gamma^*_{\bf k} \in \Gamma^{*,\bm{t}}_{a_1^{k_1}, \ldots, a_Q^{k_Q}}$ for all ${\bf k} = (k_1,\ldots,k_Q)\in \mathcal K$. 
      As a consequence, any barycenter of $(\mu_1,\cdots,\mu_Q)$ with barycentric weights $\bm{t}$ can be written as 
      \begin{equation}
         \label{eq:barycenter}
         {\rm bar}_{\bm{t}}(\mu_1,\ldots, \mu_Q) =
         \sum_{{\bf k}\in \mathcal K} w^*_{\bf k} \; {\rm bar}_{\bm{t}}(a_1^{k_1},\ldots, a_Q^{k_Q}),
      \end{equation}
      where ${\rm bar}_{\bm{t}}(a_1^{k_1},\ldots, a_Q^{k_Q})$ is the barycenter of $(a_1^{k_1},\ldots, a_Q^{k_Q})$ with barycentric weights $\bm{t}$.  Finally, any solution $(w^*_{\bf k})_{{\bf k}\in \mathcal K}$ to~\eqref{eq:mm2} contains at most $K_1+K_2+\ldots + K_Q-Q+1$ nonzeros components.
\end{proposition}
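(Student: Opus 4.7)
The plan is to mimic, term by term, the proof of Proposition~\ref{prop:equivalence_discrete_cont}, replacing pairs of atoms by $Q$-tuples and invoking Assumption~\ref{as:multimarg} together with the identifiability of $\M(\A)$ (Assumption~\ref{as:identifiability}). To obtain $\Mdtq^{\bm{t}}(\mu_1,\ldots,\mu_Q) \le \Mdq^{\bm{t}}(\mu_1,\ldots,\mu_Q)$, I pick a minimizer $w^*$ of \eqref{eq:mm2} and, for each tuple ${\bf k} = (k_1,\ldots,k_Q) \in \mathcal K$, an optimal multi-marginal plan $\gamma^*_{\bf k} \in \Gamma^{*,\bm{t}}_{a_1^{k_1},\ldots,a_Q^{k_Q}}$, whose existence is guaranteed by Assumption~\ref{as:multimarg}. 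The convex combination $\gamma^* := \sum_{{\bf k}\in \mathcal K} w^*_{\bf k}\, \gamma^*_{\bf k}$ then lies in $\GammaQM(\A)$, has marginals $\mu_1,\ldots,\mu_Q$ by the constraints on $w^*$, and evaluating $\int s\,d\gamma^*$ produces exactly the discrete objective.

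For the reverse inequality, I take any $\gamma \in \Pi(\mu_1,\ldots,\mu_Q) \cap \GammaQM(\A)$ and decompose $\gamma = \sum_{i=1}^I \lambda^i \gamma^i$ with each $\gamma^i$ in some $\Gamma_{b_1^i,\ldots,b_Q^i} \subset \Gamma_Q(\A)$. Projecting onto the $q$-th marginal yields $\mu_q = \sum_i \lambda^i P_q\#\gamma^i$; since $P_q\#\gamma^i \in \A$, identifiability forces $P_q\#\gamma^i = a_q^{k_q^i}$ for some $k_q^i \in \{1,\ldots,K_q\}$. Partitioning $\{1,\ldots,I\} = \bigsqcup_{{\bf k}\in\mathcal K} I_{\bf k}$ according to the resulting tuple, setting $w_{\bf k} := \sum_{i\in I_{\bf k}} \lambda^i$ and, when $w_{\bf k}>0$, $\overline{\gamma}_{\bf k} := w_{\bf k}^{-1}\sum_{i\in I_{\bf k}} \lambda^i \gamma^i$ (while choosing for $w_{\bf k} = 0$ any element of $\Gamma^{*,\bm{t}}_{a_1^{k_1},\ldots,a_Q^{k_Q}}$), one checks by convexity of $\Gamma_{a_1^{k_1},\ldots,a_Q^{k_Q}}$ that $\overline{\gamma}_{\bf k} \in \Gamma_{a_1^{k_1},\ldots,a_Q^{k_Q}}$ and that $w \in \Pi(\boldsymbol{\lambda}_1,\ldots,\boldsymbol{\lambda}_Q)$. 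The cost then splits as $\int s\,d\gamma = \sum_{\bf k} w_{\bf k} \int s\,d\overline{\gamma}_{\bf k} \ge \sum_{\bf k} w_{\bf k} \bigl(\AMdq^{\bm{t}}(a_1^{k_1},\ldots,a_Q^{k_Q})\bigr)^p \ge \bigl(\Mdq^{\bm{t}}(\mu_1,\ldots,\mu_Q)\bigr)^p$, proving the equivalence. Tracking the equality cases yields the characterization \eqref{eq:gammastar}: each $\overline{\gamma}_{\bf k}$ with $w^*_{\bf k}>0$ must be an optimal atom plan and $w^*$ must minimize \eqref{eq:mm2}.

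The barycenter identity \eqref{eq:barycenter} is then obtained by pushing \eqref{eq:gammastar} through the barycentric map $B(x_1,\ldots,x_Q) := \sum_q t_q x_q$ and using the multi-marginal/barycenter correspondence recalled in Section~\ref{sec:3}: one gets ${\rm bar}_{\bm{t}}(\mu_1,\ldots,\mu_Q) = B\#\gamma^* = \sum_{\bf k} w^*_{\bf k}\, B\#\gamma^*_{\bf k} = \sum_{\bf k} w^*_{\bf k}\, {\rm bar}_{\bm{t}}(a_1^{k_1},\ldots,a_Q^{k_Q})$. Lastly, the sparsity bound is a classical polytope argument: \eqref{eq:mm2} is a linear program on $\Pi(\boldsymbol{\lambda}_1,\ldots,\boldsymbol{\lambda}_Q)$, whose defining system contains $\sum_q K_q$ marginal equalities with exactly $Q-1$ linear dependences (for each $q$, summing the $K_q$ equalities yields the same total mass $1$), so the rank is at most $K_1+\cdots+K_Q-Q+1$; every vertex of the polytope therefore has support of that size at most, and an optimum of a linear program can always be chosen at a vertex.

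The main obstacle I anticipate is the bookkeeping in the decomposition step, specifically making sure that each averaged plan $\overline{\gamma}_{\bf k}$ genuinely lands in the admissible convex set $\Gamma_{a_1^{k_1},\ldots,a_Q^{k_Q}}$ and handling cleanly the empty groups $I_{\bf k} = \emptyset$; once this is done, the rest is a transparent transcription of the two-marginal argument together with a standard LP vertex argument for the sparsity count.
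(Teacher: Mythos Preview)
Your proposal is correct and follows essentially the same approach as the paper: the paper's proof simply says the equivalence is ``similar to the proof of Proposition~\ref{prop:equivalence_discrete_cont}'', that \eqref{eq:barycenter} is a direct consequence of \eqref{eq:gammastar}, and that the sparsity bound comes from the linear-programming structure with $K_1+\cdots+K_Q-Q+1$ independent affine constraints (citing Anderes et al.\ and Friesecke). You have spelled out exactly these three steps, including the identifiability-based partition and the vertex argument; note only that, as in the paper's own proof, the LP argument yields \emph{existence} of a sparse optimizer rather than sparsity of \emph{every} optimizer, which is a slight mismatch with the wording of the statement itself.
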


\begin{proof}
The proof for the equivalence between the discrete and continuous optimization problem is similar to the proof of Proposition~\ref{prop:equivalence_discrete_cont}. The structure of the barycenter is a direct consequence of equation~\eqref{eq:gammastar}. The maximum number of nonzeros components comes from the form of the discrete problem, which is a linear program with $K_1+K_2+\ldots+K_Q-Q+1$ affine constraints, therefore there exists a solution with at most $n_1+n_2+\ldots+n_Q-Q+1$ nonzero components, see~\cite[Theorem 2]{Anderes2016-kv} or~\cite[Appendix A]{Friesecke2022-od}. 
\end{proof}

In the end, as soon as Assumption~\ref{as:metric_atom} is satisfied for the atoms, we can define a geodesic space on the mixtures of these atoms. Moreover, if the metric on the atoms corresponds to a Wasserstein distance for which there is uniqueness of the transport plans and that the mixtures of atoms are identifiable, then there is an equivalence between the discrete problem~\eqref{eq:discreteatoms} and the continuous one~\eqref{eq:MOT2}, which in particular shows that the result is independent of the representation of the mixtures. 

\section{Mixtures of location-scatter atoms}\label{sec:sec4}

The sufficient conditions presented in Section~\ref{sec:3} to define a geodesic space on mixtures are pretty simple. We only need the set of atoms to be a geodesic space (Assumption~\ref{as:metric_atom}). However, all calculations on mixtures include distance calculations on the set of atoms, as well as multi-marginal calculations for the computation of barycenters. Therefore, the efficiency of the mixture calculations will highly depend on the cost of computing distances and barycenters between atoms. 
In the best case scenario, these should be explicit. 
This is for example the case of gaussian measures, which motivated the two contributions~\cite{Delon2020-wk,Chen2018-vn}. The aim of this section is to highlight how the general framework introduced in the previous section can be used in order to extend these practical considerations to more general sets of atoms, including location-scatter atoms~\cite{Alvarez-Esteban2016-us}.

\begin{remark}
   In the work by Delon and Desolneux~\cite{Delon2020-wk}, it is mentioned that other distributions than gaussians can be used, as long as they satisfy two conditions: the identifiability property (Assumption~\ref{as:identifiability}), and a marginal consistency, i.e. that transport plans are mixtures of 2d-dimensional atoms. In fact, what we have shown in the previous section is that we do not need the marginal consistency, as we use for transport plans mixtures of transport plans between atoms. Interestingly, for gaussians, the set of transport plans between atoms can be chosen as the set of (degenerate) gaussians in dimension $2\d$.
\end{remark}

In this section, we therefore start by stating results on location-scatter measures, which correspond to families of probability measures
generated from affine transformations. In some specific cases described more precisely below, transport maps and Wasserstein barycenters are explicitely computable. We then turn to a few practical examples, including elliptical distributions and affine-generated measures. 
For the sake of simplicity, we focus in this section on the 2-Wasserstein distance with quadratic cost (i.e. when $p=2$ and $c(x,y) = \|x-y\|$) and $\Omega = \mathbb{R}^d$.

\subsection{General location-scatter measures}

Location-scatter measures are generated from affine transformations of a given probability measure. We define the corresponding set of atoms as follows. 

\begin{definition}[Location-scatter atoms]\label{def:scatter}
    We define the set of location-scatter atoms generated from $a\in\P_2(\mathbb{R}^d)$ as 
   \begin{equation}\label{eq:scatter}
      \A := \left\{ 
         T\# a, \quad T : x \in \R^d \mapsto A x + b, \; A\in \mathcal S_d, \; b\in \R^d
         \right\}.
   \end{equation}
\end{definition}

We then formulate a generic result on location-scatter measures, which includes an explicit expression for the Wasserstein distance. This result is a rewriting in our context of~\cite[Theorem 2.3]{Alvarez-Esteban2016-us}, itself based on~\cite[Theorem 2.1]{Cuesta-Albertos1996-xn}.

\begin{theorem}
    \label{thm:location-scatter}
    Let $\A$ be a set of location-scatter atoms generated from $a\in \P_2(\mathbb{R}^d)$ in the sense of Definition~\ref{def:scatter}. Let us assume that the measure $a$  has mean $m_a \in \mathbb{R}^d$ and a covariance matrix $\Sigma_a \in \mathcal S_d$.
    Let 
    $a_0,a_1\in\A$ be such that there exist $T_0, T_1:\R^d \rightarrow \R^d$ with $T_0(x) = A_0 x + b_0$, $T_1(x) = A_1 x + b_1$, $A_0,A_1 \in \mathcal S_d$ and $b_0,b_1\in \R^d$
    such that 
   $
      a_0 = T_0\#a, 
   $
    $
      a_1 = T_1\#a.
   $
   Then, $a_0$ and $a_1$ have respectively means $m_i$ and covariance $\Sigma_i$ for $i=0,1$ defined by
      \begin{align*}
          m_i  = A_i m_a + b_i, \qquad
          \Sigma_i  =
          A_i \Sigma_a A_i^T.
      \end{align*}
   Moreover the Wasserstein distance squared between $a_0$ and $a_1$ satisfies
   \begin{equation}
      \label{eq:W2_location_scatter}
          W_2^2(a_0,a_1) \ge \| m_0 - m_1 \|^2 
         + {\rm Tr}\left( 
         \Sigma_0 + \Sigma_1 - 2 (\Sigma_0^{1/2} \Sigma_1 \Sigma_0^{1/2})^{1/2}
         \right),
   \end{equation}
      with equality if and only if the transport map
      \begin{equation}
         \label{eq:transportmaptheorem}
         Tx = A x + (m_0 - m_1), \quad \mbox{with } \; A = \Sigma_0^{-1/2} \left( 
            \Sigma_0^{1/2} \Sigma_1 \Sigma_0^{1/2}
            \right)^{1/2} \Sigma_0^{-1/2},
      \end{equation}
      is such that $T\# a_0 = a_1$.
 \end{theorem}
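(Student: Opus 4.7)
My strategy is to treat the three conclusions of the theorem independently: the moment formulas, the lower bound on $W_2^2$, and the characterization of equality.

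The formulas for $m_i$ and $\Sigma_i$ follow by direct push-forward: since $a_i = T_i\#a$, one has $m_i = \int T_i(x)\,da(x) = A_i m_a + b_i$, and a change of variables after centering gives $\Sigma_i = A_i\Sigma_a A_i^T$. The inequality~\eqref{eq:W2_location_scatter} is then the classical Gelbrich (or Bures--Wasserstein) lower bound: its right-hand side equals $W_2^2(g_{m_0,\Sigma_0}, g_{m_1,\Sigma_1})$ as recalled in Section~\ref{sec:3}, and it is well known that this expression lower-bounds $W_2^2(\mu_0,\mu_1)$ for \emph{any} pair of measures in $\P_2(\R^d)$ sharing these first two moments. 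Applied to $(a_0,a_1)$ this yields the desired bound, and does not yet require the affine structure.

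For the equality case, I would introduce the affine map $T$ of~\eqref{eq:transportmaptheorem}, observe that $A\in\mathcal S_d$ so that $T$ is the gradient of a convex quadratic, and verify by direct matrix algebra that $A\Sigma_0 A = \Sigma_1$. Assuming $T\#a_0 = a_1$, Brenier's theorem identifies $T$ as the optimal transport map between $a_0$ and $a_1$, and the transport cost reduces to $\|m_1-m_0\|^2 + \mathrm{Tr}\bigl((A-I)^2\Sigma_0\bigr)$. Expanding this trace and invoking $A\Sigma_0 A=\Sigma_1$ shows it equals the right-hand side of~\eqref{eq:W2_location_scatter}, so equality is attained. Conversely, if equality holds in~\eqref{eq:W2_location_scatter}, the optimal coupling must saturate the Gelbrich bound and, by uniqueness of the Brenier map (whenever $a_0$ is absolutely continuous), must coincide with the graph of this same $T$, forcing $T\#a_0 = a_1$.

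The delicate point, and the reason the location-scatter hypothesis is genuinely used, is that $T$ is defined purely from the first two moments of $a_0$ and $a_1$, whereas $T\#a_0=a_1$ is a statement about the full measures. Within the family generated by $a$, a symmetric-affine push-forward is characterized by its mean and covariance, and this rigidity is exactly what is needed to pass from the moment-level computation to the measure-level conclusion. Some care is still required if $A_0$, $A_1$, or $\Sigma_a$ happen to be degenerate; one may either restrict to the non-degenerate case or work with pseudoinverses, as is done in~\cite{Alvarez-Esteban2016-us, Cuesta-Albertos1996-xn}.
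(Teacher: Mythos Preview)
Your proposal is sound in structure and in fact goes further than the paper's own proof, which computes the moments exactly as you do (via the push-forward and a change of variables, though written out explicitly with the density of $a_0$) and then simply cites \cite[Theorem~2.3]{Alvarez-Esteban2016-us} for both the inequality~\eqref{eq:W2_location_scatter} and the equality characterization~\eqref{eq:transportmaptheorem}. So where the paper defers to the literature, you sketch the Gelbrich argument directly; your forward direction of the equality case (if $T\#a_0=a_1$ then $T$ is Brenier-optimal and the cost computation gives equality) is correct.

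There is, however, a genuine gap in your converse. You write that if equality holds, then ``by uniqueness of the Brenier map\ldots [the optimal coupling] must coincide with the graph of this same $T$, forcing $T\#a_0=a_1$''. But uniqueness of the Brenier map only guarantees a unique optimal map $T^*$; it does not identify $T^*$ with $T$, since you have not yet shown that $T$ pushes $a_0$ onto $a_1$ --- that is precisely the conclusion you are after, so the argument is circular. What is actually required is the equality analysis in the trace inequality $\mathrm{Tr}(K)\le\mathrm{Tr}\bigl((\Sigma_0^{1/2}\Sigma_1\Sigma_0^{1/2})^{1/2}\bigr)$ for the cross-covariance $K$ of an arbitrary coupling: saturation forces $K$ to a specific value, which in turn forces the optimal coupling to be supported on the graph of the affine map $T$. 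This is the substantive content of \cite{Cuesta-Albertos1996-xn}, and the paper effectively cites it rather than reproving it. A minor related point: your last paragraph suggests the location-scatter hypothesis is what makes the equality characterization work, but the iff statement in~\eqref{eq:W2_location_scatter}--\eqref{eq:transportmaptheorem} is valid for \emph{any} $a_0,a_1\in\P_2(\R^d)$ with nondegenerate covariances; the location-scatter structure is only used later (e.g.\ in the example following the theorem) to check that $T\#a_0=a_1$ actually holds.
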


\begin{proof}
Let us first show that $a_0$ (respectively $a_1$) has mean $m_0$ and covariance $\Sigma_0$ (resp. $m_1$ and $\Sigma_1$).
    Since $a_0 = T_0 \# a,$ the density of $a_0$ satisfies
    \[
        a_0(x) = | \det A_0 |^{-1} a(A_0^{-1} (x-b_0) ).
    \]
    The mean can be computed as
    \[
    m_0 = \int_{\mathbb{R}^d} x a_0(x) dx = 
    \int_{\mathbb{R}^d} x | \det A_0 |^{-1} a(A_0^{-1} (x-b_0) ) dx.
    \]
    Using the change of variable $y= A_0^{-1} (x-b_0)$, we obtain
    \[
    m_0 =
    \int_{\mathbb{R}^d} (A_0 y + b_0) a(y) dy = A_0 m_a + b_0.
    \]
    For the covariance, we compute
    \[
    \Sigma_0 = \int_{\mathbb{R}^d}
    \hspace{-.1cm}
    (x-m_0) (x-m_0)^T a_0(x) dx = \int_{\mathbb{R}^d}
     \hspace{-.1cm}
    (x-m_0) (x-m_0)^T | \det A_0 |^{-1} a(A_0^{-1} (x-b_0) ) dx.
    \]
    Using the same change of variables, we obtain
    \[
    \Sigma_0 = \int_{\mathbb{R}^d}
    A_0(y- m_a) (y-m_a)^T A_0^T
    a(y) dy = A_0 \Sigma_a A_0^T.
    \]
The proof for computing the mean and covariance of $a_1$ is similar.
Equations~\eqref{eq:W2_location_scatter} and~\eqref{eq:transportmaptheorem} directly follow  from~\cite[Theorem 2.3]{Alvarez-Esteban2016-us}.
\end{proof}

\begin{example}
   A nice example of location-scatter atoms is the case of a generative atom $a$ that is an absolutely continuous radially symmetric with finite second-order moments probability measure on $\R^d$, i.e. with an associated density $\mu$ satisfying $\mu(x) = \xi(|x|)$ for some function $\xi\in L^1(\mathbb{R}^d)$~\cite{FrieseckeOT}.
   This encompasses the case of elliptical distributions defined by
   \[
      \forall x\in \R^d, \quad f_{m,\Sigma}(x) = \frac{1}{Z_{h,\Sigma,m}} h((x-m)^T \Sigma^{-1} (x-m)),
   \]
   for $m\in\R^d$, $\Sigma\in\mathcal S_d$,
   where the generative atom $a$ can be taken as $f_{0,I_d}$, denoting by $I_d$ the identity matrix of size $d$.
\end{example}

We can express a similar theorem for the multi-marginal optimal transport problem.

\begin{theorem}
   Let $\A$ be a set of location-scatter atoms generated from $a\in \P_2({\mathbb{R}^d})$ in the sense of Definition~\ref{def:scatter}. Let us assume that the measure $a$ has a mean $m_a\in \mathbb{R}^d$ and a covariance matrix $\Sigma_a \in \mathcal S_d$. Let $a_1,a_2,\ldots, a_Q\in\A$ be such that there exist $T_1, \ldots, T_Q:\R^d \rightarrow \R^d$ with $T_q(x) = A_q x + m_q$, $A_q \in  \mathcal S_d$ and $m_q\in \R^d$
    such that  for $q=1, \ldots, Q$, 
   $
      a_q = T_q\#a, 
   $
 and $a_q$ has mean $m_q$ and covariance matrix $\Sigma_q$.
    
The multi-marginal optimal transport problem  with parameters $\bm{t} = (t_1,\ldots,t_Q)$ has for minimal cost
   \[
      \AMdqtwo^{\bm t}
   (a_1,\ldots,a_Q) = \sum_{q=1}^Q t_q W_2^2(a_q, \bar a_{\bm t}), 
   \]
   where $\bar a_{\bm t} = T_{\bm t} \#a$ is the Wasserstein barycenter with
   $
        T_{\bm t} = S x + m, 
   $
   where $S$ is the only positive definite matrix satisfying 
   \[
        S = \sum_{q=1}^Q t_q (S^{1/2} \Sigma_q S^{1/2})^{1/2}, \quad 
        \text{and} \quad 
        m = \sum_{q=1}^Q t_q m_q.
   \]
\end{theorem}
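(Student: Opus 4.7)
The plan is to reduce this multi-marginal statement to the two-marginal case by exploiting the equivalence between the multi-marginal optimal transport cost and the Fréchet functional of the Wasserstein barycenter recalled at the end of Section~\ref{sec:3}. Concretely, for $p=2$ and the squared Euclidean cost, the infimum in the multi-marginal problem coincides with $\inf_\rho \sum_{q=1}^Q t_q W_2^2(\rho, a_q)$, and this minimum is attained at $\rho = \bar a_{\bm t}$, the Wasserstein barycenter of $(a_1,\ldots,a_Q)$ with weights $\bm t$. Thus the theorem reduces to identifying $\bar a_{\bm t}$ explicitly and verifying the claimed fixed-point characterization.

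First I would show that $\bar a_{\bm t}$ itself belongs to the location-scatter family generated by $a$. The cleanest route is to invoke the multi-marginal extension of \cite[Theorem 2.3]{Alvarez-Esteban2016-us}, whose two-marginal version is Theorem~\ref{thm:location-scatter}. With this in hand one may restrict the search for the barycenter to candidates $\rho = T\#a$ with $T(x) = Sx + m$, $S \in \mathcal S_d$, $m \in \R^d$; after a preliminary translation reducing to $m_a = 0$ (absorbed into the shifts $m_q$), Theorem~\ref{thm:location-scatter} yields the closed-form expression
\[
W_2^2(\rho, a_q) = \|m - m_q\|^2 + \mathrm{Tr}\bigl( S\Sigma_a S + \Sigma_q - 2 ((S\Sigma_a S)^{1/2} \Sigma_q (S\Sigma_a S)^{1/2})^{1/2} \bigr).
\]

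Next I would minimize the resulting Fréchet functional in $(S,m)$. The mean part decouples into a strictly convex quadratic in $m$, uniquely minimized by $m = \sum_q t_q m_q$. For the covariance part, differentiating with respect to $S$ using the classical gradient formula for the Bures term $A \mapsto \mathrm{Tr}((A^{1/2} \Sigma_q A^{1/2})^{1/2})$ on $\mathcal S_d$, and exploiting symmetry of $S$, yields the fixed-point equation for $S$ stated in the theorem. Existence and uniqueness of the positive definite solution follow from the monotone fixed-point argument of Agueh--Carlier~\cite{Agueh2011-uz}. Substituting the optimal pair $(S,m)$ back into the Fréchet functional and invoking the MMOT/barycenter equivalence gives the announced value of the minimal cost.

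The main obstacle I expect is the first step: justifying rigorously that the Wasserstein barycenter actually lies in the location-scatter family, rather than merely identifying the best element within it. Quoting~\cite{Alvarez-Esteban2016-us} is the most economical route; otherwise one argues that the generalized geodesics between elements of the family (built from affine push-forwards, with linear part symmetric positive definite) stay in the family, and combines strict convexity of the Fréchet functional along such geodesics with uniqueness of the barycenter.
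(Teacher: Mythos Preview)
Your proposal is correct and, in fact, considerably more detailed than the paper's own proof, which consists of a single sentence: ``This theorem is a reformulation of~\cite[Corollary 4.5]{Alvarez-Esteban2016-us}.'' You already identify quoting~\cite{Alvarez-Esteban2016-us} as the most economical route, and that is precisely---and exclusively---what the paper does; your additional sketch of the Fr\'echet-functional minimization and the fixed-point argument essentially unpacks the content of that corollary rather than offering a genuinely different approach.
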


\begin{proof}
   This theorem is a reformulation of~\cite[Corollary 4.5]{Alvarez-Esteban2016-us}.
\end{proof}

With this result, we immediately obtain that the set of location-scatter atoms is a geodesic space, using the geodesics defined by the barycenters, which  guarantees the applicability of the results of Section~\ref{sec:3} for the location-scatter atoms.

\subsection{Particular case of elliptical distributions}

Among the distributions satisfying Assumption~\ref{as:metric_atom},  we insist here on the case of elliptical distributions, which are widely used in practice.
The first natural example is the case of gaussian distributions, for which Wasserstein distance and barycenters can be explicitly computed.
This case was thoroughly presented in~\cite{Delon2020-wk,Chen2018-vn}, therefore we do not expand further on this example. 
However from Theorem~\ref{thm:location-scatter}, any location-scatter distribution can be considered provided that one knows its mean and covariance matrix.

We therefore make the explicit calculations of the means and covariance matrices of elliptical distributions which write for a given $h:\R^+\rightarrow\R^+$
\[
 g_{m,\Sigma} (x) = \frac{1}{Z_{m,\Sigma,h}} h((x-m)^T\Sigma^{-1} (x-m)),
\]
with $m\in\R^d, \Sigma\in\R^{d\times d},$  $Z_{m,\Sigma,h}$ being a normalization factor.

\begin{lemma}
    For a given $h:\R^+\rightarrow\R^+$, 
then the distribution defined on $\R^d$ via
\[
    \forall x\in\R^d, \quad g_{m,\Sigma} (x) = \frac{1}{Z_{m,\Sigma,h}} h((x-m)^T\Sigma^{-1} (x-m)), 
\]
with
\begin{equation}
    \label{eq:zm}
    Z_{m,\Sigma,h} = \frac{2 \pi^{d/2}}{\Gamma(d/2)} | \det\Sigma|^{1/2} \int_{\R^+} r^{d-1} h(r^2) dr,
\end{equation}
has mean $m$. Moreover if
\begin{equation}
    \label{eq:h-condition}
   \frac{\int_{\R^+} r^{d+1} h(r^2) dx}{\int_{\R^+} r^{d-1} h(r^2) dr}  = d, 
\end{equation}
then $g_{m,\Sigma}$ has covariance $\Sigma$.
\end{lemma}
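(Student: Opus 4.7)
The plan is to reduce everything to an isotropic integral on $\mathbb{R}^d$ via the change of variables $y=\Sigma^{-1/2}(x-m)$ (whose Jacobian has absolute value $|\det\Sigma|^{-1/2}$), and then evaluate the resulting integrals in spherical coordinates $y=r\omega$, $r>0$, $\omega\in S^{d-1}$, using the surface area $|S^{d-1}| = 2\pi^{d/2}/\Gamma(d/2)$.

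First, I would verify the normalization. The change of variables gives
\[
\int_{\mathbb{R}^d} h\bigl((x-m)^T\Sigma^{-1}(x-m)\bigr)\,dx \;=\; |\det\Sigma|^{1/2}\int_{\mathbb{R}^d} h(\|y\|^2)\,dy,
\]
and passing to spherical coordinates yields exactly the formula~\eqref{eq:zm} for $Z_{m,\Sigma,h}$. For the mean, the same substitution gives
\[
\int_{\mathbb{R}^d} x\, g_{m,\Sigma}(x)\,dx \;=\; m + \frac{|\det\Sigma|^{1/2}}{Z_{m,\Sigma,h}}\,\Sigma^{1/2}\int_{\mathbb{R}^d} y\, h(\|y\|^2)\,dy,
\]
and the last integral vanishes by odd symmetry under $y\mapsto -y$, yielding mean equal to $m$.

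For the covariance, the same change of variables produces
\[
\int_{\mathbb{R}^d}(x-m)(x-m)^T g_{m,\Sigma}(x)\,dx \;=\; \frac{|\det\Sigma|^{1/2}}{Z_{m,\Sigma,h}}\,\Sigma^{1/2}\,M\,\Sigma^{1/2},\quad M:=\int_{\mathbb{R}^d} y y^T h(\|y\|^2)\,dy.
\]
The key structural observation is that $M$ is invariant under the conjugation $M \mapsto O M O^T$ for every orthogonal $O$, since $\|Oy\|=\|y\|$ and $|\det O|=1$; hence $M = cI_d$ for some scalar $c\geq 0$. Taking the trace and switching to spherical coordinates identifies $c = \frac{1}{d}\cdot\frac{2\pi^{d/2}}{\Gamma(d/2)}\int_0^{+\infty} r^{d+1} h(r^2)\,dr$. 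Substituting $c$ back and using~\eqref{eq:zm} to simplify, the covariance reduces to
\[
\frac{1}{d}\cdot\frac{\int_0^{+\infty} r^{d+1} h(r^2)\,dr}{\int_0^{+\infty} r^{d-1} h(r^2)\,dr}\,\Sigma,
\]
which equals $\Sigma$ precisely under the condition~\eqref{eq:h-condition}.

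There is no serious obstacle here; the argument is essentially bookkeeping once the radial change of variables is in place. The only point that deserves a careful sentence is the identification $M = cI_d$, which follows from the rotational invariance of $y\mapsto h(\|y\|^2)$. All computations implicitly require that the radial integrals $\int_0^\infty r^{d-1} h(r^2)\,dr$ and $\int_0^\infty r^{d+1} h(r^2)\,dr$ are finite, which I would state as a standing integrability hypothesis on $h$ (needed in any case for $g_{m,\Sigma}$ to be a well-defined probability density with finite second moments).
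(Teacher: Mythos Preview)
Your proposal is correct. The normalization and mean computations match the paper's essentially verbatim (change of variables $y=\Sigma^{-1/2}(x-m)$ followed by spherical coordinates and odd symmetry).

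For the covariance, however, you take a genuinely different route. The paper computes $M=\int_{\mathbb{R}^d} y y^T g_{0,I}(y)\,dy$ by writing out the full spherical parametrization $y=r(\cos\varphi_1,\sin\varphi_1\cos\varphi_2,\ldots)$, arguing that each off-diagonal entry vanishes because it contains a factor $\int_0^\pi \cos\varphi\,\sin^k\varphi\,d\varphi=0$, and then computing the common diagonal value via Wallis-type identities and the beta function, obtaining $\beta_d=\pi^{d/2}/\Gamma(d/2+1)$ after a telescoping product of $\Gamma$-ratios. Your argument replaces all of this with the single observation that $OMO^T=M$ for every orthogonal $O$, forcing $M=cI_d$, and then reads off $c$ from the trace. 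Both yield the same scalar prefactor $\frac{1}{d}\int r^{d+1}h(r^2)\,dr\big/\int r^{d-1}h(r^2)\,dr$. Your approach is shorter and more conceptual, and it avoids the special-function bookkeeping; the paper's explicit computation, on the other hand, produces the angular integrals in closed form, which could be useful if one wanted individual entries rather than just the conclusion $M\propto I_d$. Your remark on the implicit finiteness hypotheses for the radial integrals is also a welcome clarification that the paper leaves tacit.
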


\begin{proof}
    First, let us check that $g_{m,\Sigma}$ indeed defines a probability distribution. Using a first change of variable $y = \Sigma^{-1/2}(x-m)$, a second change of variables with spherical coordinates, and using that the surface of the $d-$sphere of radius $r$ is $\frac{2 \pi^{d/2}}{\Gamma(d/2)} r^{d-1}$ we obtain
    \begin{align*}
        \int_{\R^d} h((x-m)^T\Sigma^{-1} (x-m)) dx & = 
         \frac{2 \pi^{d/2}}{\Gamma(d/2)} | \det\Sigma|^{1/2} \int_{\R^+} r^{d-1} h(r^2) dr,
    \end{align*}
    from which we easily deduce using~\eqref{eq:zm} that $g_{m,\Sigma}$ is normalized to one.
    Second, the mean of $g_{m,\Sigma}$ satisfies
    \begin{align*}
        \int_{\R^d} x g_{m,\Sigma}(x) dx &= \int_{\R^d} (x+m) g_{0,\Sigma}(x) dx = m,
    \end{align*}
    where we have used a change of variable, and that $g_{0,\Sigma}(-x) = g_{0,\Sigma}(x)$ for all $x\in \R^d$.
    Third, the covariance matrix of $g_{m,\Sigma}$ can be computed as
     \begin{align*}
        \int_{\R^d} (x-m) (x-m)^T g_{m,\Sigma}(x) dx &=
        \int_{\R^d} x \Sigma x^T  g_{0,I}(x) dx
     =    \Sigma^{1/2} \hspace{-.1cm} \int_{\R^d} y  y^T  g_{0,I}(y) dy [\Sigma^{1/2}]^T.  
    \end{align*}
    Using a change of variable with spherical coordinates, we write 
    \[
    y = r \left( \cos(\varphi_1), \sin(\varphi_1) \cos(\varphi_2), \ldots, \sin(\varphi_1) \ldots \sin(\varphi_{d-1}) \right)
    \]
    for $\varphi_1,\ldots, \varphi_{d-2} \in (0,\pi)$ and $\varphi_{d-1} \in (0,2\pi)$, and the volume element is 
    \[
    r^{d-1} \sin^{d-2}(\varphi_1)\ldots \sin(\varphi_{d-2}) dr d\varphi_1 \ldots d\varphi_{d-1}.
    \]
    It is then easy to see that the off-diagonal terms of $\int_{\R^d} y  y^T  g_{0,I}(y) dy$ are zero, as they all contain at least one term writing $\int_0^\pi \cos(\varphi) \sin^k(\varphi) d\varphi$ for some integer $k$, which is zero. As for the diagonal terms, using the Wallis integral formula for the sine terms
    \[
        \int_{0}^\pi \sin^k(\varphi) d\varphi = \sqrt{\pi} \frac{\Gamma\left(\frac{k}{2}+\frac12 \right)}{\Gamma\left(\frac{k}{2}+1 \right)}, 
    \]
    and the beta function for the term containing a cosine function
    \[
        \int_{0}^\pi \cos^2(\varphi)\sin^k(\varphi) d\varphi = \frac{\Gamma\left(\frac{k}{2} + \frac{1}{2} \right) \Gamma\left(\frac32 \right)}{\Gamma\left(\frac{k}{2}+2 \right)} = \frac{\sqrt{\pi}}{2} \frac{\Gamma\left(\frac{k}{2} + \frac{1}{2} \right)}{\Gamma\left(\frac{k}{2}+2 \right)},
    \]
    we obtain that the angular part of the diagonal terms of $\int_{\R^d} y  y^T  g_{0,I}(y) dy$ are all equal to
    \begin{align*}
        \label{eq:beta_d}
        \beta_d &=         
        \int \sin^d(\varphi_1) \sin^{d-1}(\varphi_2) \ldots \sin^{d-k+2}(\varphi_{k-1}) \; \cos^2(\varphi_k) \sin^{d-k-1}(\varphi_k) \;  \nonumber  \\
        & 
        \quad \quad \sin^{d-k-2}(\varphi_{k+1}) \ldots \sin(\varphi_{d-2}) d\varphi_1 \ldots d\varphi_{d-1} 
         \nonumber
        \\ &= 2\pi \prod_{i=d-k+2}^{d} \left[ \sqrt{\pi} \frac{\Gamma\left(\frac{i}{2}+\frac12 \right)}{\Gamma\left(\frac{i}{2}+1 \right)} \right]
        \frac{\sqrt{\pi}}{2} \frac{\Gamma\left(\frac{d-k-1}{2} + \frac{1}{2} \right)}{\Gamma\left(\frac{d-k-1}{2}+2 \right)}
        \prod_{i=1}^{d-k-2} \left[ \sqrt{\pi} \frac{\Gamma\left(\frac{i}{2}+\frac12 \right)}{\Gamma\left(\frac{i}{2}+1 \right)} \right] \nonumber
        \\
        &= \frac{\pi^{d/2}}{\Gamma(d/2+1)}.
    \end{align*}
    We therefore obtain
    \begin{align*}
         \int_{\R^d} (x-m) (x-m)^T g_{m,\Sigma}(x) dx  & = \frac{\beta_d}{Z_{0,I,h}} \int_{\R^+} r^{d+1} h(r^2) dr \; \Sigma \\
         &= \frac{\Gamma(d/2)}{2 \Gamma(d/2+1)} \frac{\int_{\R^+} r^{d+1} h(r^2) dr}{\int_{\R^+} r^{d-1} h(r^2) dr} \; \Sigma \\
         & = \frac{1}{d} \frac{\int_{\R^+} r^{d+1} h(r^2) dr}{\int_{\R^+} r^{d-1} h(r^2) dr} \Sigma.
    \end{align*}
    Thus $g_{m,\Sigma}$ has a covariance $\Sigma$ if \eqref{eq:h-condition} is satisfied.
\end{proof}

The result of this lemma ensures that Theorem~\ref{thm:location-scatter} can be applied for such distributions. 
Note that gaussian distributions for which $h(x) = \exp(-x/2)$ indeed satisfy  equation~\eqref{eq:h-condition}.

\subsection{Numerical tests}
\label{seq:numerics}

In this section, we provide a few practical examples for three types of mixtures. Two are based on elliptical distributions, namely based on Slater functions and on the Wigner semicircle distribution. The third one is based on the gamma distribution, and illustrate that the framework developed in this article is not limited to elliptical distributions.

\subsubsection{Numerical setting}

The numerical tests presented below have been implemented with the Julia language~\cite{bezanson2017julia} and the Wasserstein barycenters for the $W_2$ metric have been computed with the Python optimal transport library POT~\cite{flamary2021pot}. 
All Wasserstein barycenters for the $W_2$ metric are computed using the log-sinkhorn algorithm to avoid numerical errors, with a regularization parameter of $10^{-4}$ and a maximum number of iterations of $10 000$.
The one-dimensional cases are computed on a grid containing 200 points, and the two-dimensional cases on a grid with 50 points per dimension.
The code used for generating all the figures can be downloaded at \texttt{\url{https://github.com/dussong/W2_mixtures.jl/}}. 

In terms of computational cost, it is difficult to provide proper timings to compare the cost of computing the $W_2$ barycenters and the mixture baycenters based on the mixture metric denoted by $W_{2,\mathcal M}$ 
as it highly depends on the choice of the grid, the smoothing parameter, and number of iterations for the Sinkhorn algorithm for the $W_2$ calculations. Note however that the cost of computing the mixture Wasserstein barycenters does not depend on any spatial grid, as the size of the problem is only related to the number of atoms in the considered mixtures. Therefore, with the provided parameters, the order of magnitude for computing one mixture barycenter is less than 1ms while the computation of the $W_2$ barycenters is of the order of 10s for the one-dimensional cases and about 8 minutes for the two-dimensional cases, hence several order of magnitude more expensive than the mixture-based calculations.

\subsubsection{Slater-type elliptical distributions}

We consider here Slater-type elliptical distributions, that is we consider $h(x) = \exp(-\alpha_d |x|^{1/2})$ where $\alpha_d$ is adapted with respect to $d$ in order to satisfy equation~\eqref{eq:h-condition}. Namely,~\eqref{eq:h-condition} is satisfied if
\[
    \alpha_d = \sqrt{d+1}, \quad \text{and} \quad Z_{\Sigma} = \frac{2\pi^{d/2}}{(d+1)^{d/2}} \frac{\Gamma(d)}{\Gamma(d/2)} 
    |\det \Sigma|^{1/2}.
\]

The identifiability of the mixtures of such atoms can be proved with similar arguments as in~\cite[Proposition 2]{Delon2020-wk}. 
Below we provide one-dimensional and two-dimensional examples for Slater-type elliptical distributions.
 
\paragraph{One-dimensional case} Using the previous argument, we consider the function $h(x) = \exp(-\sqrt{2}|x|^{1/2})$. 
In Figure~\ref{fig:slater1d}, we present the Wasserstein barycenters with respect to the $W_2$ and $W_{2,\mathcal M}$ metrics between two mixtures of two atoms each. We observe that the two barycenters look quite different. In particular the $W_2$ barycenter computed with a Sinkhorn algorithm is smoother than the $W_{2,\mathcal M}$ barycenter, which is a mixture of three Slater functions, hence inherits three cusps.

\begin{figure}
    \centering
    \subfigure[$t=0$]{\includegraphics[width=0.19\textwidth]{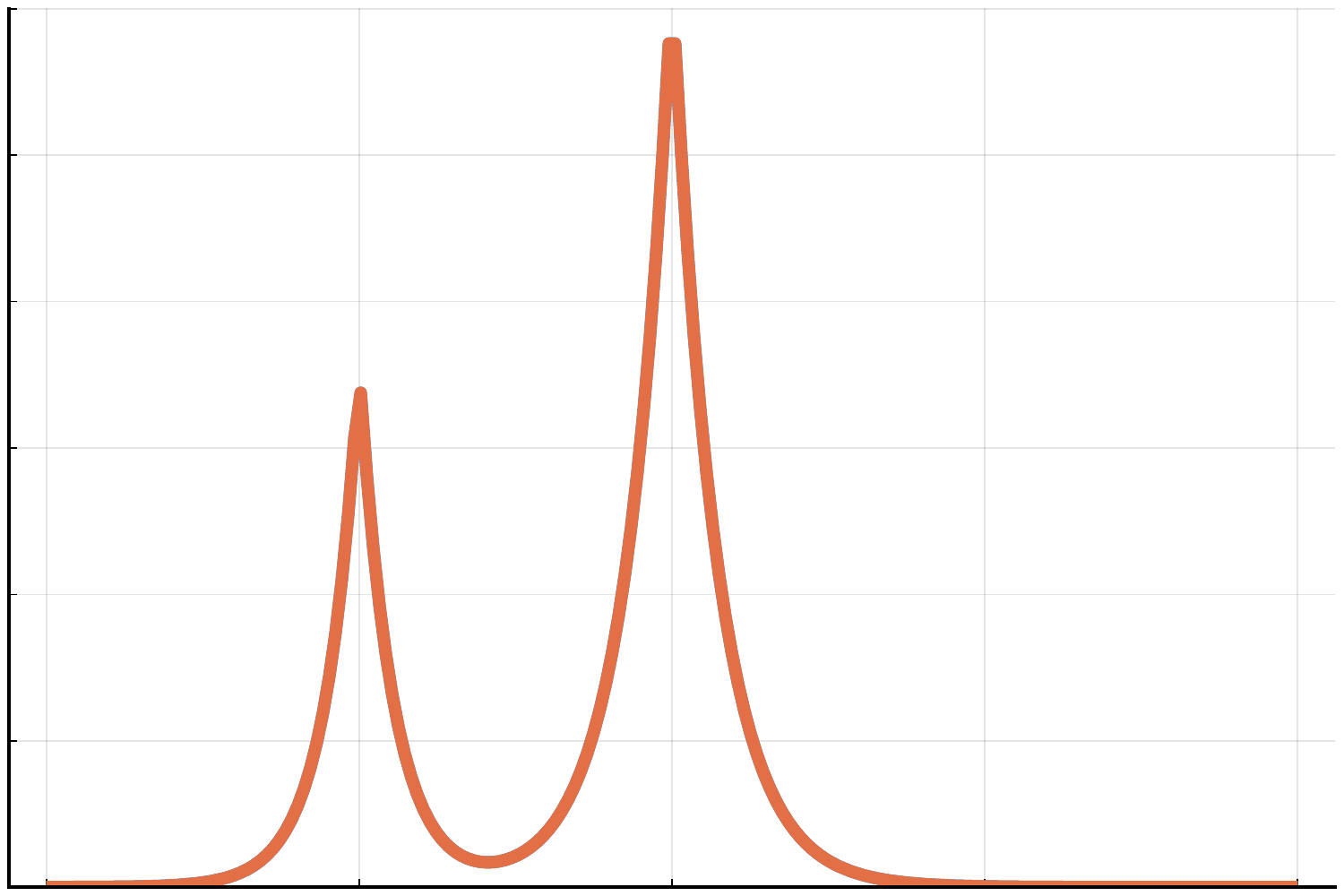}} 
    \subfigure[$t=0.25$]{\includegraphics[width=0.19\textwidth]{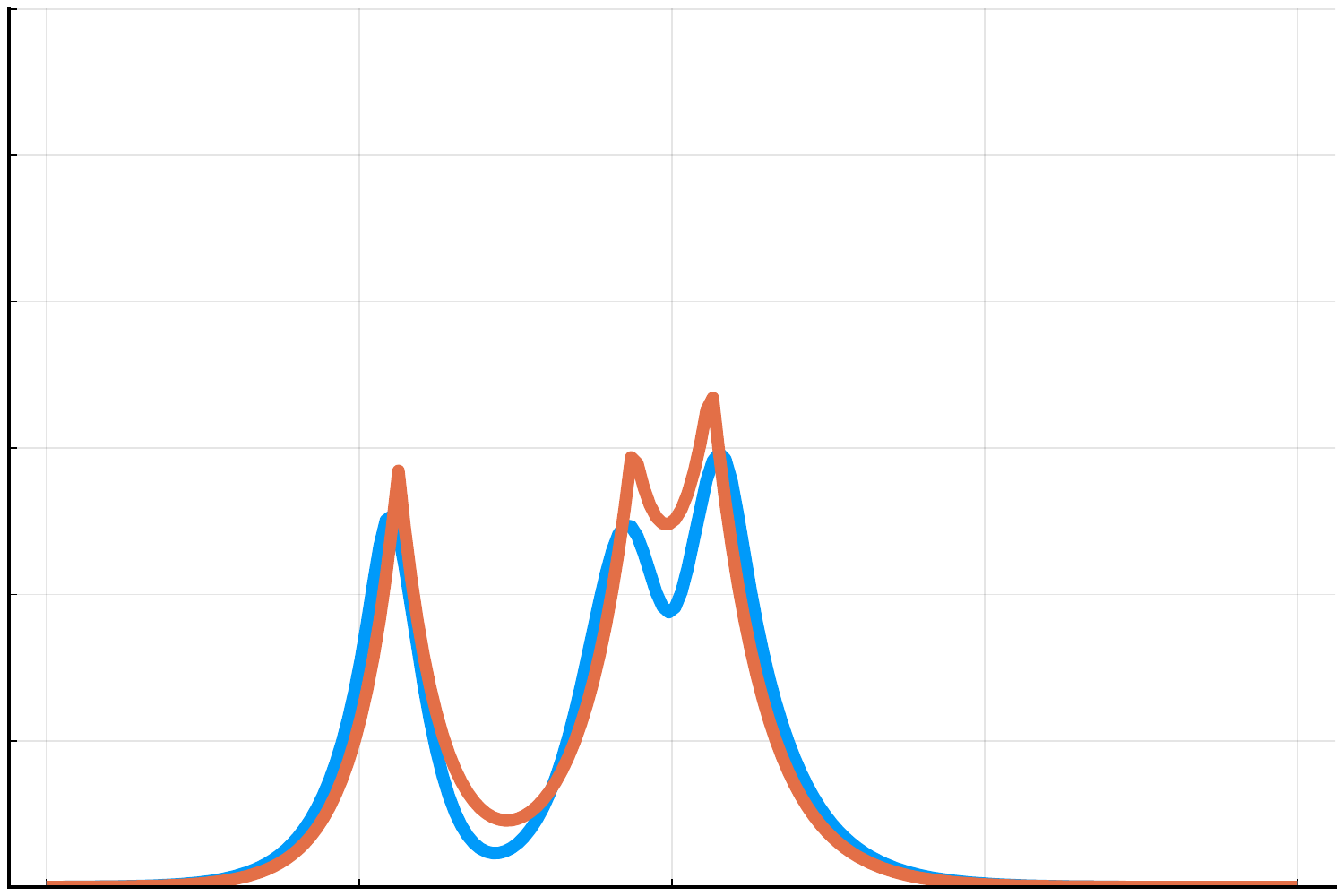}} 
    \subfigure[$t=0.5$]{\includegraphics[width=0.19\textwidth]{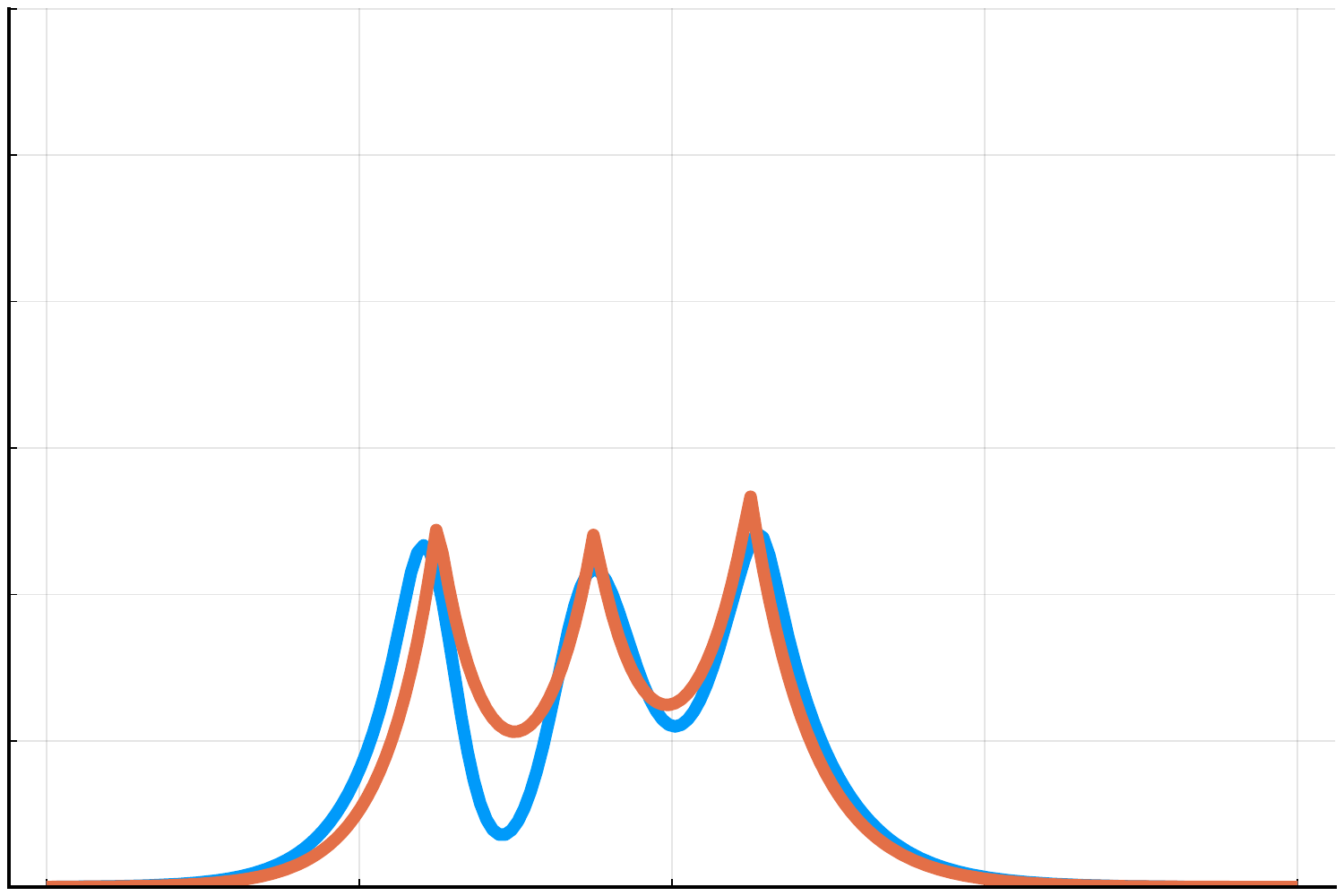}}
    \subfigure[$t=0.75$]{\includegraphics[width=0.19\textwidth]{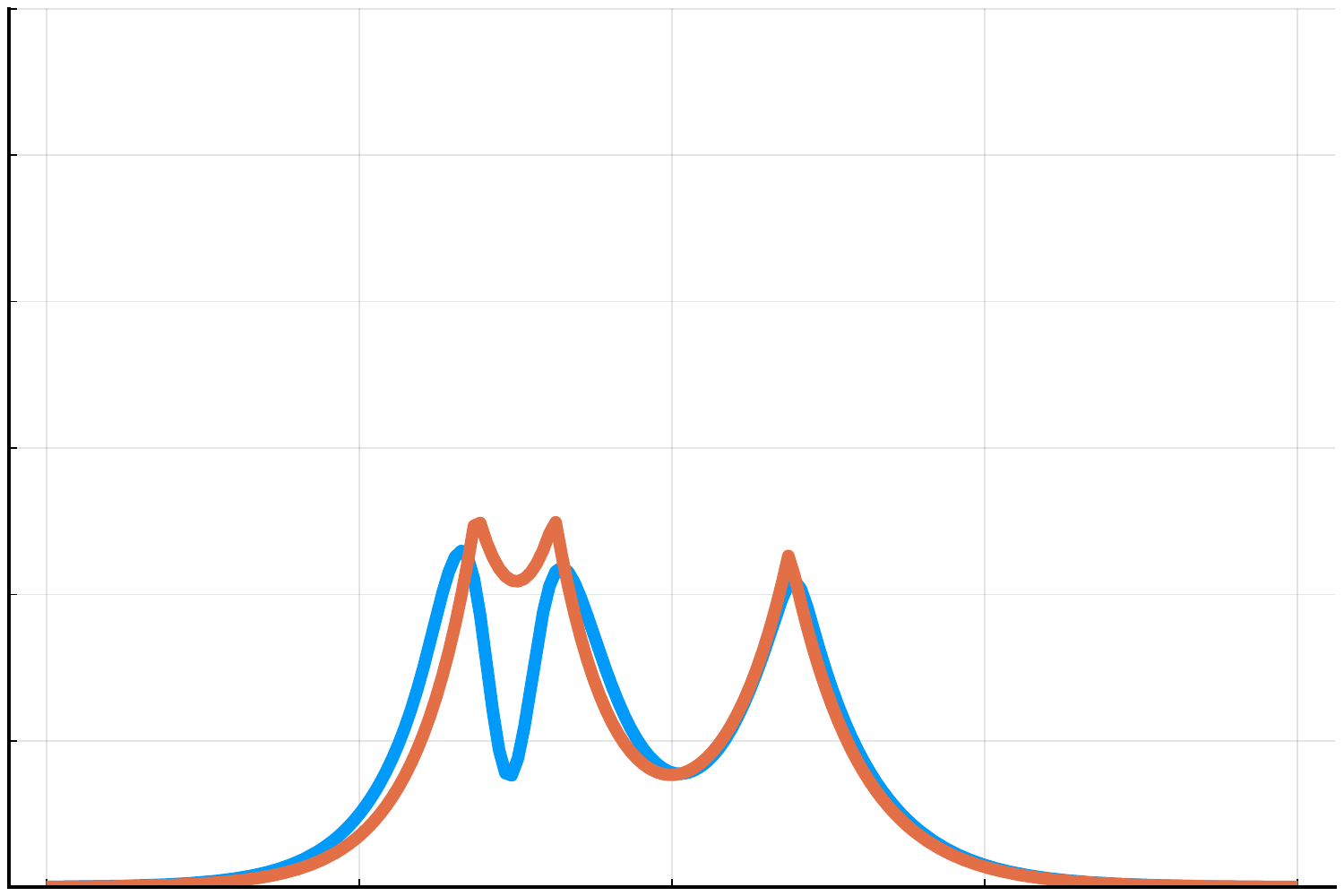}}
    \subfigure[$t=1$]{\includegraphics[width=0.19\textwidth]{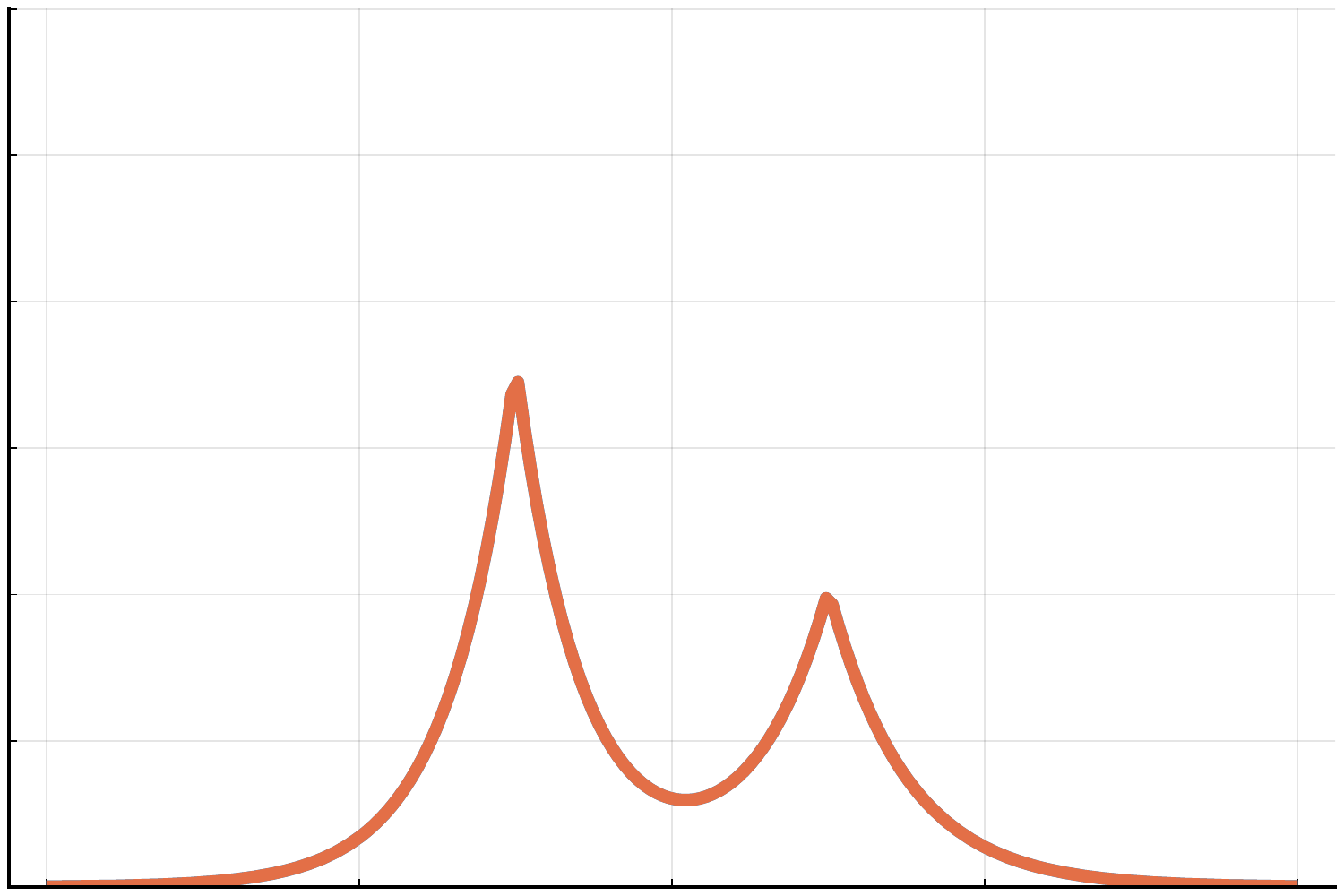}}
    \caption{Wasserstein barycenters between two mixtures of Slater-type elliptical distributions for the $W_2$ metric (blue) and the $W_{2,\mathcal M}$  metric (red).}
    \label{fig:slater1d}
\end{figure}

\paragraph{Two-dimensional case} In two dimensions, we consider $h(x) = \exp(-\sqrt{3}|x|^{1/2})$, for which we check that equation~\eqref{eq:h-condition} is satisfied. 
In Figures~\ref{fig:slater2dmw2_contour} and~\ref{fig:slater2dmw2}, we present the $W_{2,\mathcal M}$ and $W_2$ barycenters for mixtures of two-dimensional Slater-type elliptical distributions. As for the one-dimensional case, the $W_2$ barycenter is smoother than the $W_{2,\mathcal M}$ barycenter.

\begin{figure}
    \centering
    \subfigure[$t=0$]{
    \begin{tabular}{@{}c@{}}
         \includegraphics[width=0.18\textwidth]{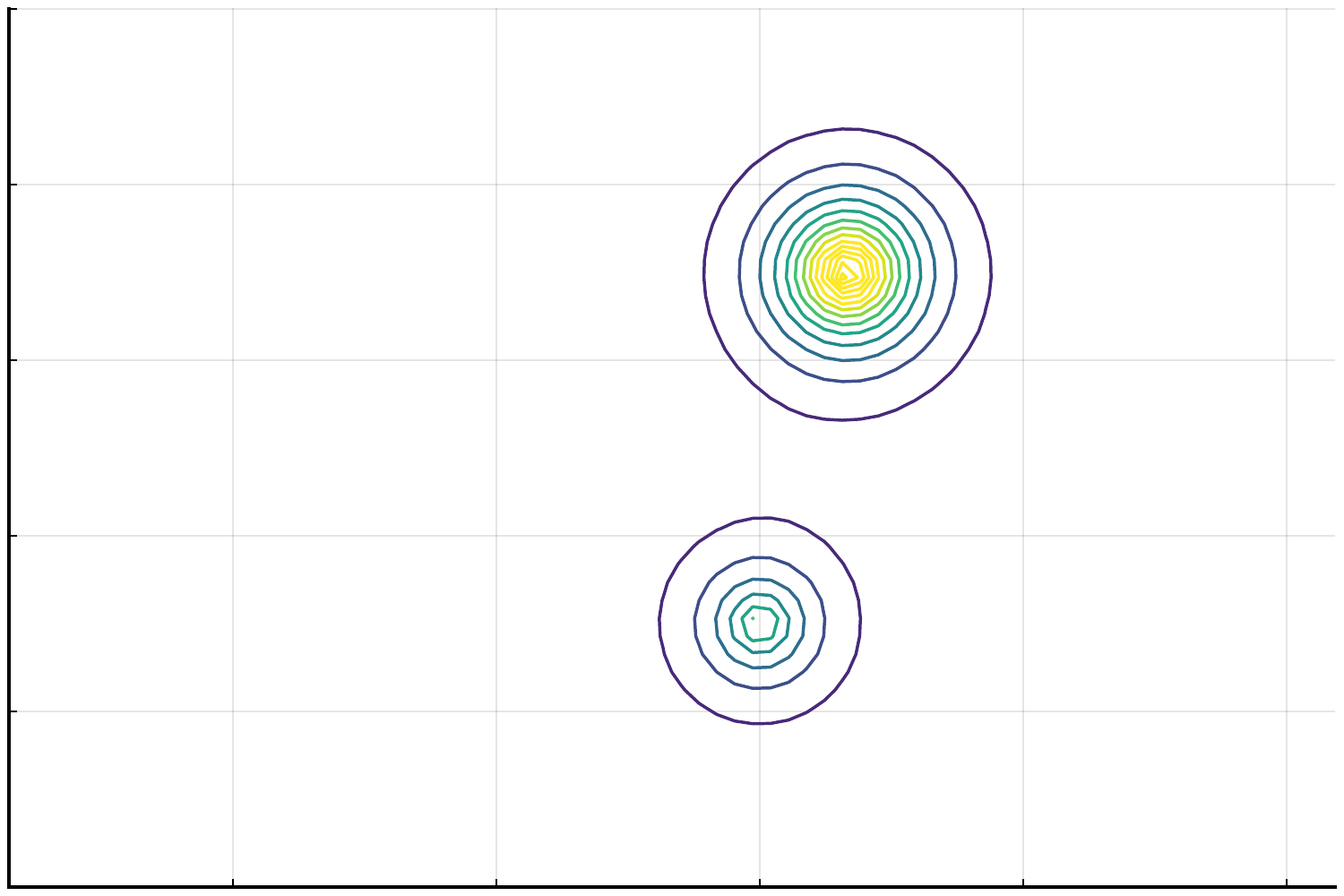} \\
         \includegraphics[width=0.18\textwidth]{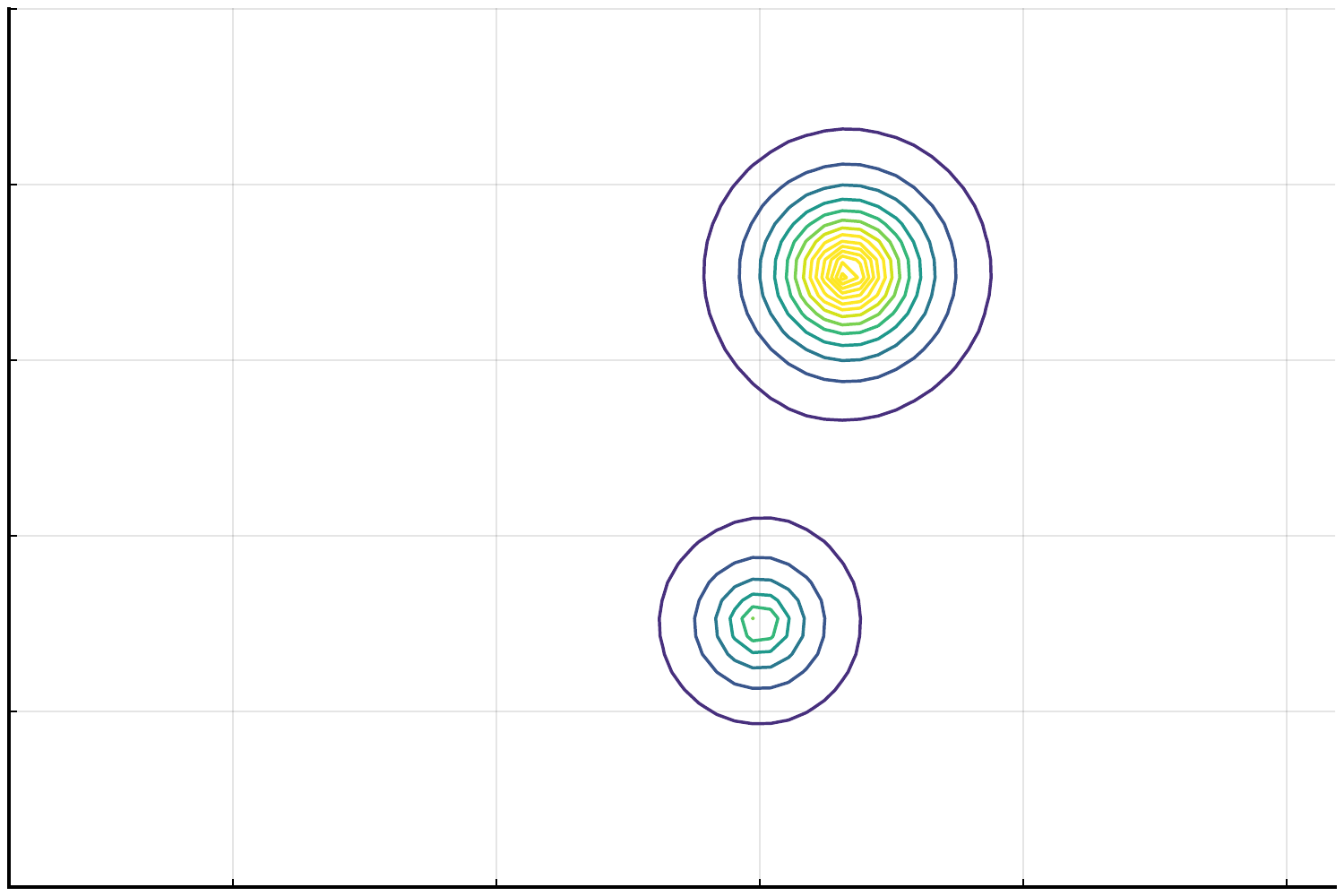}
    \end{tabular}
    } 
    \subfigure[$t=0.25$]{\begin{tabular}{@{}c@{}}
         \includegraphics[width=0.18\textwidth]{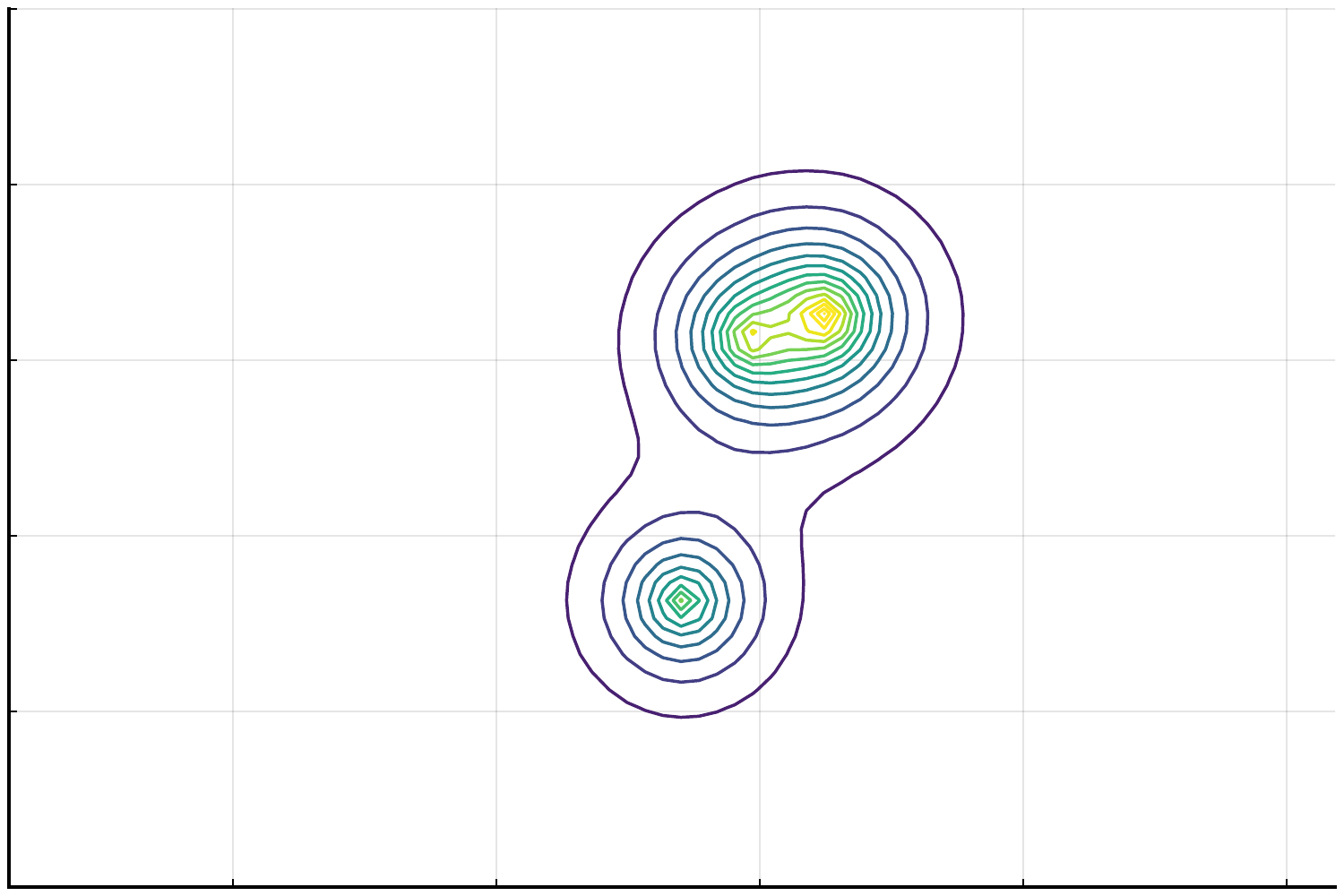} \\
         \includegraphics[width=0.18\textwidth]{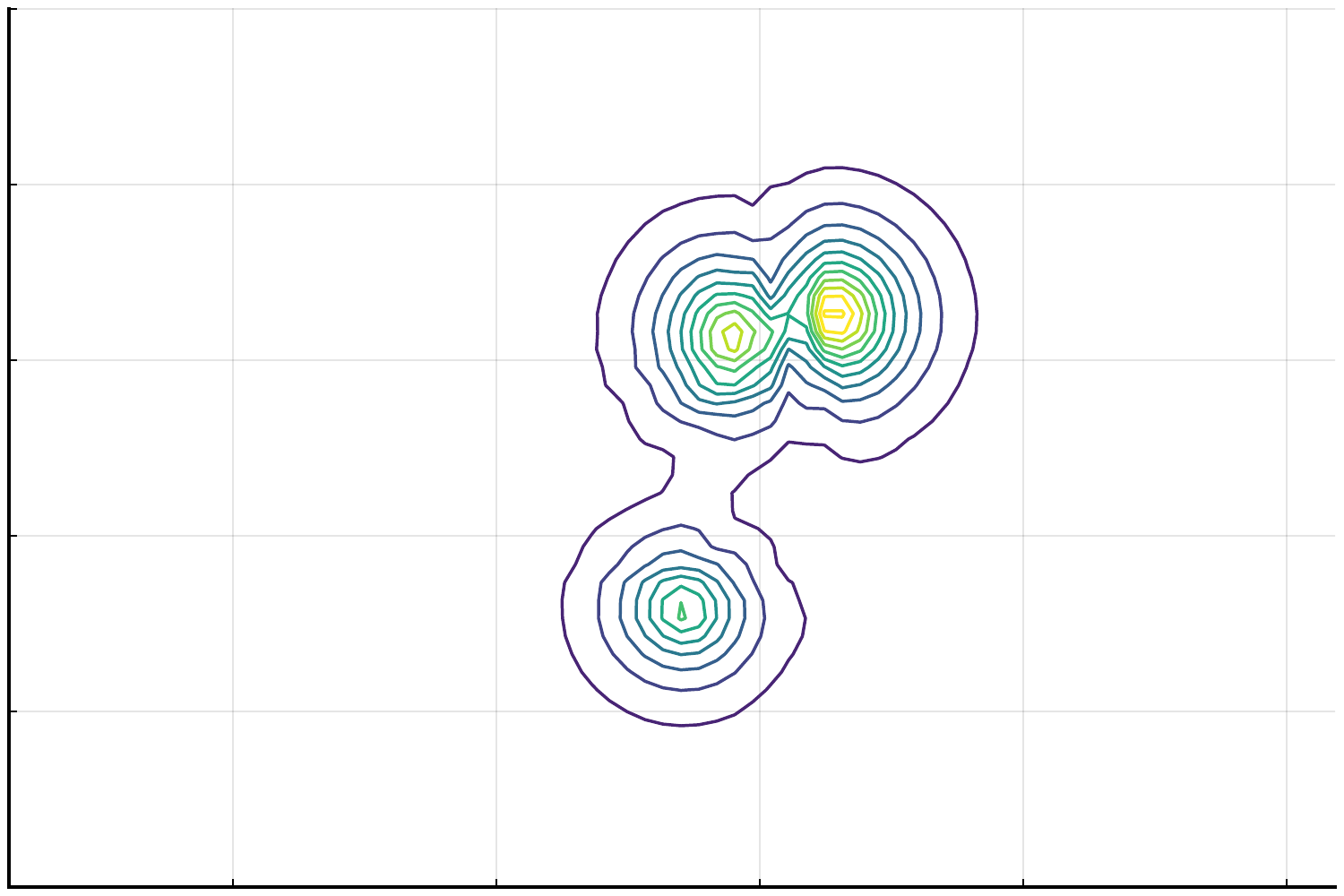}
    \end{tabular}} 
    \subfigure[$t=0.5$]{\begin{tabular}{@{}c@{}}
         \includegraphics[width=0.18\textwidth]{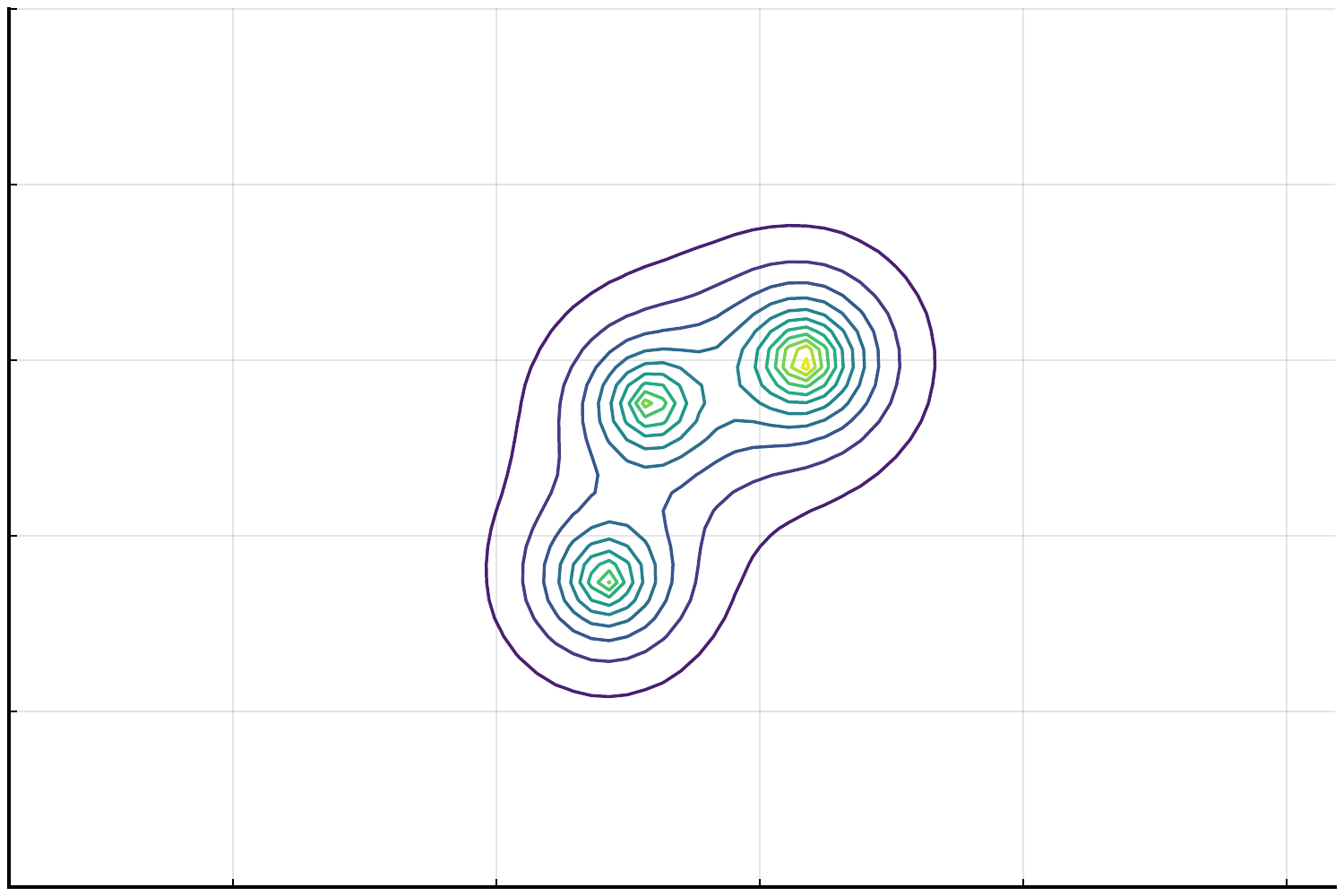} \\
         \includegraphics[width=0.18\textwidth]{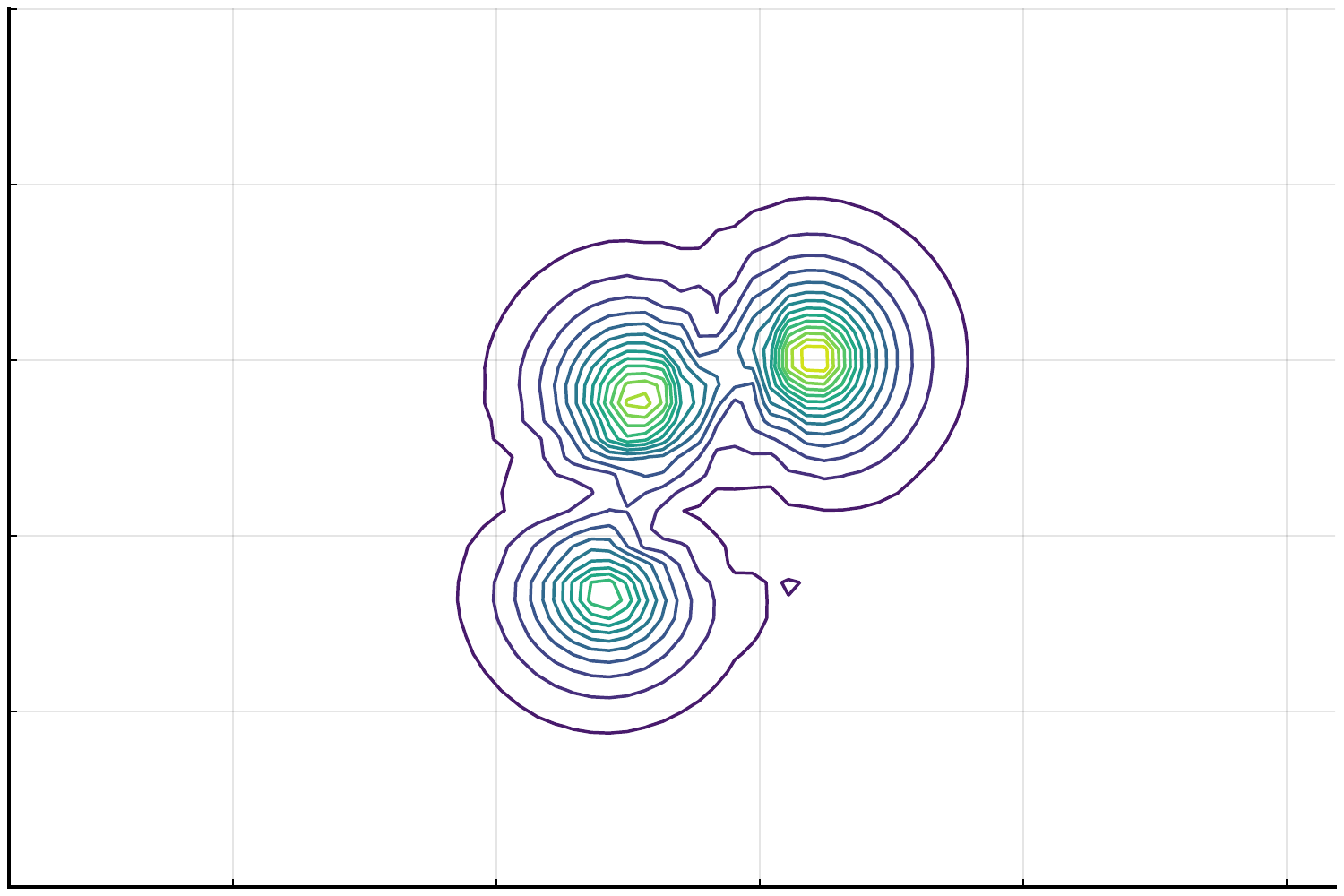}
    \end{tabular}} 
    \subfigure[$t=0.75$]{\begin{tabular}{@{}c@{}}
         \includegraphics[width=0.18\textwidth]{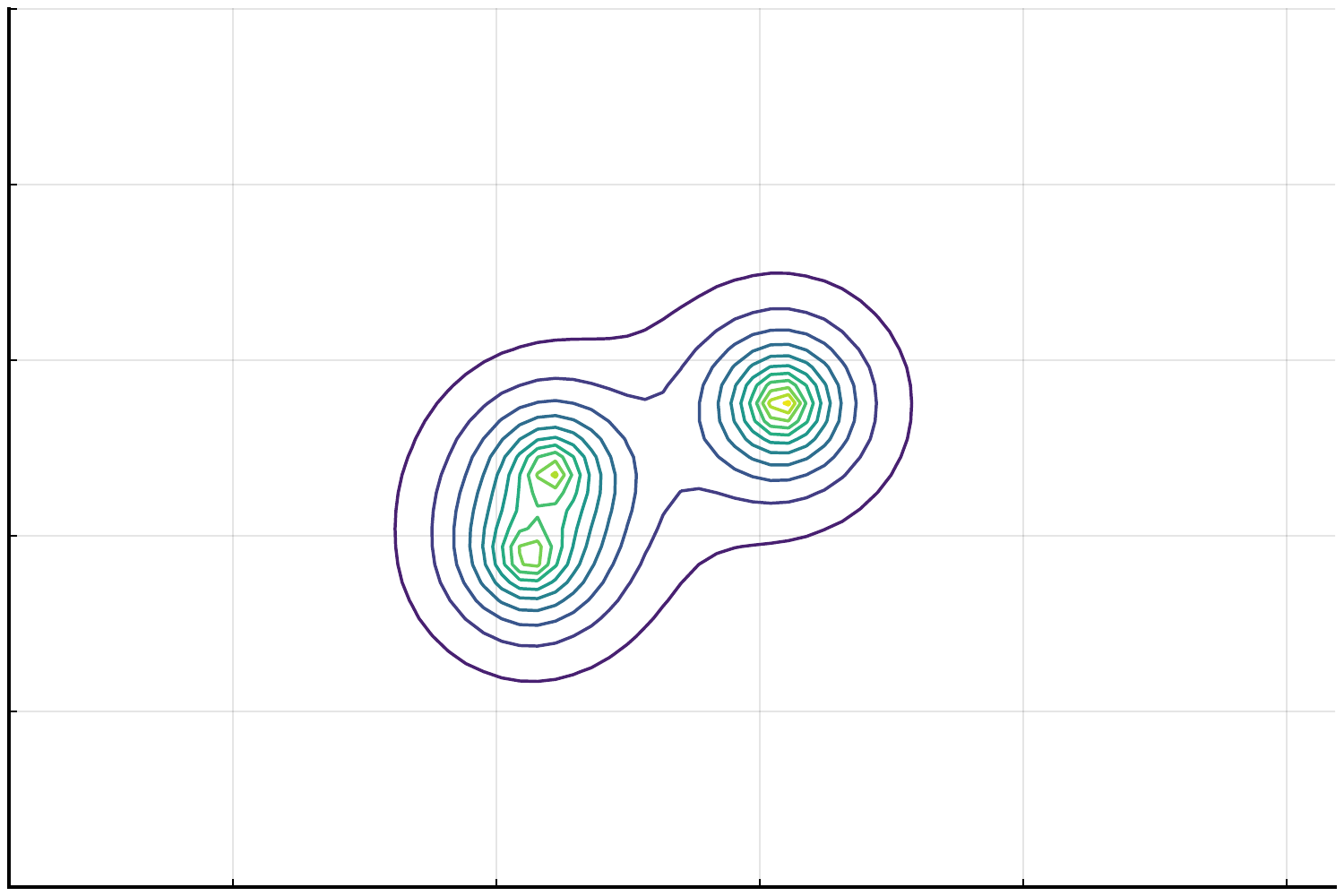} \\
         \includegraphics[width=0.18\textwidth]{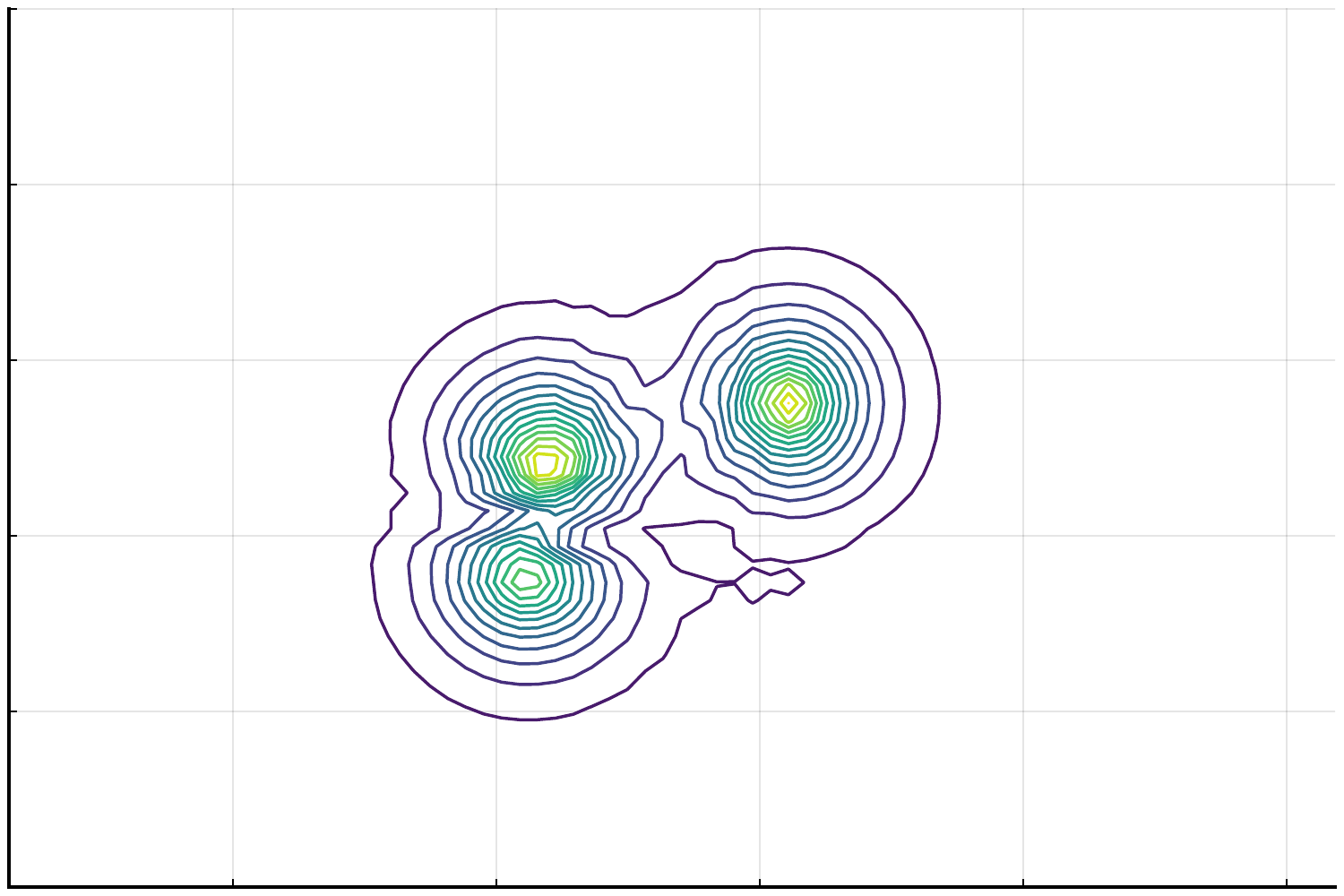}
    \end{tabular}} 
    \subfigure[$t=1$]{\begin{tabular}{@{}c@{}}
         \includegraphics[width=0.18\textwidth]{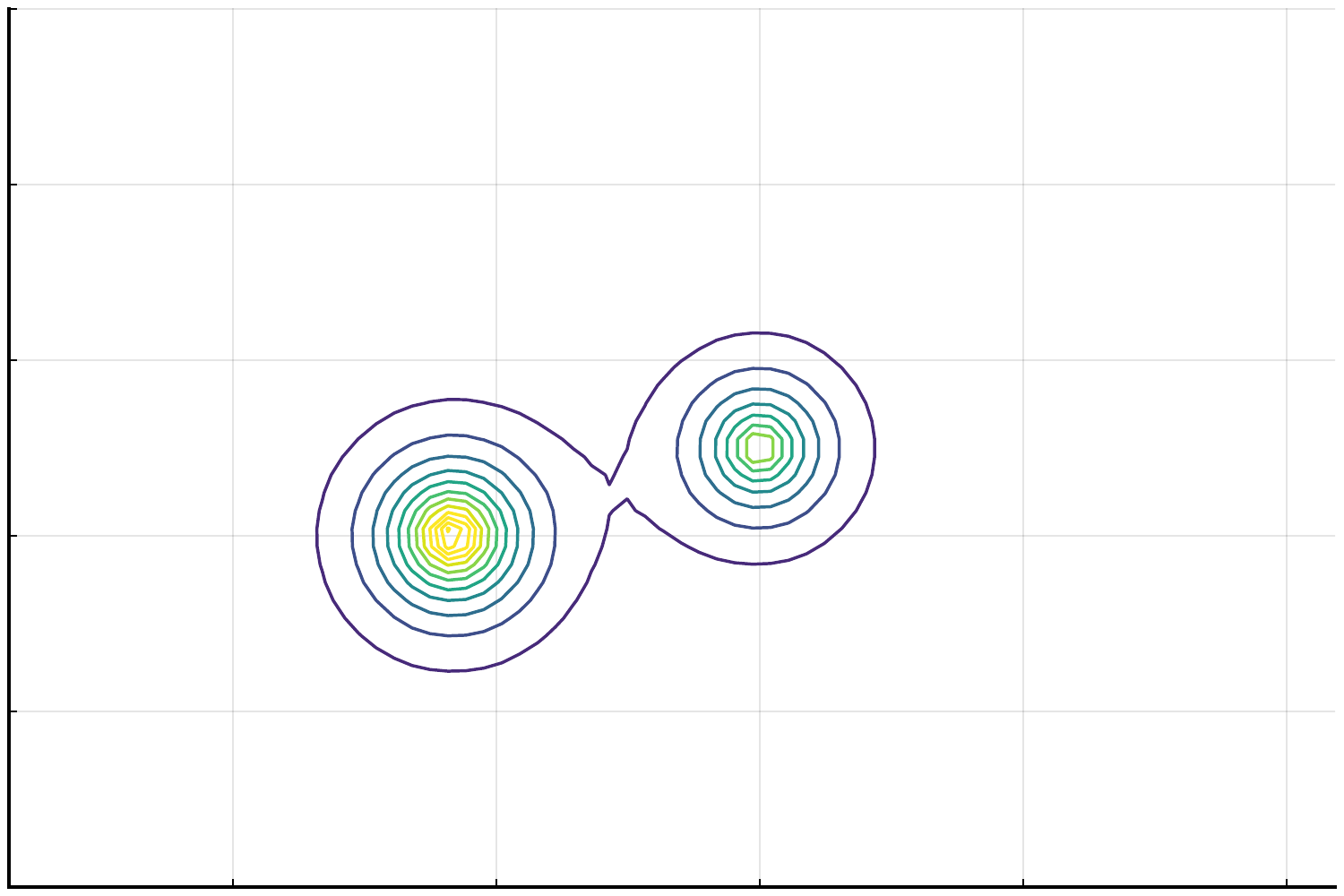} \\
         \includegraphics[width=0.18\textwidth]{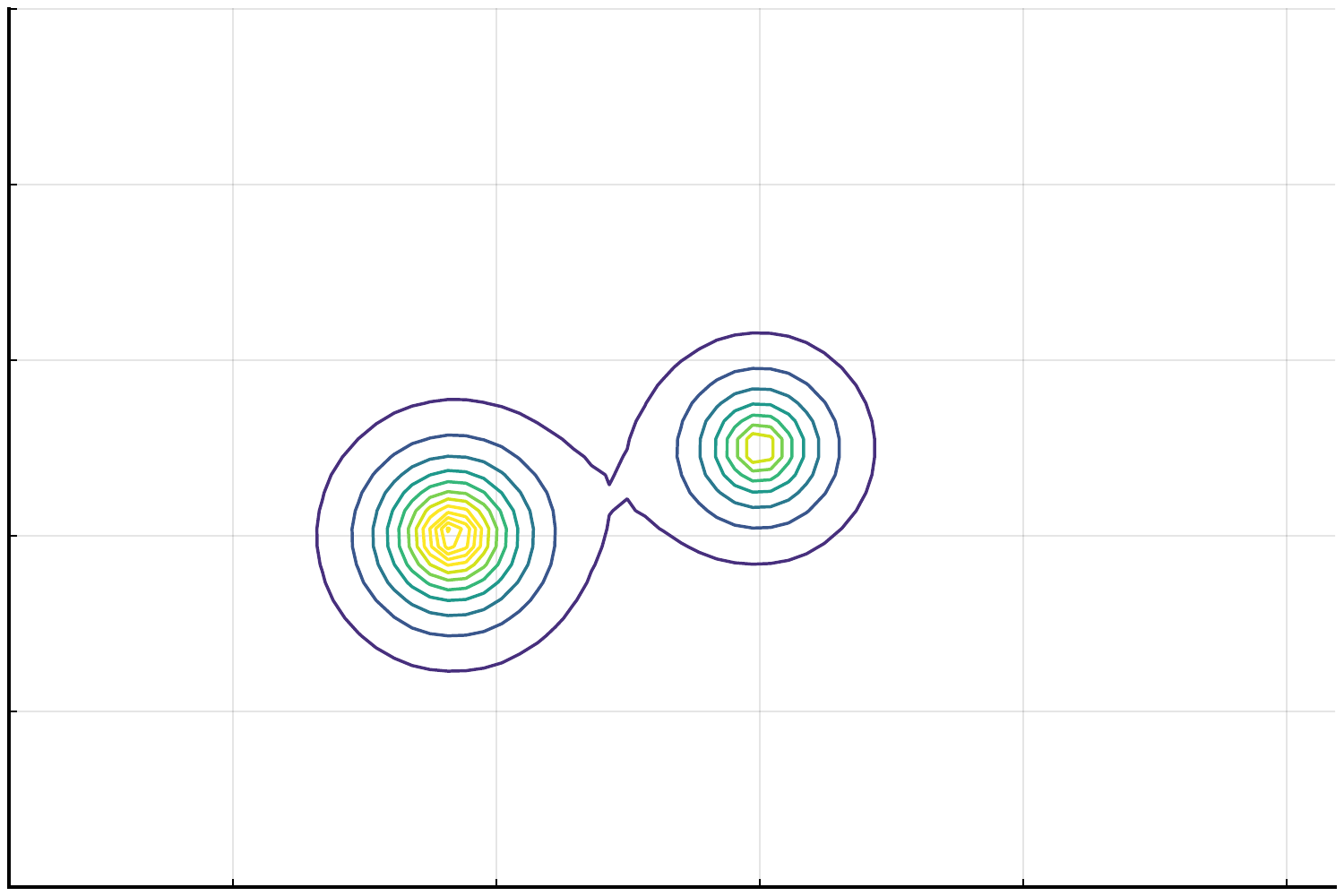}
    \end{tabular}} 
    \caption{Contour plots of $W_{2,\mathcal M}$ (top) and $W_2$ (bottom) barycenters between two mixtures of Slater-type elliptic distributions.}
    \label{fig:slater2dmw2_contour}
\end{figure}

\begin{figure}
    \centering
    \subfigure[$W_{2,\mathcal M}$-barycenter]{\includegraphics[width=0.49\textwidth]{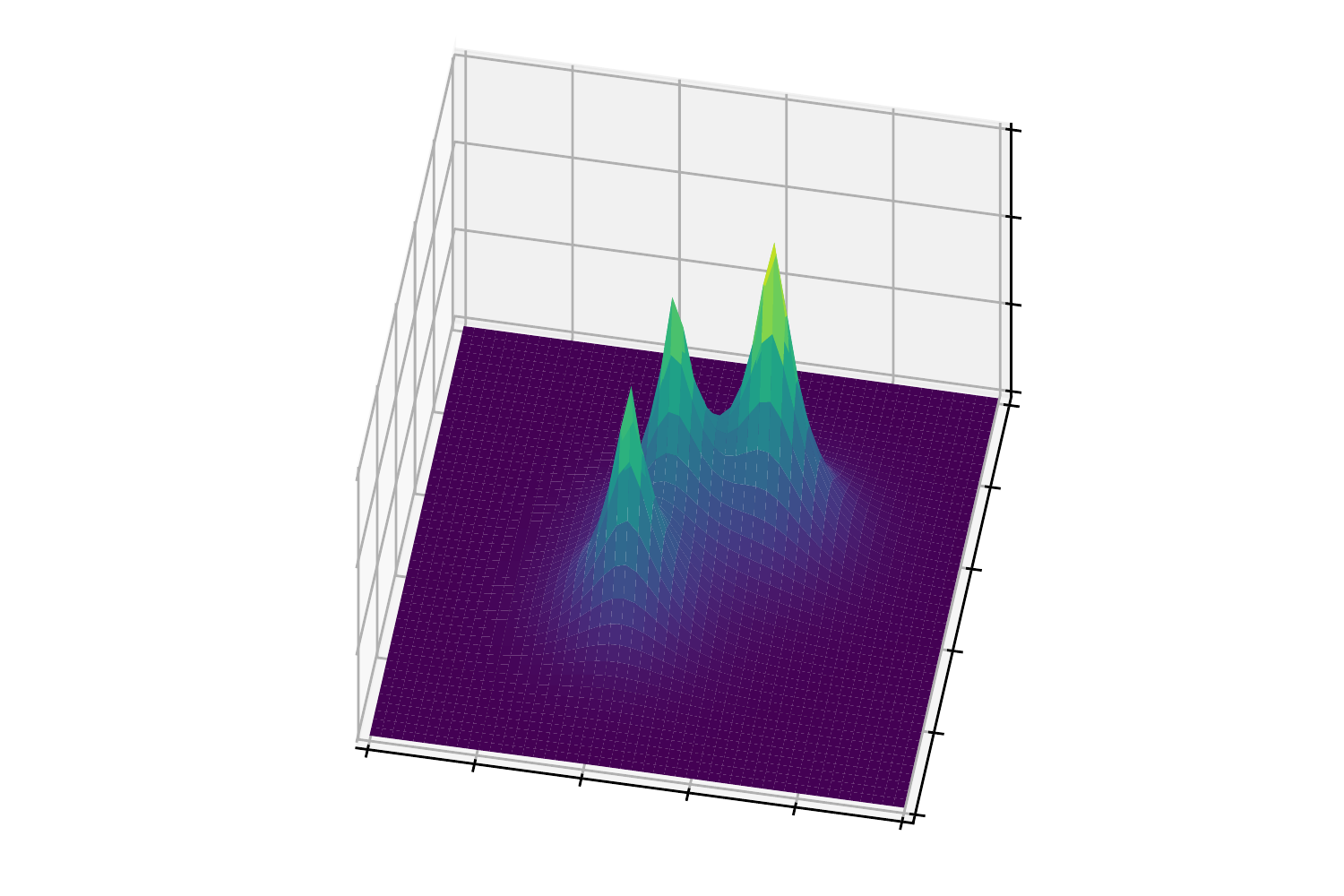}} 
    \subfigure[$W_2$-barycenter]{\includegraphics[width=0.49\textwidth]{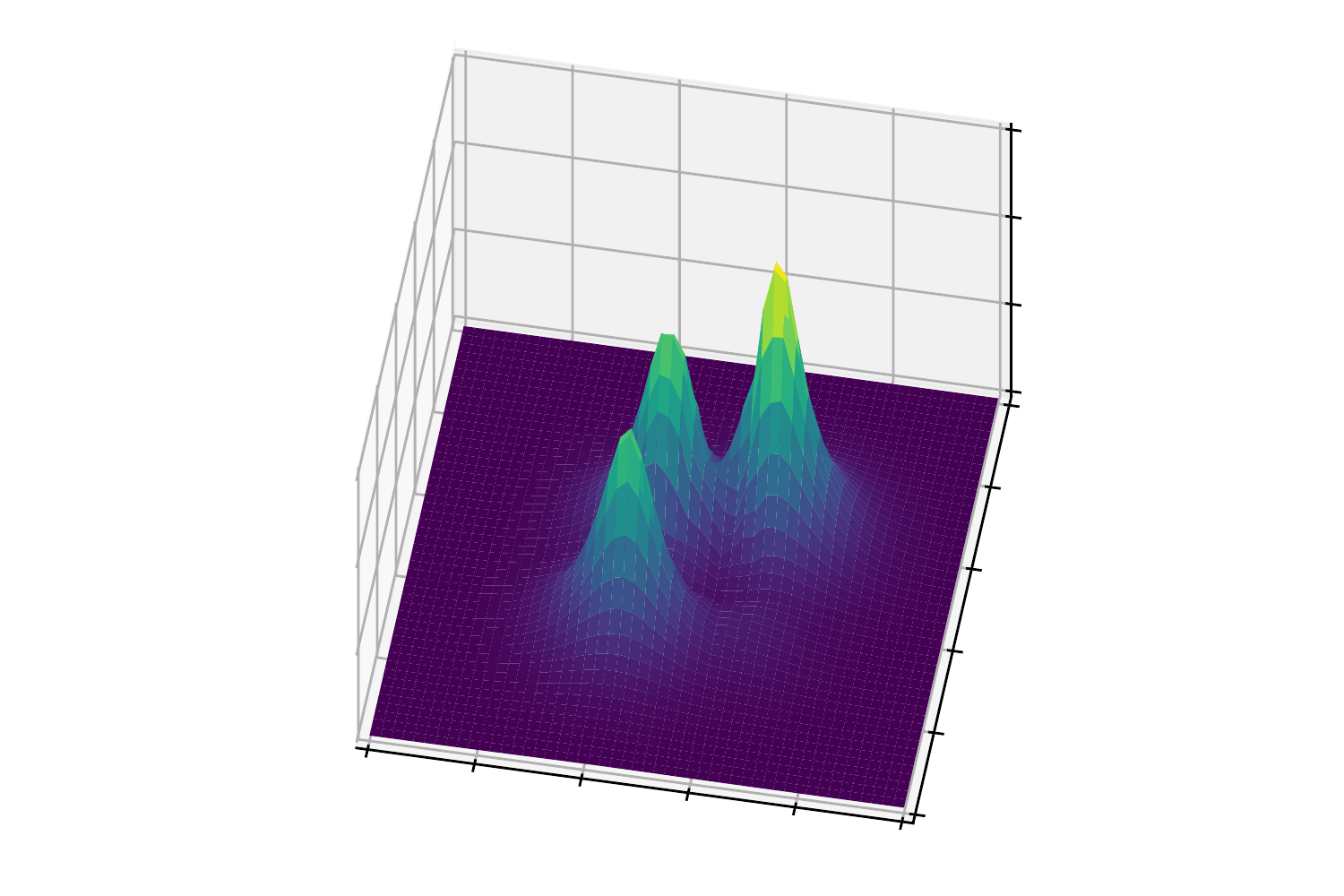}} 
    \caption{Comparison between $W_2$ and $W_{2,\mathcal M}$ barycenters for $t=0.5$ and the mixtures presented on Figure~\ref{fig:slater2dmw2_contour}.}
    \label{fig:slater2dmw2}
\end{figure}

\subsubsection{Wigner-semicircle-type elliptical distributions}

We now consider elliptical distributions based on the Wigner-semicircle distribution, i.e. we consider 
\[
h(x) = \left\{ 
\begin{array}{ll}
    \sqrt{1 - \alpha_d x}, & x \le \frac{1}{\alpha_d} \\
    0,  & x > \frac{1}{\alpha_d},
\end{array}
\right.
\]
For 
$
\alpha_d = \frac{1}{d+3},
$
one easily checks that equation~\eqref{eq:h-condition} is satisfied. Therefore, the densities of the probability distributions take the form
\[
\forall x\in\R^d, \quad g_{m,\Sigma}(x) =  \frac{1}{Z_{\Sigma}} \sqrt{ 
1- \frac{1}{d+3} (x-m)^T\Sigma^{-1} (x-m) },
\]
with 
\[
    Z_{\Sigma} = \frac{ \pi^{\frac{d+1}{2}} (d+3)^{d/2}}{2 \Gamma\left(\frac{d+3}{2}\right)} | \det\Sigma|^{1/2}.
\]

The identifiability of the mixtures based on these Wigner-semicircle type elliptical distribution distributions can be easily proved noting that the atoms can be iteratively uniquely characterized by their support.

\paragraph{One-dimensional case} 

We take $h(x) = \left\{ 
\begin{array}{ll}
    \sqrt{1 - x/4}, & x \le 4 \\
    0,  & x > 4
\end{array}
\right.$. 
In Figure~\ref{fig:semicircle1d}, we plot the $W_2$ and $W_{2,\mathcal M}$ barycenters between two mixtures of two atoms each. We observe that the two types of barycenters behave quite differently, as the zones with largest densities as pretty different. 

\begin{figure}
    \centering
    \subfigure[$t=0$]{\includegraphics[width=0.19\textwidth]{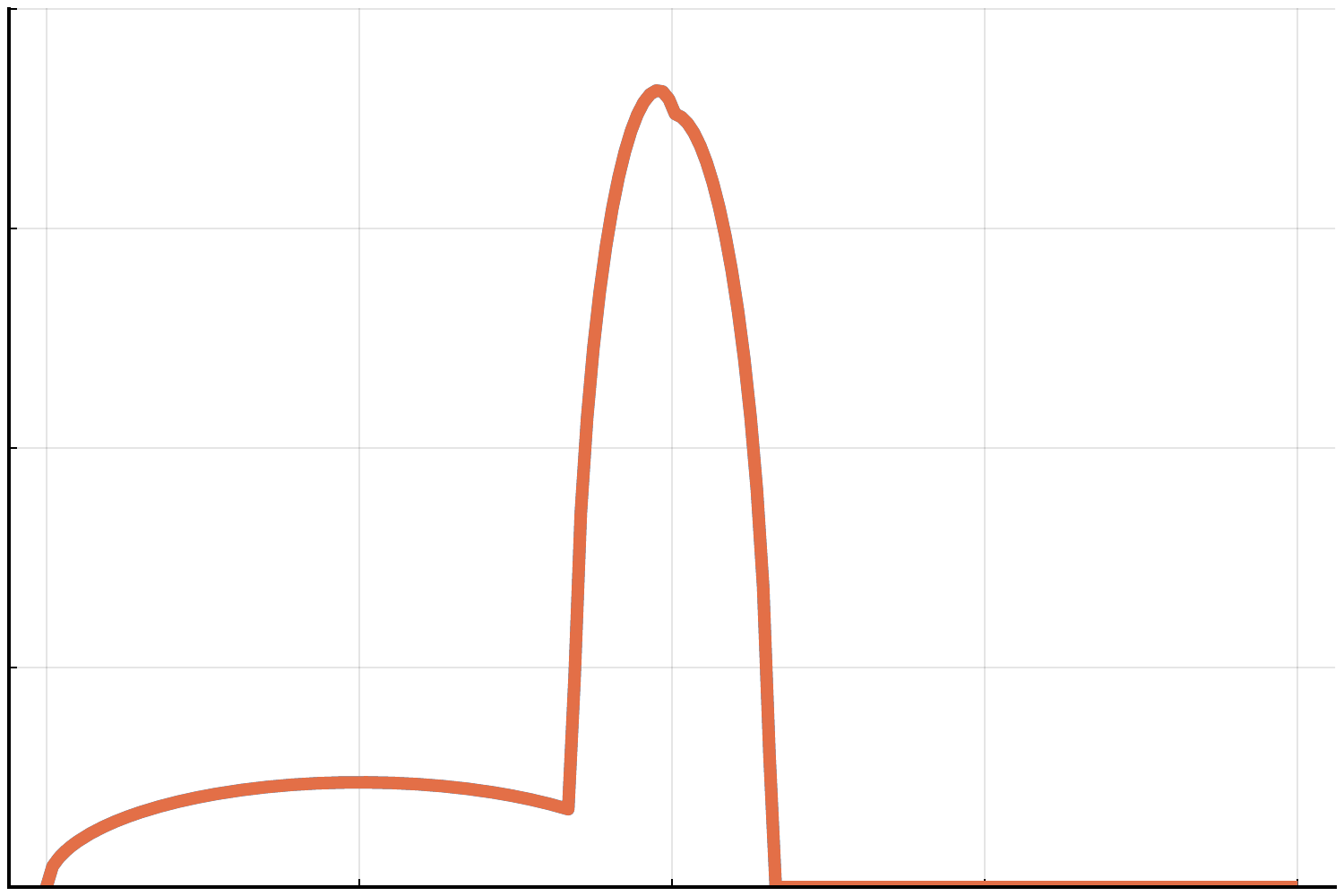}} 
    \subfigure[$t=0.25$]{\includegraphics[width=0.19\textwidth]{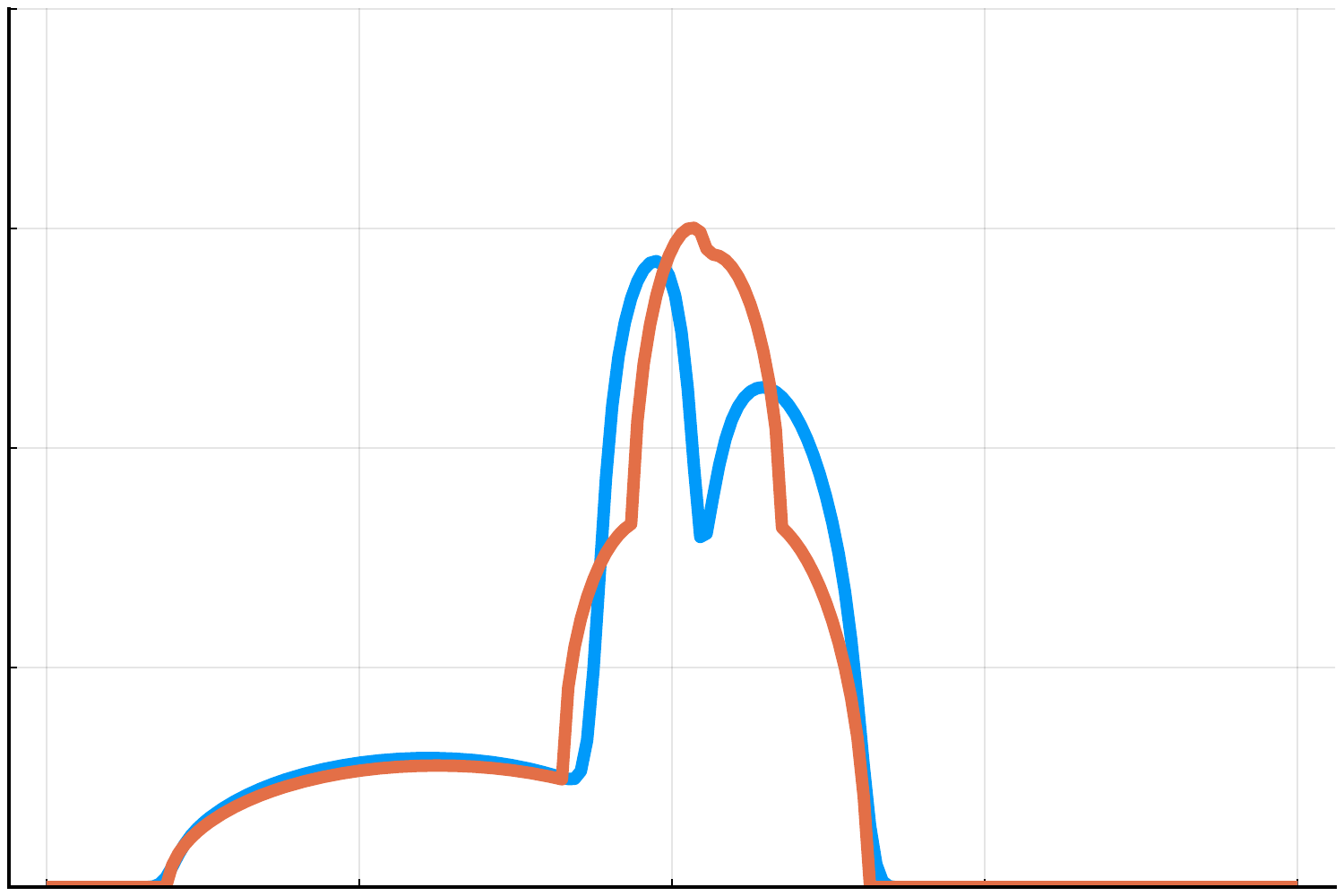}} 
    \subfigure[$t=0.5$]{\includegraphics[width=0.19\textwidth]{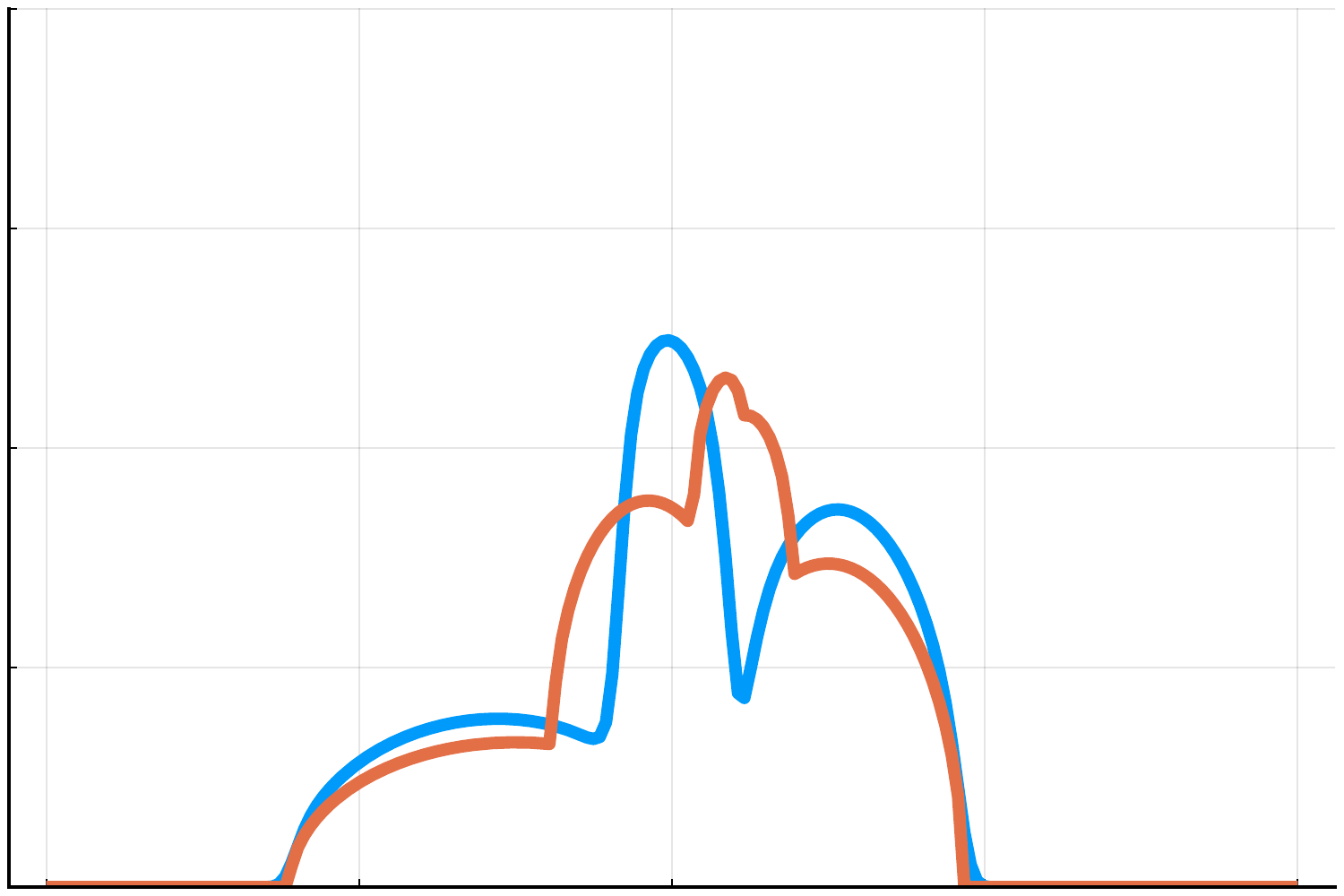}}
    \subfigure[$t=0.75$]{\includegraphics[width=0.19\textwidth]{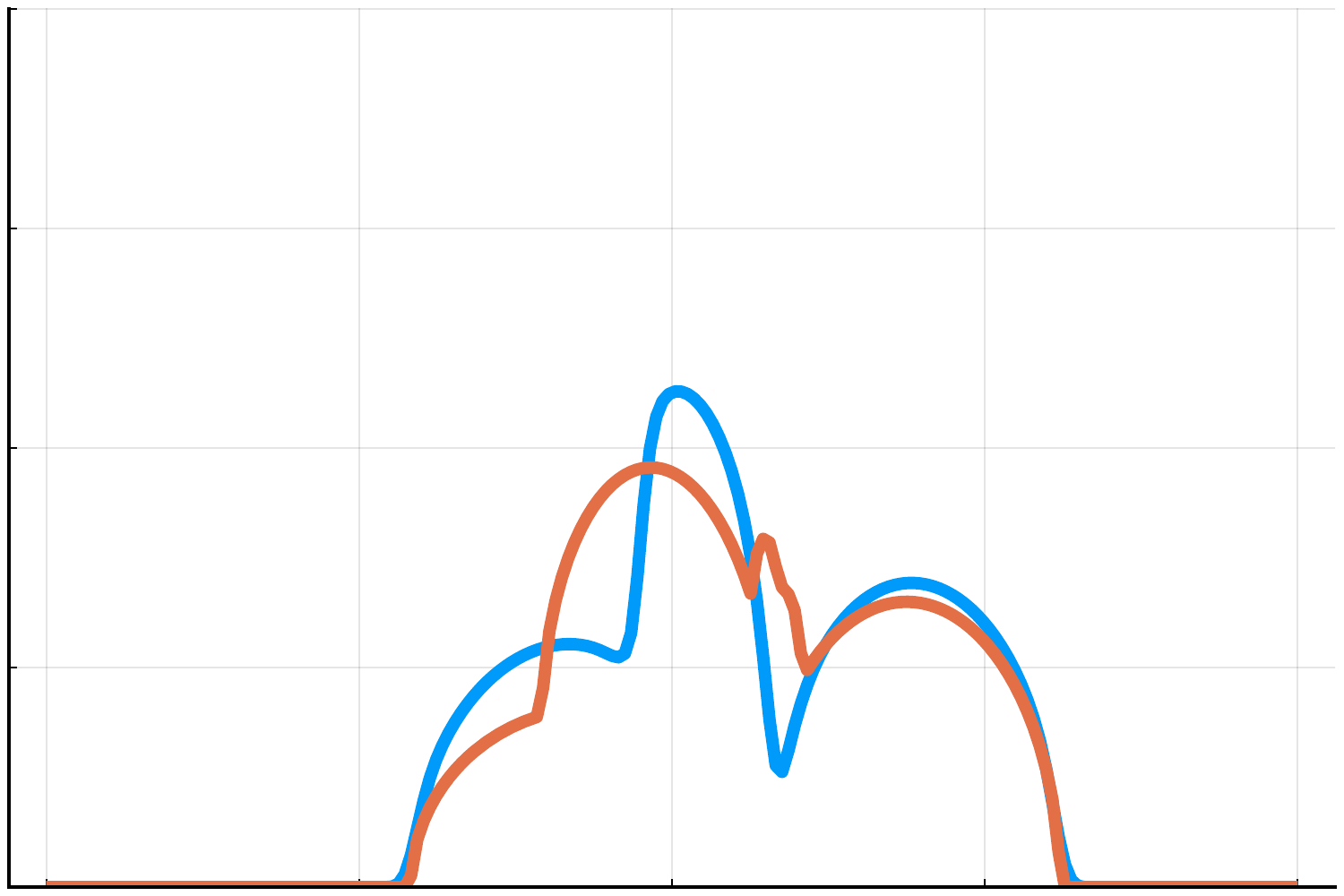}}
    \subfigure[$t=1$]{\includegraphics[width=0.19\textwidth]{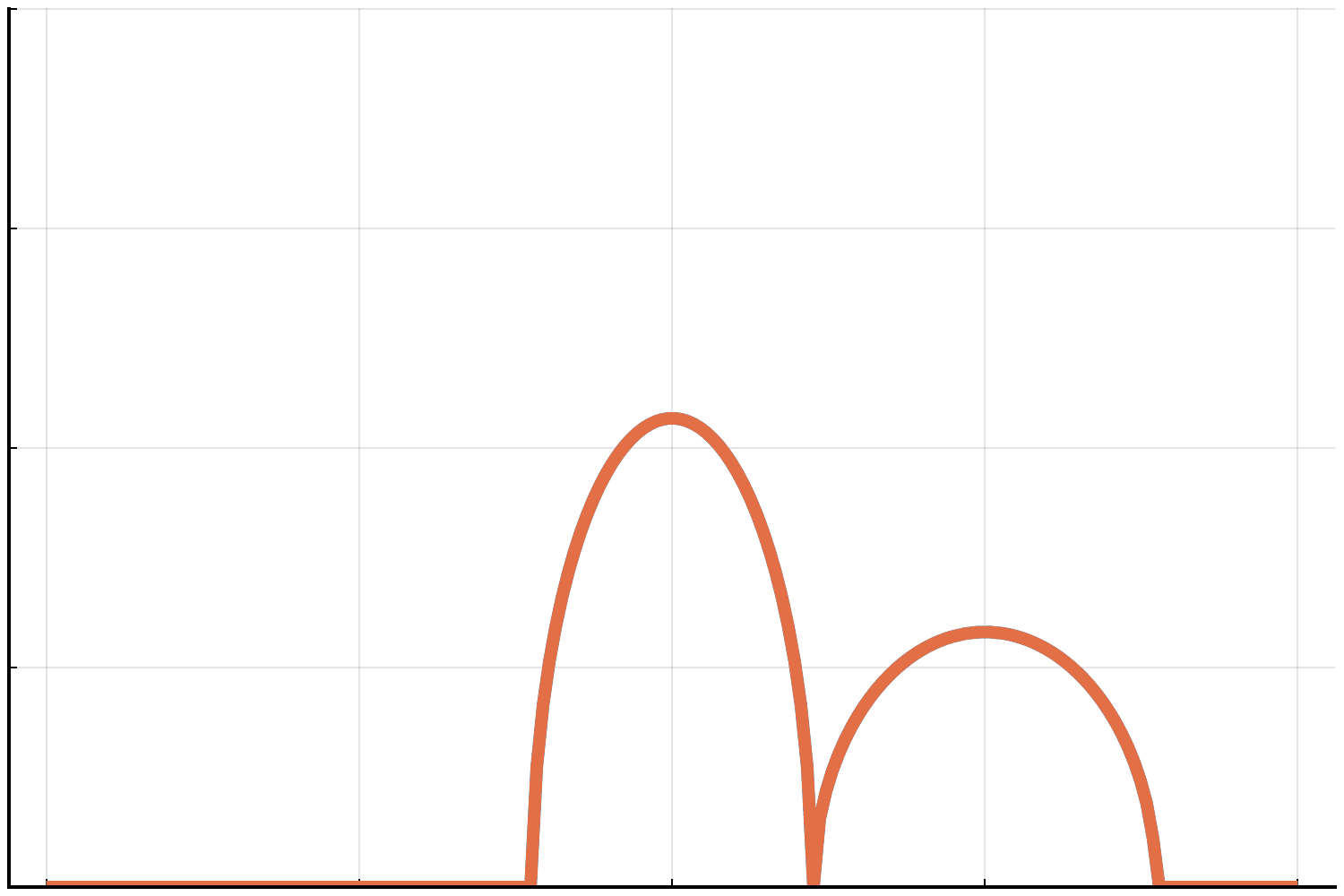}}
    \caption{Wasserstein barycenters between two mixtures of Wigner-semicircle elliptical distributions for the $W_2$ metric (blue) and the $W_{2,\mathcal M}$ metric (red).}
    \label{fig:semicircle1d}
\end{figure}

\paragraph{Two-dimensional case} 

We take $h(x) = \left\{ 
\begin{array}{ll}
    \sqrt{1 - x/5}, & x \le 5 \\
    0,  & x > 5
\end{array}
\right.$. 
In Figure~\ref{fig:semicircle2dmw2_contour} 
we plot the $W_2$ and $W_{2,\mathcal M}$ barycenters between two mixtures of two atoms each for the two-dimensional case. The observations are similar to the one-dimensional case.

\begin{figure}
    \centering
    \subfigure[$t=0$]{
    \begin{tabular}{@{}c@{}}
         \includegraphics[width=0.18\textwidth]{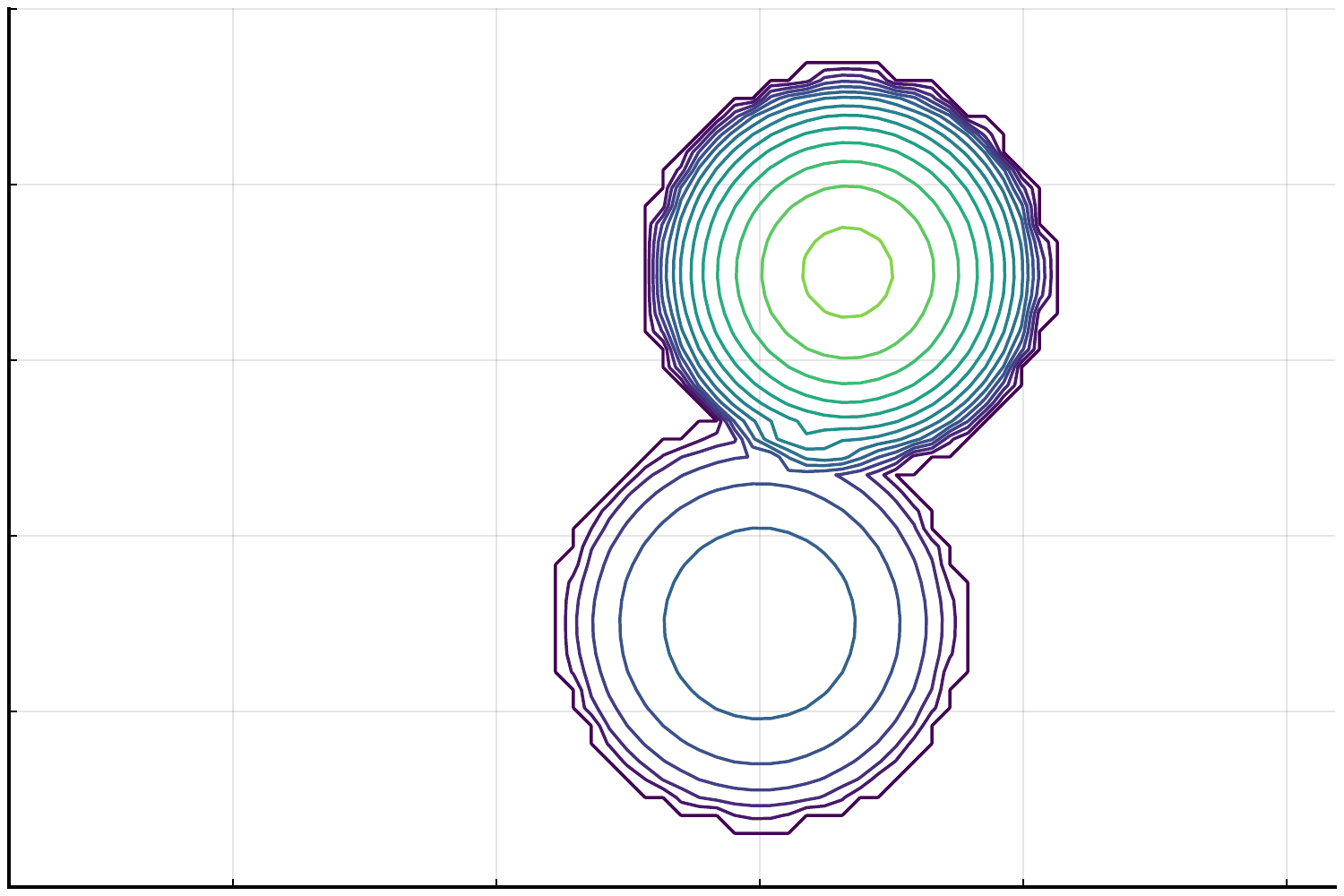} \\
         \includegraphics[width=0.18\textwidth]{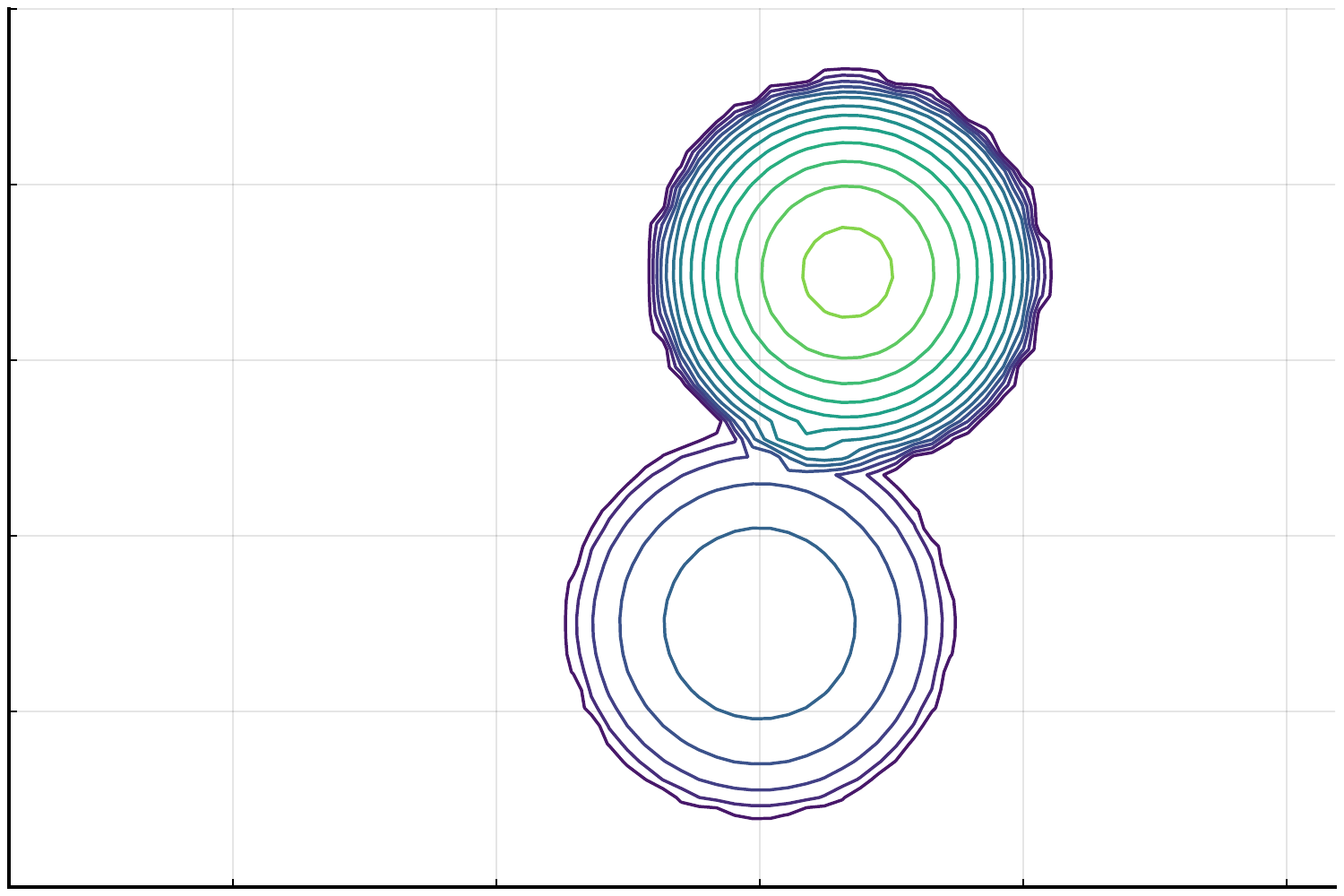}
    \end{tabular}
    } 
    \subfigure[$t=0.25$]{\begin{tabular}{@{}c@{}}
         \includegraphics[width=0.18\textwidth]{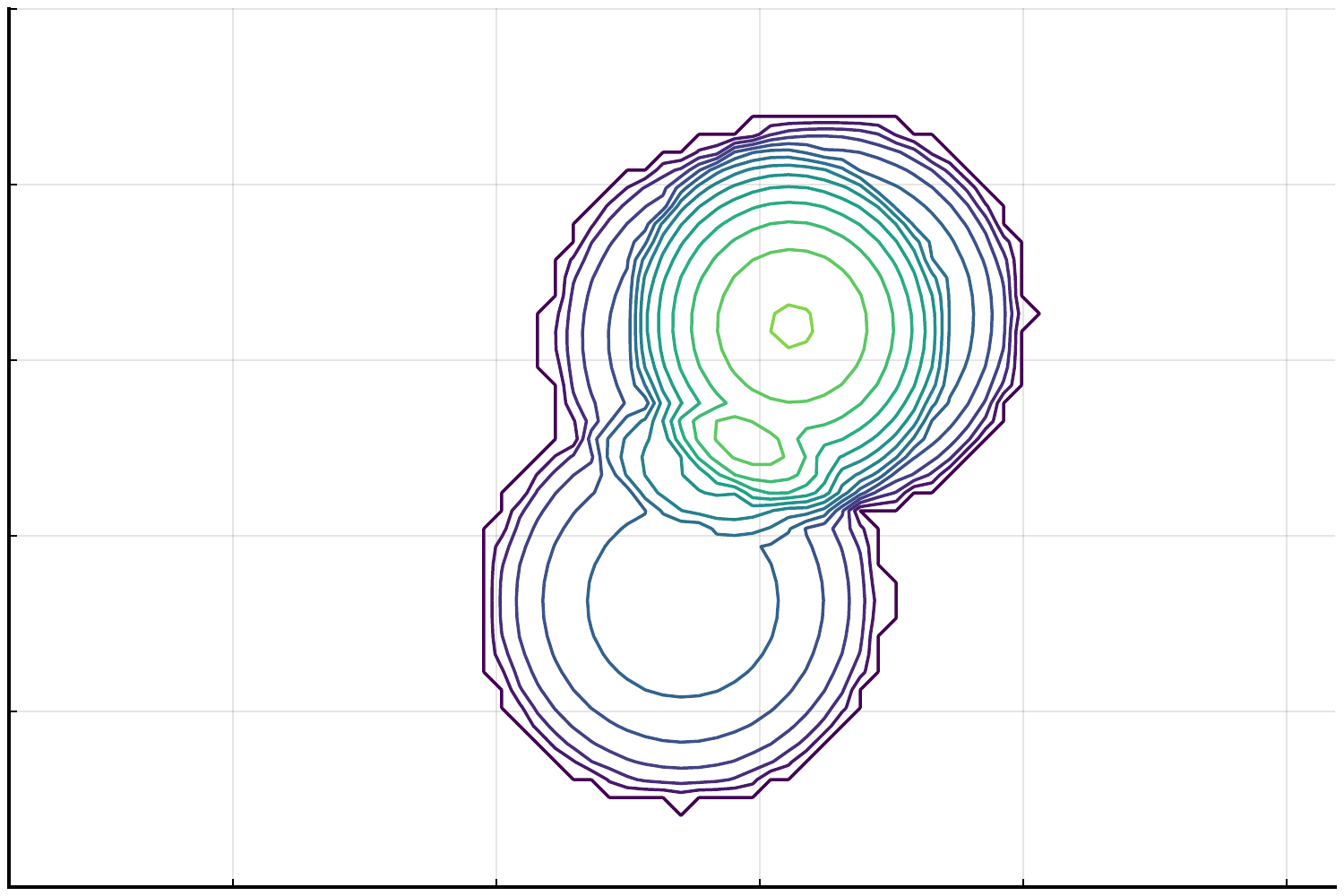} \\
         \includegraphics[width=0.18\textwidth]{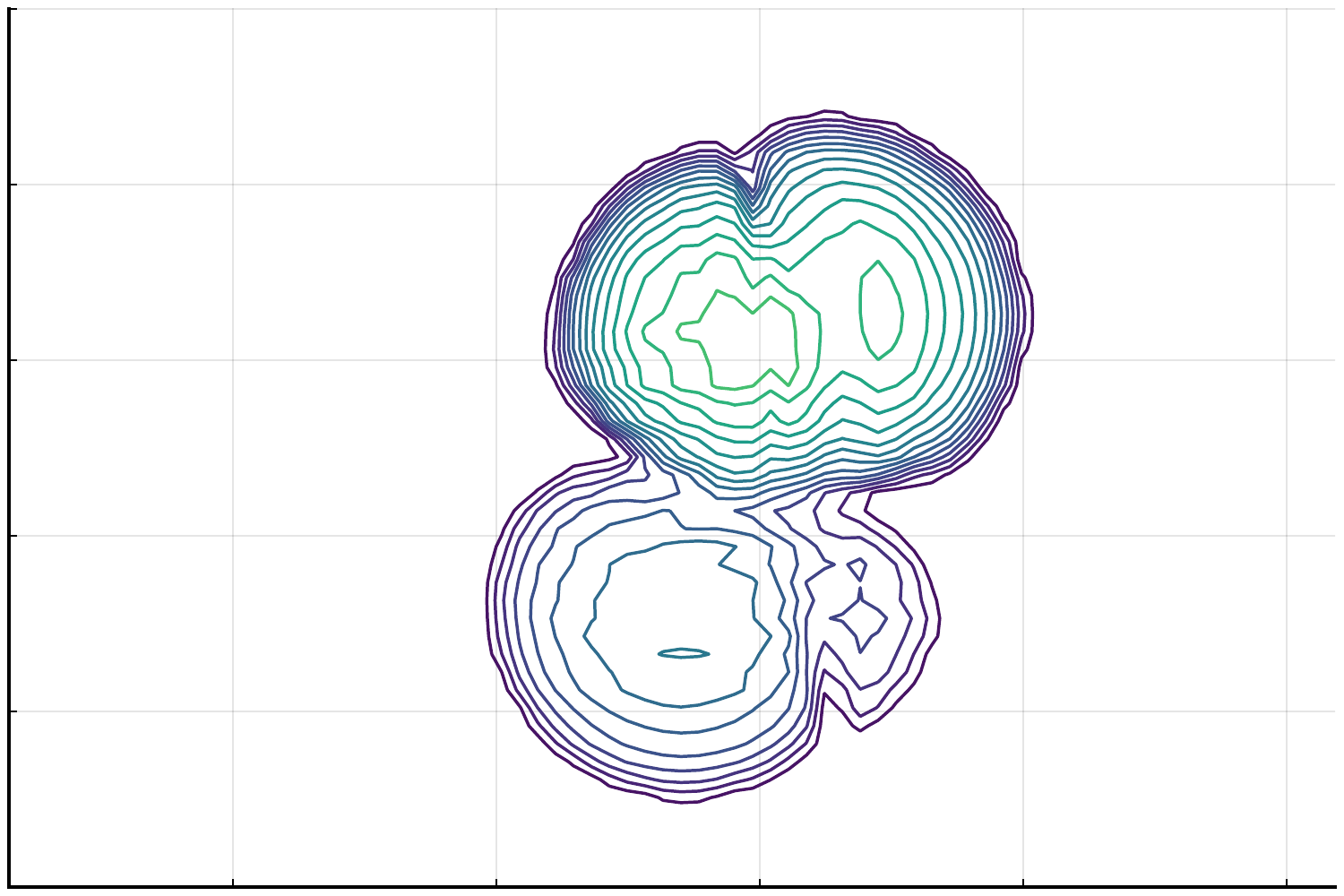}
    \end{tabular}} 
    \subfigure[$t=0.5$]{\begin{tabular}{@{}c@{}}
         \includegraphics[width=0.18\textwidth]{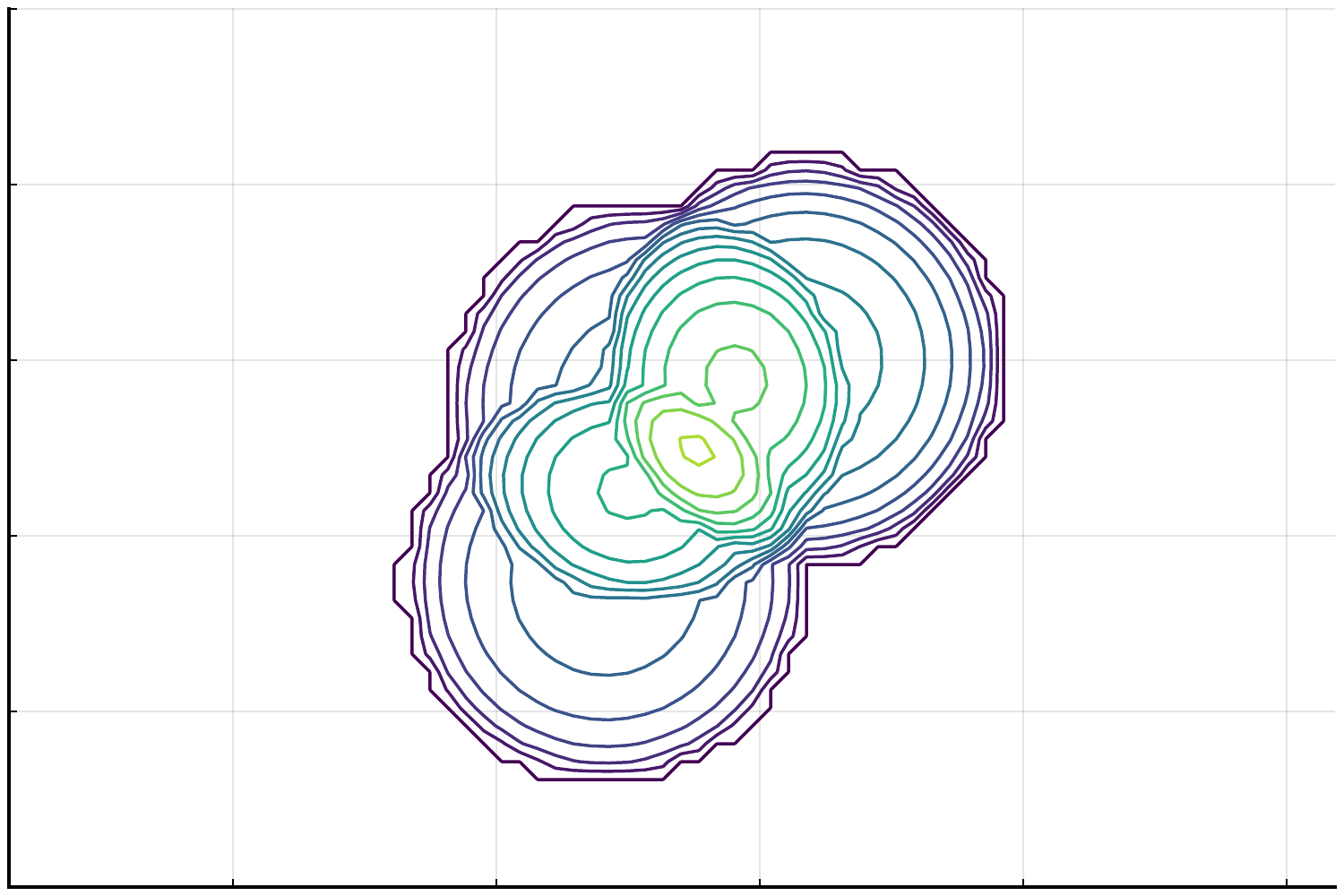} \\
         \includegraphics[width=0.18\textwidth]{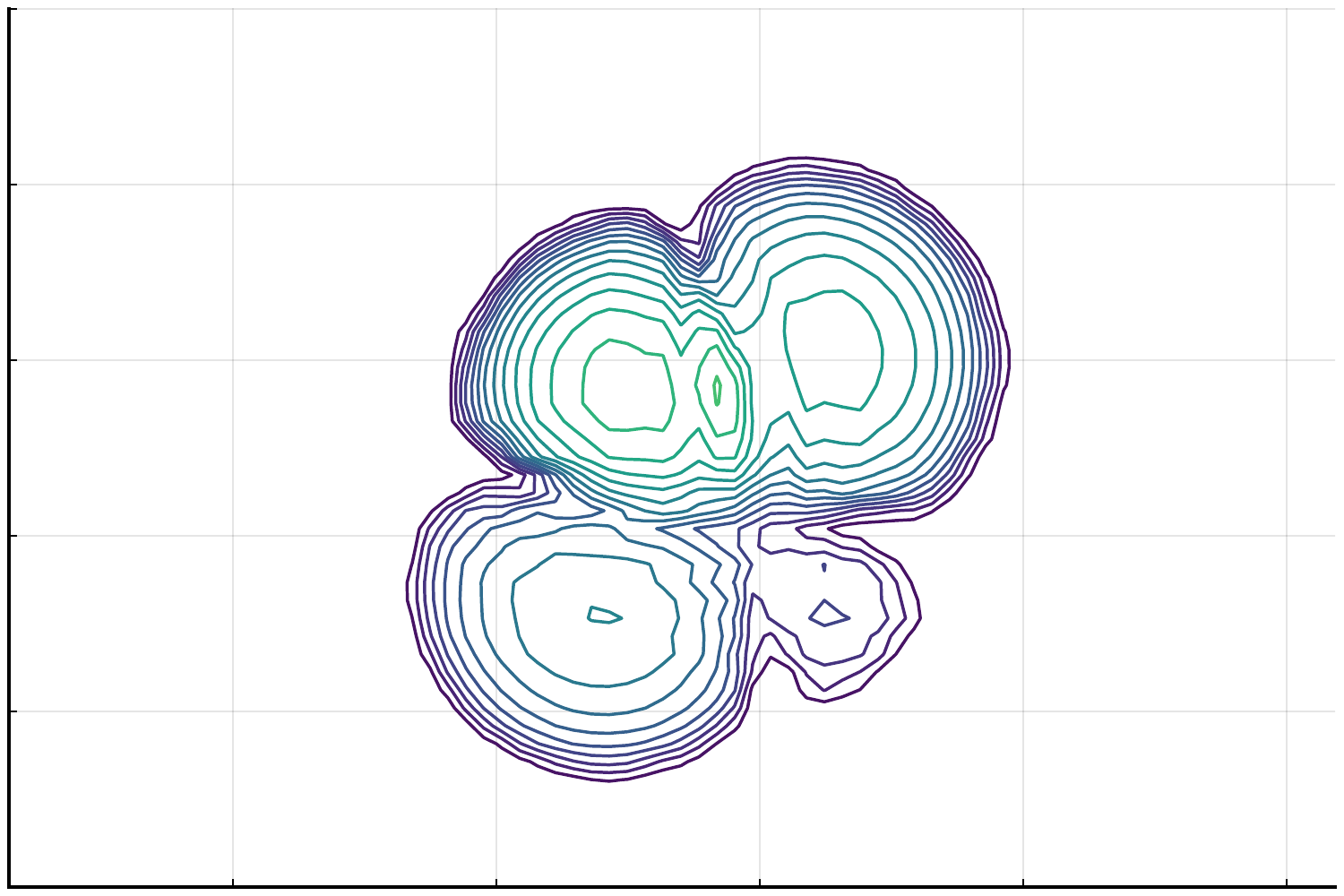}
    \end{tabular}} 
    \subfigure[$t=0.75$]{\begin{tabular}{@{}c@{}}
         \includegraphics[width=0.18\textwidth]{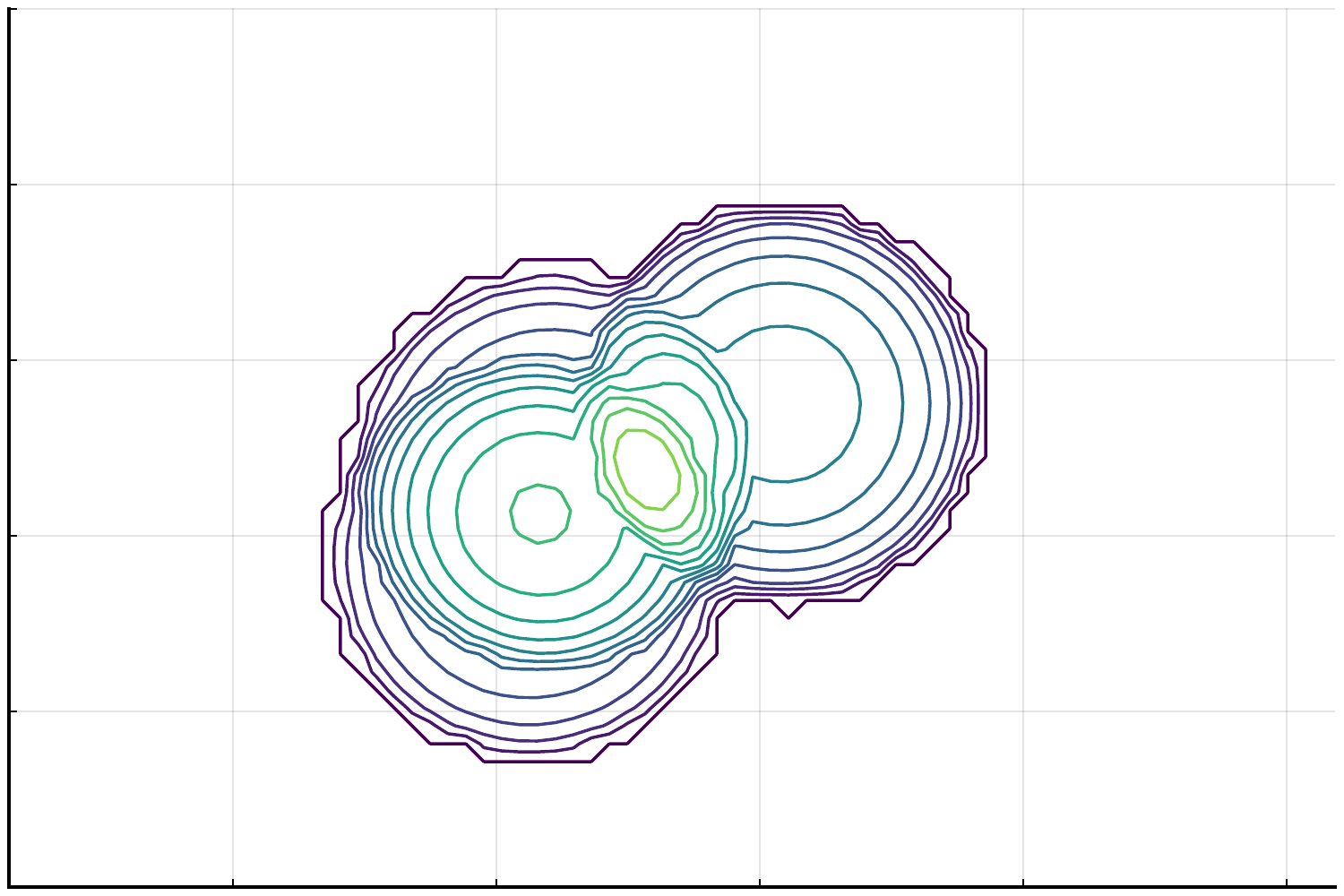} \\
         \includegraphics[width=0.18\textwidth]{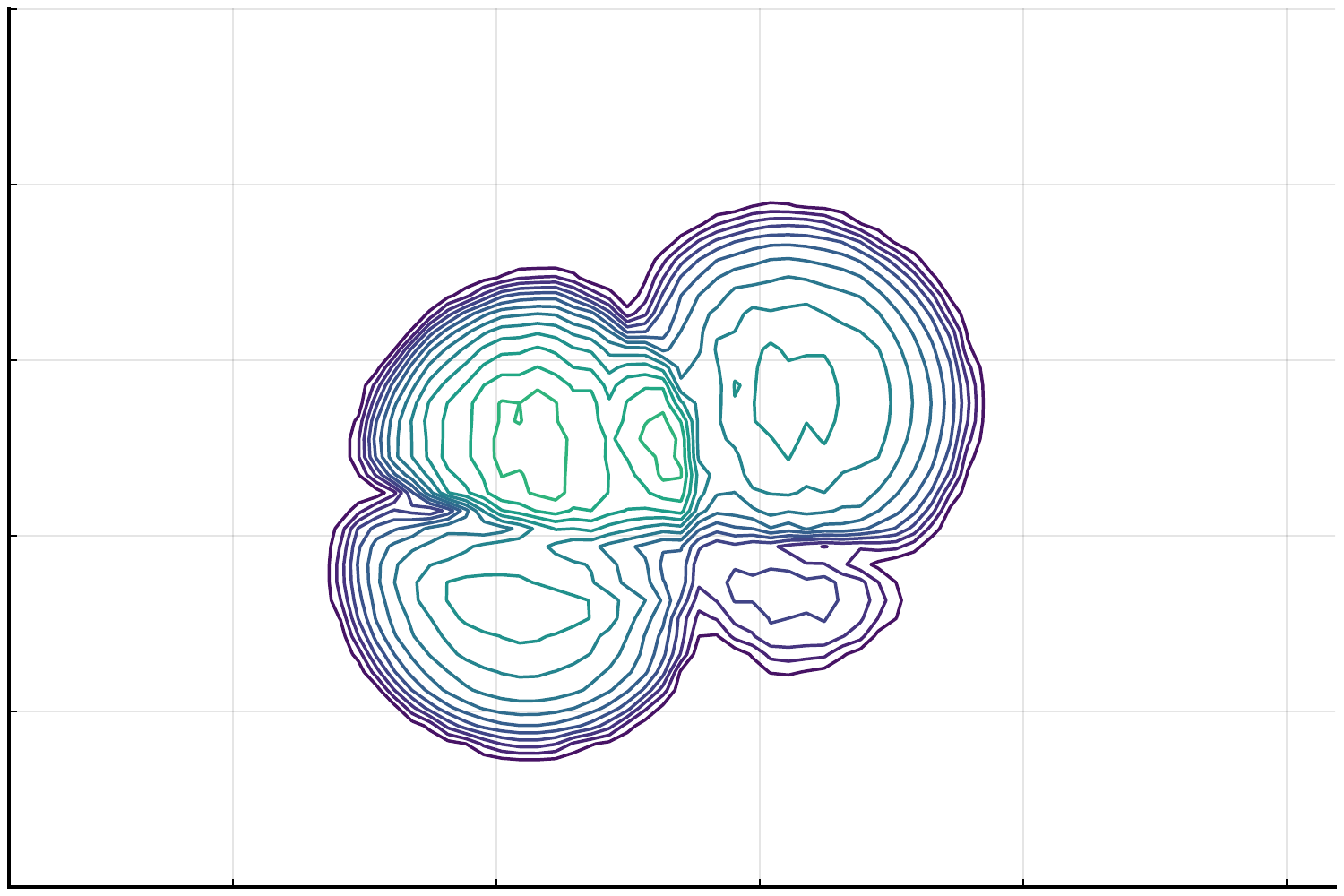}
    \end{tabular}} 
    \subfigure[$t=1$]{\begin{tabular}{@{}c@{}}
         \includegraphics[width=0.18\textwidth]{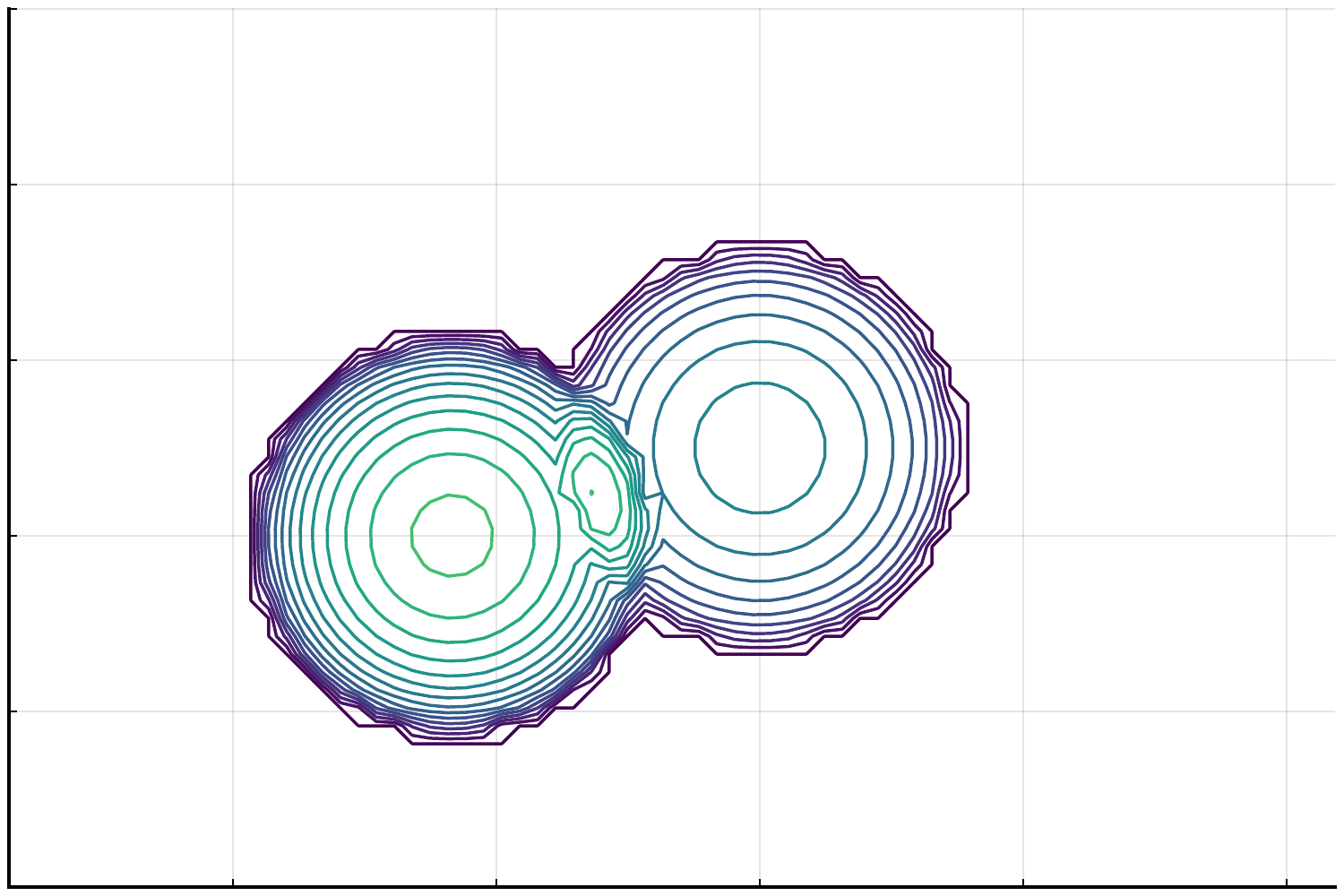} \\
         \includegraphics[width=0.18\textwidth]{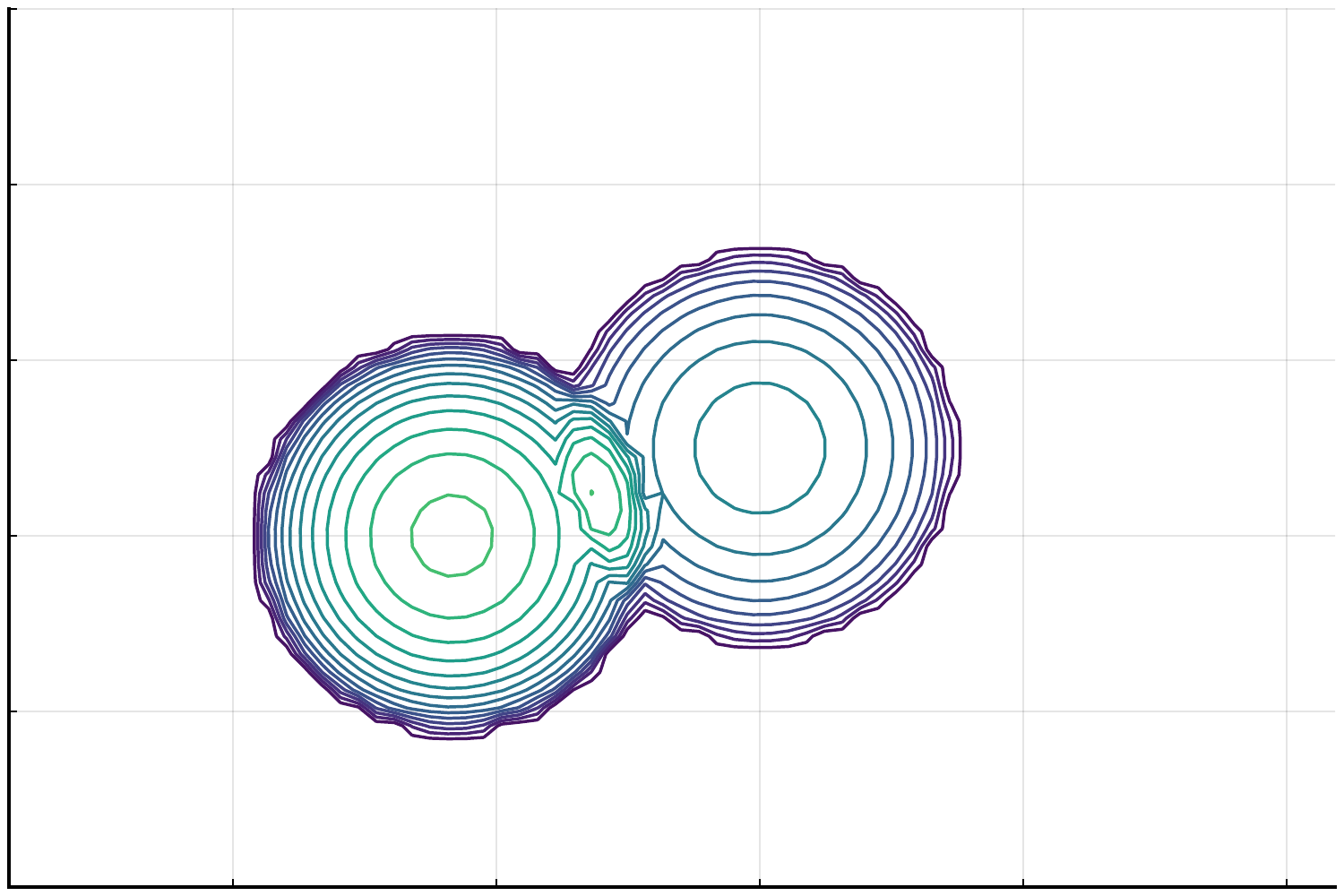}
    \end{tabular}} 
    \caption{Contour plots of $W_{2,\mathcal M}$ (top) and $W_2$ (bottom) barycenters between two mixtures of Wigner semicircle elliptic distributions.}
    \label{fig:semicircle2dmw2_contour}
\end{figure}

\subsubsection{Gamma-distribution-based atoms}

Finally, we consider atoms based on a Gamma distribution in $\R$. Since Gamma distributions are not elliptical distributions, we choose one particular gamma distribution and generate atoms from location-scatters of this particular distribution. For $\alpha,\beta>0$, the probability density function of the gamma distribution with these parameters is
\[
    \forall x\ge 0,\quad f_{\alpha,\beta}(x) =\frac{\beta^\alpha}{\Gamma(\alpha)} x^{\alpha-1} e^{-\beta x},
\]
and has mean $\alpha/\beta$ and variance $\alpha/\beta^2$, so that we choose $\alpha,\beta$ such that $\alpha/\beta^2 = 1,$ and we then define the atoms characterized by their mean $m$ and covariance $\Sigma > 0$ by 
\[
    \forall x\in\R, \quad  g_{m,\Sigma}(x) =
    \left\{
    \begin{array}{cc}
       \frac{1}{\sqrt{\Sigma}} f_{\alpha,\beta}(\Sigma^{-1/2}(x-m+\frac{\alpha}{\beta})),  & x \ge m - \frac{\alpha}{\beta} \\
       0,  &  x < m - \frac{\alpha}{\beta}.
    \end{array}
    \right.
\]

As for the Slater-type case, it is easy to prove identifiability of the mixtures by following the arguments presented in~\cite[Proposition 2]{Delon2020-wk}.

\begin{figure}[h!]
    \centering
    \subfigure[$t=0$]{\includegraphics[width=0.19\textwidth]{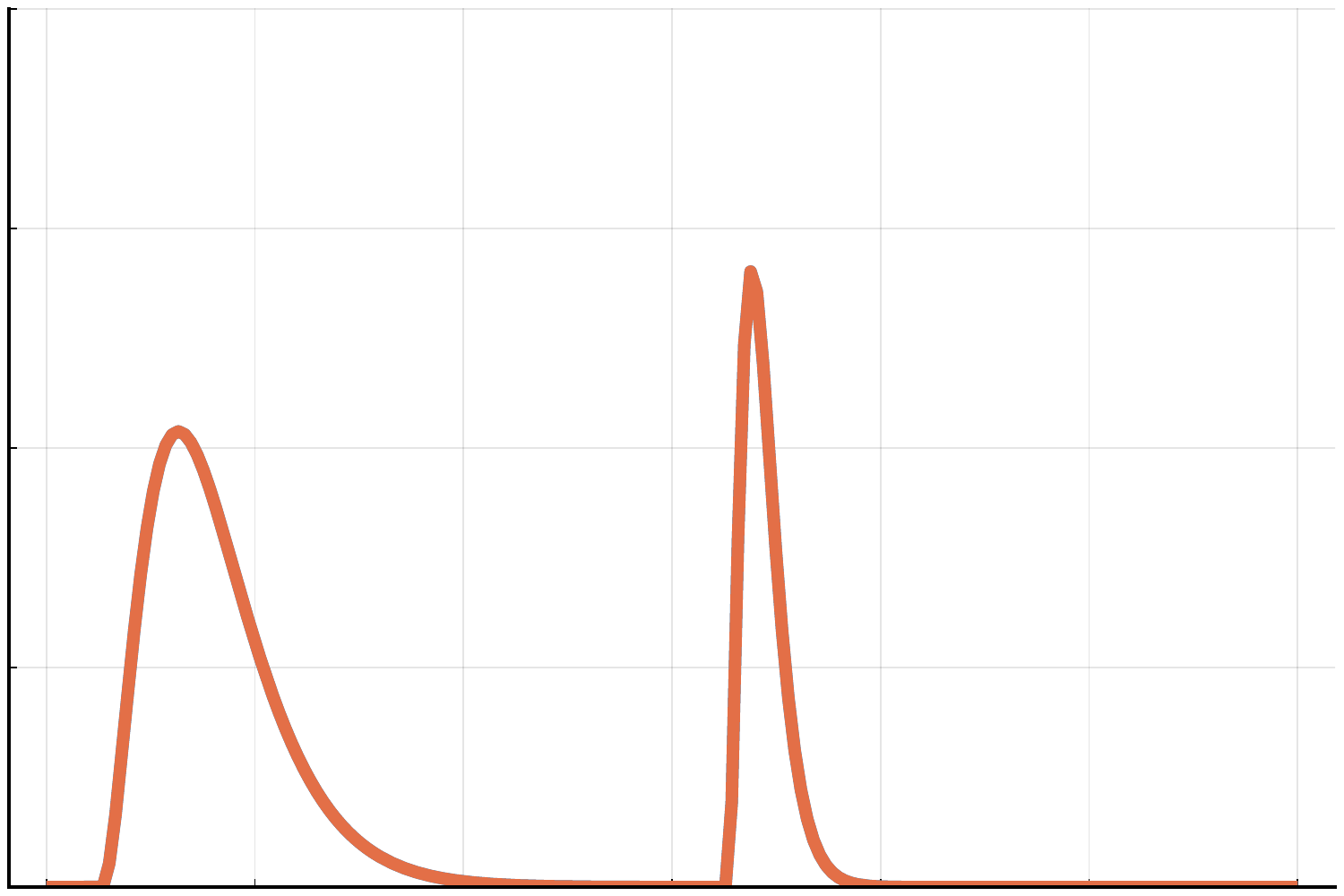}} 
    \subfigure[$t=0.25$]{\includegraphics[width=0.19\textwidth]{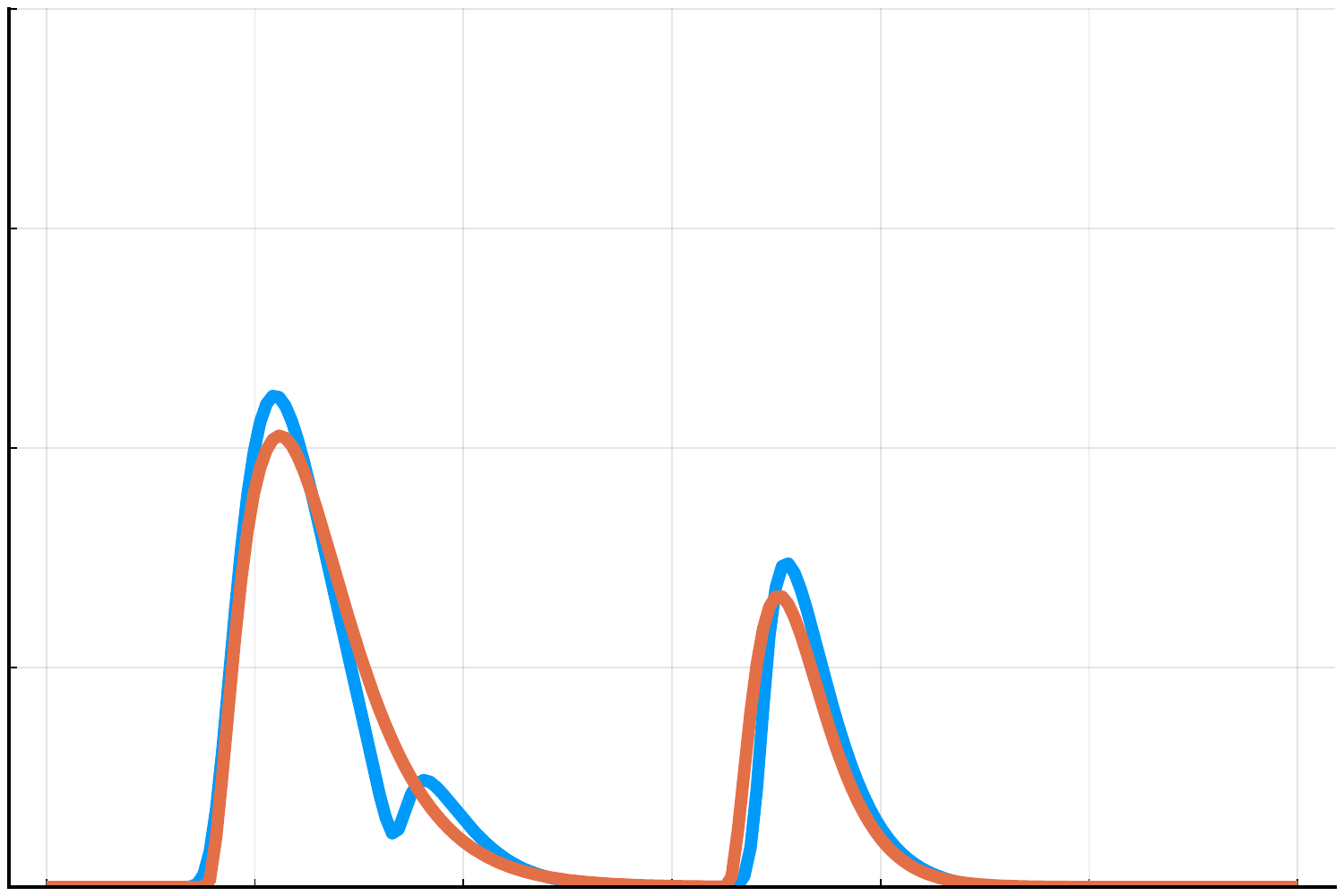}} 
    \subfigure[$t=0.5$]{\includegraphics[width=0.19\textwidth]{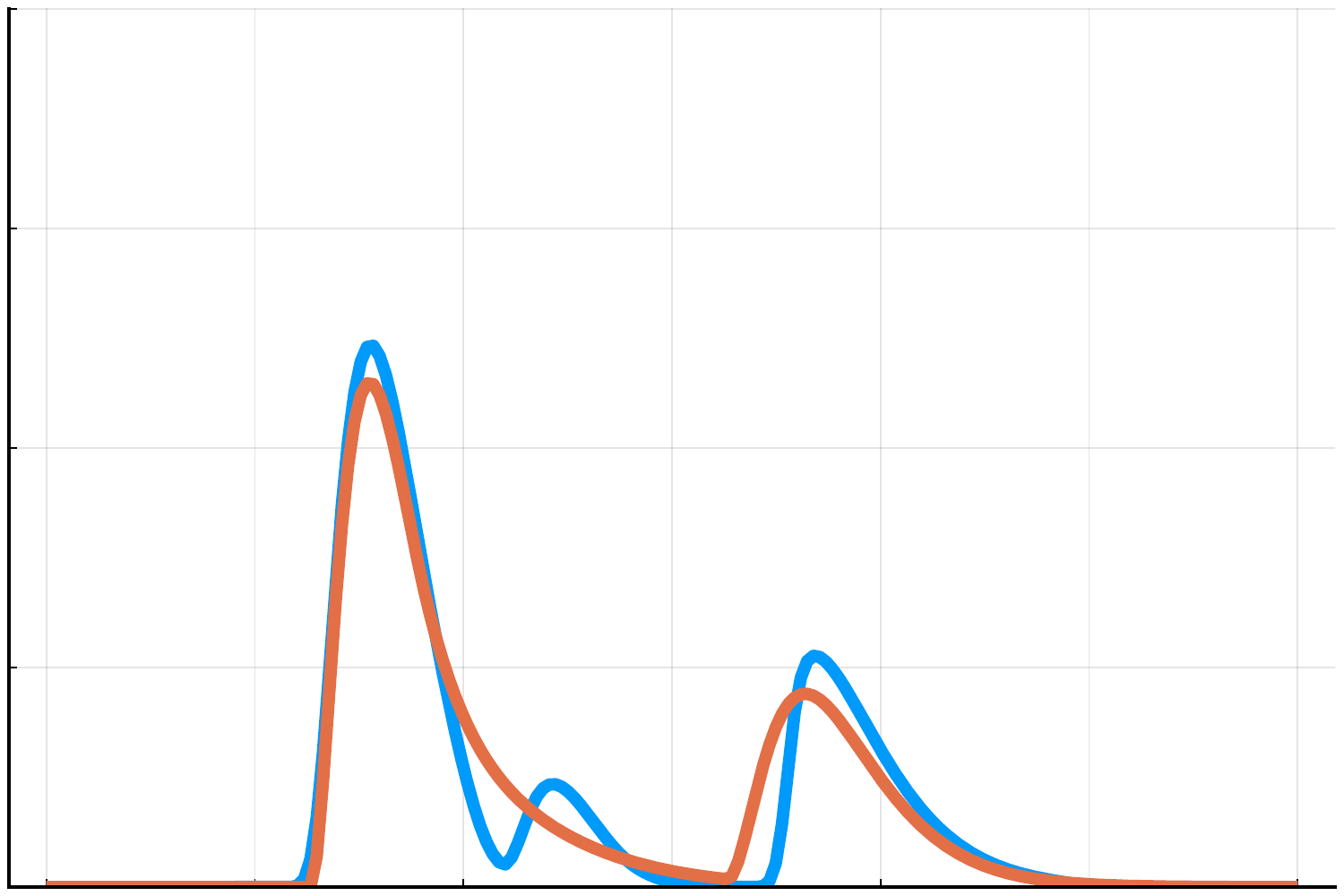}}
    \subfigure[$t=0.75$]{\includegraphics[width=0.19\textwidth]{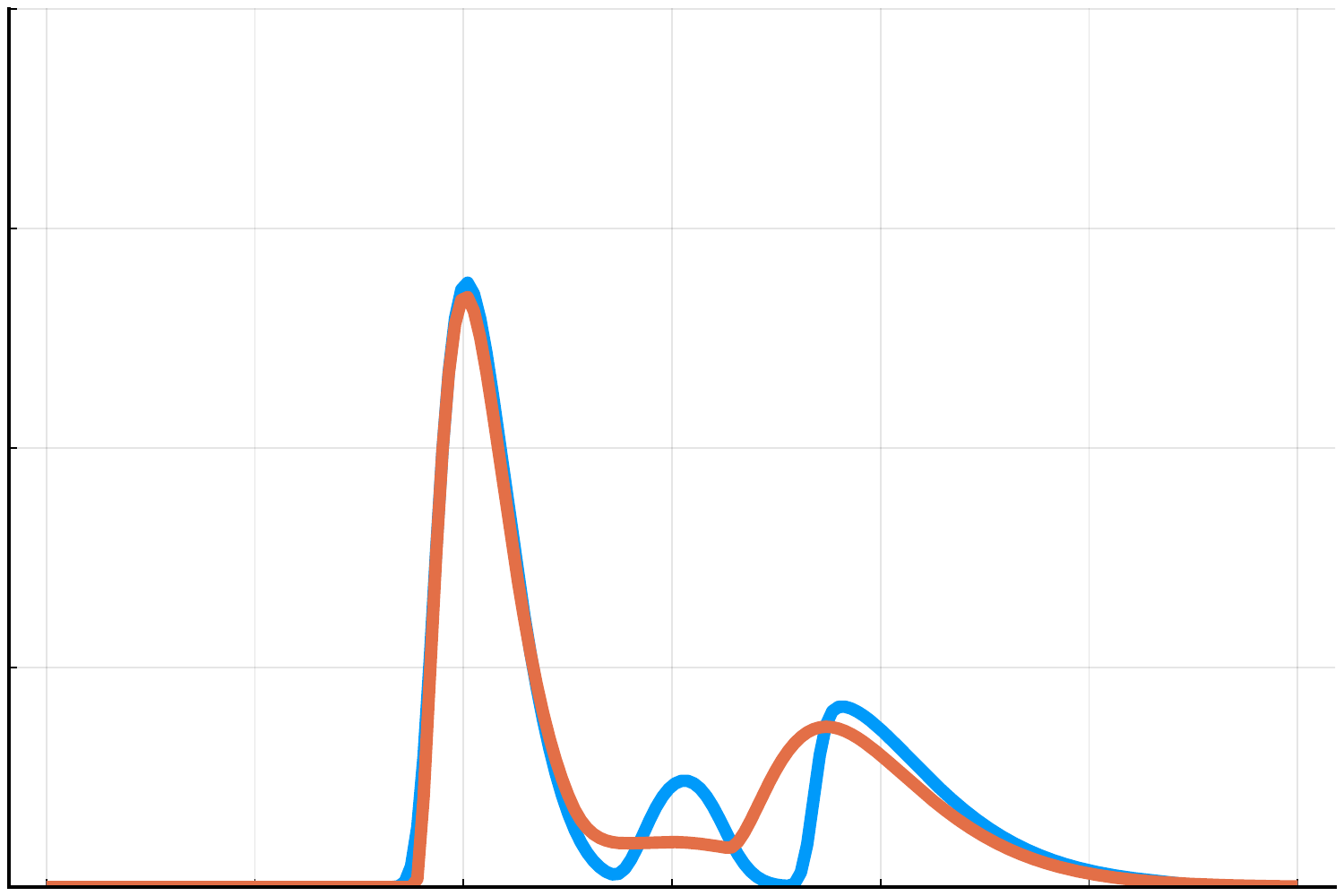}}
    \subfigure[$t=1$]{\includegraphics[width=0.19\textwidth]{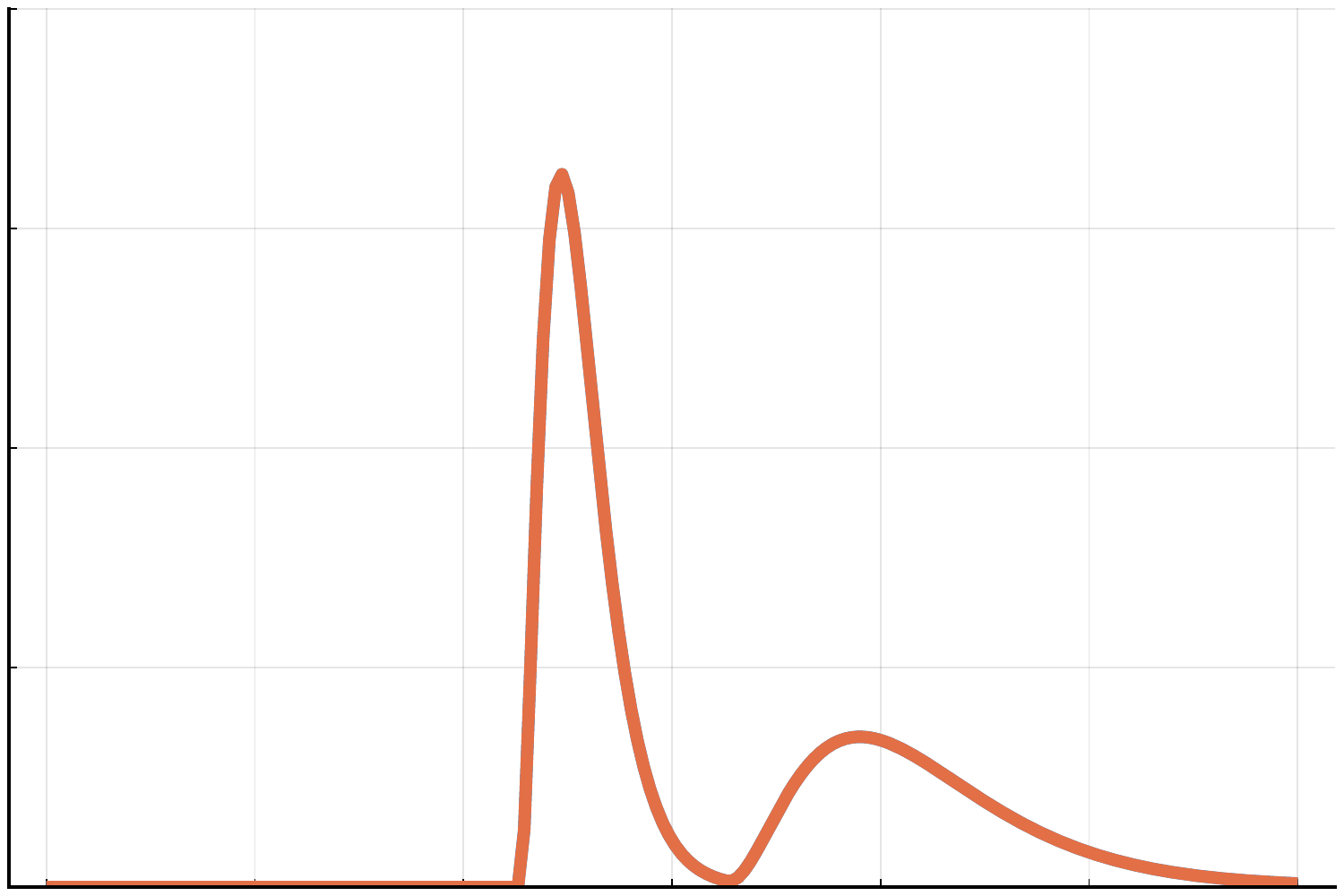}}
    \caption{Wasserstein barycenters between two mixtures of Gamma distributions for the $W_2$ metric (blue) and the $W_{2,\mathcal M}$ metric (red).}
    \label{fig:gamma1d1d}
\end{figure}

In Figure~\ref{fig:gamma1d1d}, we present the $W_2$ and $W_{2,\mathcal M}$ barycenters between two mixtures of two atoms each, where we have taken $\alpha = 3.$, $\beta=9.$ to define the atom having a variance of one. We observe that the $W_{2,\mathcal M}$ barycenter seems smoother and to have a lower middle mass movement than the $W_2$ barycenter.

\section{Symmetry group invariant measures}
\label{sec:5}

In this section, we consider dictionaries of atoms that are defined as sets of symmetric probability measures, i.e. invariant with respect to some transformation, as stated in the following definition. Here, $\Omega$ is a convex open subset (or the closure of a convex open subset) of $\mathbb{R}^d$, $p>1$ and $c:\Omega \times \Omega$ a given metric on $\Omega$.  

\begin{definition}[Invariant measure] 
\label{def:sym}
Let $S : \D \to \D$ be a measurable function from $\D$ to itself. A measure $\mu\in \mathcal{P}(\D)$ is said to be invariant with respect to $S$ if $S\#\mu = \mu$, that is if for every measurable subset $A\subset\D$, 
$\mu\left(S^{-1}(A)\right) = \mu(A).$
\end{definition}

Typical examples we have in mind are measures that are invariant with respect to permutations of the variable ordering, as in the case of marginals of squares of the wavefunction in electronic structure calculations. Another example is given by the set of measures that are invariant with more generic isometries, such as rotations, rigid-body motions, or combination thereof.

\subsection{A few properties of optimal transport plans and Wasserstein bary\-centers}\label{sec:properties}

We start by proving a few results on the symmetries of the optimal transport plans as well as Wasserstein barycenters.
First, we prove that the optimal transport plan in a multi-marginal context is symmetric. Second, we show that the Wasserstein barycenter between several symmetric measures is symmetric as well. Third, we prove some sparsity properties on the support of the optimal transport plan in the case where the symmetry is a reflection.

\medskip

We first study the symmetry of the transport plan between symmetric measures. Let $\mu_1,\dots,\mu_n$ be probability measures in 
$\mathcal{P}_p^c(\D)$ and $C: \D^n \to \mathbb{R}_+\cup\{+\infty\}$ be a multi-dimensional cost function.
We consider here the following multi-marginal optimal transport problem:  find $\gamma \in \Pi(\mu_1,\ldots,\mu_n)$ solution to 
\begin{equation}\label{eq:mmOTsym}
\mathop{\inf}_{\gamma \in \Pi(\mu_1,\ldots,\mu_n)} \int_{\D^n} C\,d\gamma. 
\end{equation}
We have the following proposition. 
\begin{proposition}
\label{prop:sym_gamma}
Let $S: \D \to \D$ be a $\mathcal C^1$-diffeomorphism such that $|{\rm det} \nabla S(x)| = 1$ for all $x\in\D$. We introduce
\begin{equation}\label{eq:defSn}
S_n: \left\{
\begin{array}{ccc}
\D^n &\to& \D^n\\
(x_1,\cdots,x_n) & \mapsto & (S(x_1), \cdots, S(x_n)).\\
\end{array}
\right.
\end{equation}
In addition, let $\mu_1,\dots,\mu_n$ be probability measures in 
$\mathcal{P}(\D)$ that are invariant under $S$. 
Let us assume that $C$ is invariant with respect to $S_n$, that is
\begin{equation}
    \label{eq:as_on_c}
    C(S_n(x_1, \ldots,x_n))= C(S(x_1), \ldots, S(x_n)) = C(x_1,\ldots,x_n), \quad \forall (x_1,\ldots, x_n)\in \D^n,
\end{equation}
and that there exists a unique solution $\gamma$ to (\ref{eq:mmOTsym}). 
Then $\gamma$ is invariant with respect to $S_n$.
\end{proposition}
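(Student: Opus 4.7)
The plan is to exploit the uniqueness of the optimal transport plan together with the push-forward construction, showing that applying $S_n$ to $\gamma$ produces another admissible plan with the same cost, which must then coincide with $\gamma$ itself.

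First I would introduce $\widetilde{\gamma} := S_n \# \gamma$ as the candidate symmetrized plan. The first key step is to check that $\widetilde{\gamma} \in \Pi(\mu_1, \ldots, \mu_n)$. Denoting by $\pi_i : \D^n \to \D$ the projection onto the $i$-th coordinate, one has $\pi_i \circ S_n = S \circ \pi_i$ by the definition~\eqref{eq:defSn} of $S_n$. Consequently the $i$-th marginal of $\widetilde{\gamma}$ is
\[
   \pi_i \# \widetilde{\gamma} = (\pi_i \circ S_n) \# \gamma = (S \circ \pi_i) \# \gamma = S \# \mu_i = \mu_i,
\]
where the last equality uses that $\mu_i$ is invariant under $S$. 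Hence $\widetilde{\gamma}$ is an admissible transport plan.

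Next I would verify that $\widetilde{\gamma}$ achieves the same cost as $\gamma$. Using the change-of-variables formula for push-forward measures together with the hypothesis~\eqref{eq:as_on_c} that $C$ is invariant under $S_n$,
\[
   \int_{\D^n} C \, d\widetilde{\gamma} = \int_{\D^n} C \circ S_n \, d\gamma = \int_{\D^n} C \, d\gamma.
\]
Therefore $\widetilde{\gamma}$ is also a minimizer of~\eqref{eq:mmOTsym}. By the assumed uniqueness of the solution, $\widetilde{\gamma} = \gamma$, i.e. $S_n \# \gamma = \gamma$, which is precisely the invariance of $\gamma$ under $S_n$.

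The argument is essentially a symmetrization trick, and there is no real obstacle once uniqueness is assumed; the assumptions that $S$ is a $\mathcal{C}^1$-diffeomorphism with unit Jacobian determinant are not directly needed for this two-line push-forward argument, but they guarantee that $S_n$ is a well-defined bi-measurable map under which the Lebesgue reference measure is preserved, which will matter when $\mu_i$ and $\gamma$ are handled via their densities in later applications (e.g. permutations or isometries as in Section~\ref{sec:5}). Without uniqueness, the same computation would only show that the set of optimizers is invariant under $\gamma \mapsto S_n \# \gamma$, from which one could still recover a symmetric minimizer by averaging over the (finite or compact) group generated by $S$, but for this proposition uniqueness makes the conclusion immediate.
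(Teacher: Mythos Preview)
Your proof is correct and follows essentially the same approach as the paper: define $\widetilde{\gamma}=S_n\#\gamma$, check that its marginals are the $\mu_i$ by invariance of each $\mu_i$ under $S$, verify equality of costs via the invariance of $C$ under $S_n$, and conclude by uniqueness. Your additional remarks on the role of the Jacobian hypothesis and on what survives without uniqueness are accurate side comments but not part of the paper's argument.
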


\begin{proof}
Let $\gamma$ be the optimal transport plan solution to~\eqref{eq:mmOTsym}. Let $\widetilde{\gamma}:= S_n \# \gamma$. Let us prove that $\widetilde{\gamma}= \gamma$. First, it is easy to see that for all $1\leq i \leq n$, $P_i\#\widetilde{\gamma}= S\# \mu_i$. Since $\mu_i$ is invariant under $S$ for all $1\leq i \leq n$, it holds that $\widetilde{\gamma} \in \Pi(\mu_1,\cdots,\mu_n)$.  

Moreover using a change of variables, we obtain from~\eqref{eq:as_on_c} that
\[
    \int_{\D^n}C(x_1,\ldots,x_n)\,d\widetilde{\gamma} =     \int_{\D^n}C(S(x_1),\ldots,S(x_n))\,d\gamma
     = \int_{\D^n}C(x_1,\ldots,x_n)\,d\gamma.
\]
Thus, $\widetilde{\gamma}$ is also an optimal transport plan between $\mu_1,\ldots,\mu_n$ which implies that $\widetilde{\gamma} = \gamma$, hence the desired result.
\end{proof}

We now prove that the $2$-Wasserstein barycenter associated to the euclidean distance between symmetric measures is symmetric. 

\begin{proposition}
\label{prop:5.3}
Let us assume that $\D$ is convex and that for all $x,y\in \D$, $c(x,y)= \|x-y\|$. Let $S: \D \to \D$ be a measurable map such that for all $x,y\in \D$, 
$c(S(x), S(y)) = c(x,y)$ and such that for all $\boldsymbol{t} = (t_i)_{1\leq i \leq n}\in \mathcal L_n$ 
and all $x_1,\ldots,x_n\in \D$, 
$S\left( \sum_{i=1}^n t_i x_i \right) = \sum_{i=1}^n t_i S(x_i)$.
 Let $\mu_1,\dots,\mu_n$ be probability measures in $\mathcal{P}_p^c(\D)$ that are invariant under $S$. Let $\boldsymbol{t} = (t_1,\ldots,t_n)\in \mathcal L_n$ and let us assume that there exists a unique optimal transport plan solution to (\ref{eq:mmOTsym}) with 
 \begin{equation}
     \label{eq:Cdef}
      C(x_1,\ldots,x_n) := \sum_{1\leq i ,j \leq n} t_i t_j c(x_i ,x_j)^p.
 \end{equation}
 Let $\nu$ be the $p$-Wasserstein barycenter between the measures $\mu_1,\dots,\mu_n$ with weigths $\boldsymbol{t}$.
Then $\nu$ is also invariant under $S$. 
\end{proposition}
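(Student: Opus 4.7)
The plan is to combine Proposition~\ref{prop:sym_gamma} (symmetry of the multi-marginal optimal plan) with the characterization of the Wasserstein barycenter as the push-forward of such a plan under the affine averaging map $B:\D^n\to\D$, $B(x_1,\ldots,x_n):=\sum_{i=1}^n t_i x_i$, recalled in Section~\ref{sec:3}.

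First, I would verify that the hypotheses of Proposition~\ref{prop:sym_gamma} are met with the cost $C$ of~\eqref{eq:Cdef}. The isometry assumption $c(S(x),S(y))=c(x,y)$ yields
\[
C(S_n(x_1,\ldots,x_n)) = \sum_{1\le i,j\le n} t_i t_j\, c(S(x_i),S(x_j))^p = \sum_{1\le i,j\le n} t_i t_j\, c(x_i,x_j)^p = C(x_1,\ldots,x_n),
\]
so~\eqref{eq:as_on_c} holds. Although the statement of Proposition~\ref{prop:sym_gamma} additionally asks $S$ to be a $\mathcal{C}^1$-diffeomorphism with unit Jacobian, its proof only invokes the push-forward change of variables formula, which is valid for any measurable $S$; hence the conclusion $S_n\#\gamma=\gamma$ still applies to the unique optimizer $\gamma$ of~\eqref{eq:mmOTsym} in the present setting. (Alternatively, one could reproduce the three-line argument of Proposition~\ref{prop:sym_gamma} here, bypassing the regularity gap.)

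Second, I would invoke the representation recalled in Section~\ref{sec:3}: since $\D$ is convex, the cost is the euclidean one, and~\eqref{eq:mmOTsym} admits a unique minimizer $\gamma$, the unique barycenter is $\nu=B\#\gamma$. Third, I would use the affine property of $S$ to establish the intertwining identity $B\circ S_n=S\circ B$: for every $(x_1,\ldots,x_n)\in\D^n$,
\[
B(S_n(x_1,\ldots,x_n)) = \sum_{i=1}^n t_i\,S(x_i) = S\!\left(\sum_{i=1}^n t_i x_i\right) = S(B(x_1,\ldots,x_n)).
\]
Combining these ingredients,
\[
S\#\nu = S\#(B\#\gamma) = (S\circ B)\#\gamma = (B\circ S_n)\#\gamma = B\#(S_n\#\gamma) = B\#\gamma = \nu,
\]
which is the desired $S$-invariance of $\nu$.

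The main subtlety, in my view, is the compatibility check between the regularity hypothesis carried by Proposition~\ref{prop:sym_gamma} and the merely measurable $S$ allowed in the present statement; once this technical point is settled, the proof reduces to a purely formal manipulation of push-forwards combined with the commutation of $S$ with convex combinations. I would not expect any deeper obstacle: the whole mechanism is that both ``taking the barycenter'' (through $B$) and ``applying the symmetry'' (through $S$) are equivariant operations that commute with each other thanks to the affine hypothesis on $S$.
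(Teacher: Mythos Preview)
Your proposal is correct and follows essentially the same approach as the paper: both invoke Proposition~\ref{prop:sym_gamma} to obtain $S_n\#\gamma=\gamma$, use the representation $\nu=B\#\gamma$ (the paper writes $T$ for your $B$), and exploit the intertwining $S\circ B=B\circ S_n$ coming from the affine hypothesis. The only cosmetic difference is that the paper phrases the argument via test functions while you work directly with push-forwards; your observation about the superfluous regularity hypothesis in Proposition~\ref{prop:sym_gamma} is also accurate, and the paper's proof implicitly relies on the same fact.
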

\begin{proof}
It suffices to show that for all real-valued measurable functions $f:\D \to \R$, there holds
\begin{equation}
    \int_{\D} f(x) \,d\nu(x)=\int_{\D} f(S(x)) \,d\nu(x).
    \label{eq:sym_barycentre_mesurable}
\end{equation}
Let $f$ be a measurable real-valued function defined on $\D$. 
From~\cite{agueh2011barycenters}, there holds $\nu=T\# \gamma$ with $$
T(x_1,\dots,x_n)=\sum_{i=1}^{n}t_ix_i, \quad \forall (x_1,\cdots,x_n)\in \D^n,
$$ 
and $\gamma$ the unique solution to (\ref{eq:mmOTsym}) associated to the cost function $C$ defined in~\eqref{eq:Cdef}.
 
Thus
\begin{equation}
    \label{eq:eq_fT}
    \int_{\D} f(x) \, d\nu(x)=\int_{\D^n} f(T(x_1,\dots,x_n))\,d\gamma(x_1,\dots,x_n),
\end{equation}
and 
\[
\int_{\D} f(S(x)) \, d\nu(x)=\int_{\D^n} f(S(T(x_1,\dots,x_n)))\,d\gamma(x_1,\dots,x_n).
\]
Using the assumption on $S$ and the fact that $\sum_{i=1}^n t_i = 1$, we obtain
$$
S(T(x_1,\dots,x_n)) = T(S(x_1), \cdots, S(x_n)), \quad \forall (x_1,\cdots,x_n)\in \D^n.
$$
As a consequence, noting that the assumptions on $S$ and Proposition~\ref{prop:sym_gamma} show that $\gamma$ is invariant under $S_n$ where $S_n$ is the map defined by~\eqref{eq:defSn}, and using the assumption on the cost function~\eqref{eq:as_on_c},
\begin{align*}
\int_{\D} f(S(x)) \, d\nu(x)
& =\int_{\D^n} f(T(S(x_1),\dots,S(x_n)))\,d\gamma(x_1,\dots,x_n)\\
& = \int_{\D^n} 
 f(T(S_n(x_1,\dots,x_n)))\,d\gamma(x_1,\dots,x_n) \\
& = \int_{\D^n}  f(T(x_1,\dots,x_n))\,d\gamma(x_1,\dots,x_n).
\end{align*}
Using~\eqref{eq:eq_fT} finishes the proof.
\end{proof}

Finally, we prove that when the considered symmetry is a reflection and the domain can be split into two parts such that $S(\D_1) = \D_2$ and $S(\D_2) = \D_1$, the optimal transport plan between symmetric measures is zero on many parts of the domain $\D^n.$ Indeed, the only parts of the domain where the optimal transport plan 
is nonzero are $\D_1^n$ and $\D_2^n$.

\begin{proposition}
Let $\mu_1,\dots,\mu_n$ be probability measures in $\mathcal{P}_p^c(\D)$ invariant under a map $S:\D \to \D$. 
We assume that $S$ is a reflection (i.e. $S$ is an isometry in the sense that $c(S(x),S(y)) = c(x,y)$ for all $x,y\in \D$ and such that $S^2 = Id$) and that there exist $\D_1,\D_2$ such that $\D_1\cap \D_2 = \emptyset$, $\D_1 \cup \D_2 = \D$, and that $S(\D_1) = \D_2$, $S(\D_2) = \D_1$. 
We also assume that for all $(x,y)\in (\D_1\times \D_1) \cup (\D_2\times \D_2)$,
\[
    c(x,y) \le c( S(x), y).
\]
Let $\boldsymbol{t} = (t_1,\ldots,t_n)\in \mathcal L_n$. Let us assume that there exists a unique optimal transport plan $\gamma \in \Pi(\mu_1,\dots,\mu_n)$ solution to (\ref{eq:mmOTsym}) with $C$ defined in~\eqref{eq:Cdef}.
Then, the support of $\gamma$ is included in  $\D_1^n\cup \D_2^n$.
\end{proposition}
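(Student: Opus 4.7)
The plan is to exhibit a competitor transport plan $\widetilde\gamma \in \Pi(\mu_1,\ldots,\mu_n)$ supported in $\D_1^n \cup \D_2^n$ with $\int_{\D^n} C\,d\widetilde\gamma \le \int_{\D^n} C\,d\gamma$, and then invoke the assumed uniqueness of the minimizer of~\eqref{eq:mmOTsym}. For each $J \subset \{1,\ldots,n\}$, introduce
\[
A_J := \{x = (x_1,\dots,x_n) \in \D^n : x_i \in \D_1 \iff i \in J\},
\]
so that $(A_J)_{J \subset \{1,\ldots,n\}}$ partitions $\D^n$, with $A_{\{1,\dots,n\}} = \D_1^n$ and $A_\emptyset = \D_2^n$. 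From $S\#\mu_i = \mu_i$, $S^2 = \mathrm{Id}$ and $S(\D_1) = \D_2$ one readily deduces that $S\#(\mu_i|_{\D_2}) = \mu_i|_{\D_1}$.

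For each $J$, define folding maps $T_J^{(1)}, T_J^{(2)} : A_J \to \D^n$ by
\[
T_J^{(1)}(x)_i := \begin{cases} x_i, & i \in J, \\ S(x_i), & i \notin J, \end{cases}
\qquad T_J^{(2)} := S_n \circ T_J^{(1)},
\]
with $S_n$ as in~\eqref{eq:defSn}; then $T_J^{(1)}(A_J) \subset \D_1^n$ and $T_J^{(2)}(A_J) \subset \D_2^n$. Set
\[
\widetilde{\gamma} := \frac{1}{2} \sum_J \left[ (T_J^{(1)})\#(\gamma|_{A_J}) + (T_J^{(2)})\#(\gamma|_{A_J}) \right],
\]
which is supported in $\D_1^n \cup \D_2^n$ by construction. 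To check $\widetilde\gamma \in \Pi(\mu_1,\dots,\mu_n)$, split the contribution to the $i$-th marginal of the $T_J^{(1)}$-sum according to whether $i \in J$ or $i \notin J$: summing over $J$ this equals $\mu_i|_{\D_1} + S\#(\mu_i|_{\D_2}) = 2\mu_i|_{\D_1}$; symmetrically the $T_J^{(2)}$-sum contributes $2\mu_i|_{\D_2}$, so after the $\tfrac{1}{2}$ factor the total $i$-th marginal of $\widetilde\gamma$ is $\mu_i|_{\D_1} + \mu_i|_{\D_2} = \mu_i$.

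For the cost comparison, fix $x \in A_J$ and set $y := T_J^{(1)}(x)$. Split $\sum_{i,j} t_i t_j [c(x_i,x_j)^p - c(y_i,y_j)^p]$ according to whether $(i,j)$ lies in $J\times J$, $J^c\times J^c$, $J\times J^c$ or $J^c\times J$. The first two blocks vanish since $c$ is an $S$-isometry, giving $c(S(x_i), S(x_j)) = c(x_i, x_j)$. For a cross-term with $i \in J, j \in J^c$ one has $x_i \in \D_1, x_j \in \D_2$ and hence $(x_i, S(x_j)) \in \D_1 \times \D_1$; the standing hypothesis combined with the isometry gives
\[
c(x_i, S(x_j)) \le c(S(x_i), S(x_j)) = c(x_i, x_j),
\]
and the symmetric cross-block is handled identically. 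Hence $C(T_J^{(1)}(x)) \le C(x)$, and by $S_n$-invariance of $C$ (itself a direct consequence of the isometry property of $S$) also $C(T_J^{(2)}(x)) = C(T_J^{(1)}(x)) \le C(x)$.

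Integrating these pointwise bounds over $A_J$ against $\gamma$ and summing in $J$ yields $\int_{\D^n} C\,d\widetilde\gamma \le \int_{\D^n} C\,d\gamma$, so $\widetilde\gamma$ is itself a minimizer of~\eqref{eq:mmOTsym}. By the assumed uniqueness, $\widetilde\gamma = \gamma$, and since $\widetilde\gamma$ is supported in $\D_1^n \cup \D_2^n$, so is $\gamma$. I expect the main obstacle to be the design of $\widetilde\gamma$: naively folding every $\gamma|_{A_J}$ into $\D_1^n$ alone would double the $\D_1$-portion of each marginal, and the symmetric averaging between $T_J^{(1)}$ (targeting $\D_1^n$) and $T_J^{(2)}$ (targeting $\D_2^n$) is exactly what restores the correct marginals while preserving the pointwise cost inequality.
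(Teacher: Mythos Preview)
Your proof is correct and follows essentially the same strategy as the paper: construct a competitor $\widetilde\gamma$ supported in $\D_1^n\cup\D_2^n$ by folding each piece of $\gamma$ into $\D_1^n$ and $\D_2^n$ via coordinatewise applications of $S$, check the marginals, and compare costs using the reflection hypothesis. The only difference is bookkeeping: the paper indexes the construction by the $2^n$ maps $S_{\bm i}$ (applying $S^{i_k}$ in each coordinate) and then restricts to $\D_1^n\cup\D_2^n$, whereas you first partition $\D^n$ into the cells $A_J$ and then push each $\gamma|_{A_J}$ forward by the two specific maps $T_J^{(1)},T_J^{(2)}$ landing in $\D_1^n$ and $\D_2^n$; these are two parametrizations of the same measure, and the marginal and cost verifications are the same computations organized slightly differently.
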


\begin{proof}
For any $\bm{i} = (i_1, i_2,\ldots,i_n)\in \{0,1\}^n$, denoting by $\bm{x} = (x_1,\ldots, x_n) \in \Omega^n$, we define the map
\begin{equation}
    \label{eq:Si1in}
    S_{\bm{i}}: \left\{
\begin{array}{ccc}
\D^n & \to & \D^n \\
\bm{x} & \mapsto & (S^{i_1}(x_1), S^{i_2}(x_2), \ldots, S^{i_n}(x_n)),\\
\end{array}
\right.
\end{equation}
so that $S_{i_1,i_2,\ldots,i_n}^2 = {\rm Id}$. We define a non-negative measure $\widetilde{\gamma}$ on $\D^n$ as follows: for any measurable subset $B\subset \D^n$, we define
\begin{equation}
    \label{eq:tildegamma}
    \widetilde{\gamma}(B) = \frac{1}{2}\sum_{\bm{i}\in \{0,1 \}^n} \left(S_{\bm{i}}\# \gamma\right)(B \cap (\D_1^n \cup \D_2^n)).
\end{equation}
By construction, the support of $\tilde \gamma$ is included in $\D_1^n \cup \D_2^n$. Let us prove that $ \tilde \gamma $ is an optimal Wasserstein transport plan between $ \mu_1,\dots, \mu_n $. 
To this aim, we first show that $\tilde\gamma$ has for marginals $ \mu_1,\dots, \mu_n,$ starting with the first marginal, the others being dealt with similarly.
Let $f:\D \to \mathbb{R}$ be a measurable map and define by $F: \D^n \to \mathbb{R}$ the function such that $F(\bm{x}) = f(x_1)$ for all $\bm{x}\in \D^n$. 
Using~\eqref{eq:tildegamma} we obtain
\begin{align*}
    \int_{\D^{n}} f(x_1)\,d\tilde{\gamma}(\bm{x})   =   \int_{\D^{n}} F(\bm{x})\,d\tilde{\gamma}(\bm{x})  = \frac{1}{2}\sum_{\bm{i}\in \{0,1 \}^n} \int_{\D_1^{n} \cup \D_2^n}  F(\bm{x}) \,d(S_{\bm{i}}\#\gamma)(\bm{x}).
\end{align*}
Introducing functions $S^{i_k}$ in the arguments of the function $F$ which do not change the values of $F$ for $k$ from 2 to $n$, and then using a change of variables and writing explicitly the sum on $i_1$, we obtain
\begin{align*}
    \int_{\D^{n}} f(x_1)\,d\tilde{\gamma}(\bm{x})
   & = \;  
 \frac{1}{2}\sum_{\bm{i}\in \{0,1 \}^n} \int_{\D_1^{n} \cup \D_2^n}  F(x_1,S^{i_2}(x_2), \ldots, S^{i_n}(x_n)) \,d(S_{\bm{i}}\#\gamma)(\bm{x})    \\
    & = \;  
 \frac{1}{2}\sum_{ i_2,\cdots,i_n=0}^1 \int_{S_{0, i_2,\ldots,i_n}(\D_1^{n} \cup \D_2^n)}  F(\bm{x}) \,d\gamma(\bm{x})   \\
 & \;  +  \frac{1}{2}\sum_{ i_2,\ldots, i_n = 0}^1 \int_{S_{1, i_2,\ldots,i_n}\left(\D_1^n \cup \D_2^{n}\right)}  F(S(x_1),x_2,\ldots,x_n)\,d\gamma(\bm{x}). 
\end{align*}
Using properties of $S$ leads to
 \begin{align*}
 \int_{\D^{n}} f(x_1)\,d\tilde{\gamma}(\bm{x})
   = \; & 
  \frac{1}{2}\int_{ (\D_1 \cup \D_2) \times \D^{n-1} }  \left[ F(\bm{x})  +  F(S(x_1),x_2,\ldots,x_n) \right]\,d\gamma(\bm{x})  \\
   = \; &  
  \frac{1}{2}\int_{ \D  }  f(x_1)\,d\mu_1(x_1)  + \frac{1}{2}  \int_{\D}  f(S(x_1))\,d\mu_1(x_1).  
\end{align*}
Noting that $f$ is symmetric under $S$, we obtain that the first marginal of $\widetilde{\gamma}$ is $\mu_1$. The proof for the other marginals are similar. Hence $\widetilde{\gamma} \in \Pi(\mu_1, \ldots, \mu_n)$. 
\medskip
Now, let us prove that $\tilde \gamma$ is an optimal Wasserstein transport plan. Indeed, noting that
\[
c(S(x), S(y)) = c(x,y) \; \mbox{ if } (x,y)\in (\D_1 \times \D_1) \cup (\D_2 \times \D_2)
\] 
and 
\[
c(S(x),y) \leq c(x,y) \mbox{ if } (x,y)\in (\D_1 \times \D_2) \cup (\D_2 \times \D_1),
\]
there holds

 \begin{align*}
      \int_{\D^n} \sum_{1\leq k,l \leq n} 
     t_k t_l & c(x_k,x_l)^p\,d\tilde\gamma(\bm{x}) 
      =
    \frac{1}{2} \sum_{\bm{i}\in \{0,1 \}^n} \int_{\D_1^n \cup \D_2^n}
    \sum_{1\leq k,l\leq n }t_k t_l c(x_k,x_l)^p d( S_{\bm{i}}\#\gamma)(\bm{x}) \\
  & =
    \frac{1}{2} \sum_{\bm{i}\in \{0,1 \}^n} \int_{S_{\bm{i}}(\D_1^n \cup \D_2^n)}
    \sum_{1\leq k,l\leq n }t_k t_l c(S^{i_k}(x_k),S^{i_l}(x_l))^p\,d\gamma(\bm{x}) \\
      & \leq
    \frac{1}{2} \sum_{\bm{i}\in \{0,1 \}^n} \int_{S_{\bm{i}}(\D_1^n \cup \D_2^n)}
    \sum_{1\leq k,l\leq n }t_k t_l c(x_k,x_l)^p\,d\gamma(\bm{x}) \\
    & = 
 \int_{\D^n}
    \sum_{1\leq k,l\leq n }t_k t_l c(x_k,x_l)^p\,d\gamma(\bm{x}).
 \end{align*}
Therefore, $\gamma$ being a Wasserstein optimal transport plan between $\mu_1, \ldots, \mu_n$, so is $\tilde\gamma$. The desired result then follows from the uniqueness of the optimal transport plan.
\end{proof}

\subsection{Mixture distance for group invariant measures: general case}

We now introduce dictionaries of symmetric atoms in order to define a mixture distance on symmetric measures.
Let $G$ be a finite or compact group acting on $\mathcal P_2(\Omega)$ through a group action  denoted by $\cdot$. 
 We denote by $H$ the normalised Haar measure on $G$. Note that for finite groups, this measure corresponds to Dirac masses on the elements of the group with equal weight that is the inverse of the cardinal of the group.

Let $\A$ be a set of atoms such that for all $a\in \A$ and all $g\in G$, $g\cdot a \in \A$. We define $\A_{\rm sym}$ as the set of symmetric measures defined from $\A$ as follows:
\begin{equation}
   \label{eq:Asym}
   \A_{\rm sym} = \left\{  \int_G  g \cdot a \; H(dg) , \quad a \in \A \right\}.
\end{equation}
For any $a\in\A$, we denote by
\[
   S(a) := \int_G  g \cdot a \; H(dg).
\]
Note that, for any $ g\in G$ and any $a\in \A$, $S(g \cdot a) = S(a)$.
This leads us to introduce a metric on $\A_{\rm sym}$. 
In order to impose uniqueness up to the group action, we make the following assumption. 

\begin{assumption}
\label{as:equality_group}
    For any $a_1,a_2\in\A$, there holds $S(a_1) = S(a_2)$ if and only if there exists $g\in G$ such that $a_1 = g \cdot a_2$.
\end{assumption}

This assumption is well-adapted to atoms generating identifiable mixtures, but not satisfied in general. Let us give a toy example, taking $\Omega = \left(-\frac{1}{2}, \frac{1}{2}\right)$ and the two-element group $(e, \tilde e)$. Let us consider the following group action: for all $a\in \mathcal P_2(\Omega)$,

\begin{equation}
    \label{eq:parity_group}
     e \cdot a = a, \quad  \tilde e \cdot a = (-{\rm Id} )\# a.
\end{equation}
If the dictionary of atoms $\A$ is chosen so that (i) for all $a\in \A$, $ (-{\rm Id} )\# a \in \A$ and (ii) the set mixtures $\M(\A)$ is identifiable, it is easy to show that Assumption~\ref{as:equality_group} is satisfied. However, taking $a_0(\,dx) = 2 \mathbb{1}_{[-1/2,0]}(x)\,dx$ and $a_1(\,dx) = \frac{2}{1+e^{-x}}\,dx$, there holds $\frac{1}{2}a_0 +\frac{1}{2} (-{\rm Id} )\# a_0 = \frac{1}{2}a_1 + \frac{1}{2}(-{\rm Id} )\# a_1$, while it does not hold that $a_1 = a_0$ or $a_1 = (-{\rm Id} )\# a_0$.

A metric on the set of symmetric measures $\A_{\rm sym}$ is defined as follows.

\begin{proposition}
   \label{prop:sym_metric}
   Let $d:\A\times\A\rightarrow \R^+$ be a metric such that for all $a_0,a_1\in \A$ and all $g\in G$, 
   \begin{equation}
      \label{eq:d_sym_prop}
      d(g \cdot a_0,g \cdot a_1) = d(a_0,a_1).
   \end{equation}
Then, the map $\bar d:\Asym \times \Asym \rightarrow \R^+$ defined by
\begin{equation}
    \label{eq:symm_distance}
        \forall a_0, a_1\in\A,\quad \bar d(\bar a_0,\bar a_1) = \inf_{g \in G} d( a_0, g \cdot a_1), \quad \text{ where } \bar a_0 = S(a_0), \; 
    \bar a_1 = S(a_1),
\end{equation}
is a metric on $\A_{\rm sym}$.
\end{proposition}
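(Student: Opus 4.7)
The plan is to verify the four axioms of a metric for $\bar d$, after first establishing that $\bar d$ is well-defined on equivalence classes. The key input is Assumption~\ref{as:equality_group}, which identifies the fibers of the map $S$ with $G$-orbits, together with the $G$-invariance property~\eqref{eq:d_sym_prop} of $d$.

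First I would check that $\bar d$ does not depend on the choice of representatives. If $\bar a_0 = S(a_0) = S(a_0')$ and $\bar a_1 = S(a_1) = S(a_1')$, Assumption~\ref{as:equality_group} gives $g_0, g_1 \in G$ with $a_0 = g_0 \cdot a_0'$ and $a_1 = g_1 \cdot a_1'$. Then using~\eqref{eq:d_sym_prop},
\[
  d(a_0, g \cdot a_1) = d(g_0 \cdot a_0', g g_1 \cdot a_1') = d(a_0', g_0^{-1} g g_1 \cdot a_1'),
\]
and since $g \mapsto g_0^{-1} g g_1$ is a bijection on $G$, taking $\inf_{g\in G}$ on both sides yields the same value. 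Hence $\bar d$ is well-defined on $\Asym \times \Asym$.

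Next come the metric axioms. Nonnegativity is immediate. For symmetry, the same substitution gives $d(a_0, g\cdot a_1) = d(g^{-1}\cdot a_0, a_1) = d(a_1, g^{-1}\cdot a_0)$, and as $g$ ranges over $G$ so does $g^{-1}$, whence $\bar d(\bar a_0, \bar a_1) = \bar d(\bar a_1, \bar a_0)$. For the triangle inequality, I would pick arbitrary $g_1, g_2 \in G$ and write
\begin{align*}
  d(a_0, g_1 g_2 \cdot a_2)
  &\le d(a_0, g_1 \cdot a_1) + d(g_1 \cdot a_1, g_1 g_2 \cdot a_2)\\
  &= d(a_0, g_1 \cdot a_1) + d(a_1, g_2 \cdot a_2),
\end{align*}
using the triangle inequality for $d$ and then~\eqref{eq:d_sym_prop}. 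Bounding the left-hand side by $\bar d(\bar a_0, \bar a_2)$ and taking infima independently over $g_1$ and $g_2$ on the right gives the desired inequality.

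The step I expect to require the most care is the identity of indiscernibles, in particular the implication $\bar d(\bar a_0, \bar a_1) = 0 \Rightarrow \bar a_0 = \bar a_1$. If $G$ is finite, the infimum is attained at some $g^* \in G$, so $d(a_0, g^* \cdot a_1) = 0$ forces $a_0 = g^* \cdot a_1$, and applying $S$ gives $\bar a_0 = S(a_0) = S(g^* \cdot a_1) = S(a_1) = \bar a_1$. In the compact case the same conclusion follows provided the infimum is still attained, which holds for instance whenever $g \mapsto d(a_0, g\cdot a_1)$ is lower semicontinuous on $G$; this is the natural regularity hypothesis implicit in the statement. The converse implication uses Assumption~\ref{as:equality_group} directly: if $\bar a_0 = \bar a_1$, there exists $g \in G$ with $a_0 = g\cdot a_1$, hence $d(a_0, g\cdot a_1) = 0$ and therefore $\bar d(\bar a_0, \bar a_1) = 0$. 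Combining these four verifications establishes that $\bar d$ is a metric on $\Asym$.
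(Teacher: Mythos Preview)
Your proof follows the same direct verification of the metric axioms as the paper, but is more complete. Two differences are worth flagging. First, you explicitly check well-definedness of $\bar d$ using Assumption~\ref{as:equality_group}; the paper omits this (it is implicit in the Remark that follows). Second, your triangle-inequality argument via $d(a_0,g_1 g_2\cdot a_2)\le d(a_0,g_1\cdot a_1)+d(a_1,g_2\cdot a_2)$ with independent infima over $g_1,g_2$ is cleaner than the paper's chain, whose last step $\inf_g[d(a_0,a_1)+d(a_1,g\cdot a_2)]\le \bar d(\bar a_0,\bar a_1)+\bar d(\bar a_1,\bar a_2)$ tacitly requires optimizing over the representative $a_1$ (i.e., precisely the well-definedness you established). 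Your remark that the implication $\bar d=0\Rightarrow \bar a_0=\bar a_1$ needs attainment of the infimum---automatic for finite $G$, and requiring a continuity hypothesis in the compact case---is also a point the paper simply asserts.
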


\begin{remark}
   Note that the definition given in Proposition~\ref{prop:sym_metric} is equivalent to 
   \[
    \forall \bar a_0,\bar a_1\in\Asym,\quad \bar d(\bar a_0,\bar a_1) = \inf_{g_0,g_1 \in G} d(g_0 \cdot a_0, g_1 \cdot a_1), \;\; \text{ where } \bar a_0 = S(a_0), \; 
    \bar a_1 = S(a_1),
\]
due to~\eqref{eq:d_sym_prop}.
\end{remark}

\begin{proof}
   First, $\bar d$ is clearly symmetric. 
   Second, if $\bar d(\bar a_0,\bar a_1)=0$, there exists $g\in G$ such that $d(a_0,g \cdot a_1)=0$ and so $a_0 = g \cdot a_1$. 
   Therefore, $S(a_0) = S( g \cdot a_1) = S(a_1)$, i.e. $\bar a_0 = \bar a_1$.
   Third, we prove the triangle inequality. 
   Let $a_0, a_1, a_2\in \A$ so that $\bar a_0, \bar a_1,\bar a_2\in \A_{\rm sym}$. It then holds that 
    \[
          \bar d(\bar a_0, \bar a_2) =
      \inf_{g \in G} d( a_0, g \cdot a_2) 
       \le \inf_{g \in G} [d( a_0, a_1) + d( a_1, g \cdot a_2)] 
       \leq \bar d(\bar a_0, \bar a_1) + \bar d(\bar a_1, \bar a_2).
    \]
   Hence $\bar d$ is a metric.
\end{proof}

\begin{proposition}
   $\A_{\rm sym}$ equipped with the metric $\bar d$ is a geodesic space.
\end{proposition}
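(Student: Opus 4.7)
The plan is to build a constant-speed geodesic in $\Asym$ by first realizing $\bar d$ as an actual minimum over $G$, lifting to a geodesic in $\A$, and then projecting back via $S$. I would implicitly rely on the fact that $(\A,d)$ is a geodesic space (which is the working hypothesis of this section, consistent with Assumption~\ref{as:metric_atom}) and that $G$ is compact with continuous action, so that the infimum defining $\bar d$ is attained.

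First I would pick $\bar a_0,\bar a_1 \in \Asym$ with representatives $a_0,a_1\in\A$ and, using compactness of $G$ together with the continuity of $g\mapsto d(a_0, g\cdot a_1)$, select $g^\ast \in G$ realizing
\[
\bar d(\bar a_0,\bar a_1) = d(a_0, g^\ast\cdot a_1).
\]
Then, since $(\A,d)$ is assumed to be a geodesic space, I would fix a constant speed geodesic $(a_t)_{t\in[0,1]}$ in $\A$ connecting $a_0$ to $g^\ast\cdot a_1$, i.e.\ such that $d(a_s,a_t) = |t-s|\, d(a_0,g^\ast\cdot a_1)$ for all $s,t\in[0,1]$. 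Finally I would set
\[
\bar a_t := S(a_t), \qquad t\in[0,1].
\]
The endpoint conditions are immediate: $\bar a_0 = S(a_0)$ by definition, and $\bar a_1 = S(g^\ast\cdot a_1) = S(a_1)$ by the identity $S(g\cdot a) = S(a)$ noted just after~\eqref{eq:Asym}.

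It remains to verify the constant-speed geodesic property for $\bar d$. The upper bound is direct from the definition of $\bar d$ as an infimum,
\[
\bar d(\bar a_s,\bar a_t) \le d(a_s, a_t) = |t-s|\, d(a_0, g^\ast\cdot a_1) = |t-s|\, \bar d(\bar a_0, \bar a_1).
\]
For the matching lower bound I would use the triangle inequality for $\bar d$ (already established in Proposition~\ref{prop:sym_metric}) together with the upper bound on the two outer segments:
\[
\bar d(\bar a_0,\bar a_1) \le \bar d(\bar a_0,\bar a_s) + \bar d(\bar a_s,\bar a_t) + \bar d(\bar a_t,\bar a_1) \le \bigl(1 - (t-s)\bigr)\,\bar d(\bar a_0,\bar a_1) + \bar d(\bar a_s,\bar a_t),
\]
which gives $\bar d(\bar a_s,\bar a_t) \ge |t-s|\, \bar d(\bar a_0,\bar a_1)$, hence equality. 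This shows $(\bar a_t)_{t\in[0,1]}$ is a constant speed geodesic, and its length equals $\bar d(\bar a_0,\bar a_1)$, concluding that $(\Asym,\bar d)$ is a geodesic space.

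The main obstacle is the attainment of the infimum defining $\bar d$: one must ensure that $G$ (being finite or compact) together with continuity of $g\mapsto d(a_0, g\cdot a_1)$ guarantees a minimizer $g^\ast$. If $G$ were only a topological group without compactness, one would have to work with an infimizing sequence $(g_n)$ and argue via a compactness/completeness property on $\A$ to pass to the limit, which is significantly more delicate. Under the standing assumption that $G$ is finite or compact with continuous action, this difficulty disappears, and the proof outlined above goes through.
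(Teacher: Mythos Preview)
Your proposal is correct and follows essentially the same route as the paper: pick a minimizer $g^\ast\in G$, lift to a constant-speed geodesic in $(\A,d)$ between $a_0$ and $g^\ast\cdot a_1$, and project back via $S$. The only cosmetic difference is that the paper argues through the length formalism (showing ${\rm Len}_{\bar d}$ of the projected path equals $\bar d(\bar a_0,\bar a_1)$ using only the upper bound $\bar d(\bar a_s,\bar a_t)\le |t-s|\,\bar d(\bar a_0,\bar a_1)$), whereas you go one step further and derive the matching lower bound directly from the triangle inequality to conclude the constant-speed equality; both are equivalent here.
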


\begin{proof}
     The proof is similar to that of Proposition~\ref{eq:geodesic_space_mixture}. 
    We consider paths $(\rho_t)_{t\in[0,1]}$ with $\rho_t \in \A_{\rm sym}$ for all $t\in [0,1]$, we define the length of the path relative to the $\bar d$ metric as in~\eqref{eq:path_length},
    and we show that given any two points $\bar a_0,\bar a_1 \in \A_{\rm sym},$ there exists a path between them the length of which equals the distance $\bar d(\bar a_0, \bar a_1)$.
   We first show using the triangle inequality  that ${\rm Len}_{\bar d}(\rho) \ge \bar d(\bar a_0,\bar a_1)$. 
   The equality is shown by defining $\bar g = \mathop{\rm argmin}_{g\in G}d(a_0,g\cdot a_1),$ where $\bar a_0 = S(a_0), a_0\in \A$ and $\bar a_1 = S(a_1), a_1\in \A$, and taking $(\mu_t)_{t\in [0,1]}$ be a constant speed geodesic between $a_0$ and $\bar g \cdot a_1$. Noting that for all $0\le s,t \le 1,$
\[
   d(\mu_t,\mu_s) = |t-s| d(a_0,\bar g \cdot a_1).
\]
we define $\bar \mu_t = S(\mu_t)$ to obtain that for any  $0\le s,t \le 1,$
\[
   \bar d(\bar \mu_t,\bar \mu_s) = \inf_{\widetilde g \in G} d(\mu_t,\widetilde g\cdot \mu_s) 
    \le d(\mu_t,\mu_s) 
    \le |t-s| d(a_0, \bar g \cdot a_1) 
   = |t-s| \bar d(\bar a_0, \bar a_1),
\]
    from which we deduce that 
   $
   {\rm Len}_{\bar d}((\mu_t)_{t\in[0,1]}) = \bar d(\bar a_0, \bar a_1).
   $
   We conclude that $\A_{\rm sym}$ equipped with $\bar d$ is a geodesic space.
\end{proof}

Having now proved that $(\A_{\rm sym},\bar d)$ is a geodesic space, we can now express barycenters in terms of geodesics of atoms.

\begin{corollary}
   Let $\bar a_0=S(a_0)$ and $\bar a_1=S(a_1)$ be two elements of $\A_{\rm sym}$. 
   Let $\bar g = \mathop{\rm argmin}_{g\in G}d(a_0,g\cdot a_1).$
   The barycenters between $\bar a_0$ and $\bar a_1$ belong to $\A_{\rm sym}$ and can be written as 
   \[
      \forall t\in [0,1],\quad \bar a_t = S(a_t),
   \]
   where $(a_t)_{t\in[0,1]}$ is a constant speed geodesic between $a_0$ and $\bar g\cdot a_1$.
\end{corollary}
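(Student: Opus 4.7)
The plan is to construct the candidate family $\bar a_t := S(a_t)$ and verify two things: (i) it is a constant-speed geodesic between $\bar a_0$ and $\bar a_1$ in $(\Asym, \bar d)$, and (ii) for a two-point interpolation this is exactly the family of barycenters, in the same sense as the analogous corollary stated for $\M(\A)$ at the end of Section~\ref{sec:gen}. Since the construction is already embedded in the proof of the preceding geodesic-space proposition, the argument mostly amounts to repackaging that one.

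First, I would apply Assumption~\ref{as:metric_atom} to obtain a constant-speed geodesic $(a_t)_{t\in[0,1]}$ in $(\A, d)$ between $a_0$ and $\bar g \cdot a_1$, i.e.\ with $d(a_s, a_t) = |t-s|\, d(a_0, \bar g \cdot a_1) = |t-s|\, \bar d(\bar a_0, \bar a_1)$ using the definition of $\bar g$. Setting $\bar a_t := S(a_t)$ places the path in $\Asym$, and the endpoints are correct because the identity $S(g\cdot a) = S(a)$, built into the definition of $\Asym$ in~(\ref{eq:Asym}), gives $\bar a_0 = S(a_0)$ and $S(\bar g \cdot a_1) = S(a_1) = \bar a_1$.

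Next I would reuse the central inequality from the preceding proof: for any $s, t \in [0,1]$,
\[
\bar d(\bar a_s, \bar a_t) \;=\; \inf_{g \in G} d(a_s, g \cdot a_t) \;\le\; d(a_s, a_t) \;=\; |t-s|\, \bar d(\bar a_0, \bar a_1).
\]
Together with the triangle inequality $\bar d(\bar a_0, \bar a_1) \le \bar d(\bar a_0, \bar a_t) + \bar d(\bar a_t, \bar a_1)$, all inequalities collapse, and $(\bar a_t)$ is revealed to be a constant-speed geodesic in $(\Asym, \bar d)$.

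The main (though mild) obstacle is the last step: identifying this constant-speed geodesic with the family of two-point barycenters in the sense of~(\ref{eq:bary}). This is the same identification at play in the analogous corollary for $\M(\A)$, and in the $p=2$ Wasserstein setting it reduces, via the equivalence~(\ref{eq:mut}), to a standard Lagrangian minimization of $(1-t) u^p + t v^p$ under the constraint $u + v \ge \bar d(\bar a_0, \bar a_1)$ coming from the triangle inequality, whose minimizer is precisely $u = t\,\bar d(\bar a_0, \bar a_1)$ and $v = (1-t)\,\bar d(\bar a_0, \bar a_1)$, the values realized along $(\bar a_t)$. No new phenomenon beyond what is handled in the preceding proposition needs to be treated here.
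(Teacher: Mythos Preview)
Your proposal is correct and follows the same approach as the paper: the corollary is stated there without proof, as an immediate consequence of the preceding geodesic-space proposition, whose proof already exhibits the path $\bar a_t = S(a_t)$ and the key inequality $\bar d(\bar a_s,\bar a_t)\le |t-s|\,\bar d(\bar a_0,\bar a_1)$ that you reuse. Your write-up simply makes explicit the identification of the constant-speed geodesic with the two-point barycenters, which the paper leaves implicit here (just as it does for the analogous corollary at the end of Section~\ref{sec:gen}).
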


Note that if Assumption~\ref{as:equality_group} is satisfied, it is easy to show that the barycenter between $\bar a_0$ and $\bar a_1$ does not depend on the choice of $a_0$ and $a_1$.

\subsection{Wasserstein case}

In this section, we focus on the particular case where $d$ is the Wasserstein distance defined in~\eqref{eq:Wpc}, and where the group action of $G$ on $\mathcal P^c_p(\Omega)$ is inherited from a group action of $G$ onto $\Omega$ as follows: for all $g\in G$, $g\cdot a  = a \# T_g$ where $T_g : \Omega \to \Omega$ is defined by $T_g(x) = g \cdot x$ for all $x\in \Omega$. For all $k\in \mathbb{N}^*$, we define $T^k_g: \Omega^k \to \Omega^k$ as the map such that
$T_g^k(x_1,\cdots,x_k) = (g\cdot x_1, \cdots, g\cdot x_k)$ for all $x_1,\cdots,x_k\in \Omega$. Using a slight abuse of notation, for all $k\in \mathbb{N}^*$, and for all $\gamma \in \mathcal P_p(\Omega^k)$, we denote by $g\cdot \gamma = \gamma \# T_g^k$. 
Then there holds
   \begin{equation}\label{eq:Wass-sym}
      \bar d(\bar a_0, \bar a_1) = \left[ \inf_{g_0,g_1\in G}
      \inf_{\gamma\in\Pi(g_0 \cdot a_0,g_1 \cdot a_1)} 
      \int c(x,y)^p d\gamma(x,y) \right]^{1/p}.
   \end{equation}
By analogy, one can define the following symmetrized optimal transport plan as 
\[
   \bar \gamma = \int_{g \in G} g\cdot \gamma\, H(dg).
\]
Note that this is not equivalent to the true Wasserstein transport plan. 
Indeed,  the transport plans are in general different, as shown in Figure~\ref{fig:otplans} for two densities being even hence symmetric with respect to the group action defined in~\eqref{eq:parity_group}. 
This can be easily understood. For the true Wasserstein distance, the optimal transport plan satisfies the monotone rearrangment property, while in the case of the symmetrized distance, the atom is first transported to the closest one, and then the transport plan is symmetrized.
However, this allows for a simple generalization to the multi-marginal problem.

\begin{figure}
    \centering
    \subfigure[two even densities]{\includegraphics[width=0.25\textwidth]{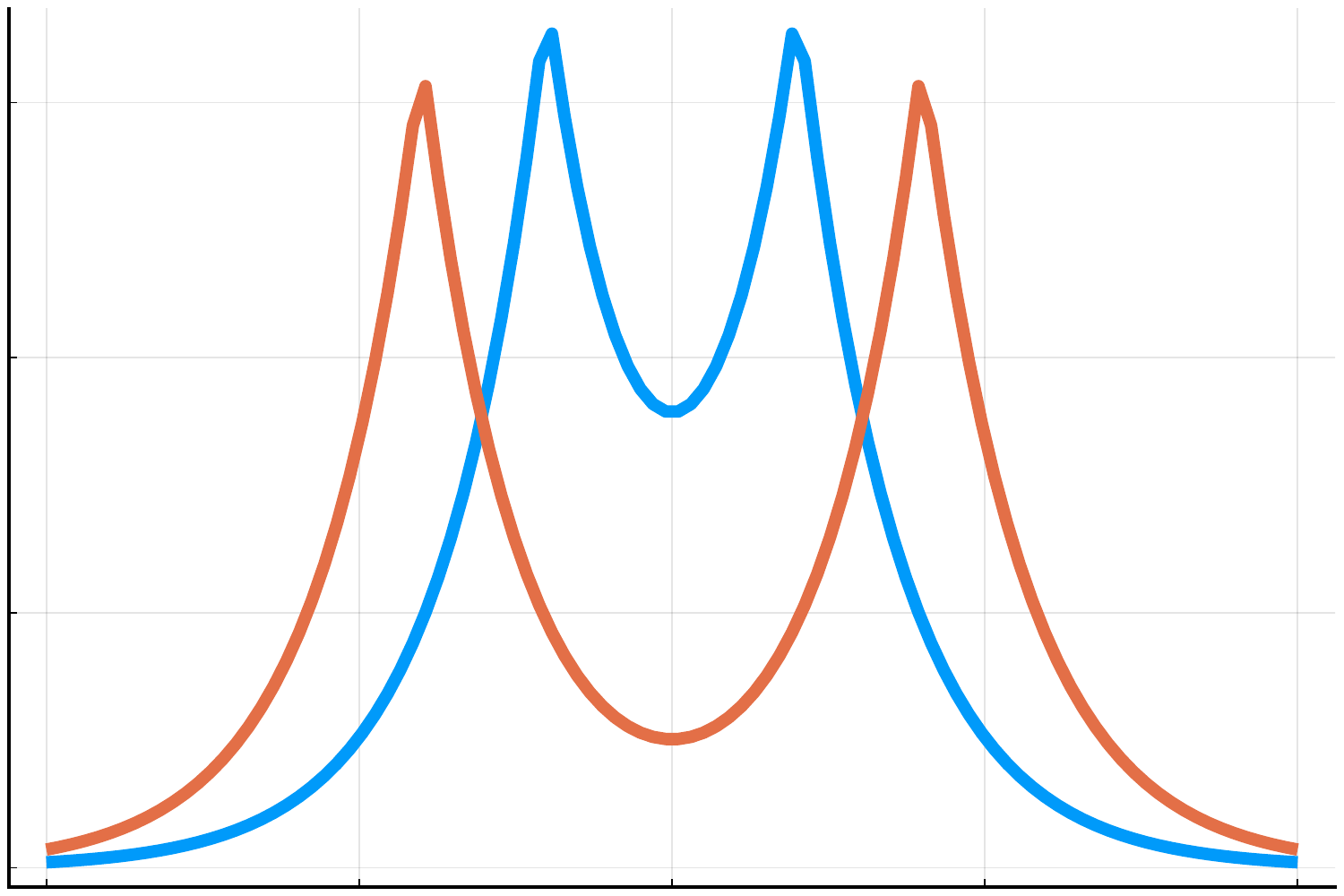}} 
    \subfigure[$W_2$ optimal transport plan]{\includegraphics[width=0.25\textwidth]{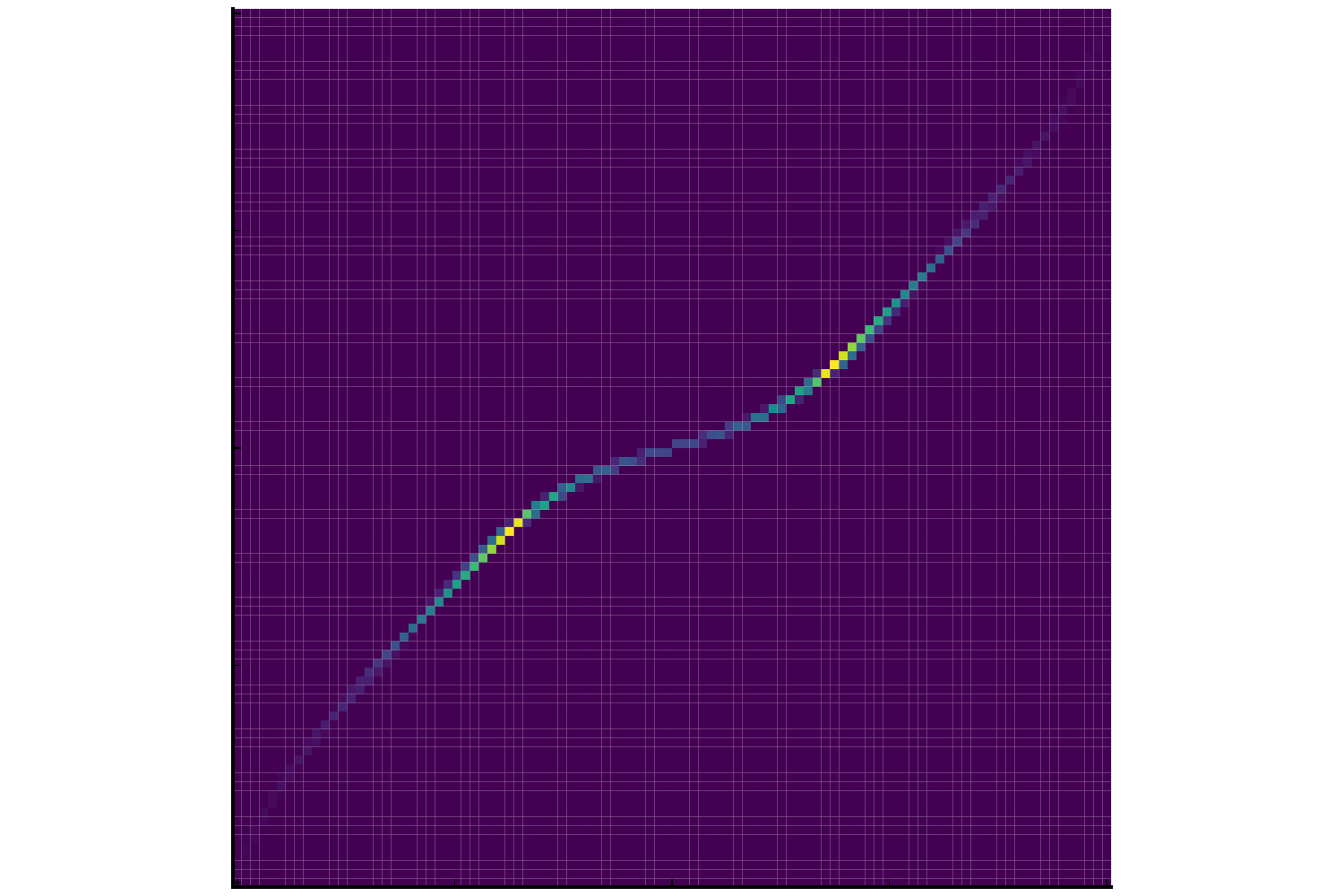}} 
    \subfigure[$W_{2,\mathcal M}$  optimal transport plan]{\includegraphics[width=0.25\textwidth]{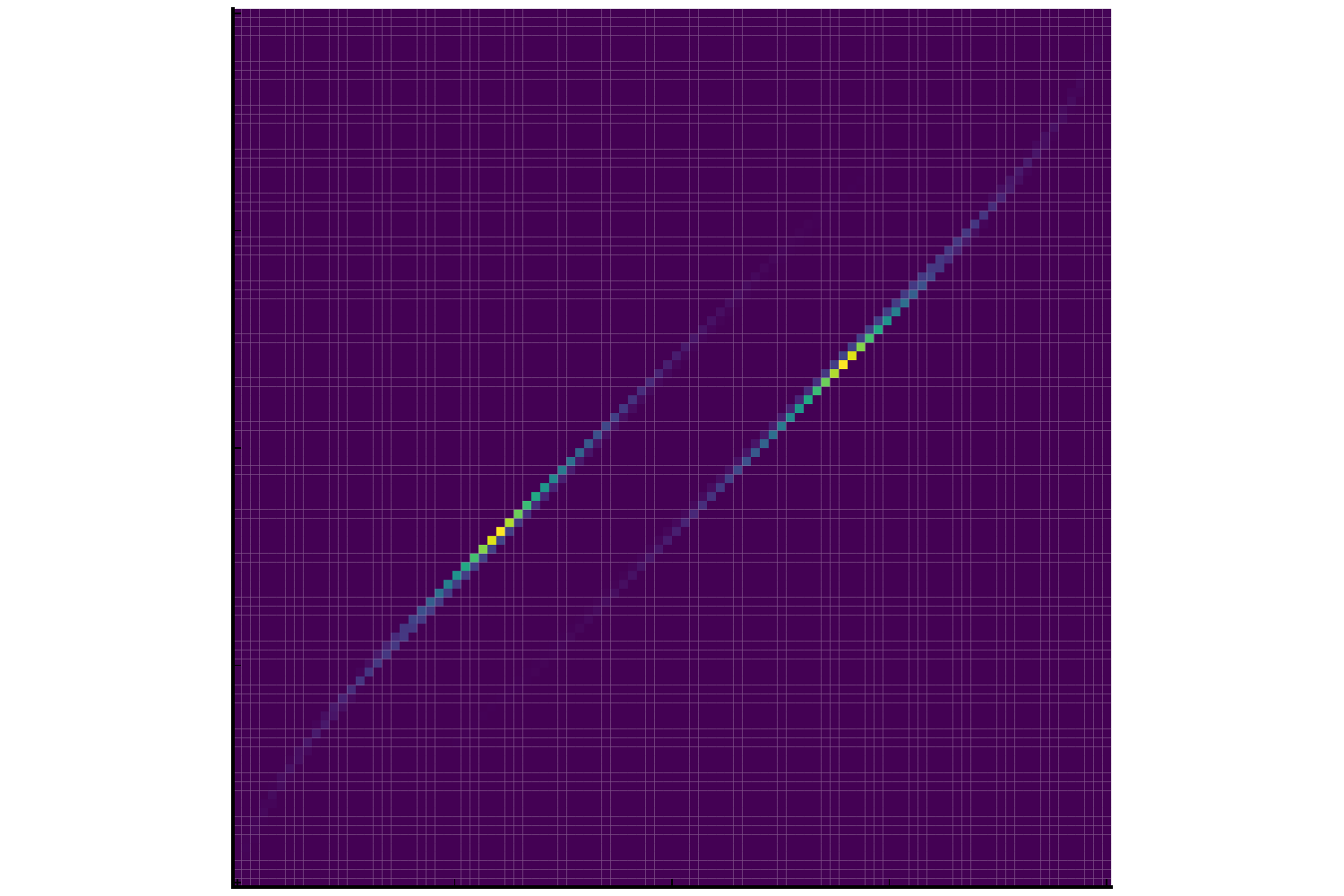}} 
    \caption{Comparison between $W_2$ and $W_{2,\mathcal M}$ transport plans between two even densities}
    \label{fig:otplans}
\end{figure}

\begin{definition}[Symmetric multi-marginal problem]
   \label{def:multimar_groups}
   Let $Q\in \N^*$. 
   Let $\bm{t} := (t_q)_{1\le q\le Q}\in \mathcal L_Q$.
   Let $a_1, \ldots, a_Q \in\A$ and $\bar a_1 = S(a_1), \ldots, \bar a_Q = S(a_Q)$ for $\A$ a set of atoms. We define the symmetric multi-marginal transport problem by
   \begin{equation}
      \label{eq:MMsym}
      \bar d_Q^{\bm{t}}(\bar a_1, \ldots, \bar a_Q)
      = \left( 
        \inf_{g_1,\ldots,g_Q\in G} \inf_{\gamma\in \Pi(g_1\cdot a_1,\ldots, g_Q\cdot  a_Q)} s( x_1,\ldots, x_Q) 
        d\gamma(x_1,\ldots, x_Q)
      \right)^{1/p},
   \end{equation} 
   with 
   \[
      s(x_1,\ldots,x_Q) = \frac12 \sum_{q=1}^Q \sum_{q'=1}^Q 
      t_q t_{q'} c(x_q,x_{q'})^p.
   \]
   Denoting by $\Gamma^{\bm{t}}_{\bar a_1,\ldots, \bar a_Q}$ the set of minimizers of~\eqref{eq:MMsym}, for all $\gamma \in \Gamma^{\bm{t}}_{\bar a_1,\ldots, \bar a_Q}$, we define the associated symmetrized optimal transport plan as 
   \[
      \bar \gamma = \int_{g \in G}
      g\cdot \gamma \; H(dg) .
   \]
   Finally, assuming that Wasserstein barycenters between atoms in $\A$ also belong to $\A$, the barycenters of $(\bar a_1,\cdots,\bar a_Q)$ with barycentric weights $\bm{t}$ can be written as 
      \begin{equation}
         \label{eq:barycenter_symm}
         \overline{\rm bar}_{\bm{t}}(\bar a_1,\ldots, \bar a_Q) =
         S({\rm bar}_{\bm{t}}(\bar g_1\cdot a_1, \ldots, \bar g_Q\cdot a_Q) )
      \end{equation}
      where $(\bar g_1,\ldots, \bar g_Q) \in G^Q$ is such that  
      \[
         (\bar g_1,\ldots \bar g_Q) \in \mathop{\rm arginf }_{g_1,\ldots,g_Q\in G} \; \;
         \inf_{\gamma\in \Pi(g_1\cdot a_1,\ldots, g_Q\cdot  a_Q)} s( x_1,\ldots, x_Q) 
        d\gamma(x_1,\ldots, x_Q).
      \]
\end{definition}

With this definition, we show that the multi-marginal problem is equivalent to the symmetric barycentric problem.

\begin{proposition}
    The infimum in~\eqref{eq:MMsym} matches the following barycentric mi\-nimization problem 
    \begin{equation}
    \label{eq:multimarg_bar_symm}
           \bar d_Q^{\bm{t}}(\bar a_1, \ldots, \bar a_Q)^p =  \inf_{\bar a\in \A_{\rm sym}} \; \sum_{q=1}^Q t_q \bar d(\bar a, \bar a_q)^p.
    \end{equation}
\end{proposition}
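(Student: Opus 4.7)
The plan is to unfold the right-hand side of~\eqref{eq:multimarg_bar_symm} into a nested infimum over a representative $a \in \A$ and a tuple $(g_1,\ldots,g_Q) \in G^Q$, swap the two infima, and recognize the inner problem as a classical Wasserstein barycenter problem whose multi-marginal equivalent is precisely the integrand appearing in $\bar d_Q^{\bm t}$.

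First, every $\bar a \in \A_{\rm sym}$ admits a representative $a \in \A$ with $\bar a = S(a)$, and by the remark following Proposition~\ref{prop:sym_metric} the quantity $\inf_{g\in G} d(a, g\cdot a_q)$ is independent of the chosen representative. Because $d \geq 0$, raising to the power $p$ commutes with the infimum, and since the $g_q$'s decouple across $q$, one obtains
\[
\sum_{q=1}^Q t_q \bar d(\bar a,\bar a_q)^p \;=\; \inf_{(g_1,\ldots,g_Q)\in G^Q} \sum_{q=1}^Q t_q\, d(a, g_q\cdot a_q)^p.
\]
Taking the infimum over $\bar a \in \A_{\rm sym}$, which reduces to an infimum over $a \in \A$, and swapping the order of the two infima (they act on independent variables and the integrand is non-negative) yields
\[
\inf_{\bar a\in \A_{\rm sym}} \sum_{q=1}^Q t_q \bar d(\bar a,\bar a_q)^p \;=\; \inf_{(g_1,\ldots,g_Q)\in G^Q}\ \inf_{a\in \A}\, \sum_{q=1}^Q t_q\, W_p^c(a, g_q\cdot a_q)^p .
\]

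Second, for each fixed $(g_1,\ldots,g_Q)$, the inner minimization is a Wasserstein barycenter problem for the atoms $g_1\cdot a_1,\ldots,g_Q\cdot a_Q$ with weights $\bm t$. Under the hypothesis of Definition~\ref{def:multimar_groups} that Wasserstein barycenters of atoms of $\A$ remain in $\A$, the infimum over $\A$ coincides with the infimum over the whole space $\mathcal P_p^c(\Omega)$, and the Agueh--Carlier equivalence between the barycenter and the multi-marginal formulations (see~\cite{Agueh2011-uz,Gangbo1998-wi} and the discussion following~\eqref{eq:multimarginal}) delivers
\[
\inf_{a\in \mathcal P_p^c(\Omega)}\sum_{q=1}^Q t_q W_p^c(a, g_q\cdot a_q)^p \;=\; \inf_{\gamma \in \Pi(g_1\cdot a_1,\ldots,g_Q\cdot a_Q)} \int_{\Omega^Q} s(x_1,\ldots,x_Q)\, d\gamma(x_1,\ldots,x_Q).
\]
Taking the outer infimum over $(g_1,\ldots,g_Q)\in G^Q$ on both sides then exactly reproduces the definition~\eqref{eq:MMsym} of $\bar d_Q^{\bm t}(\bar a_1,\ldots,\bar a_Q)^p$, which concludes the proof.

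The main subtlety lies in step two: one must ensure that the barycenter problem is genuinely equivalent to its multi-marginal counterpart at the level of infima (not merely up to push-forward of optimizers), and that the restriction of the minimization from $\mathcal P_p^c(\Omega)$ to $\A$ does not raise the infimum. Both points are precisely what is built into the stand\-ing assumption of Definition~\ref{def:multimar_groups} together with the geometric framework of Section~\ref{sec:3} inherited from~\cite{Agueh2011-uz}. Everything else is a routine unwinding of definitions, licensed by the positivity of $d$ and the decoupling of the optimizing $g_q$'s across $q$.
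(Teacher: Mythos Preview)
Your proof is correct and follows essentially the same approach as the paper: unfold $\bar d$ via its definition, pull out and swap the infima over $a\in\A$ and $(g_1,\ldots,g_Q)\in G^Q$, then invoke the equivalence between the Wasserstein barycenter and multi-marginal problems together with the stability of $\A$ under barycenters. You are somewhat more explicit than the paper in justifying the commutation of the $p$-th power with the infimum and the decoupling of the $g_q$'s, but the structure is identical.
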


\begin{proof}
    Starting from the right handside of~\eqref{eq:multimarg_bar_symm}, and using the distance definition~\eqref{eq:symm_distance}, we obtain
    \begin{align*}
        \inf_{\bar a\in \A_{\rm sym}} \sum_{q=1}^Q t_q \bar d(\bar a, \bar a_q)^p & = \hspace{-.1cm}
        \inf_{a\in \A} \sum_{q=1}^Q t_q [\inf_{g_q\in G} d(a, g_q\cdot a_q)]^p 
         = \hspace{-.28cm} \inf_{g_1,\ldots, g_Q\in G} \inf_{a\in \A} \sum_{q=1}^Q t_q  d(a, g_q\cdot a_q)^p.
    \end{align*}
    Then using the equivalence between the multi-marginal problem and the barycentric problem on the set of atoms $\A$, which is well-posed since the atoms are stable under Wasserstein barycenter proves~\eqref{eq:multimarg_bar_symm}.
\end{proof}

\subsection{Examples}

We now provide a few examples in dimension $d=1,2$ for group actions defined over permutation groups and the rotation group $SO(2)$. We also present an example closely related to quantum chemistry applications.
For simplicity, we consider the case $\D = \mathbb{R}^d$, $p=2$ and $c(x,y)= \|x-y\|$. 
All the numerical results presented below are performed with the same setting as in Section~\ref{seq:numerics} for the $W_2$ barycenters. The $W_{2,\mathcal M}$ barycenters are computed following Definition~\eqref{eq:barycenter_symm}. In terms of computational cost, let us remark that the computational cost of the mixture barycenter highly depends on the cost of computing the barycenter between two atoms. This cost namely depends on the cardinal of the group if finite, or the complexity of solving~\eqref{eq:symm_distance} for infinite groups. However, in the tests presented below, the cardinal of the considered finite groups, which are the permutation groups, is only two, therefore the computation of the mixture barycenters stays orders of magnitude cheaper than the computation of the $W_2$ barycenters.

\subsubsection{Parity group}\label{sec:parity}

We first consider the case $d=1$, $\D= \mathbb{R}^d$ and where $G$ is defined as the two-element group $(e, \tilde e)$ with the group action defined in~\eqref{eq:parity_group}.
We compute the $W_2$ and $W_{2,\mathcal M}$ barycenters for the symmetric measure of a mixture of two Slater functions with respect to this group action and present them in Figure~\ref{fig:parity1d}. We observe that the barycenters correspond to the symmetrized version of the barycenters presented on Figure~\ref{fig:slater1d} which is naturally expected. 

\begin{figure}[h!]
    \centering
    \subfigure[$t=0$]{\includegraphics[width=0.19\textwidth]{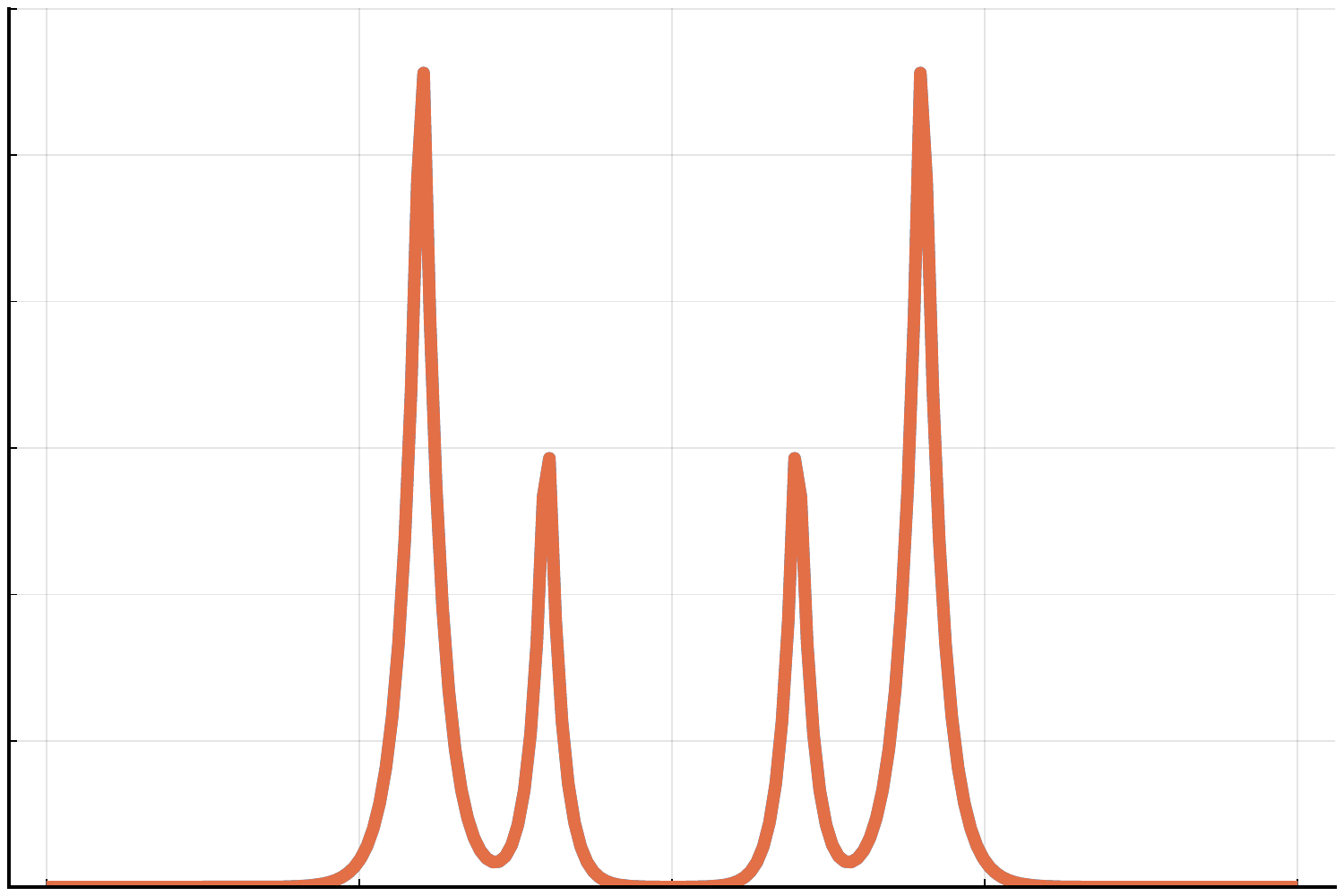}} 
    \subfigure[$t=0.25$]{\includegraphics[width=0.19\textwidth]{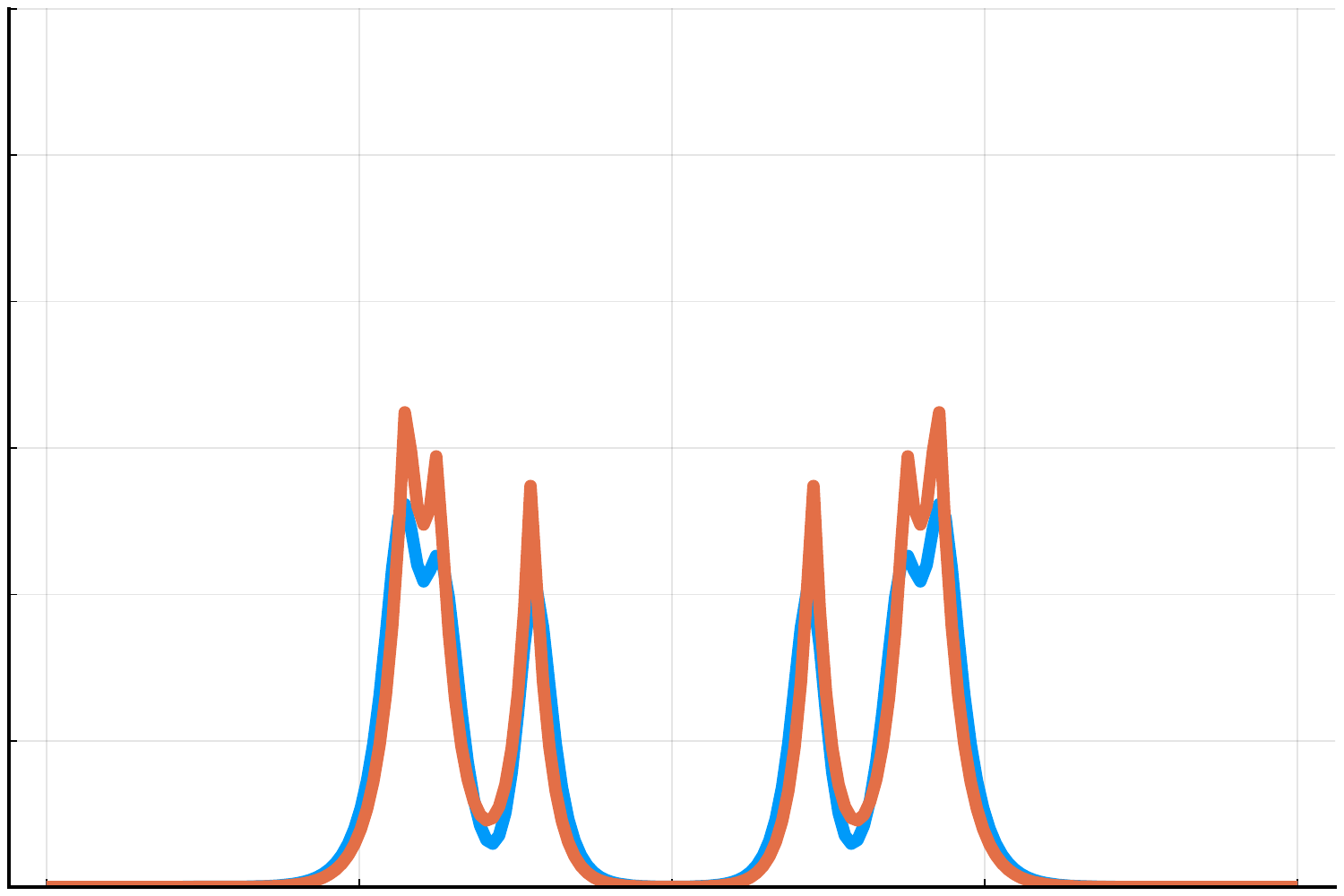}} 
    \subfigure[$t=0.5$]{\includegraphics[width=0.19\textwidth]{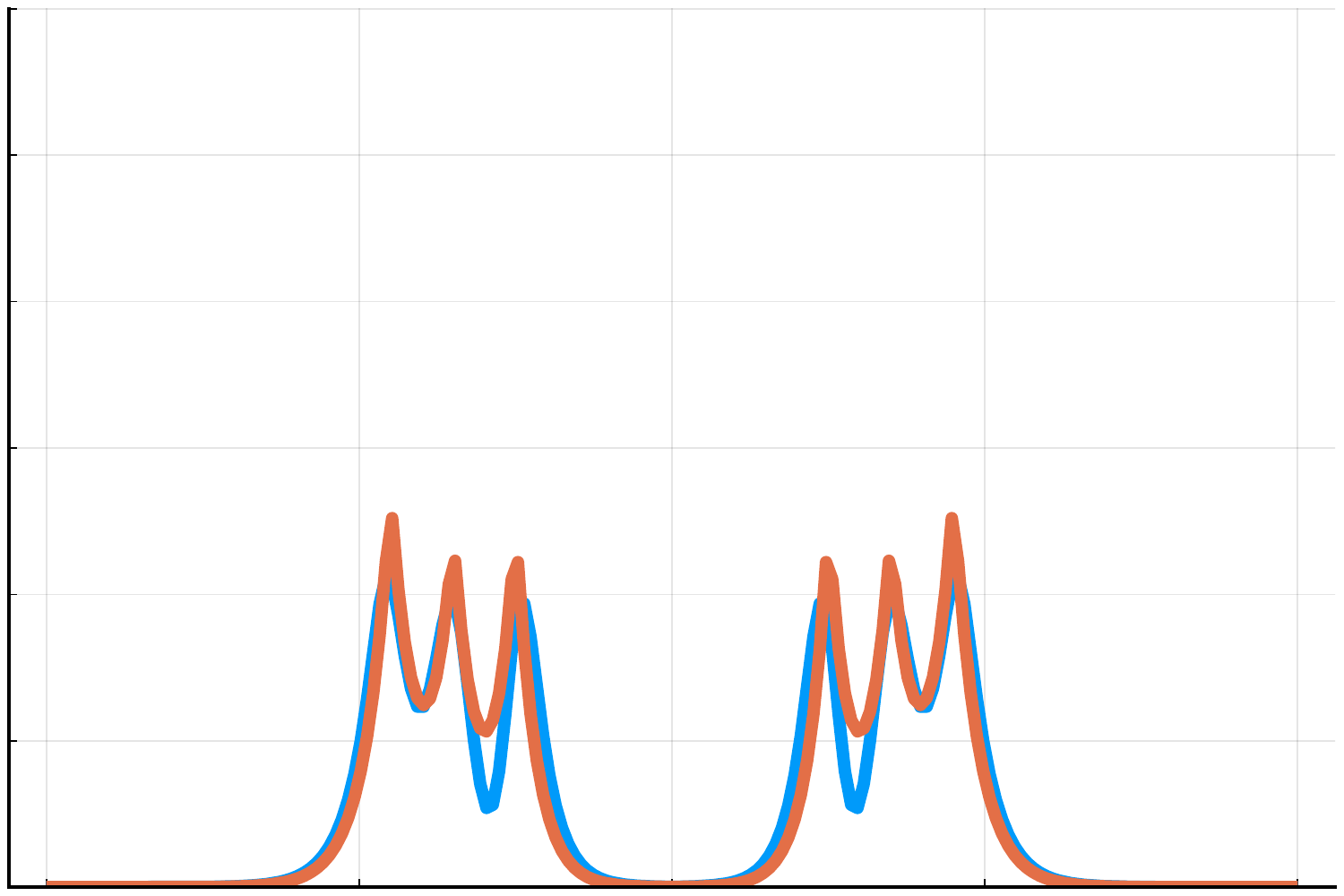}}
    \subfigure[$t=0.75$]{\includegraphics[width=0.19\textwidth]{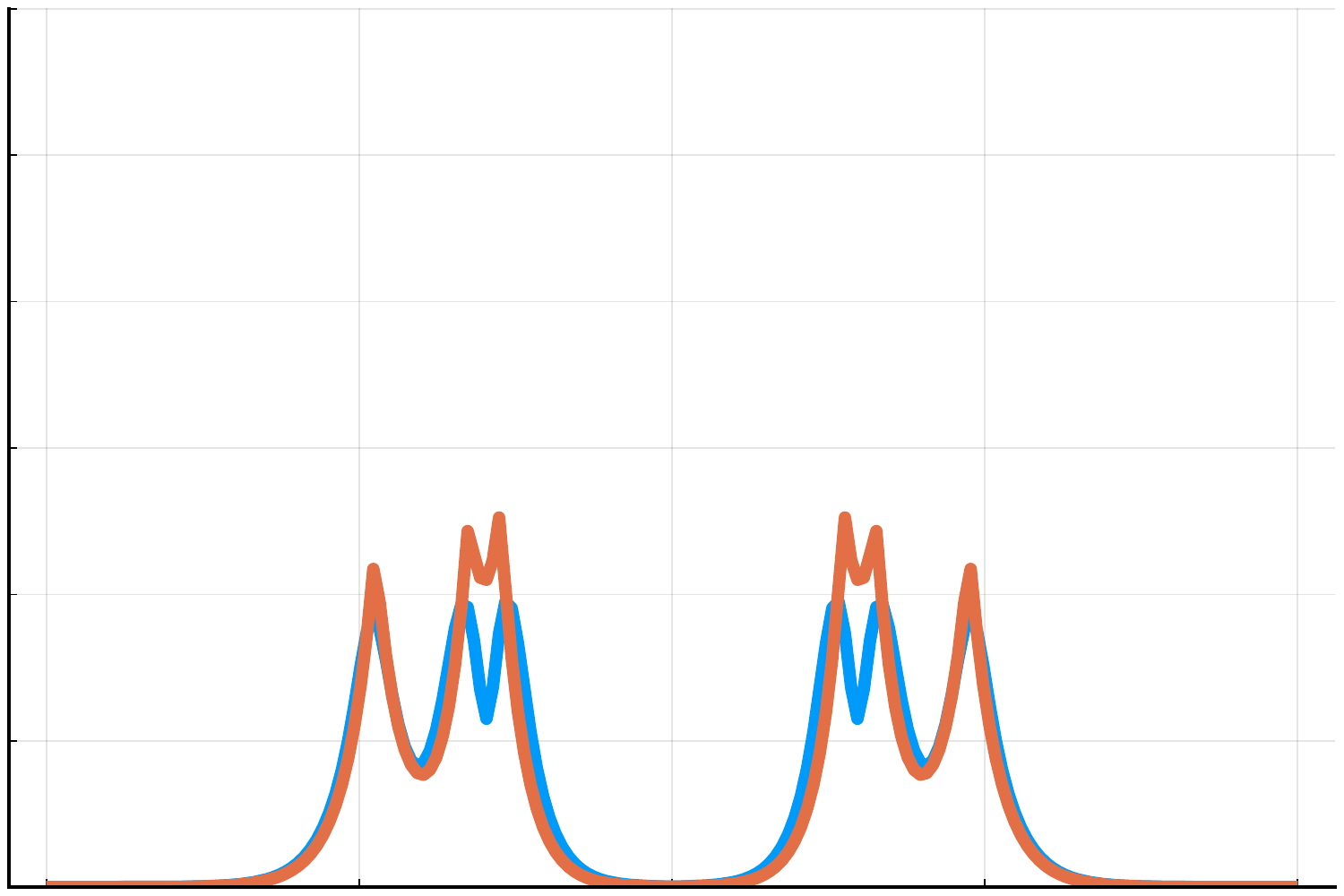}}
    \subfigure[$t=1$]{\includegraphics[width=0.19\textwidth]{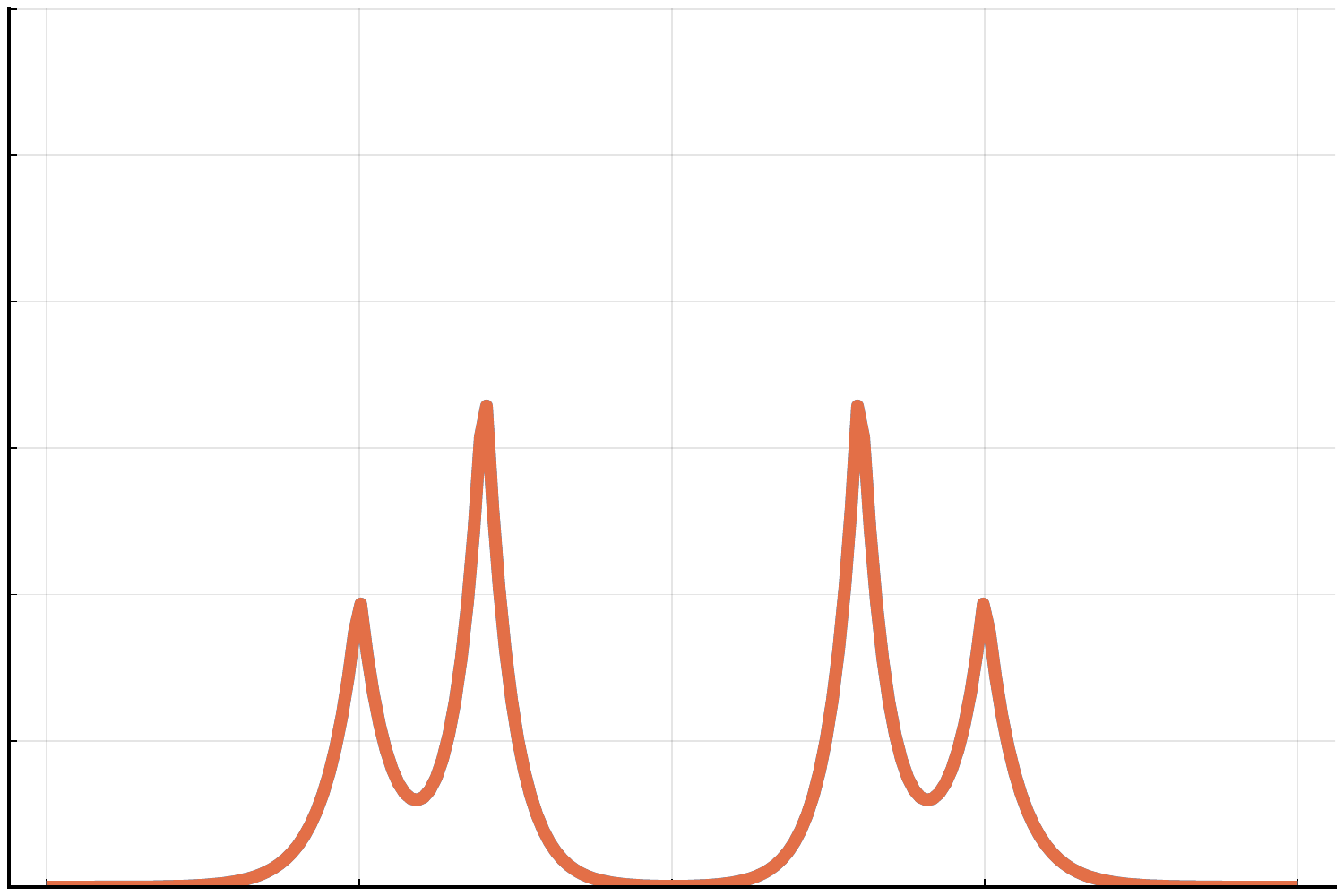}}
    \caption{Wasserstein barycenters between two mixtures of symmetric Slater distributions for the $W_2$ metric (blue) and the $W_{2,\mathcal M}$ metric (red)}
    \label{fig:parity1d}
\end{figure}

\subsubsection{Permutation group}\label{sec:permutation}

We then consider the permutation group $\mathfrak{S}_n$ on  the set $\{1,\ldots,n\}$. We define a group action $\cdot$  on elements of $\P_2(\D)$ by
\[
   \forall a\in\P_2(\D), \quad \forall \sigma \in \mathfrak{S}_n,\quad 
   \sigma \cdot a = a\# T_\sigma,
\]
where 
$$
T_\sigma: \left\{
\begin{array}{ccc}
\D \to \D \\
(x_1,\ldots,x_n) & \mapsto & (x_{\sigma(1)}, \ldots, x_{\sigma(n)}),\\
\end{array}
\right.
$$
which corresponds to a permutation of the variables.
It is easy to check that this indeed defines a group action. For a given set of atomic measures $\mathcal A$, the corresponding set of symmetric atomic measures $\A_{\rm sym}$ is then given as the set of elements $\overline{a}$ defined for all $a \in \mathcal A$ by
\[
\bar a =  \frac{1}{n!}
\sum_{\sigma \in\mathfrak{S}_n} \sigma \cdot a.
\]
The computation of the symmetric barycenters can be then easily done, provided that the underlying set of atoms allows for an efficient - or even explicit - computation of distances $d$. Choosing for the atoms $\mathcal A$ any set of location-scatter atoms, the computation of the symmetric Wasserstein distance $\bar d$ only requires the computation of $n!$ explicit Wasserstein distances, which stays cheap for moderate values of $n$.
In Figure~\ref{fig:symm_gauss2dmw2_contour} 
we plot the $W_2$ barycenters between symmetric mixtures of gaussian measures for the group action defined above and compare it to the symmetric barycenter based on gaussian mixtures. We observe that the mixture barycenter is smoother than the $W_2$ barycenter, and obtained at a fraction of the cost.

\begin{figure}
    \centering
    \subfigure[$t=0$]{
    \begin{tabular}{@{}c@{}}
         \includegraphics[width=0.18\textwidth]{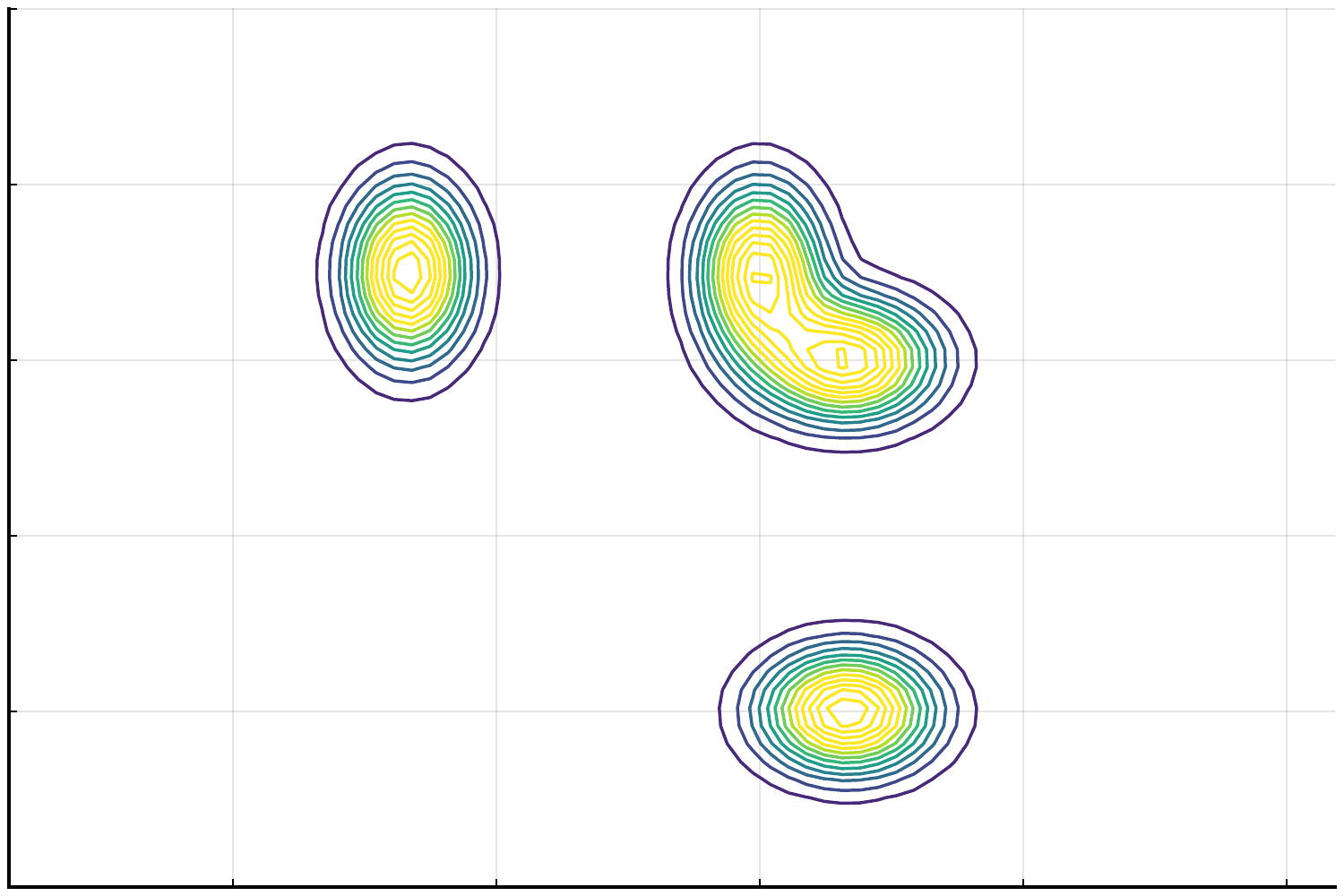} \\
         \includegraphics[width=0.18\textwidth]{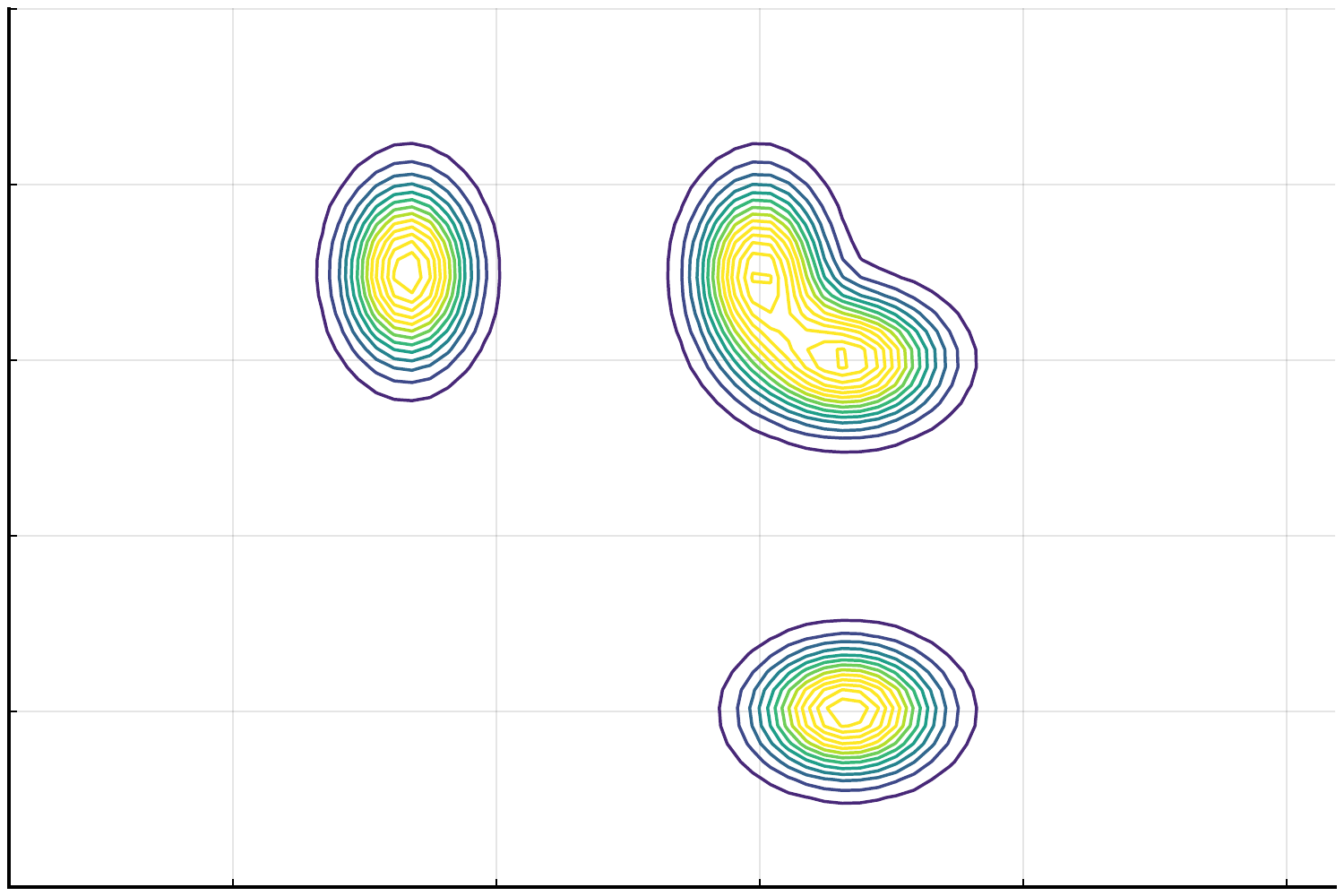}
    \end{tabular}
    } 
    \subfigure[$t=0.25$]{\begin{tabular}{@{}c@{}}
         \includegraphics[width=0.18\textwidth]{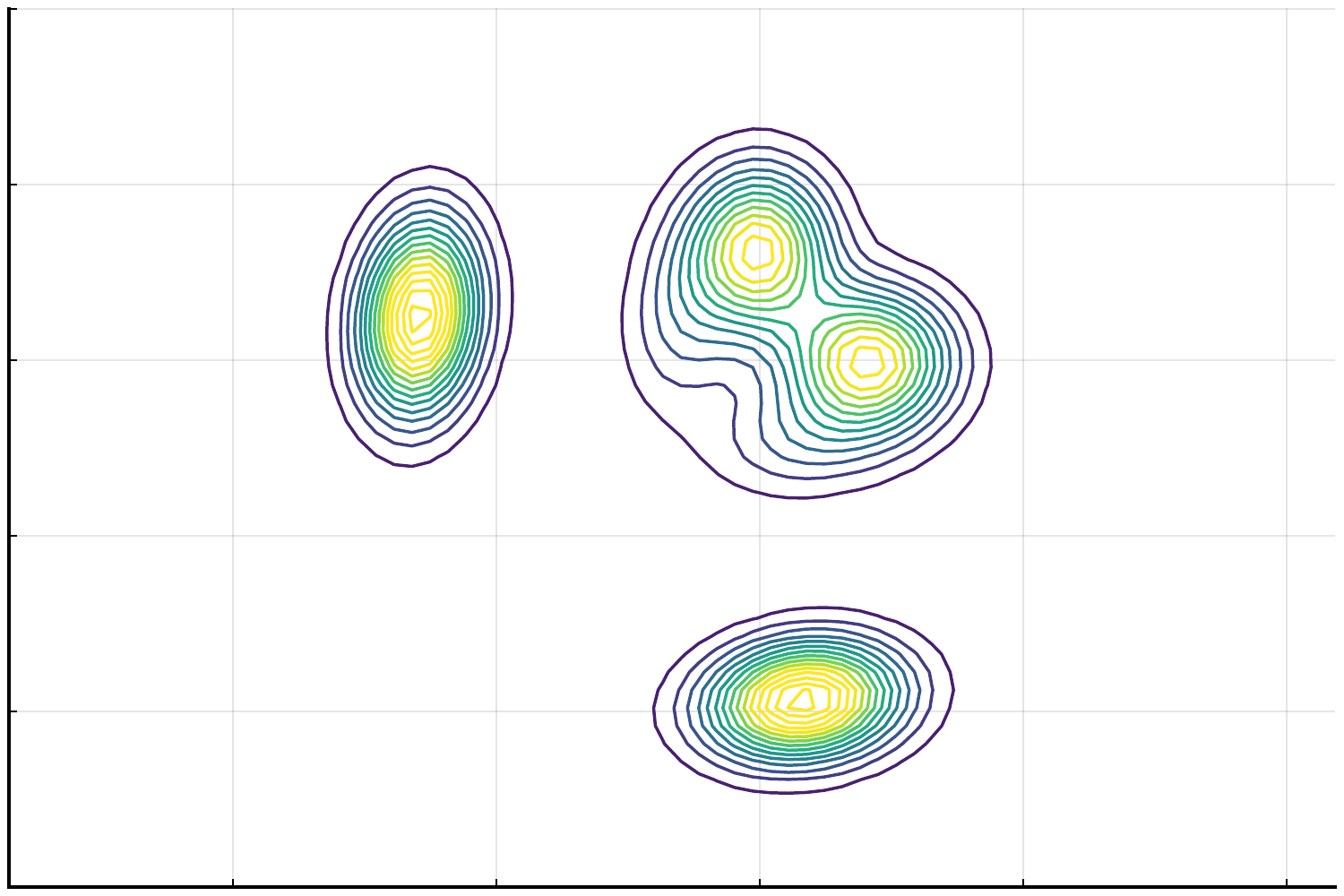} \\
         \includegraphics[width=0.18\textwidth]{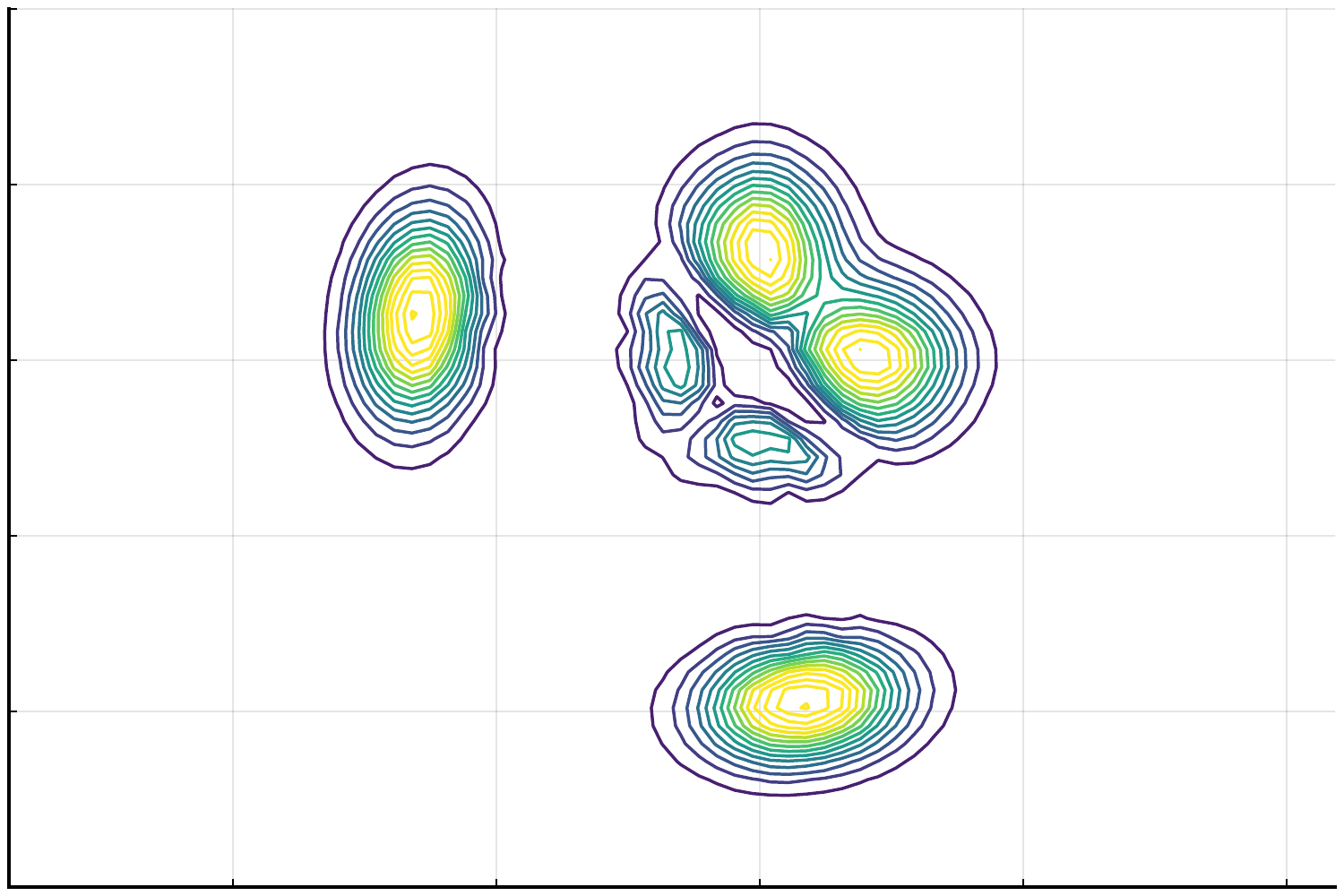}
    \end{tabular}} 
    \subfigure[$t=0.5$]{\begin{tabular}{@{}c@{}}
         \includegraphics[width=0.18\textwidth]{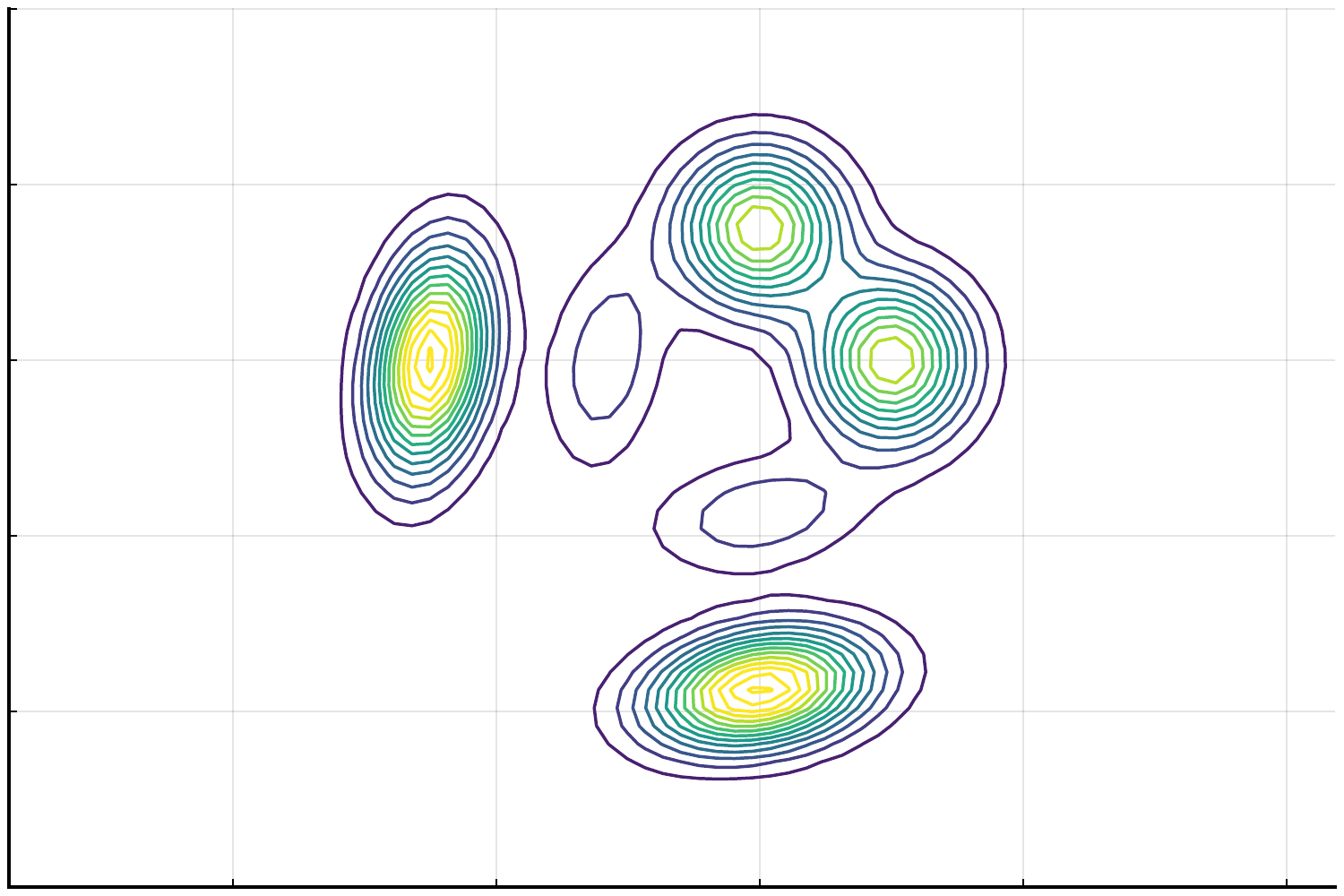} \\
         \includegraphics[width=0.18\textwidth]{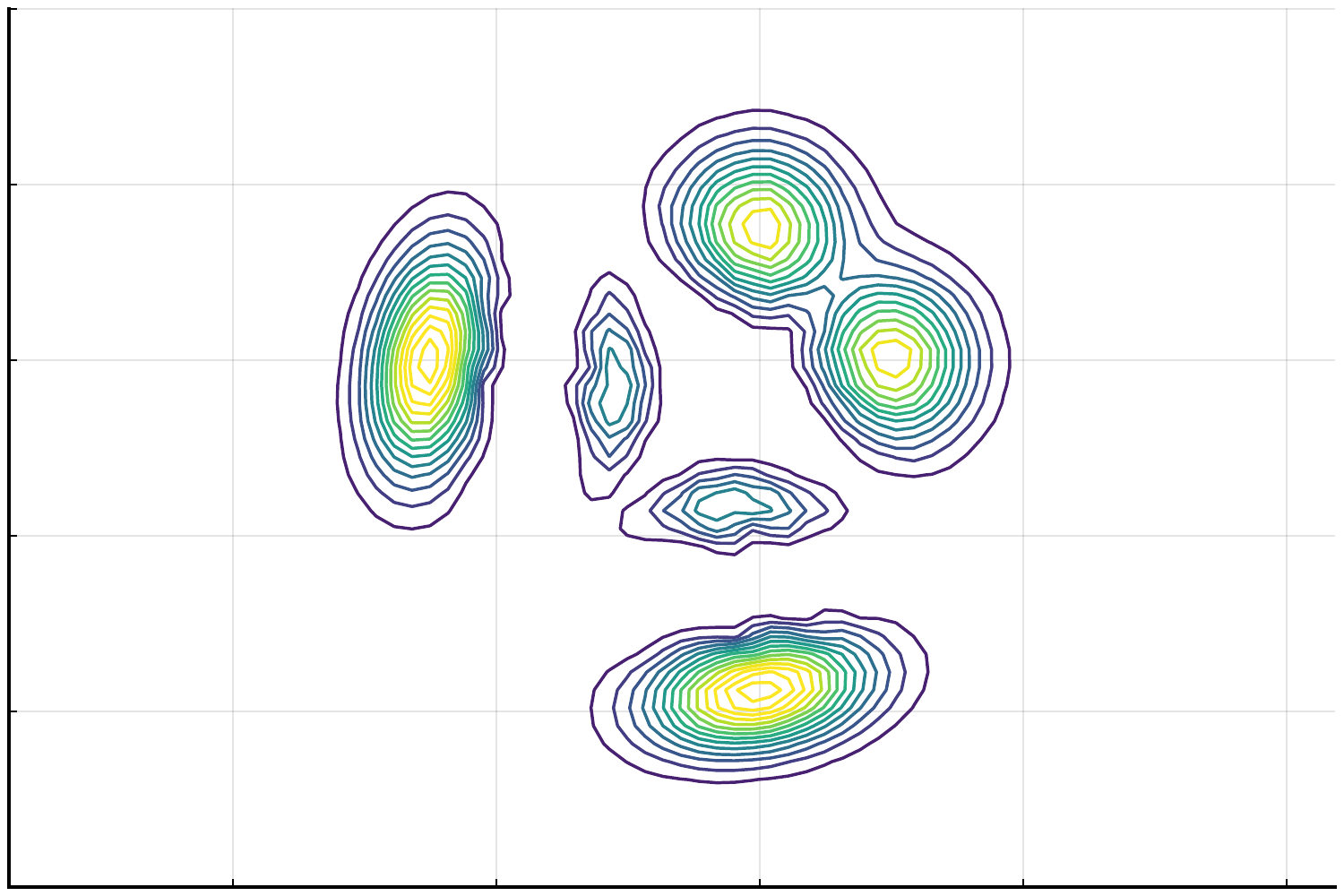}
    \end{tabular}} 
    \subfigure[$t=0.75$]{\begin{tabular}{@{}c@{}}
         \includegraphics[width=0.18\textwidth]{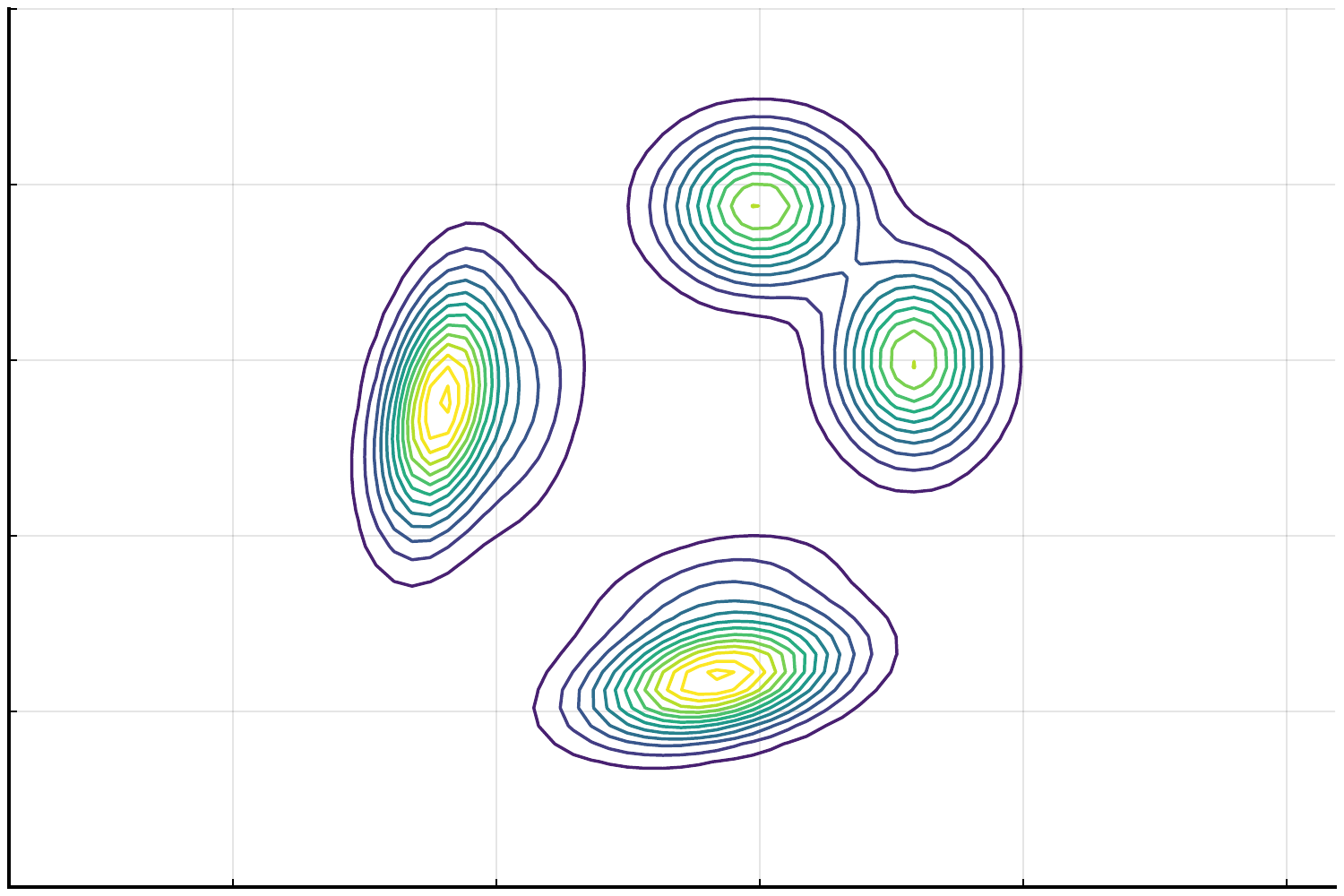} \\
         \includegraphics[width=0.18\textwidth]{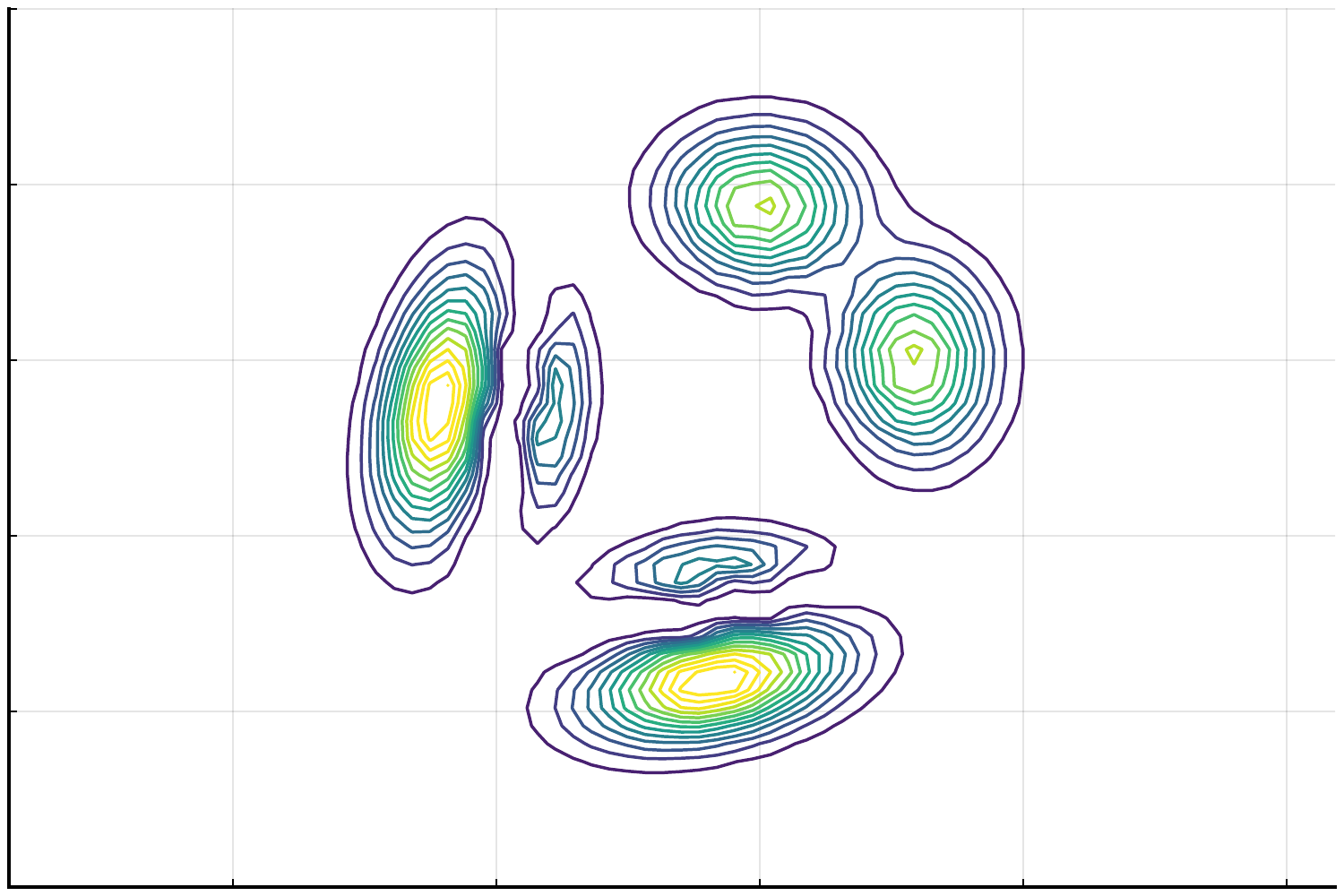}
    \end{tabular}} 
    \subfigure[$t=1$]{\begin{tabular}{@{}c@{}}
         \includegraphics[width=0.18\textwidth]{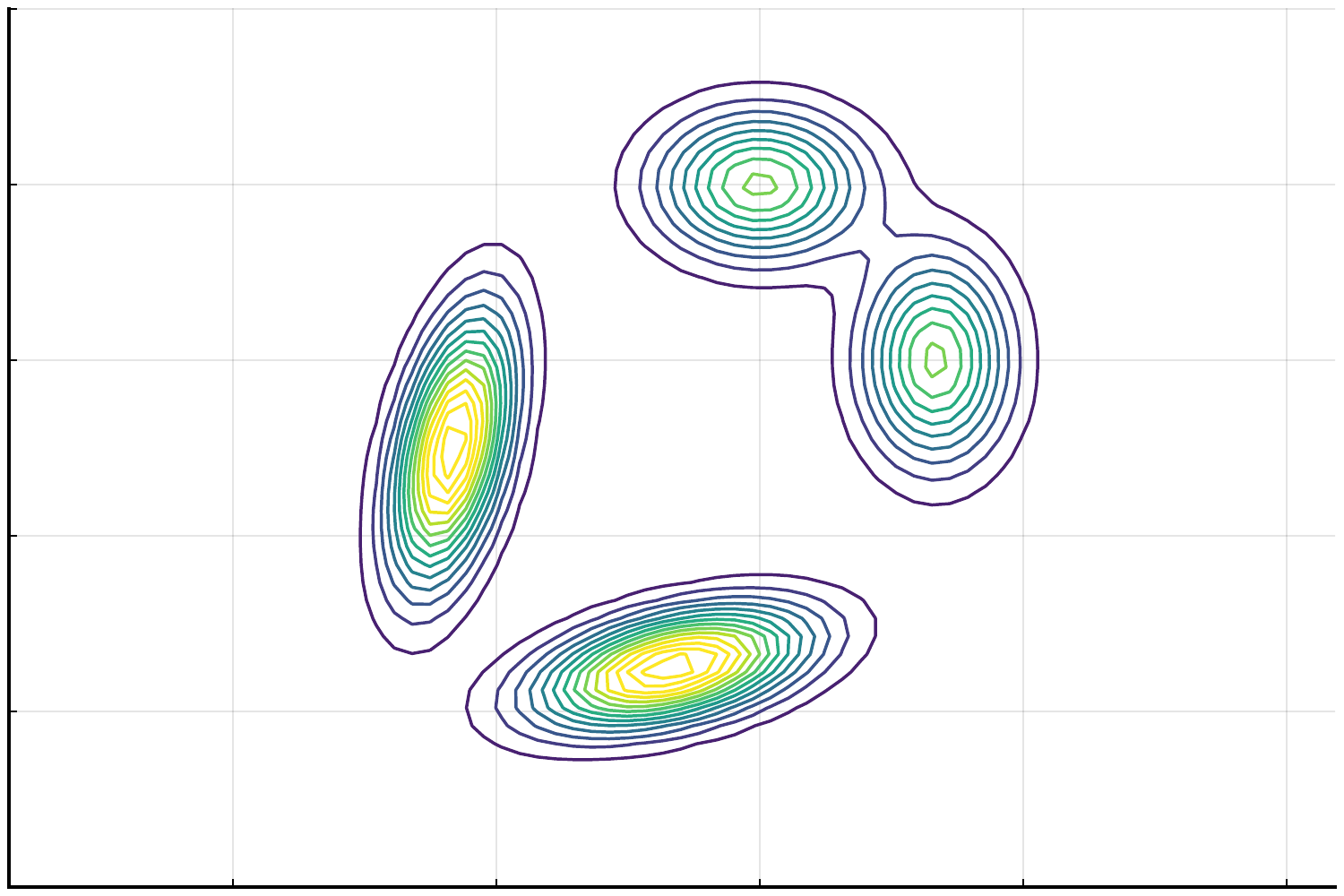} \\
         \includegraphics[width=0.18\textwidth]{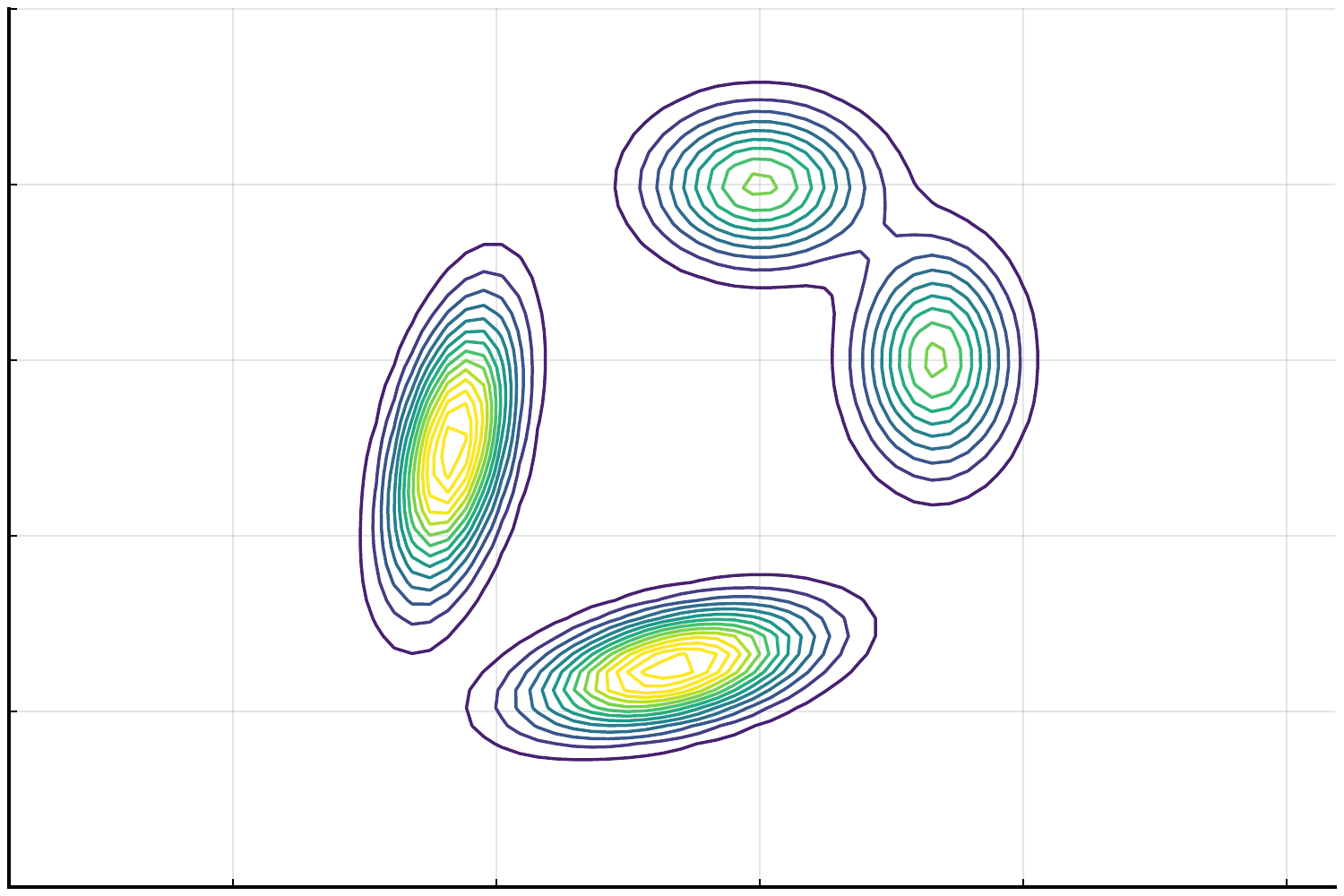}
    \end{tabular}} 
    \caption{Contour plots of $W_{2,\mathcal M}$ (top) and $W_2$ (bottom) barycenters between two mixtures of symmetric gaussian  distributions.}
    \label{fig:symm_gauss2dmw2_contour}
\end{figure}

\subsubsection{Application in quantum chemistry}

One application of the symmetry group invariant mixtures presented in the previous section is to compute Wasserstein-type distances and interpolants (i.e. barycenters) of many-body densities in electronic structure theory. As mentioned in the introduction, this was the first motivation of the present work. 

In quantum chemistry, the state of a system of $N$ electrons is fully characterized by its electronic wavefunction, which is a function $\psi \in L^2(\mathbb{R}^{dN}; \mathbb{C})$ with $d$  the dimension of the underlying physical space (typically $d=1,2,3$). This wavefunction is normalized in the sense that 
$$
\int_{\mathbb{R}^{dN}} |\psi|^2 = 1.
$$
Note that we omit here the spin variables for simplicity of the presentation.
Since electrons are fermions, the function $\psi$ is antisymmetric with respect to permutations of the ordering of the variables. More precisely, for all $\sigma \in \mathfrak{S}_N$ that is a permutation of the set $\{1, \cdots, N\}$ and all $(x_1,\ldots,x_N)\in (\mathbb{R^d})^N$, there holds
$$
\psi(x_{\sigma(1)}, \ldots, x_{\sigma(N)}) = \epsilon(\sigma) \psi(x_1, \ldots, x_N),
$$
where $\epsilon(\sigma)$ denotes the signature of the permutation $\sigma$. 
In electronic structure calculations, such a wavefunction is often approximated as a finite linear combination of so-called Slater determinants, which are defined as follows. For any set $\Phi = \{\phi_1, \ldots, \phi_N\}$ of $N$ functions of $L^2(\mathbb{R}^d; \mathbb{C})$, the associated Slater determinant $S_\Phi$ is defined as the function in $L^2(\mathbb{R}^{dN}; \mathbb{C})$ such that for almost all 
$ (x_1,\ldots, x_N)\in (\mathbb{R}^d)^N$, 
$$
S_\Phi(x_1,\ldots,x_N) = \frac{1}{Z_\Phi}{\rm det}\left( \begin{array}{cccc}
\phi_1(x_1) & \phi_1(x_2) & \ldots & \phi_1(x_N)\\
\phi_2(x_1) & \phi_2(x_2) & \ldots & \phi_2(x_N)\\
\vdots & \vdots & \ddots & \vdots\\
\phi_N(x_1) & \phi_N(x_2) & \ldots & \phi_N(x_N)\\
\end{array}
\right),
$$
where $Z_\Phi >0$ is the normalisation constant such that $\int_{\mathbb{R}^{dN}} |S_\Phi|^2 = 1$.

Since the wavefunction is defined on a high-dimensional space  when the number of electrons in the system is large, it is more convenient to handle the (normalized) one-body density which is defined as follows: 
$$
\forall x_1\in \mathbb{R}^d, \quad \rho_1(x_1):= \int_{(x_2,\ldots,x_N)\in (\mathbb{R}^d)^{N-1}} |\psi(x_1,x_2,\ldots, x_N)|^2\,dx_2\ldots\,dx_N.
$$
More generally, for all $1\leq n \leq N$, the $n$-body density associated to the wavefunction $\psi$ is defined as follows:
\begin{equation}\label{eq:nbody}
\forall \bm{x} = (x_1,x_2, \ldots x_n)\in (\mathbb{R}^d)^n, \quad \rho_n(\bm{x}):= \int_{(x_{n+1},\ldots,x_N)\in (\mathbb{R}^d)^{N-n}} |\psi(\bm{x})|^2\,dx_{n+1}\ldots\,dx_N.
\end{equation}
For all $1\leq n \leq N$, $\rho_n$ can then be seen as the density associated to a probability measure on $\mathbb{R}^{dn}$.

Due to the linear approximation with Slater determinants based on gaussian functions often used in quantum chemistry codes to approximate the wave-function, as well as the symmetry constraints on the $n$-body densities, it seems natural to approximate $n$-body densities by mixtures of \itshape squared Slater determinants with Gaussian functions\normalfont. 
More precisely, for all $m_i\in \mathbb{R}^d$ and $\Sigma_i \in \mathcal S_d$ for $i=1,\ldots,n$, denoting by ${\bm m}:=(m_1,\ldots,m_n)$ and by ${\bm \Sigma}:=(\Sigma_1, \ldots, \Sigma_n)$, one can define
$$
a^{SD}_{{\bm m},{\bm \Sigma}}(dx_1,\ldots,dx_n):= \frac{1}{Y_{{\bm m},{\bm \Sigma}}}\left|S_{\Phi_{{\bm m},{\bm \Sigma}}}(x_1,\ldots,x_n)\right|^2\,dx_1\ldots\,dx_n,
$$
where $\Phi_{{\bm m},{\bm \Sigma}}:= \{ G_{m_1,\Sigma_1},\ldots, G_{m_n, \Sigma_n}\}$ and where $Y_{{\bm m},{\bm \Sigma}}>0$ is the normalization constant such that $a^{SD}_{{\bm m},{\bm \Sigma}}$ is a probability measure on $\mathbb{R}^{dn}$.

Denoting by 
$$
\mathcal A^{nd}_{SD}:= \left\{ a^{SD}_{{\bm m},{\bm \Sigma}}, \quad {\bm m}\in (\mathbb{R}^d)^n, \; {\bm \Sigma} \in (\mathcal S_d)^n \right\}, 
$$
electronic $n$-body densities of the form (\ref{eq:nbody}) can be approximated as elements of $\mathcal M\left(\mathcal A^{nd}_{SD}\right)$. 
We would therefore define a Wasserstein-type distance and Wasserstein-type barycenters on this set of mixtures. To this aim, we exploit the following remark. Consider $\mathcal A_{g,BD}^{nd}$ the set of gaussian measures on $\mathbb{R}^{nd}$ with block-diagonal covariance matrices, i.e. 
$$
\mathcal A_{g,BD}^{nd}:= \left\{ g_{{\bm m}, {\rm diag}({\bm \Sigma})}, \; {\bm m} \in (\mathbb{R}^d)^n, \; {\bm \Sigma} \in (\mathcal S_d)^n\right\},
$$
where ${\rm diag}({\bm \Sigma})$ is the $nd\times nd$ block-diagonal matrix with diagonal blocks given by $\Sigma_1,\ldots,\Sigma_n$.
It can be easily checked that $\mathcal A_{g,BD}^{nd}$ embedded with the Wasserstein distance is a geodesic space. 

Denoting by $\mathcal A_{g, {\rm sym}}^{nd}$ the set of symmetrized gaussian measures obtained from the dictionnary $\mathcal A_{g,BD}^{nd}$ via the permutation symmetry group introduced in Section~\ref{sec:permutation}, it can easily be checked that $\mathcal M\left(\mathcal A^{nd}_{SD}\right)$ is isomorphic to $\mathcal M \left( \mathcal A_{g, {\rm sym}}^{nd} \right)$.
More precisely, the following application $\mathcal I$ is an isomorphism:
$$
\mathcal I : \left\{
\begin{array}{ccc}
\mathcal M\left(\mathcal A^{nd}_{SD}\right) & \to & \mathcal M \left( \mathcal A_{g, {\rm sym}}^{nd} \right)\\
\sum_{k=1}^K \pi_k a_{{\bm m}^k, {\bm \Sigma}^k}^{SD} & \mapsto & \sum_{k=1}^K \pi_k \overline{g}_{{\bm m}^k,{\rm diag}({\bm \Sigma}^k)},
\end{array}
\right.
$$
where for all $1\leq k \leq K$, we denote by ${\bm m}^k\in (\R^d)^n$, ${\bm \Sigma}^k\in (\mathcal S_d)^n$, by $(\pi_k)_{1\leq k \leq K}$ an element of $\mathcal L_K$, and by $\overline{g}_{{\bm m}^k,{\rm diag}({\bm \Sigma}^k)}$ the symmetrized gaussian obtained from  $g_{{\bm m}^k,{\rm diag}({\bm \Sigma}^k)}$.
It is then natural to define a Wasserstein-type distance $d_{SD}$ on $\mathcal M\left(\mathcal A^{nd}_{SD}\right)$ as follows:
$$
\forall a_1, a_2 \in \mathcal M\left(\mathcal A^{nd}_{SD}\right), \quad d_{SD}(a_1, a_2):= \overline{d}(\mathcal I(a_1), \mathcal I(a_2)),  
$$
where $\overline{d}$ is the symmetry-invariant Wasserstein distance defined in~\eqref{eq:Wass-sym}. Wasserstein-like barycenters are then also naturally defined using the isomorphism $\mathcal I$.

To illustrate this case, we plot on Figure~\ref{fig:permantisym2dmw2_contour} the barycenters between two mixtures of squares of Slater determinants for the $W_2$ and $W_{2,\mathcal M}$ distances. First we observe that both types of barycenters satisfy that they are zero on the diagonal, as is constrained by the form of the densities for the $W_{2,\mathcal M}$ distance, and naturally expected from Proposition~\ref{prop:5.3} for the 2-Wasserstein barycenters. Second, we observe that the mixture Wasserstein barycenters are smoother than the $W_2$-barycenters.

The application of the Wasserstein-type distance and Wasserstein-type barycenters presented here to accelerate computations in quantum chemistry will be the object of the future work~\cite{PolackDusson}.

\begin{figure}
    \centering
    \subfigure[$t=0$]{
    \begin{tabular}{@{}c@{}}
         \includegraphics[width=0.18\textwidth]{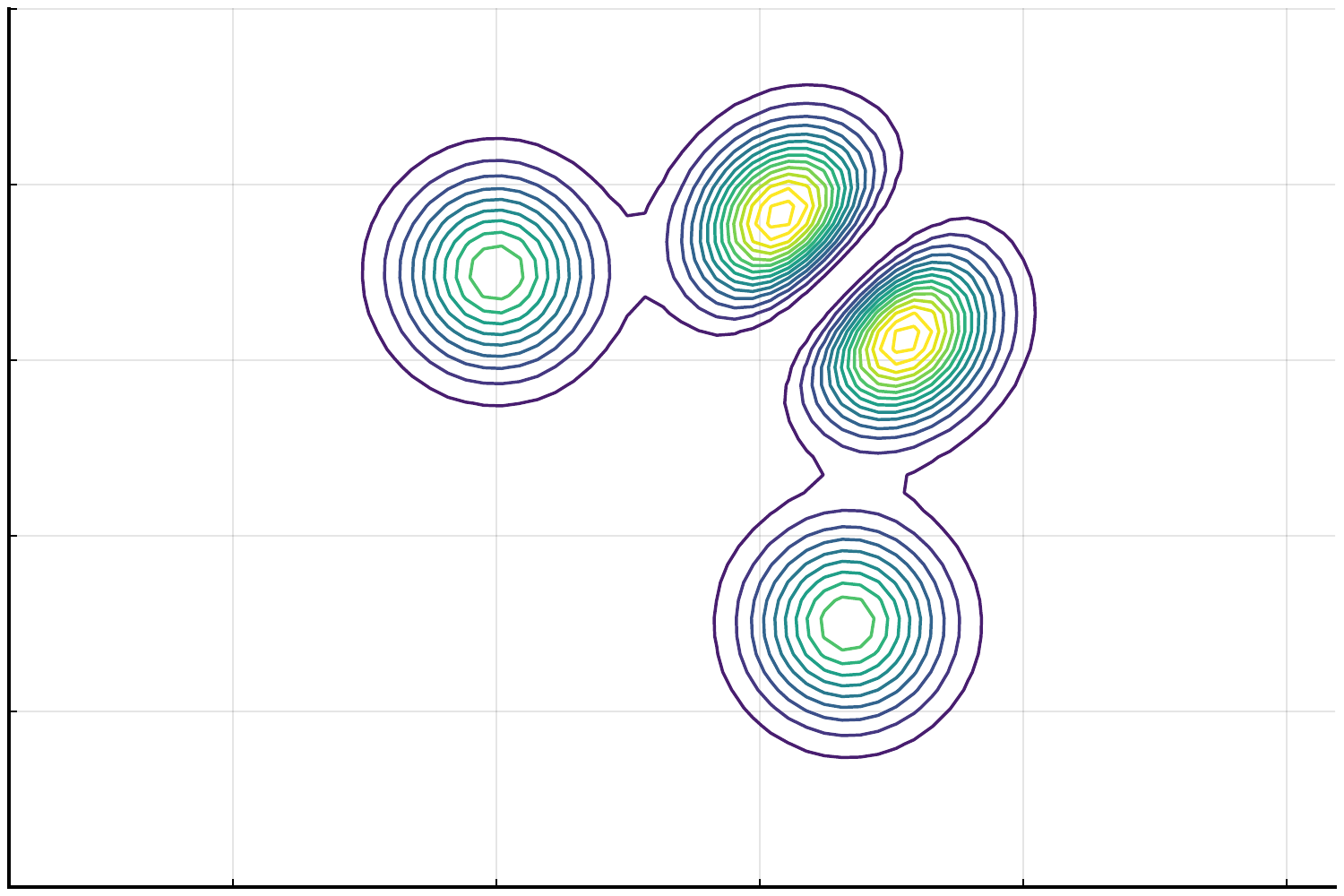} \\
         \includegraphics[width=0.18\textwidth]{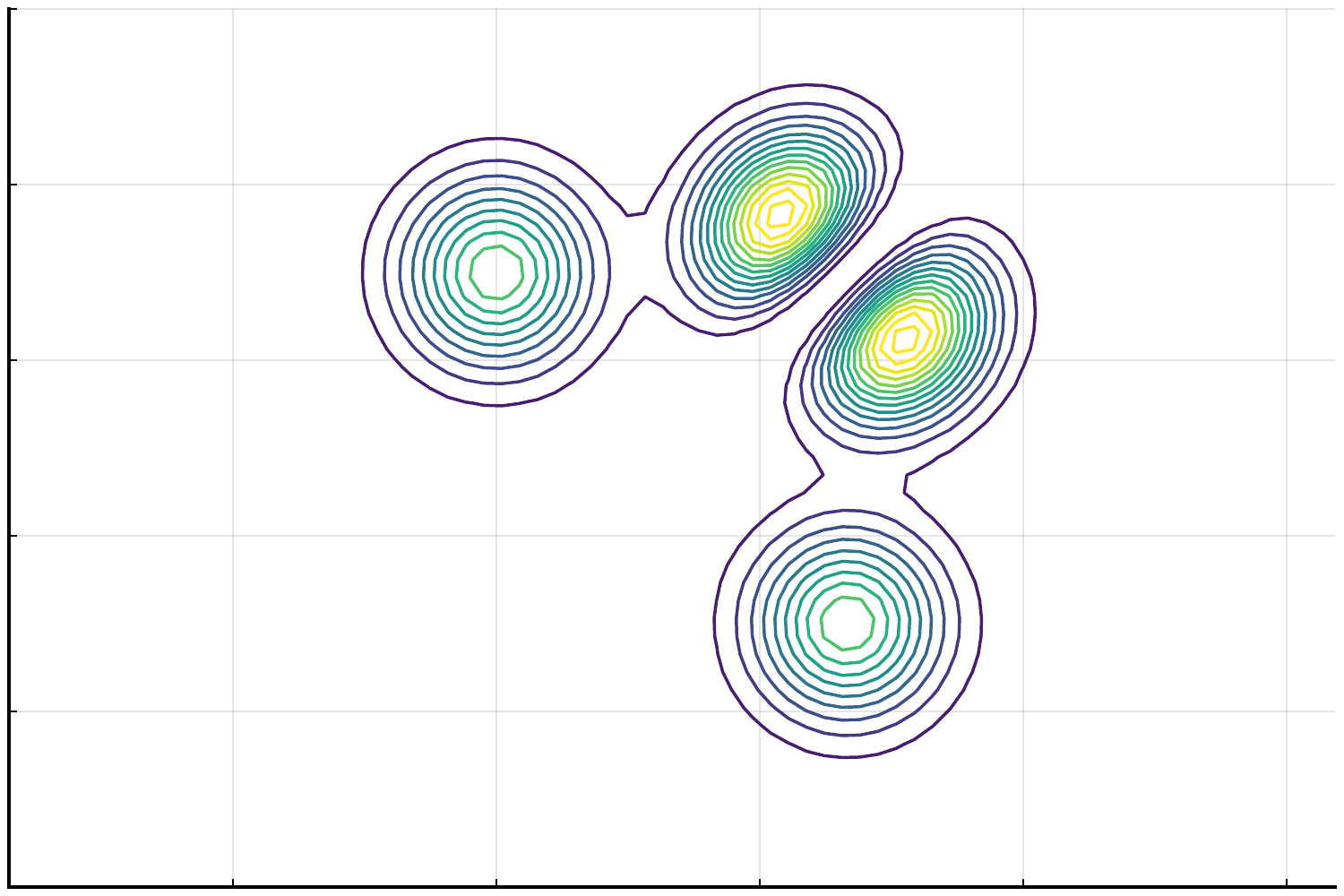}
    \end{tabular}
    } 
    \subfigure[$t=0.25$]{\begin{tabular}{@{}c@{}}
         \includegraphics[width=0.18\textwidth]{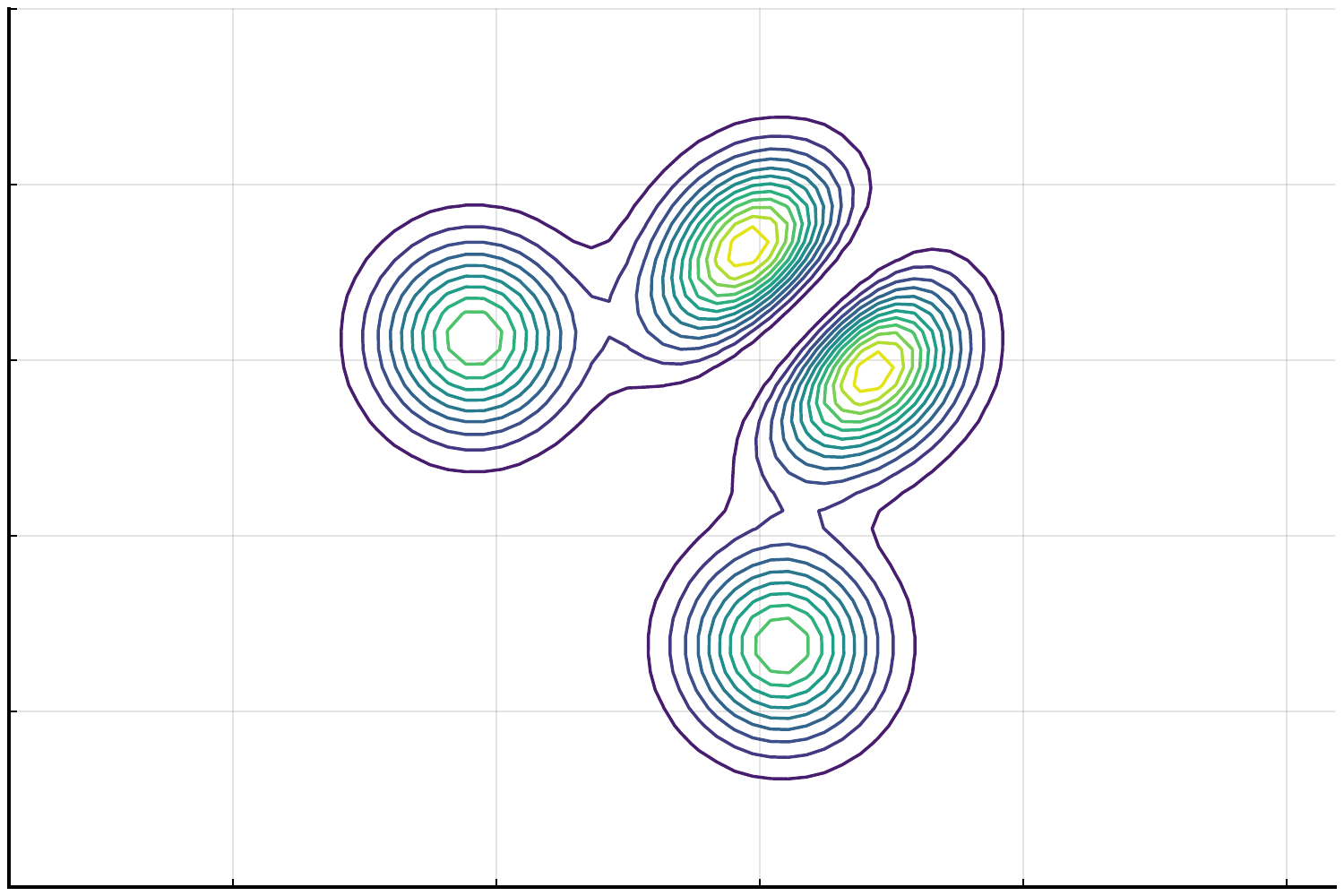} \\
         \includegraphics[width=0.18\textwidth]{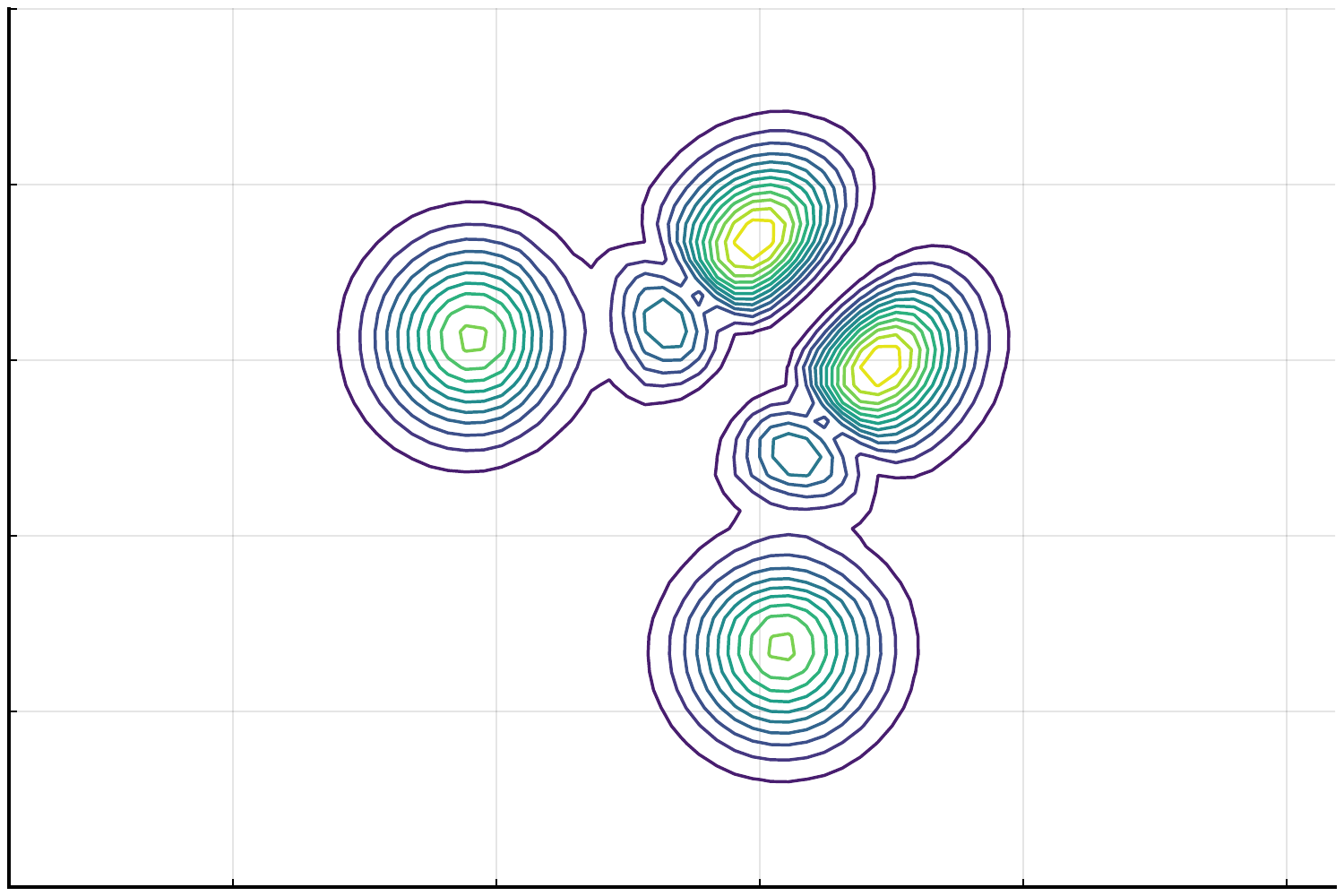}
    \end{tabular}} 
    \subfigure[$t=0.5$]{\begin{tabular}{@{}c@{}}
         \includegraphics[width=0.18\textwidth]{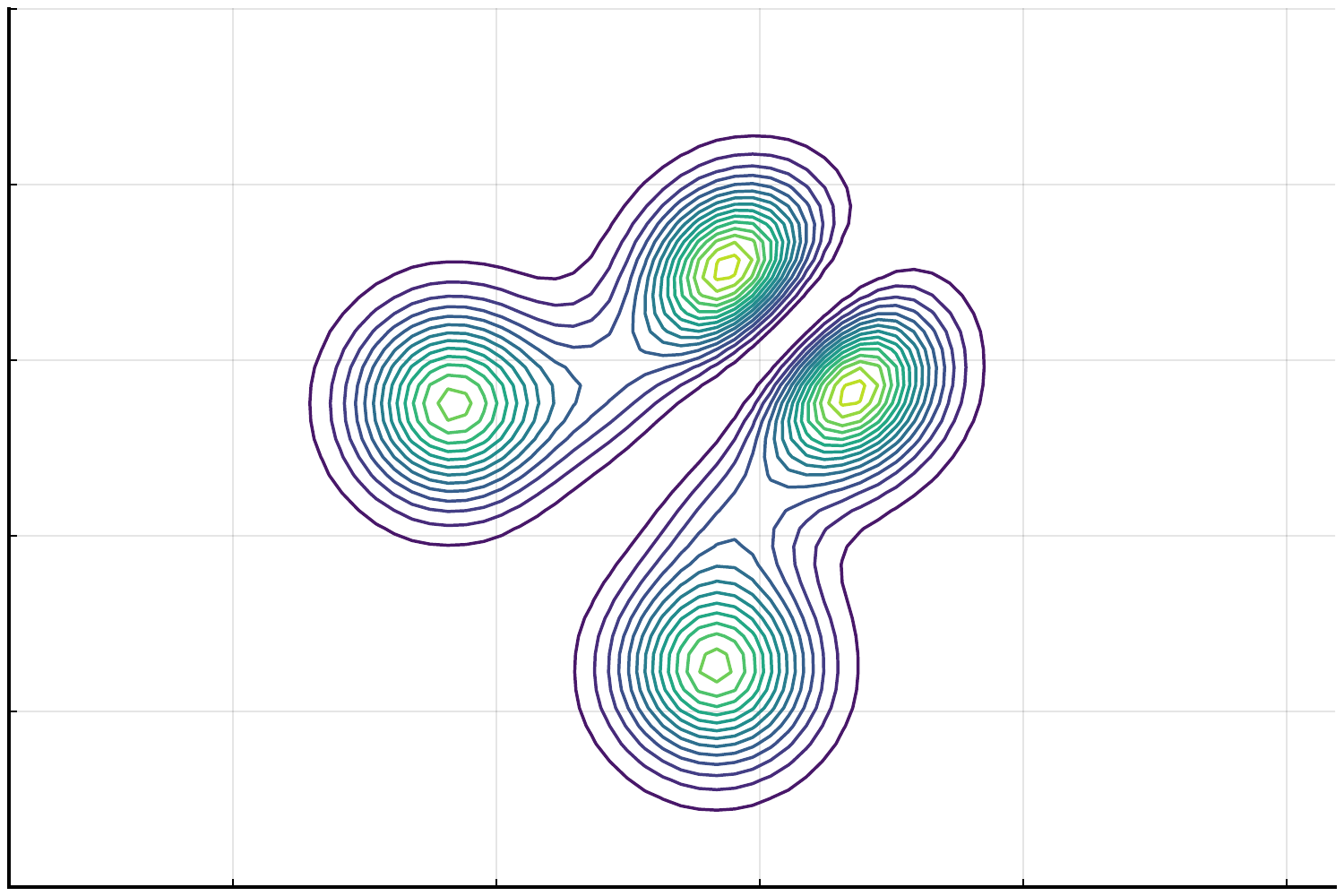} \\
         \includegraphics[width=0.18\textwidth]{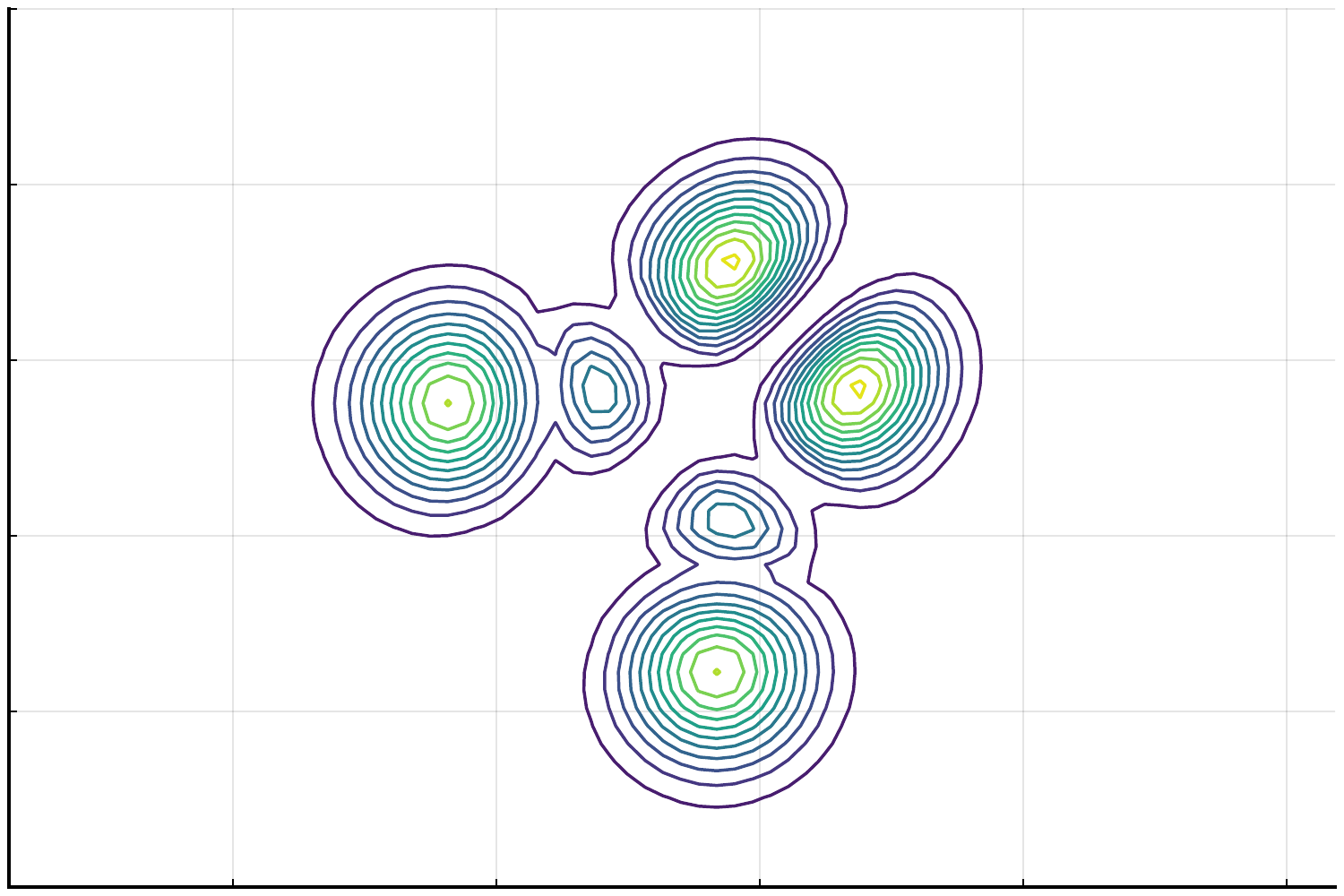}
    \end{tabular}} 
    \subfigure[$t=0.75$]{\begin{tabular}{@{}c@{}}
         \includegraphics[width=0.18\textwidth]{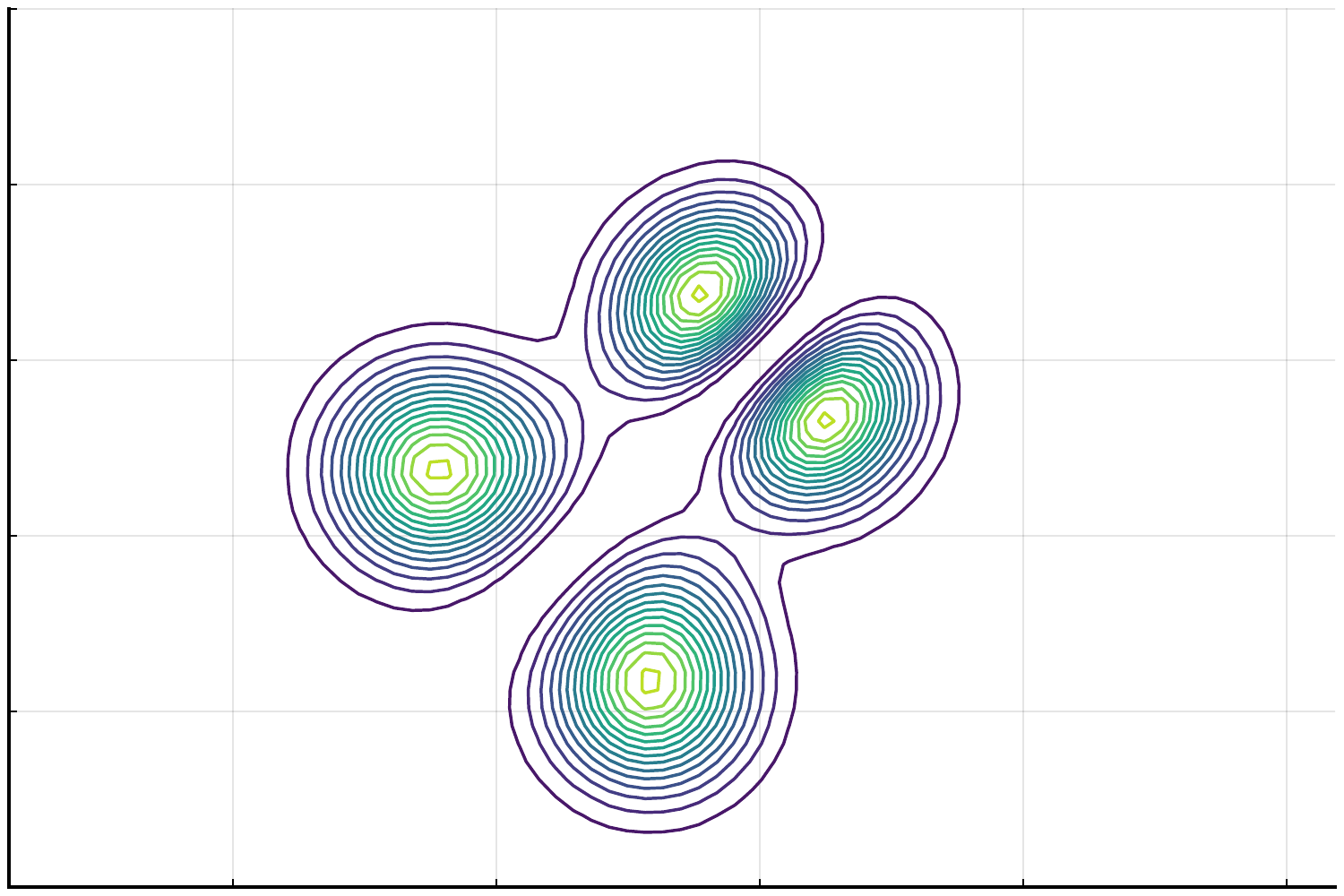} \\
         \includegraphics[width=0.18\textwidth]{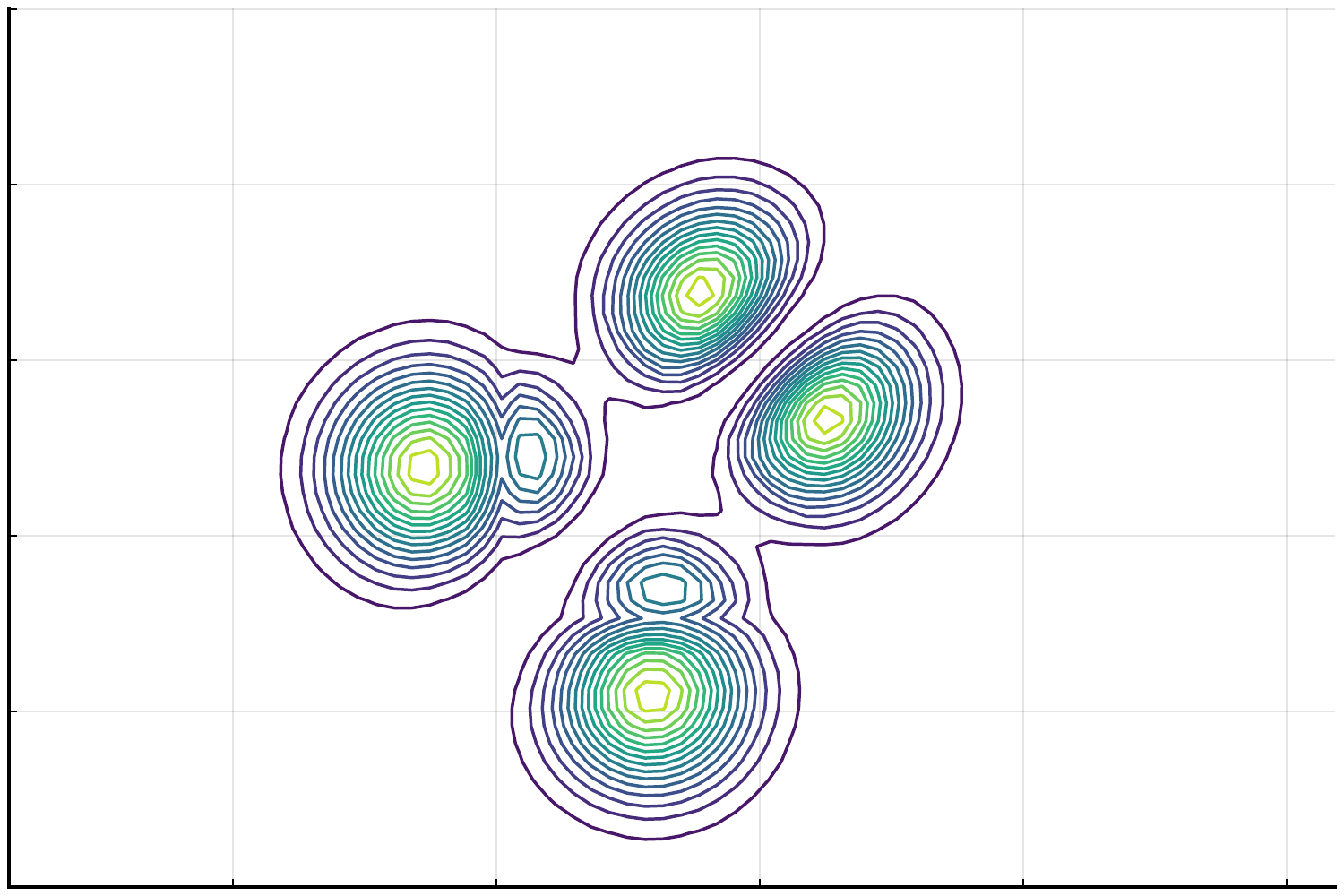}
    \end{tabular}} 
    \subfigure[$t=1$]{\begin{tabular}{@{}c@{}}
         \includegraphics[width=0.18\textwidth]{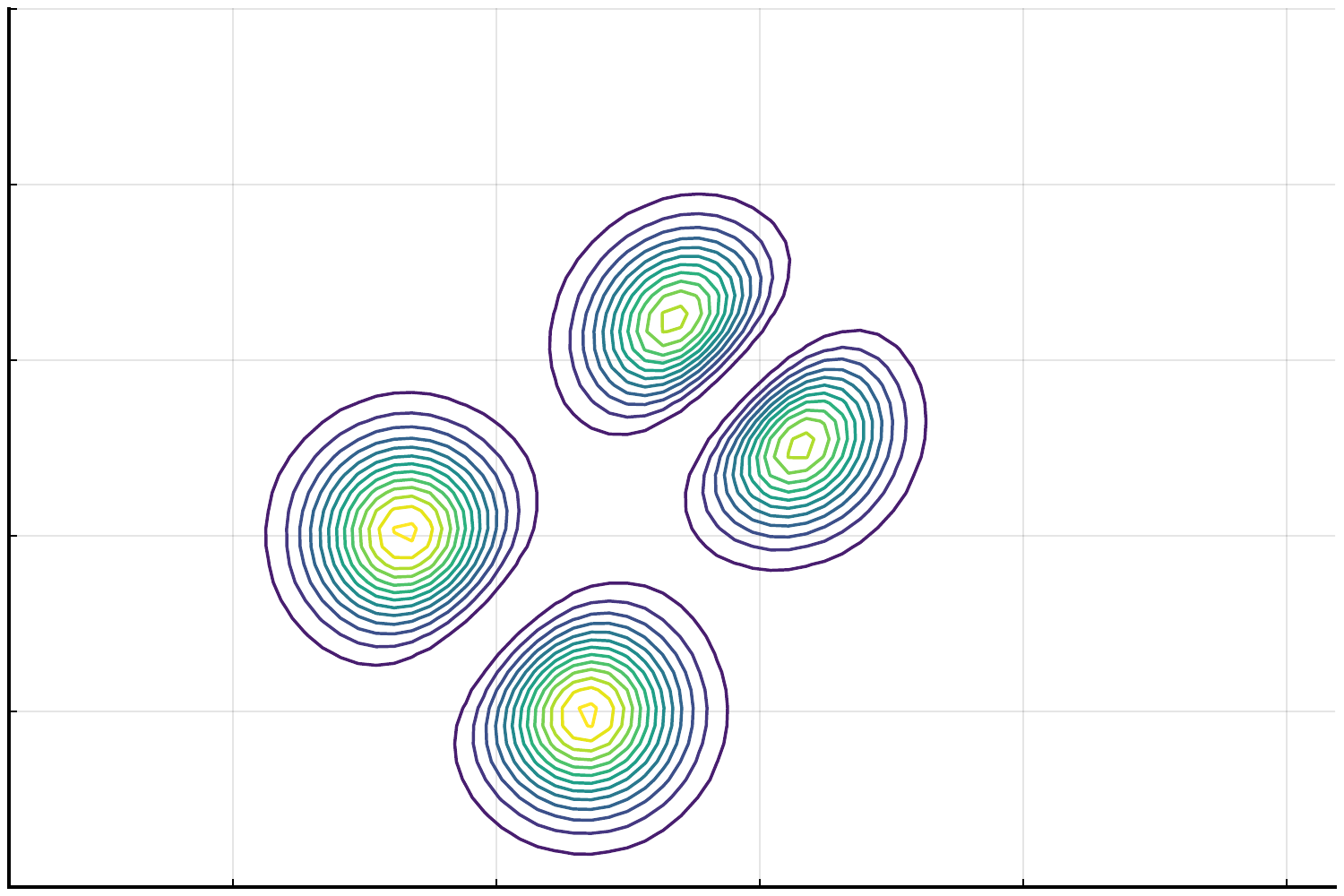} \\
         \includegraphics[width=0.18\textwidth]{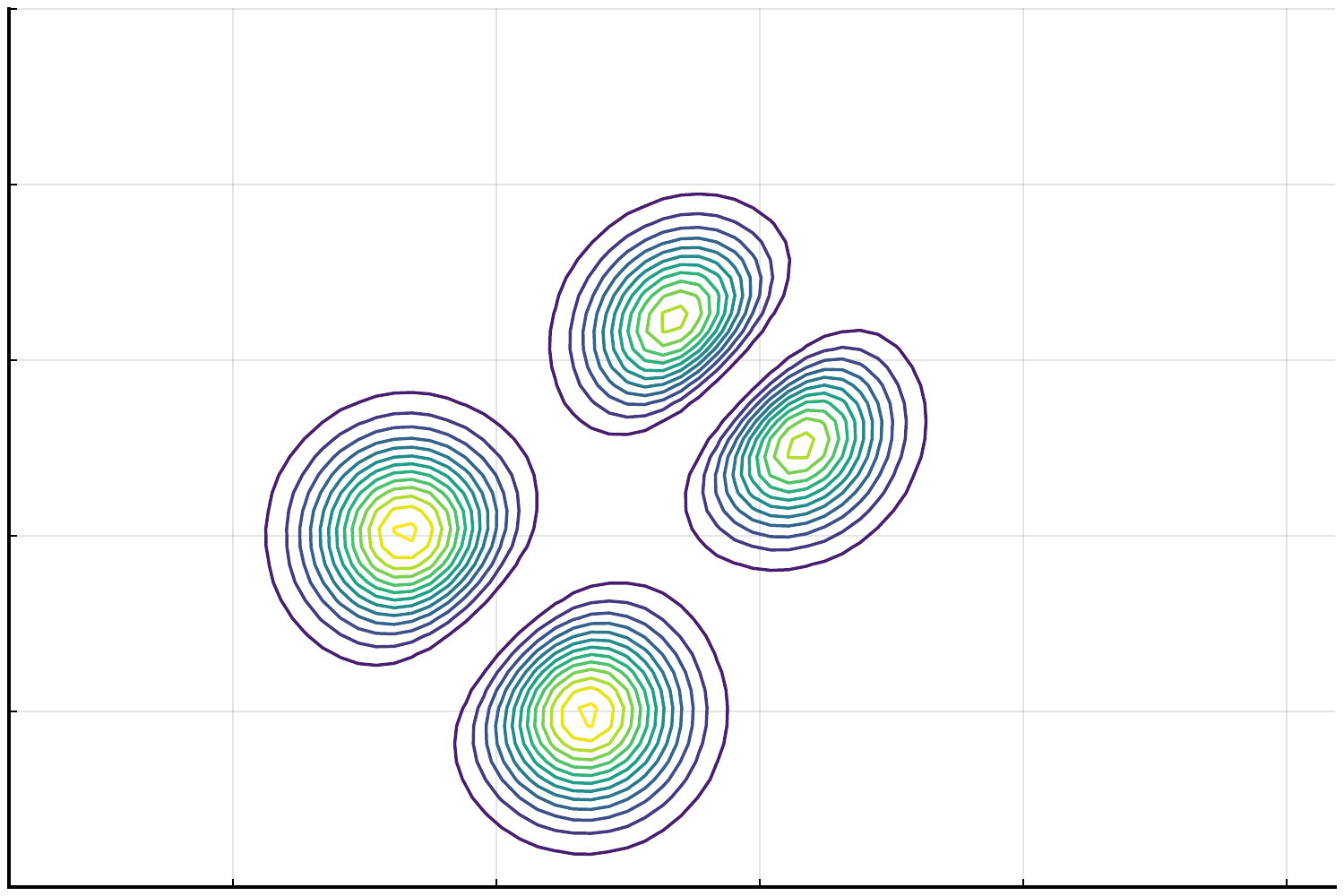}
    \end{tabular}} 
    \caption{Contour plots of $W_{2,\mathcal M}$ (top) and $W_2$ (bottom) barycenters between two mixtures of squared Slater determinants based on gaussian functions.}
    \label{fig:permantisym2dmw2_contour}
\end{figure}

\subsubsection{Rotation group $SO(2)$}

Finally, we consider the case where $\D = \mathbb{R}^2$ and the two-dimensional rotation group $SO(2)$, for which we can, as for the permutation group, define a natural group action 
$\cdot$ via
\[
   \forall a\in\A, \quad \forall Q \in SO(2),\quad 
   (Q \cdot a) = a\# T_Q ,
\]
with $T_Q: \mathbb{R}^2 \ni {\bf x}\mapsto Q{\bf x}$.
Note that such a group action can similarly be defined for $SO(n)$.
From definition~\eqref{eq:Asym} the symmetric atoms $\bar a$ of $\A_{\rm sym}$ write 
\[
\bar a = 
\int  Q\cdot a \; H(dQ).
\]
We now provide a numerical example, taking $\A$ as the set of two-dimensional slater-type distributions. The main difficulty and difference with the previous examples is that the group is not finite. 
Therefore, problem~\eqref{eq:symm_distance} which has to be solved to compute the distance between elements in $\A_{\rm sym}$ as well as barycenters is now a continuous optimization problem. 
In this two-dimensional case, there is in fact only one parameter to optimize which is the angle of the rotation, and we chose to perform this optimization numerically. This increases the computational cost of the distance and barycenters, however the computational cost stays way below the cost of computing the $W_2$ Wasserstein barycenters. 
In Figure~\ref{fig:rotsym2dmw2_contour}
we provide an example of barycenters between two mixtures of symmetric distributions, themselves based on one slater function each. 
We observe that the $W_2$ Wasserstein barycenter computed with the Sinkhorn algorithm is 
way less regular than the modified Wasserstein barycenter.

\begin{figure}
    \centering
    \subfigure[$t=0$]{
    \begin{tabular}{@{}c@{}}
         \includegraphics[width=0.19\textwidth]{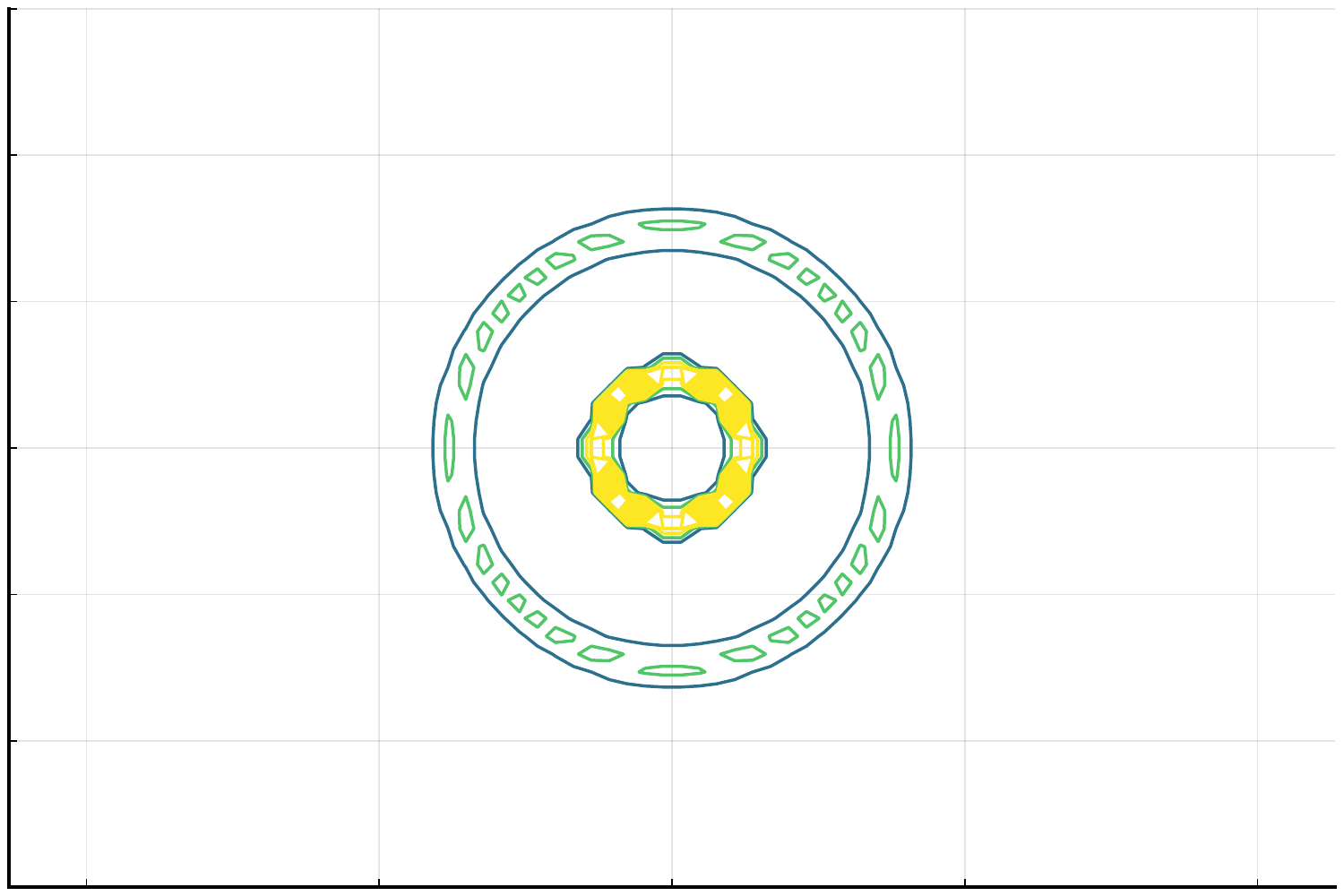} \\
         \includegraphics[width=0.19\textwidth]{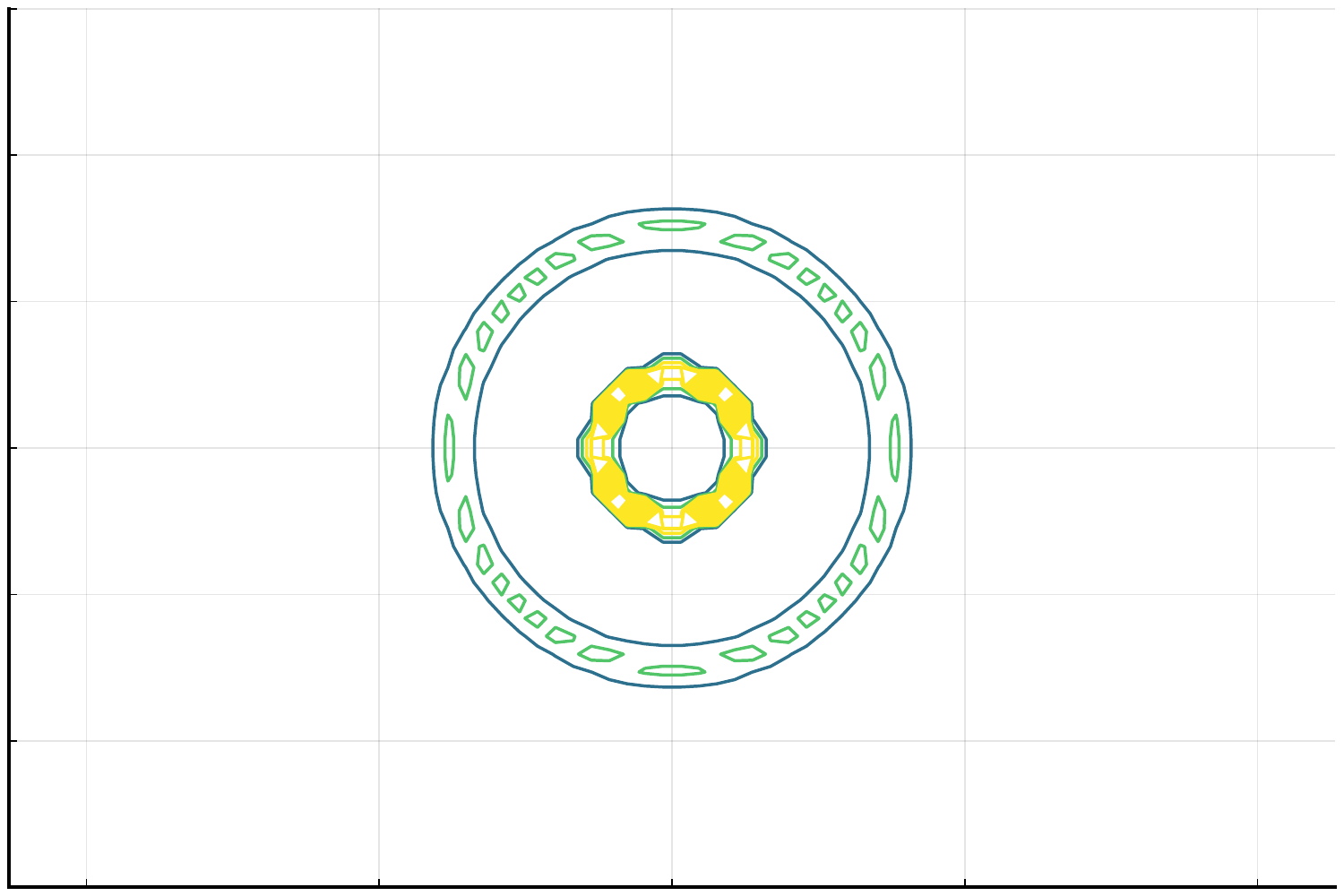}
    \end{tabular}} 
    \subfigure[$t=0.25$]{\begin{tabular}{@{}c@{}}
         \includegraphics[width=0.19\textwidth]{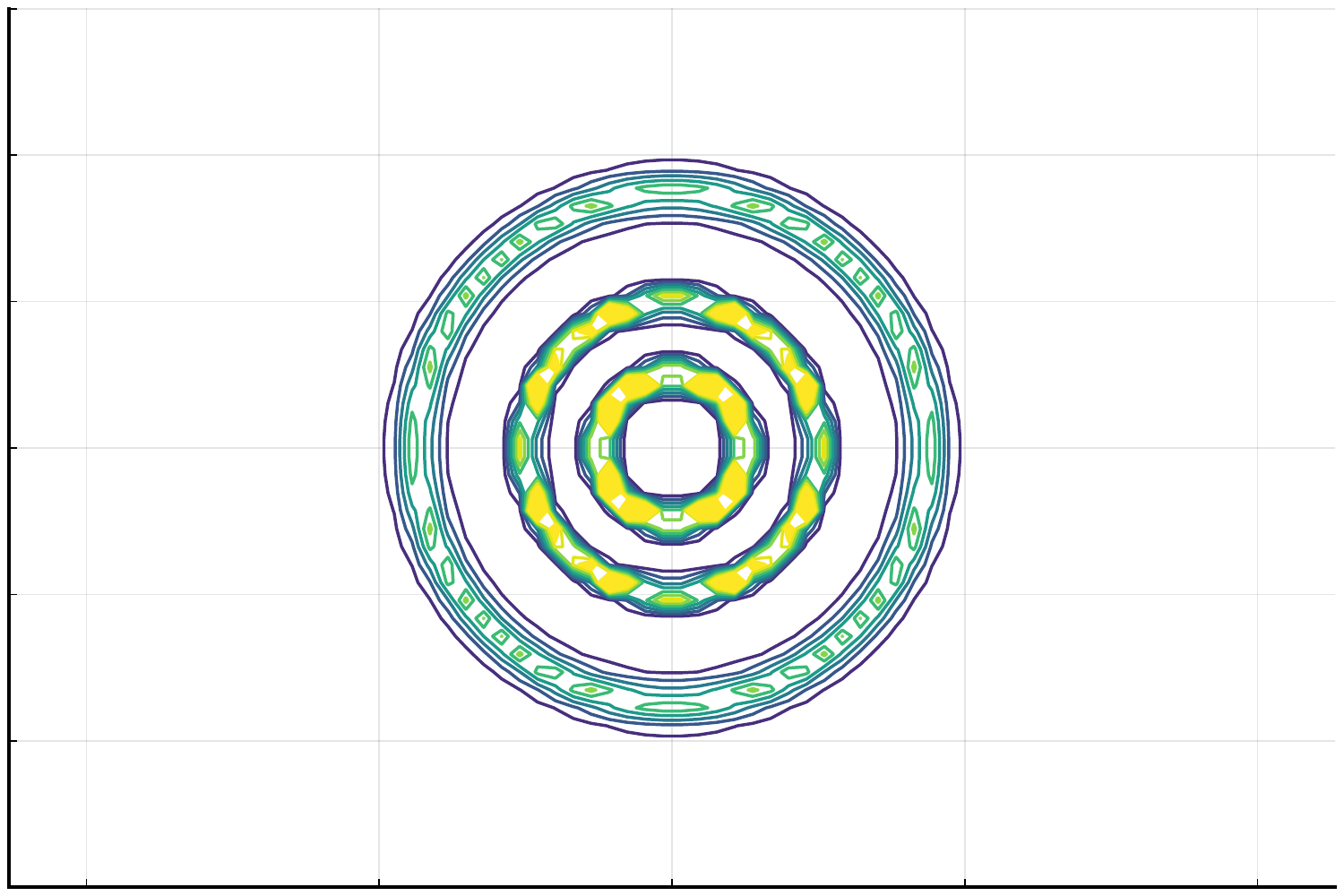} \\
         \includegraphics[width=0.19\textwidth]{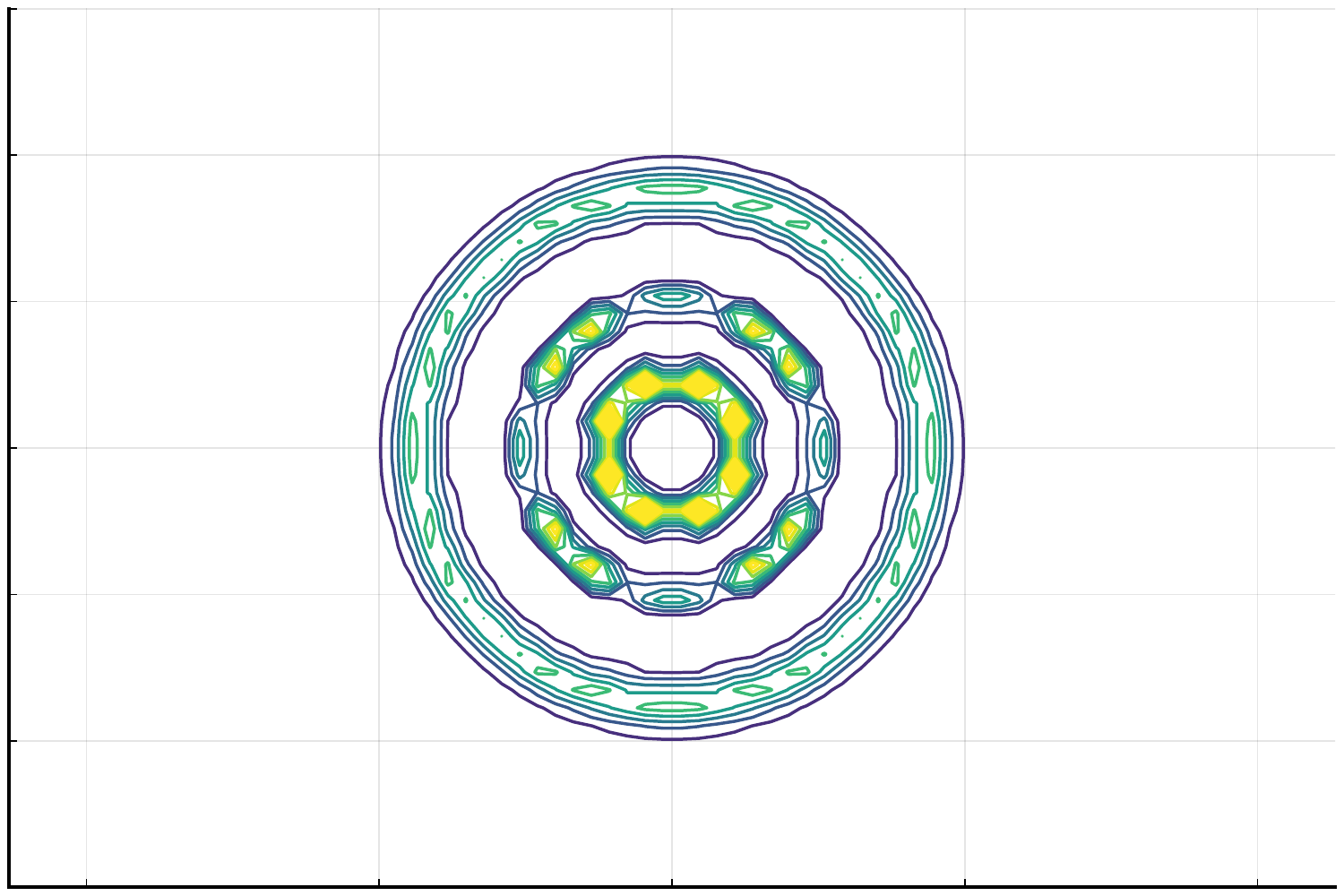}
    \end{tabular}} 
    \subfigure[$t=0.5$]{\begin{tabular}{@{}c@{}}
         \includegraphics[width=0.19\textwidth]{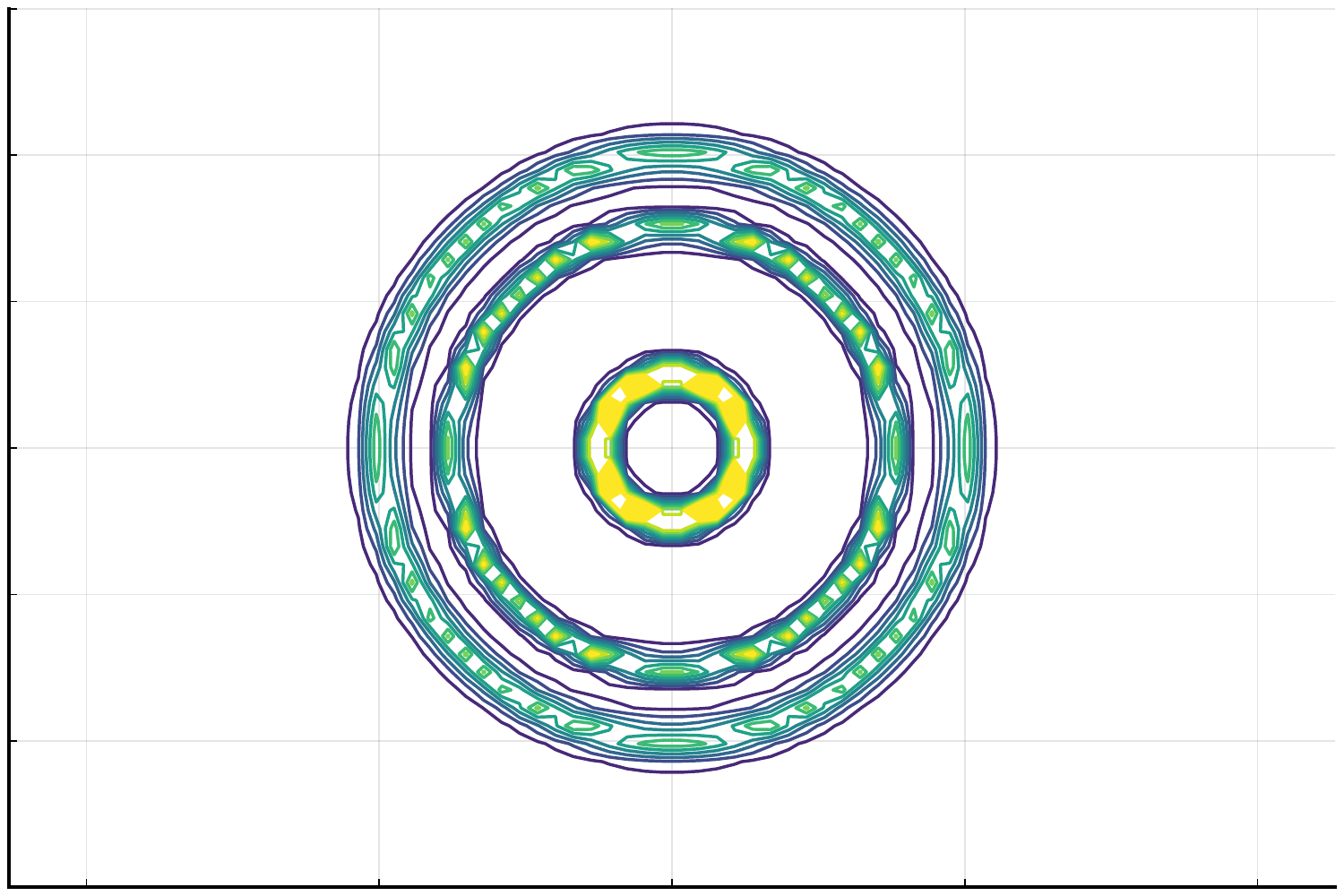} \\
         \includegraphics[width=0.19\textwidth]{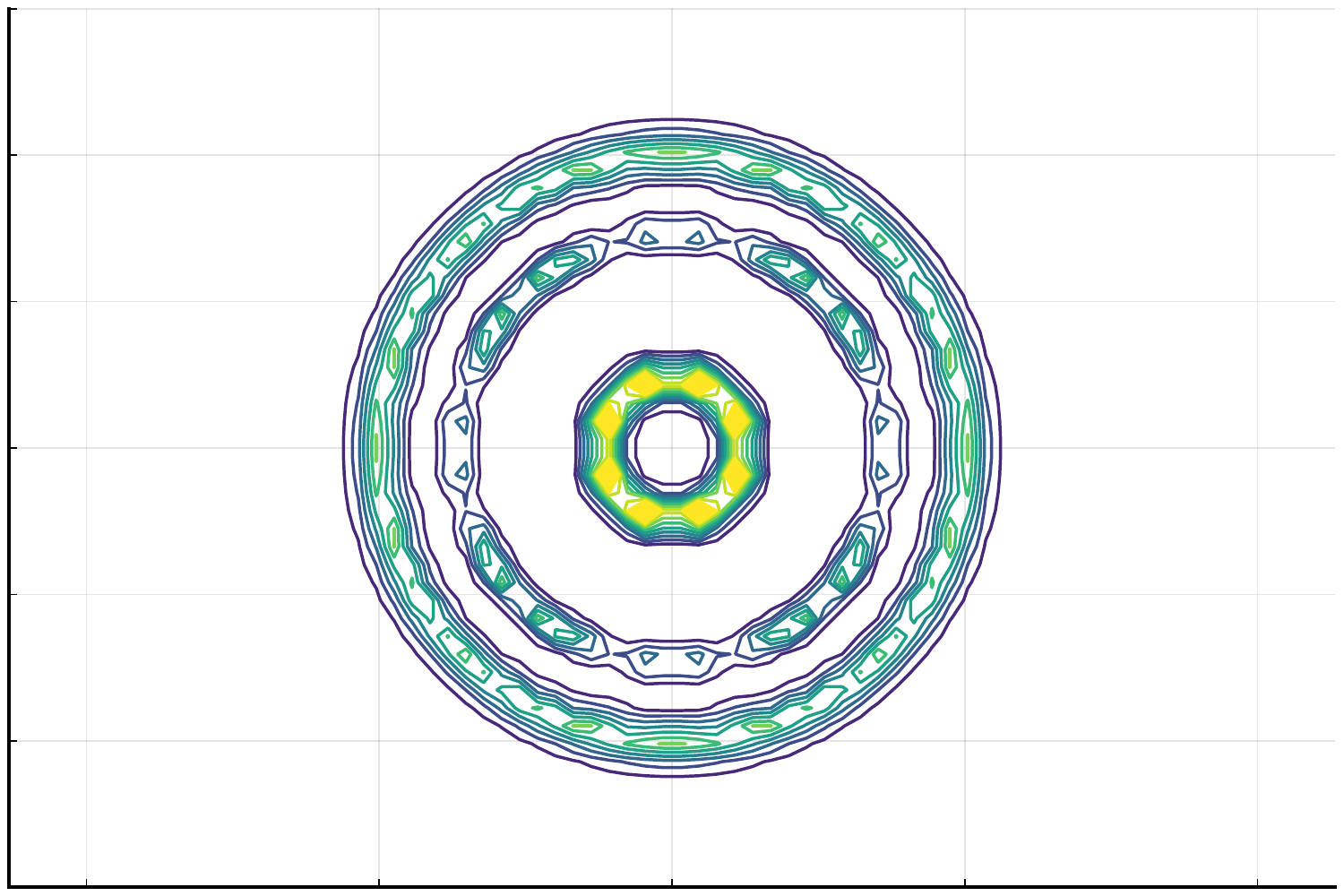}
    \end{tabular}} 
    \subfigure[$t=0.75$]{\begin{tabular}{@{}c@{}}
         \includegraphics[width=0.19\textwidth]{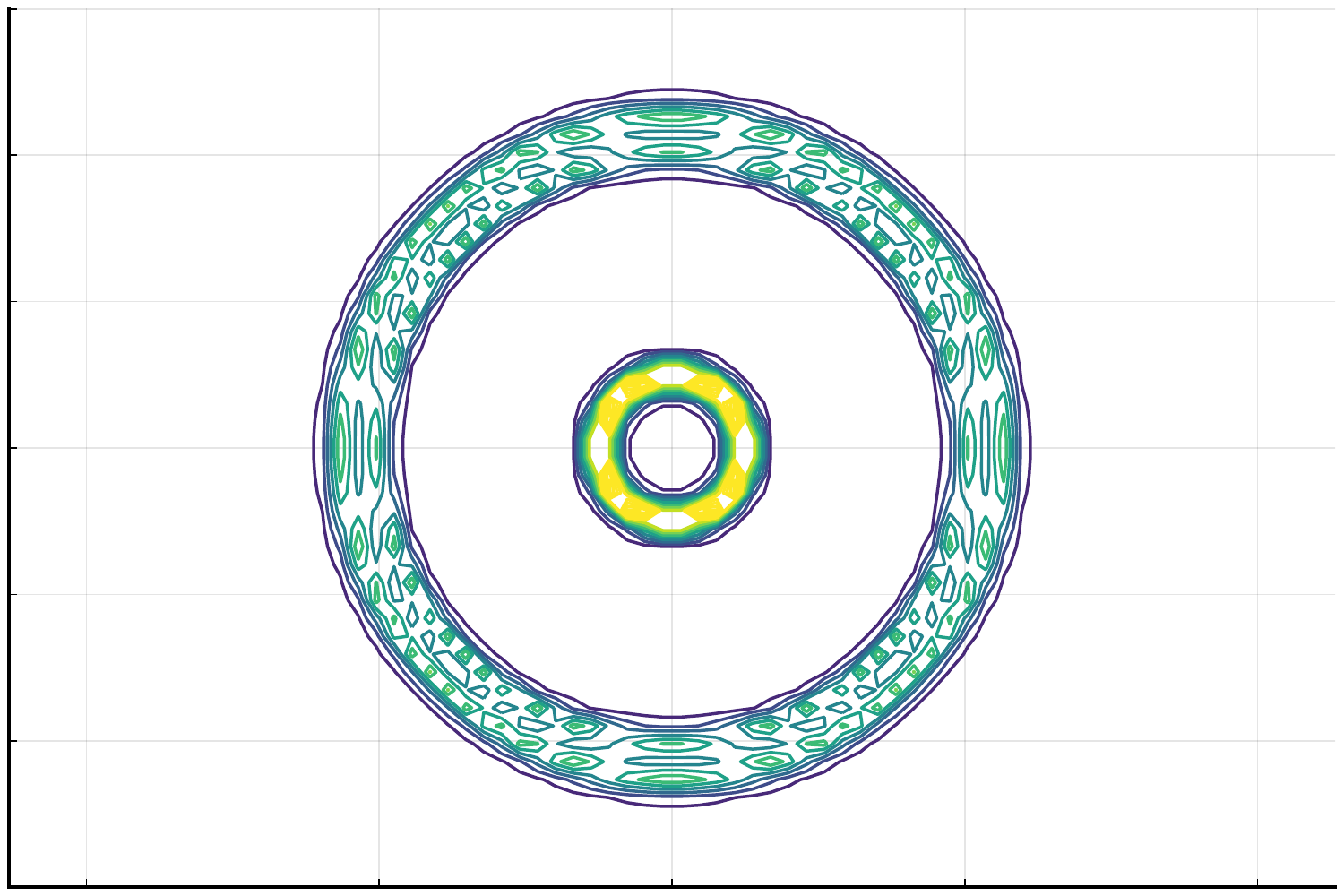} \\
         \includegraphics[width=0.19\textwidth]{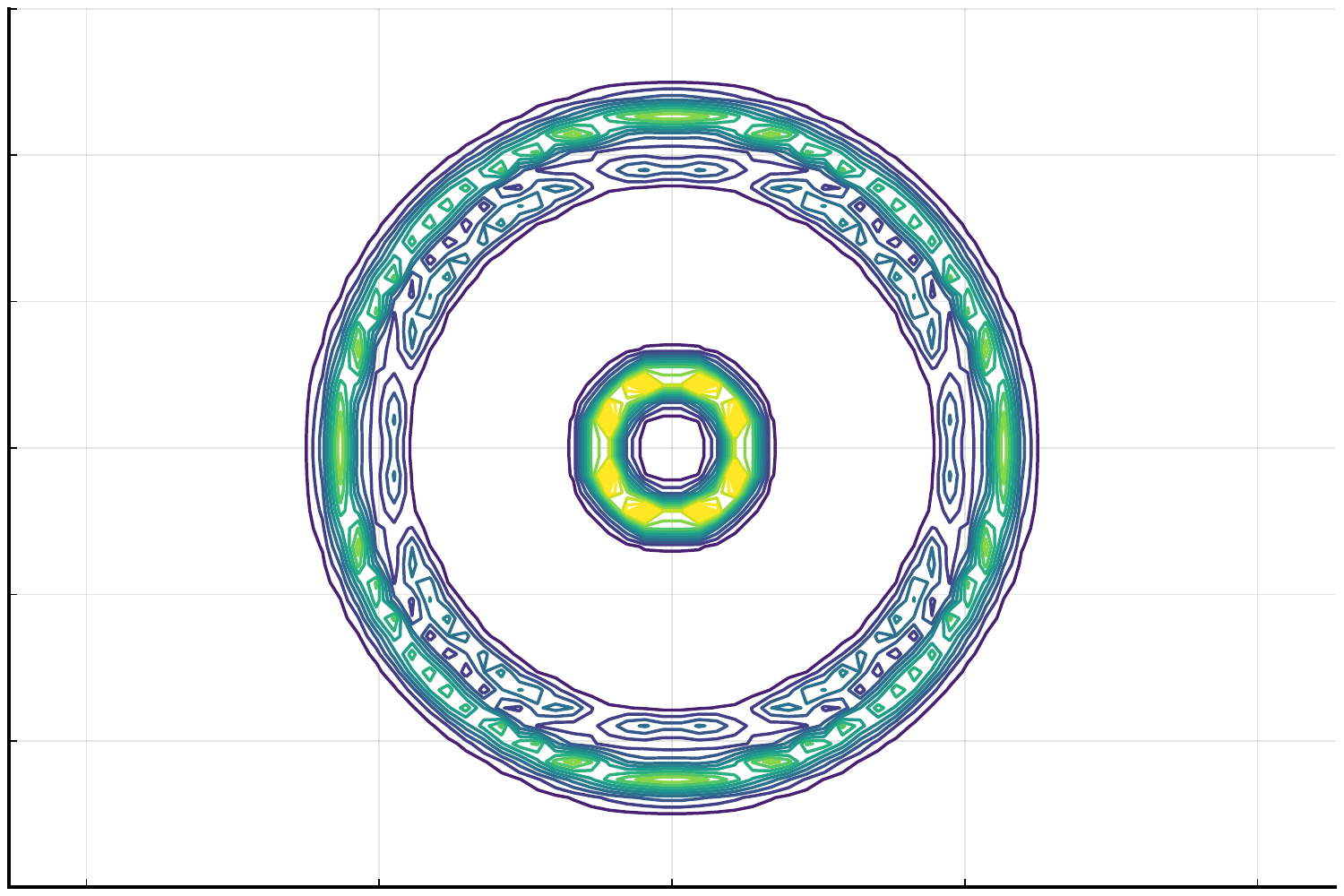}
    \end{tabular}} 
    \subfigure[$t=1$]{\begin{tabular}{@{}c@{}}
         \includegraphics[width=0.19\textwidth]{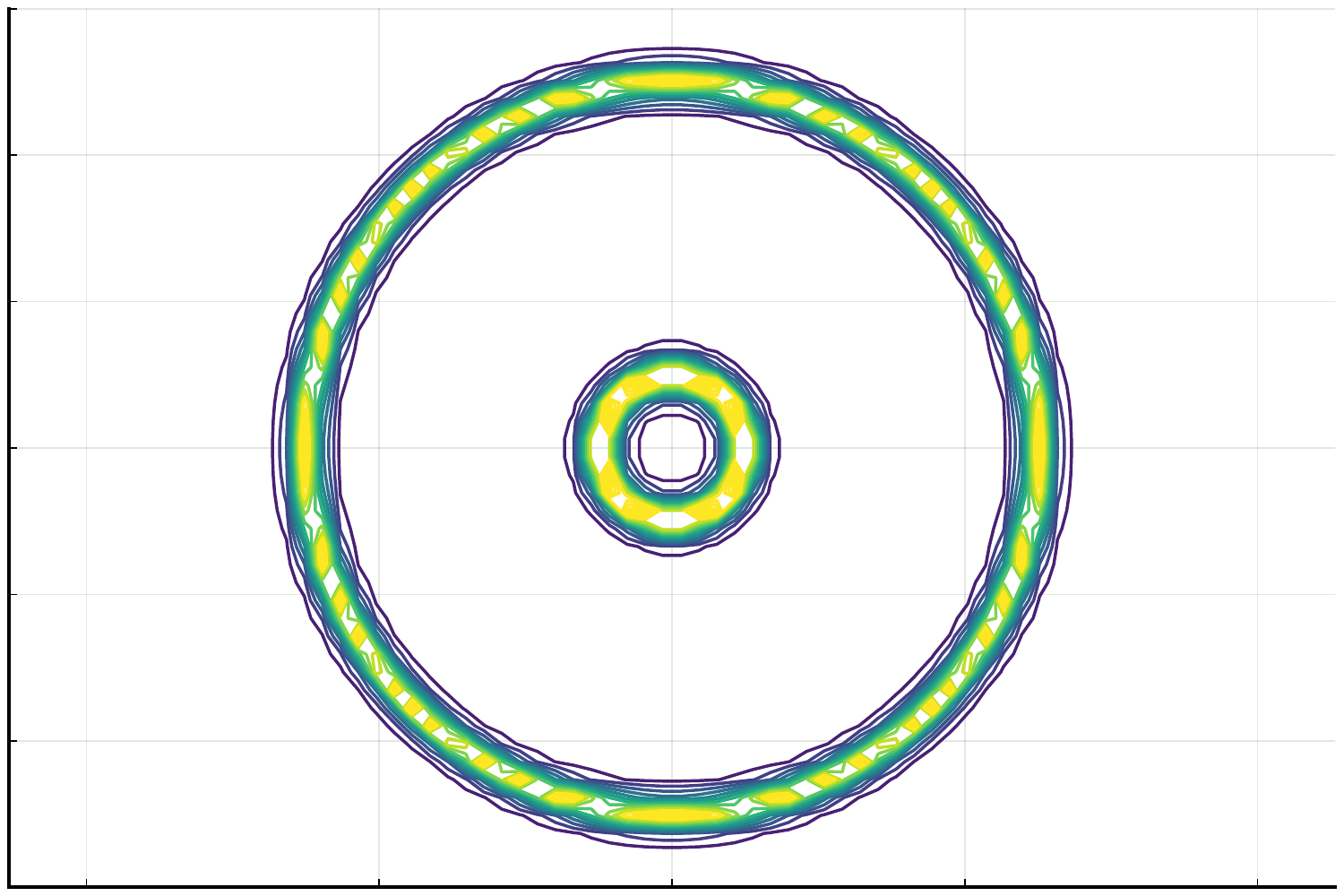} \\
         \includegraphics[width=0.19\textwidth]{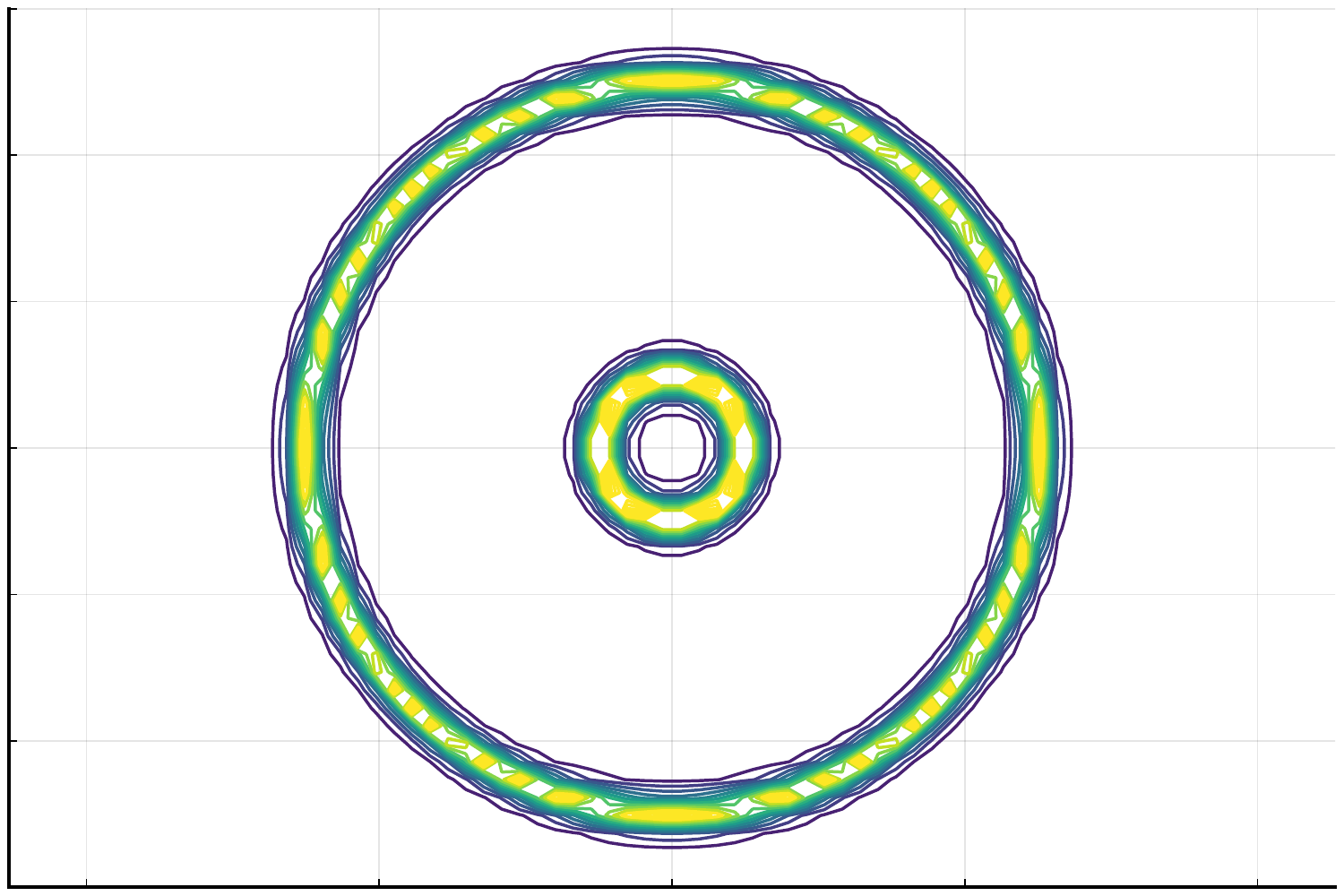}
    \end{tabular}} 
    \caption{Contour plots of $W_{2,\mathcal M}$ (top) and $W_2$ (bottom) barycenters between two mixtures of $SO(2)$-symmetric measures based on Slater-type distributions}
    \label{fig:rotsym2dmw2_contour}
\end{figure}

\section*{Acknowledgements}

This project has received funding from the
European Research Council (ERC) under the European Union's Horizon 2020
research and innovation programme (grant agreement EMC2 No 810367). This work
was supported by the French ‘Investissements d’Avenir’ program, project Agence
Nationale de la Recherche (ISITE-BFC) (contract ANR-15-IDEX-0003). GD was also supported by the Ecole des Ponts-ParisTech. VE acknowledges support from the ANR project COMODO (ANR-19-CE46-0002).

\bibliographystyle{siamplain}
\bibliography{biblio}

\begin{thebibliography}{10}

\bibitem{Agueh2011-uz}
{\sc M.~Agueh and G.~Carlier}, {\em Barycenters in the wasserstein space}, SIAM
  J. Math. Anal., 43 (2011), pp.~904--924.

\bibitem{agueh2011barycenters}
{\sc M.~Agueh and G.~Carlier}, {\em Barycenters in the wasserstein space}, SIAM
  Journal on Mathematical Analysis, 43 (2011), pp.~904--924.

\bibitem{Alvarez-Esteban2016-us}
{\sc P.~C. {\'A}lvarez-Esteban, E.~del Barrio, J.~A. Cuesta-Albertos, and
  C.~Matr{\'a}n}, {\em A fixed-point approach to barycenters in wasserstein
  space}, J. Math. Anal. Appl., 441 (2016), pp.~744--762.

\bibitem{Anderes2016-kv}
{\sc E.~Anderes, S.~Borgwardt, and J.~Miller}, {\em Discrete wasserstein
  barycenters: optimal transport for discrete data}, Math. Methods Oper. Res.,
  84 (2016), pp.~389--409.

\bibitem{bezanson2017julia}
{\sc J.~Bezanson, A.~Edelman, S.~Karpinski, and V.~B. Shah}, {\em Julia: A
  fresh approach to numerical computing}, SIAM review, 59 (2017), pp.~65--98,
  \url{https://doi.org/10.1137/141000671}.

\bibitem{Cances2003-jm}
{\sc E.~Canc{\`e}s, M.~Defranceschi, W.~Kutzelnigg, C.~Le~Bris, and Y.~Maday},
  {\em Computational quantum chemistry: A primer}, in Handbook of Numerical
  Analysis, vol.~10, Elsevier, Jan. 2003, pp.~3--270.

\bibitem{Chen2015-gu}
{\sc H.~Chen and G.~Friesecke}, {\em Pair densities in density functional
  theory}, Multiscale Model. Simul., 13 (2015), pp.~1259--1289.

\bibitem{Chen2018-vn}
{\sc Y.~Chen, T.~T. Georgiou, and A.~Tannenbaum}, {\em Optimal transport for
  gaussian mixture models}, IEEE Access, 7 (2018), pp.~6269--6278.

\bibitem{Cuesta-Albertos1996-xn}
{\sc J.~A. Cuesta-Albertos, C.~Matr{\'a}n-Bea, and A.~Tuero-Diaz}, {\em On
  lower bounds for thel 2-wasserstein metric in a hilbert space}, J. Theoret.
  Probab., 9 (1996), pp.~263--283.

\bibitem{Delon2020-wk}
{\sc J.~Delon and A.~Desolneux}, {\em A {Wasserstein-Type} distance in the
  space of gaussian mixture models}, SIAM J. Imaging Sci., 13 (2020),
  pp.~936--970.

\bibitem{DussonFries}
{\sc G.~Dusson, G.~Friesecke, and C.~Kl{\"u}ppelberg}, {\em On copula
  structures in electronic structure calculation}.
\newblock in preparation.

\bibitem{PolackDusson}
{\sc G.~Dusson, E.~Virginie, and P.~Etienne}, {\em Nonlinear reduced model of
  pair densities in electronic structure calculation based on optimal
  transport}.
\newblock in preparation.

\bibitem{flamary2021pot}
{\sc R.~Flamary, N.~Courty, A.~Gramfort, M.~Z. Alaya, A.~Boisbunon, S.~Chambon,
  L.~Chapel, A.~Corenflos, K.~Fatras, N.~Fournier, L.~Gautheron, N.~T. Gayraud,
  H.~Janati, A.~Rakotomamonjy, I.~Redko, A.~Rolet, A.~Schutz, V.~Seguy, D.~J.
  Sutherland, R.~Tavenard, A.~Tong, and T.~Vayer}, {\em Pot: Python optimal
  transport}, Journal of Machine Learning Research, 22 (2021), pp.~1--8,
  \url{http://jmlr.org/papers/v22/20-451.html}.

\bibitem{FrieseckeOT}
{\sc G.~Friesecke}, {\em Lecture notes on optimal transport}.
\newblock in preparation.

\bibitem{Friesecke2022-od}
{\sc G.~Friesecke and M.~Penka}, {\em The {GenCol} algorithm for
  high-dimensional optimal transport: general formulation and application to
  barycenters and wasserstein splines}, Sept. 2022,
  \url{https://arxiv.org/abs/2209.09081}.

\bibitem{Gangbo1998-wi}
{\sc W.~Gangbo and A.~{\'S}wi{e}ch}, {\em Optimal maps for the multidimensional
  {Monge‐Kantorovich} problem}, Commun. Pure Appl. Math.,  (1998).

\bibitem{Gupta2013-qk}
{\sc A.~K. Gupta, T.~Varga, and T.~Bodnar}, {\em Elliptically Contoured Models
  in Statistics and Portfolio Theory}, Springer New York, 2013.

\bibitem{Helgaker2014-py}
{\sc T.~Helgaker, P.~Jorgensen, and J.~Olsen}, {\em Molecular
  {Electronic-Structure} Theory}, John Wiley \& Sons, Aug. 2014.

\bibitem{McCann1997-da}
{\sc R.~J. McCann}, {\em A convexity principle for interacting gases}, Adv.
  Math., 128 (1997), pp.~153--179.

\bibitem{Mehr2018-zi}
{\sc E.~Mehr, A.~Lieutier, F.~S. Bermudez, V.~Guitteny, N.~Thome, and M.~Cord},
  {\em Manifold learning in quotient spaces}, in 2018 {IEEE/CVF} Conference on
  Computer Vision and Pattern Recognition, IEEE, June 2018, pp.~9165--9174.

\bibitem{Peyre2019-yf}
{\sc G.~Peyr{\'e} and M.~Cuturi}, {\em Computational optimal transport: With
  applications to data science}, Foundations and Trends\textregistered{} in
  Machine Learning, 11 (2019), pp.~355--607.

\bibitem{Pham2017-bd}
{\sc D.~H. Pham}, {\em Galerkin method using optimized wavelet-Gaussian mixed
  bases for electronic structure calculations in quantum chemistry}, PhD
  thesis, Universit{\'e} Grenoble Alpes, June 2017.

\bibitem{Santambrogio2015-xa}
{\sc F.~Santambrogio}, {\em Optimal Transport for Applied Mathematicians},
  Springer International Publishing, 2015.

\bibitem{Sinkhorn1967-rk}
{\sc R.~Sinkhorn}, {\em Diagonal equivalence to matrices with prescribed row
  and column sums}, Am. Math. Mon., 74 (1967), pp.~402--405.

\bibitem{Villani2009-sy}
{\sc C.~Villani}, {\em Optimal Transport}, Springer Berlin Heidelberg, 2009.

\end{thebibliography}

\end{document}